\numberwithin{equation}{section}
\newtheorem{theorem}{Theorem}[section]
\newtheorem{prop}[theorem]{Proposition}
\newtheorem{definition}[theorem]{Definition}
\newtheorem{lem}[theorem]{Lemma}
\newtheorem{cor}[theorem]{Corollary}
\newtheorem{example}[theorem]{Example}
\theoremstyle{remark}
\newtheorem{remark}[theorem]{Remark}
\def\M{\mathsf{M}}
\def\sM{\mathscr{M}}
\def\p{\mathsf{p}}
\def\d{{\sf d}}
\def\ev{\mathsf{C}}
\def\cH{\mathcal{H}}
\def\sH{\mathscr{H}}
\def\cE{\mathcal{E}}
\def\fE{\frak{E}}
\def\cP{\mathcal{P}_\lambda}
\def\J{\mathbb{J}}
\def\R{\mathbb{R}}
\def\C{\mathbb{C}}
\def\H{\mathbb{H}^{m}}
\def\N{\mathbb{N}}
\def\Nz{\mathbb{N}_0}
\def\A{\mathfrak{A}}
\def\K{\frak{K}}
\def\L{\mathcal{L}}
\def\Lis{\mathcal{L}{\rm{is}}}
\def\F{\mathfrak{F}}
\def\bF{\boldsymbol{\mathfrak{F}}}
\def\bW{\boldsymbol{W}}
\def\bu{\boldsymbol{u}}
\def\B{\mathbb{B}}
\def\Ok{\mathsf{O}_{\kappa}}
\def\vpk{\varphi_{\kappa}}
\def\psk{\psi_{\kappa}}
\def\Q{\mathsf{Q}^{m}}
\def\Qk{\mathsf{Q}^{m}_{\kappa}}
\def\cA{\mathcal{A}}
\def\sA{\mathscr{A}}
\def\Ah{\mathcal{A}_h}
\def\Aw{\mathcal{A}_\omega}
\def\Awla{\mathcal{A}_{\omega}^*(\lambda^\prime)}
\def\Awl{\mathcal{A}_{\omega}(\lambda^\prime)}
\def\Ata{\mathcal{A}^*(\vartheta)}
\def\Azt{\mathcal{A}(\vartheta;2\alpha)}
\def\Aa{\mathcal{A}^*}
\def\Aath{\mathcal{A}^*_h(\vartheta)}
\def\cB{\mathcal{B}}
\def\fB{\mathfrak{B}}
\def\Bw{\mathcal{B}_\omega}
\def\aw{\mathfrak{a}_\omega}
\def\ba{\boldsymbol{\vec{a}}}
\def\kb{\varphi^{\ast}_{\kappa}}
\def\kf{\psi^{\ast}_{\kappa}}
\def\pk{\pi_\kappa}
\def\Re{\mathcal{R}}
\def\Rek{\mathcal{R}_{\kappa}}
\def\Rc{\mathcal{R}^c}
\def\Rck{\mathcal{R}^c_{\kappa}}
\def\Rp{{\rm{Re}}}
\def\id{{\rm{id}}}
\def\supp{{\rm{supp}}}
\def\gd{{\rm{grad}}}
\def\div{{\rm div}}
\def\sg{{\rm{sign}}}
\def\Hom{{\text{Hom}}}
\def\X{\mathbb{X}}
\def\Xk{\mathbb{X}_{\kappa}}
\begin{document}

\title[Singular parabolic equations on singular manifolds]{Singular parabolic equations of second order on manifolds with singularities}

\author[Y. Shao]{Yuanzhen Shao}
\address{Department of Mathematics,
         Vanderbilt University, 
         1326 Stevenson Center, 
         Nashville, TN 37240, USA}
\email{yuanzhen.shao@vanderbilt.edu}

\subjclass[2010]{ 35K15, 35K65, 35K67, 35R01, 58J35, 53C21}
\keywords{elliptic operators, singular parabolic equations, degenerate boundary value problem, Riemannian manifolds with singularities,  sesquilinear forms, contractive semigroups}

\begin{abstract}
The main aim of this article is to establish an $L_p$-theory for elliptic operators on manifolds with singularities.
The particular class of differential operators discussed herein may exhibit degenerate or singular behavior near the singular ends of the manifolds. Such a theory is of importance for the study of elliptic and parabolic equations on non-compact, or even incomplete manifolds, with or without boundary.
\end{abstract}
\maketitle

\section{\bf Introduction}

In this article, we study second order differential operators in an $L_p$-framework defined on manifolds with singularities. The particular class of manifolds considered here is called {\em singular manifolds}. Roughly speaking, a manifold $(\M,g)$ is singular if it is conformal to a manifold $(\M,g/\rho^2)$ whose local patches are of comparable sizes, and all transit maps and curvatures are uniformly bounded. The conformal factor $\rho$ is called a singularity function for $(\M,g)$. In \cite{DisShaoSim}, it is shown that the class of all such $(\M,g/\rho^2)$ coincides with the family of complete manifolds with bounded geometry if we restrict ourselves to manifolds without boundary. The concept of {\em singular manifolds} used in this paper is first introduced by H. Amann in \cite{Ama13}.

The approach in this article is based on the traditional strategy of associating differential operators with densely defined, closed and sectorial forms. 
This method, being utilized by many authors, has displayed its power in establishing $L_p$-semigroup theory for second order differential operators on domains in $\R^N$. See, for example, \cite{ArenElst97, ArenThom05, Dav89, MouSel12, Ouh92, Ouh04} and the references therein. 
To clarify the role of the differential operators in this article, we look at 
\begin{align}
\label{S1: weighted diff op}
\sA=\rho^{-\lambda} \cA ,
\end{align}
where $\rho\in C^\infty(\M, (0,1))$ is a conformal factor and $\lambda>0$, or $\rho\in C^\infty(\M, (1,\infty))$   and $\lambda<0$. 
$\cA$ is a uniformly strongly $\rho$-elliptic operator in the sense that the local expressions of $\cA$ have uniform ellipticity constants in all local coordinates. More precisely, a second order differential operator
$$\cA u:= -\div(\vec{a}\cdot\gd u)+\ev(\nabla u, a_1) +a_0 u$$
is  uniformly strongly $\rho$-elliptic, if the principal symbol of $\cA$ fulfils
$$ \hat{\sigma}\cA(x,\xi):= (\vec{a}(x)\cdot\xi | \xi)_{g^*} \sim \rho^2 |\xi|_{g^*}^2, $$
for any cotangent field $\xi$. 
Here $g^*$ is the cotangent metric induced by $g$, $\vec{a}$ is a symmetric $(1,1)$-tensor field on $(\M,g)$, and the operation $[u\mapsto\vec{a}\cdot\nabla u]$ denotes center contraction. See Section~3 for the precise definition.
The $L_p$ theory of uniformly strongly $\rho$-elliptic operators has been established by H. Amann in \cite{Ama13b}. 

In contrast, in this paper  we will focus on the operator $\sA$. An easy computation shows that the principal symbol of $\sA$ satisfies
$$\hat{\sigma}\sA(x,\xi)  \sim \rho^{2-\lambda} |\xi|_{g^*}^2.$$
Therefore, $\sA$ can exhibit both degenerate and singular behaviors near the singular ends.
However, in comparison to  $\cA$, 
the choice of $\rho$ and $\lambda$ in \eqref{S1: weighted diff op} reveals that  
the ellipticity constants of the localizations for the operator $\sA$  in local coordinates blow up while approaching the singular ends of the manifold $(\M,g)$.
The rate of the blow-up for the ellipticity constant is characterized by the power $\lambda$. 
For this reason, we will call such $\sA$ a $(\rho,\lambda)$-singular elliptic operator.
The precise definition of  $(\rho,\lambda)$-singular ellipticity can be found in Section~3.
To illustrate the behavior of the operator $\sA$, we consider the Euclidean space $\R^N$ as a {\em singular manifold} with $\infty$ as a singular end, and take $\cA$ to be the Laplacian in \eqref{S1: weighted diff op}. 
Then the operator $\sA$, in some sense, looks like one with unbounded coefficients at infinity on $\R^N$.

To the best of the author's knowledge, there are only very few papers on the generation of analytic semigroups for  differential operators with unbounded diffusion coefficients in  $\R^N$ or in an exterior domain with regular boundary, among them \cite{ForLor07, GiuGozMonVes08, GozMonVes02,  MetPalPruSch05, MouSel12}. 
In all  these articles, the drift coefficients have to be controlled by the diffusion and potential terms. 
In \cite{MouSel12}, the authors use a form operator method to prove a semigroup result for operators with unbounded coefficients in a weighted Sobolev space. 
The drawback of the method used in \cite{MouSel12} is reflected by the difficulty to precisely determine the domains of the differential operators. This is, in fact, one of the most challenging tasks in the form operator approach. 
One of the most important features of this article is that with the assistance of the theory for function spaces and differential operators on {\em singular manifolds} established in \cite{Ama13, AmaAr,  Ama13b}, we can find a precise characterization for the domains of the second order  $(\rho,\lambda)$-singular elliptic operators.

A conventional method to render the associated sesquilinear form of an elliptic operator $\cA$  densely defined, closed and sectorial is to perturb $\cA$ by a spectral parameter $\omega>0$. 
See \cite{MouSel12, Ouh04} for instance. 
Then $\cA$ generates a quasi-contractive semigroup.
However, for a $(\rho,\lambda)$-singular elliptic operator, e.g., the operator $\sA$ in \eqref{S1: weighted diff op}, because of the existence of the multiplier $\rho^{-\lambda}$,  we need to perturb $\sA$ by a weight function of the form $\omega\rho^{-\lambda}$.
This feature arising from our approach creates an essential difficulty for parabolic theory of differential equation on manifolds with singularities. 
We take conical manifolds as an example. Given a compact closed manifold $B$, the Laplacian on the conical manifold $([0,1)\times B) / (\{0\}\times B)$  reads as 
$$t^{-2}((t \partial_t)^2 + \Delta_B). $$
In order to prove that this operator generates a contractive semigroup, we need to perturb it not by a constant $\omega$, but actually by a weight function $\omega t^{-2}$.
The commutator of weight functions and differential operators is usually not a perturbation in the sense of \cite{Fuhr97, PruSim07}. 
Thus the extra term $\omega t^{-2}$, in general, cannot be removed by a ``soft" method, like the perturbation theory of semigroups.
In some cases, e.g., the Laplacian operator, we find it more practical to put a control on the diffusion or drift term. 
This is a quite natural condition which has been used in \cite{ForLor07, GiuGozMonVes08, GozMonVes02,  MetPalPruSch05, MouSel12}. 
In all  these articles, the growth of the drift coefficients have to be controlled by the diffusion and potential terms.

In Section~5, we are able to remove the compensation condition $\omega \rho^{-\lambda}$  for a class of {\em singular manifolds} called {\em singular manifolds} with  $\sH_\lambda$-ends. 
To the best of the author's knowledge, this concept is introduced here for the first time.  
To illustrate how to construct such manifolds, we look at the following example of manifolds with ``holes". First, we start with an $m$-dimensional complete closed manifold $(\mathscr{M},g)$ with bounded geometry. Then we remove finitely many $\Sigma_j\subset \mathscr{M}$. Each $\Sigma_j$ is an $m$-dimensional compact manifold with boundary. Let 
$$\M:=\mathscr{M}\setminus \cup_{j}\Sigma_j.$$ 
Since the boundary $\partial\Sigma_j$ is not contained in $\M$, the manifold $(\M,g)$ is incomplete. The resulting manifold with ``holes" is a {\em singular manifold} with  $\sH_\lambda$-ends. 

To illustrate the work in this paper, we consider the Laplace-Beltrami operator 
$$\Delta_g:=\div_g\circ \gd_g$$ 
on a manifold with ``holes", which we denote by $(\M,g)$.
We want to point out that  $-\Delta_g$ is indeed of the same type as $\sA$ in \eqref{S1: weighted diff op}. Instead, taking 
$$\rho:={\sf dist}(\cdot,\partial\Sigma_j),\quad \text{near }\Sigma_j;\quad \rho\sim {\bf 1}\quad \text{elsewhere}, $$ 
the operator $-\rho^2 \Delta_g$ is uniformly strongly $\rho$-elliptic. 
Here $\sim$ denotes Lipschitz equivalence.
In Section~5.3, we prove that $\Delta_g$ generates a strongly continuous analytic semigroup on $L^{\lambda^\prime}_p(\M)$ with domain $W^{2,\lambda^\prime-2}_p(\M)$ for any $1<p<\infty$. Here $L^{\lambda^\prime}_p(\M)$ and $W^{2,\lambda^\prime-2}_p(\M)$
are some weighted Sobolev spaces whose definition will be given in Section 2.2.  More general results for second order differential operators will be stated in Section~5.3 below.

The study of differential operators on manifolds with singularities is motivated by a variety of applications from applied mathematics, geometry and topology. 
All of it is related to the seminal paper
by V.A. Kondrat'ev \cite{Kon67}.
There is a tremendous amount of literature on pseudo-differential calculus of differential operators of Fuchs type, which have been introduced independently by R.B.~Melrose \cite{Mel81, Mel93} and B.-W.~Schulze \cite{NazSalSchSte06, Sch94, Sch97, SchSei02}. One branch of these lines of research is connected with the so-called $b$-calculus and its generalizations on manifolds with cylindrical ends. See \cite{Mel81, Mel93}. Many authors have been very active in this direction. Research along another line, known as conical differential operators, has also been known for a long time. 
Operators in this line of research are modelled on conical manifolds. The investigation of conical singularities was initiated by J.~Cheeger in \cite{Chg79, Chg80, Chg83}, and then continued by many other authors. A comparison between the $b$-calculus and the cone algebra can be found in \cite{LauSei99}. However, for higher order singularities, the corresponding algebra becomes far from being elementary, although many ideas and structures can be extracted, e.g., from the calculus of boundary value problems, c.f., \cite{LiuWit04, Sch94, SchSei02}. In Section~5.2, we will show that it is possible to create {\em singular manifolds} with $\sH_\lambda$-ends with singularities of arbitrarily high dimension.  The amount of research on pseudo-differential calculus of differential operators of Fuchs type is enormous, and thus it is literally impossible to list all the work. 

This paper is organized as follows.

In the next section, we present some preliminary material, including the definitions and fundamental properties of the function spaces used in this article, and a divergence theorem for tensor bundles.

Section 3 provides the theoretical basis for this paper, wherein we prove the generation of analytic $L_p$-semigroups by second order differential operators in divergence form on {\em singular manifolds}. 
To prove that a differential operator $\sA$ generates a contractive strongly continuous analytic semigroup, as we mentioned earlier, it is usually necessary to perturb $\sA$ by a weight function of the form $\omega\rho^{-\lambda}$, which is equivalent to requiring $\sA$ to possess a large positive potential term. 
A precise bound on this compensation condition can be formulated for $L_2$-theory, or general $L_p$-theory for scalar functions.
It is shown in Section~5 that, for {\em singular manifolds} with  $\sH_\lambda$-ends, the aforementioned largeness condition for the potential term of $\sA$, or equivalently the perturbation $\omega\rho^{-\lambda}$, can be removed.

In Section 4, we follow the techniques and constructions from \cite{Ama14} to introduce two important classes of {\em singular manifolds}, that is, manifolds with singularities of wedge type and manifolds with holes. Typical examples of manifolds of wedge type  are conical manifolds and edge manifolds. 
As mentioned in Remark~\ref{S5.2: remove bdry} below,  the concept of manifolds with holes can be generalized to manifolds constructed by removing finitely many compact closed submanifolds  from a complete manifold.

In Section~5, we first demonstrate a technique to remove the compensation condition on the potential terms  formulated in Section~3  for second order differential operators defined on {\em singular manifolds} with so called {\em property $\sH_\lambda$}, which means that there exists some function $h\in C^2(\M)$ on $(\M,g)$ with singularity function $\rho$ satisfying
$$\rho|\gd_g h|_g\sim {\bf 1}_\M,\quad \rho^\lambda \div_g(\rho^{2-\lambda}\gd_g h)\sim  {\bf 1}_\M.$$
The study of such conditions is new.
Based on this technique, we generalize the $L_p$-theory established in Section~3 to a class of manifolds, called {\em singular manifolds} with $\sH_\lambda$-ends. 
Roughly speaking, a {\em singular manifold} has $\sH_\lambda$-ends if near the singularities it is a {\em singular manifold} with  {\em property $\sH_\lambda$}.  
The discussions in Section~5.2 and 5.3 show how to construct {\em singular manifolds} with $\sH_\lambda$-ends in a systematic way.
The main results of this articles, Theorems~\ref{S5: sing mfd-ends}, \ref{S5: main theorem} and Corollary~\ref{S5: main corollary}, are presented in Section~5.3.

In the last section, several applications of the $L_p$-theory established in Section~5 are given.
First, we apply the theory established in Section~5 to the heat equation on {\em singular manifolds} with $\sH_\lambda$-ends to establish an existence and uniqueness result in an $L_p$-framework. 
The second example
concerns  parabolic equations with lower order degeneracy or boundary singularity on domains with compact boundary.
The order of the degeneracy or singularity is measured by the rate of decay or blow-up in the ellipticity constant while approaching the boundary.
This example generalizes the results in \cite{ForMetPall11, Shao14, Vesp89}.
In the third example, we discuss a generalization of the parabolic Heston equation.
One feature of the equations considered in the second and third examples is the anisotropic degeneracy of the higher order and lower order terms.
For instance, while the leading term is degenerate towards the boundary, the lower order terms are allowed to exhibit boundary singularities.

{\textbf {Assumptions on manifolds:}}
Following H.~Amann \cite{Ama13, AmaAr},
let $(\M,g)$ be a $C^\infty$-Riemannian manifold of dimension $m$ with or without boundary endowed with $g$ as its Riemannian metric such that its underlying topological space is separable. An atlas $\A:=(\Ok,\vpk)_{\kappa\in \K}$ for $\M$ is said to be normalized if 
\begin{align*}
\vpk(\Ok)=
\begin{cases}
\Q, \hspace*{1em}& \Ok\subset\mathring{\M},\\
\Q\cap\H, &\Ok\cap\partial\M \neq\emptyset,
\end{cases}
\end{align*}
where $\H$ is the closed half space $\bar{\R}^+ \times\R^{m-1}$ and $\Q$ is the unit cube at the origin in $\R^m$. We put $\Qk:=\vpk(\Ok)$ and  $\psk:=\vpk^{-1}$. 
\smallskip\\
The atlas $\A$ is said to have \emph{finite multiplicity} if there exists $K\in \N $ such that any intersection of more than $K$ coordinate patches is empty. Put
\begin{align*}
\mathfrak{N}(\kappa):=\{\tilde{\kappa}\in\K:\mathsf{O}_{\tilde{\kappa}}\cap\Ok\neq\emptyset \}.
\end{align*} 
The finite multiplicity of $\A$ and the separability of $\M$ imply that $\A$ is countable.
\smallskip\\
An atlas $\A$ is said to fulfil the \emph{uniformly shrinkable} condition, if it is normalized and there exists $r\in (0,1)$ such that $\{\psk(r{\Qk}):\kappa\in\K\}$ is a cover for ${\M}$. 
\smallskip\\
Following H.~Amann \cite{Ama13, AmaAr}, we say that $(\M,g)$ is a {\bf{uniformly regular Riemannian manifold}} if it admits an atlas $\A$ such that
\begin{itemize}
\item[(R1)] $\A$ is uniformly shrinkable and has finite multiplicity. If $\M$ is oriented, then $\A$ is orientation preserving.
\item[(R2)] $\|\varphi_{\eta}\circ\psk \|_{k,\infty}\leq c(k) $, $\kappa\in\K$, $\eta\in\mathfrak{N}(\kappa)$, and $k\in{\N}_0$.
\item[(R3)] $\kf g\sim g_m $, $\kappa\in\K$. Here $g_m$ denotes the Euclidean metric on ${\R}^m$ and $\kf g$ denotes the pull-back metric of $g$ by $\psk$.
\item[(R4)] $\|\kf g\|_{k,\infty}\leq c(k)$, $\kappa\in\K$ and $k\in\Nz$.
\end{itemize}
Here $\|u\|_{k,\infty}:=\max_{|\alpha|\leq k}\|\partial^{\alpha}u\|_{\infty}$, and it is understood that a constant $c(k)$, like in (R2), depends only on $k$. An atlas $\A$ satisfying (R1) and (R2) is called a \emph{uniformly regular atlas}. (R3) reads as
\begin{center}
$|\xi|^2/c\leq \kf g(x)(\xi,\xi) \leq{c|\xi|^2}$,\hspace{.5em} for any $x\in \Qk,\xi\in \R^m, \kappa\in\K$ and some $c\geq{1}$.
\end{center}
In \cite{DisShaoSim}, it is shown that the class of {\em uniformly regular Riemannian manifolds} coincides with the family of complete Riemannian manifolds with bounded geometry, when $\partial\M=\emptyset$.

Assume that $\rho\in C^{\infty}(\M,(0,\infty))$. Then $(\rho,\K)$ is a {\em singularity datum} for $\M$ if
\begin{itemize}
\item[(S1)] $(\M,g/\rho^2)$ is a {\em uniformly regular Riemannian manifold}.
\item[(S2)] $\A$ is a uniformly regular atlas.
\item[(S3)] $\|\kf\rho\|_{k,\infty}\leq c(k)\rho_{\kappa}$, $\kappa\in\K$ and $k\in\N_0$, where $\rho_{\kappa}:=\rho(\psk(0))$.
\item[(S4)] $\rho_{\kappa}/c\leq \rho(\p)\leq c\rho_{\kappa}$, $\p\in\Ok$ and $\kappa\in\K$ for some $c\geq 1$ independent of $\kappa$.
\end{itemize}
Two {\em singularity data} $(\rho,\K)$ and $(\tilde{\rho},\tilde{\K})$ are equivalent, if
\begin{itemize}
\item[(E1)] $\rho\sim \tilde{\rho}$.
\item[(E2)] card$\{\tilde{\kappa}\in\tilde{\K}:\mathsf{O}_{\tilde{\kappa}}\cap\Ok\neq\emptyset\}\leq c$, $\kappa\in\K$.
\item[(E3)] $\|\varphi_{\tilde{\kappa}}\circ\psk\|_{k,\infty}\leq{c(k)}$, $\kappa\in\K$, $\tilde{\kappa}\in\tilde{\K}$ and $k\in{\N}_0$
\end{itemize}
We write the equivalence relationship as $(\rho,\K)\sim(\tilde{\rho},\tilde{\K})$. (S1) and (E1) imply that 
\begin{align}
\label{section 1:singular data}
1/c\leq \rho_{\kappa}/\tilde{\rho}_{\tilde{\kappa}}\leq c,\hspace*{.5em} \kappa\in\K,\hspace*{.5em} \tilde{\kappa}\in\tilde{\K}\text{ and }\mathsf{O}_{\tilde{\kappa}}\cap\Ok\neq \emptyset.
\end{align}
{\em A singularity structure}, $\mathfrak{S}(\M)$, for $\M$ is a maximal family of equivalent {\em singularity data}. A {\em singularity function} for $\mathfrak{S}(\M)$ is a function $\rho\in C^{\infty}(\M,(0,\infty))$ such that there exists an atlas $\A$ with $(\rho,\A)\in\mathfrak{S}(\M)$. The set of all {\em singularity functions} for $\mathfrak{S}(\M)$ is the {\em singular type}, $\mathfrak{T}(\M)$, for $\mathfrak{S}(\M)$. By a {\bf{singular manifold}} we mean a Riemannian manifold $\M$ endowed with a singularity structure $\mathfrak{S}(\M)$. Then $\M$ is said to be \emph{singular of type} $\mathfrak{T}(\M)$. If $\rho\in\mathfrak{T}(\M)$, then it is convenient to set $[\![\rho]\!]:=\mathfrak{T}(\M)$ and to say that $(\M,g;\rho)$ is a {\em singular manifold}. A {\em singular manifold} is a {\em uniformly regular Riemannian manifold} iff $\rho\sim {\bf 1}_{\M}$. 
\smallskip\\
We refer to \cite{Ama13b, Ama14} for examples of {\em uniformly regular Riemannian manifolds} and {\em singular manifolds}.
\smallskip\\
A {\em singular manifold} $\M$ with a uniformly regular atlas $\A$ admits a \emph{localization system subordinate to} $\A$, by which we mean a family $(\pi_{\kappa} )_{\kappa\in\K}$ satisfying:
\begin{itemize}
\item[(L1)] $\pk \in\mathcal{D}(\Ok,[0,1])$ and $(\pi_{\kappa}^{2})_{\kappa\in \K} $ is a partition of unity subordinate to $\A$.
\item[(L2)] $\|\kf  \pk \|_{k,\infty} \leq c(k) $, for $\kappa\in\K$, $k\in \Nz$.
\end{itemize}
The reader may refer to \cite[Lemma~3.2]{Ama13} for a proof. 

Lastly, for each $k\in\N$, the concept of {\bf{$C^k$-uniformly regular Riemannian manifold}} is defined by modifying (R2), (R4) and (L1), (L2) in an obvious way. Similarly, {\bf{$C^k$-singular manifolds}} are defined by replacing the smoothness of $\rho$ by $\rho\in C^k(\M,(0,\infty))$ and altering (S1)-(S3) accordingly.

\textbf{Notations:} 
Given any topological set $U$, $\mathring{U}$ denotes the interior of $U$. 
\smallskip\\
For any two Banach spaces $X,Y$, $X\doteq Y$ means that they are equal in the sense of equivalent norms. The notation $\Lis(X,Y)$ stands for the set of all bounded linear isomorphisms from $X$ to $Y$.
\smallskip\\
Given any Banach space $X$ and  manifold $\mathscr{M}$,
let $\| \cdot \|_\infty$, $\| \cdot \|_{s,\infty}$, $\|\cdot \|_p$ and $\|\cdot \|_{s,p}$ denote the usual norm of the Banach spaces $BC(\mathscr{M},X)$($L_\infty(\mathscr{M},X)$), $BC^s(\mathscr{M},X)$, $L_p(\mathscr{M},X)$ and $W^s_p(\mathscr{M},X)$, respectively. 
\smallskip\\ 

\section{\bf Preliminaries}

In this Section, we follow the work of H. Amann in \cite{Ama13} and \cite{AmaAr} to introduce some concepts and properties of weighted function spaces on {\em singular manifolds}.
Let $\mathbb{A}$ be a countable index set. Suppose $E_{\alpha}$ is for each $\alpha\in\mathbb{A}$ a locally convex space. We endow $\prod_{\alpha}E_{\alpha}$ with the product topology, that is, the coarsest topology for which all projections $pr_{\beta}:\prod_{\alpha}E_{\alpha}\rightarrow{E_{\beta}},(e_{\alpha})_{\alpha}\mapsto{e_{\beta}}$ are continuous. By $\bigoplus_{\alpha}E_{\alpha}$ we mean the vector subspace of $\prod_{\alpha}E_{\alpha}$ consisting of all finitely supported elements, equipped with the inductive limit topology, that is, the finest locally convex topology for which all injections $E_{\beta}\rightarrow\bigoplus_{\alpha}E_{\alpha}$ are continuous. 

\subsection{\bf Tensor bundles}
Suppose $(\M,g;\rho)$ is a {\em singular manifold}.
Given $\sigma,\tau\in\N_0$, 
$$T^{\sigma}_{\tau}{\M}:=T{\M}^{\otimes{\sigma}}\otimes{T^{\ast}{\M}^{\otimes{\tau}}}$$ 
is the $(\sigma,\tau)$-tensor bundle of $\M$, where $T{\M}$ and $T^{\ast}{\M}$ are the tangent and the cotangent bundle of ${\M}$, respectively.
We write $\mathcal{T}^{\sigma}_{\tau}{\M}$ for the $C^{\infty}({\M})$-module of all smooth sections of $T^{\sigma}_{\tau}\M$,
and $\Gamma(\M,T^{\sigma}_{\tau}{\M})$ for the set of all sections.

For abbreviation, we set $\J^{\sigma}:=\{1,2,\ldots,m\}^{\sigma}$, and $\J^{\tau}$ is defined alike. Given local coordinates $\varphi=\{x^1,\ldots,x^m\}$, $(i):=(i_1,\ldots,i_{\sigma})\in\J^{\sigma}$ and $(j):=(j_1,\ldots,j_{\tau})\in\J^{\tau}$, we set
\begin{align*}
\frac{\partial}{\partial{x}^{(i)}}:=\frac{\partial}{\partial{x^{i_1}}}\otimes\cdots\otimes\frac{\partial}{\partial{x^{i_{\sigma}}}}, \hspace*{.5em} \partial_{(i)}:=\partial_{i_{1}}\circ\cdots\circ\partial_{i_{\sigma}} \hspace*{.5em} dx^{(j)}:=dx^{j_1}\otimes{\cdots}\otimes{dx}^{j_{\tau}}
\end{align*}
with $\partial_{i}=\frac{\partial}{\partial{x^i}}$. The local representation of 
$a\in \Gamma(\M,T^{\sigma}_{\tau}{\M})$ with respect to these coordinates is given by 
\begin{align}
\label{local}
a=a^{(i)}_{(j)} \frac{\partial}{\partial{x}^{(i)}} \otimes dx^{(j)} 
\end{align}
with coefficients $a^{(i)}_{(j)}$ defined on $\Ok$。
\smallskip\\
We denote by $\nabla=\nabla_g$ the Levi-Civita connection on $T{\M}$. It has a unique extension over $\mathcal{T}^{\sigma}_{\tau}{\M}$ satisfying, for $X\in\mathcal{T}^1_0{\M}$,
\begin{itemize}
\item[(i)] $\nabla_{X}f=\langle{df,X}\rangle$, \hspace{1em}$f\in{C^{\infty}({\M})}$,
\item[(ii)] $\nabla_{X}(a\otimes{b})=\nabla_{X}a\otimes{b}+a\otimes{\nabla_{X}b}$, \hspace{1em}$a\in\mathcal{T}^{{\sigma}_1}_{{\tau}_1}{\M}$, $b\in\mathcal{T}^{{\sigma}_2}_{{\tau}_2}{\M}$,
\item[(iii)] $\nabla_{X}\langle{a,b}\rangle=\langle{\nabla_{X}a,b}\rangle+\langle{a,\nabla_{X}b}\rangle$, \hspace{1em}$a\in\mathcal{T}^{{\sigma}}_{{\tau}}{\M}$, $b\in\mathcal{T}^{{\tau}}_{{\sigma}}{\M}$,
\end{itemize}
where $\langle{\cdot,\cdot}\rangle:\mathcal{T}^{\sigma}_{\tau}{\M}\times{\mathcal{T}^{\tau}_{\sigma}{\M}}\rightarrow{C^{\infty}({\M})}$ is the extension of the fiber-wise defined duality pairing on ${\M}$, cf. \cite[Section 3]{Ama13}. Then the covariant (Levi-Civita) derivative is the linear map
$$\nabla: \mathcal{T}^{\sigma}_{\tau}{\M}\rightarrow{\mathcal{T}^{\sigma}_{\tau+1}{\M}},\quad a\mapsto{\nabla{a}}$$
defined by
$$\langle{\nabla{a},b\otimes{X}}\rangle:=\langle{\nabla_{X}a,b}\rangle,\quad b\in\mathcal{T}^{\tau}_{\sigma}{\M},\quad X\in\mathcal{T}^{1}_{0}{\M}.$$
For $k\in{\N}_0$, we define
$$\nabla^k: \mathcal{T}^{\sigma}_{\tau}{\M}\rightarrow{\mathcal{T}^{\sigma}_{\tau+k}{\M}},\quad a\mapsto{\nabla^k{a}}$$
by letting $\nabla^0 a:=a$ and $\nabla^{k+1} a:=\nabla\circ\nabla^k a$.
We can also extend the Riemannian metric $(\cdot|\cdot)_g$ from the tangent bundle to any $(\sigma,\tau)$-tensor bundle $T^{\sigma}_{\tau}{\M}$ such that $(\cdot|\cdot)_g:=(\cdot|\cdot)_{g^\tau_\sigma}:T^{\sigma}_{\tau}{\M}\times{T^{\sigma}_{\tau}{\M}}\rightarrow \C $ by 
$$(a|b)_g =g_{(i)(\tilde{i})} g^{(j)(\tilde{j})}	a^{(i)}_{(j)}  \bar{b}^{(\tilde{i})}_{(\tilde{j})}$$
in every coordinate with $(i),(\tilde{i})\in \mathbb{J}^\sigma$, $(j),(\tilde{j})\in \mathbb{J}^\tau$ and 
$$ g_{(i)(\tilde{i})}:= g_{i_1,\tilde{i}_1}\cdots g_{i_\sigma,\tilde{i}_\sigma},\quad g^{(j)(\tilde{j})}:= g^{j_1,\tilde{j}_1}\cdots g^{j_\tau,\tilde{j}_\tau}.$$ 
In addition,
$$|\cdot|_g:=|\cdot|_{g^\tau_\sigma}:\mathcal{T}^{\sigma}_{\tau}{\M}\rightarrow{C^{\infty}}({\M}),\quad a\mapsto\sqrt{(a|a)_g}$$
is called the (vector bundle) \emph{norm} induced by $g$.
\smallskip\\
We assume that $V$ is a $\C$-valued tensor bundle on $\M$ and $E$ is a $\C$-valued vector space, i.e.,
\begin{center}
$V=V^{\sigma}_{\tau}:=\{T^{\sigma}_{\tau}\M, (\cdot|\cdot)_g\}$,\hspace{.5em} and\hspace{.5em} $E=E^{\sigma}_{\tau}:=\{\C^{m^{\sigma}\times m^{\tau}},(\cdot|\cdot)\}$, 
\end{center}
for some $\sigma,\tau\in\N_0$. Here $(a|b):=$trace$(b^{\ast}a)$ with $b^{\ast}$ being the conjugate matrix of $b$. By setting $N=m^{\sigma+\tau}$ , we can identify $\F^s(\M,E)$ with $\F^s(\M)^N$. 

Recall that for any $a\in V^\sigma_{\tau+1}$
$$(a^\sharp)^{(i;k)}_{(j)}:=g^{kl} a^{(i)}_{(j;l)}, \quad (i)\in\J^\sigma,\quad(j)\in\J^\tau,\quad  k,l\in\J^1. $$
We have 
$|a^\sharp|_{g^\tau_{\sigma+1}}=|a|_{g^{\tau+1}_\sigma}.$
For any $(i_1)\in \mathbb{J}^{\sigma_1}$ and $(i_2)\in \mathbb{J}^{\sigma_2}$, the index $(i_1;i_2)$ is defined by
\begin{align*}
(i_1;i_2)=(i_{1,1},\cdots,i_{1,\sigma_1};i_{2,1},\cdots,i_{2,\sigma_2}).
\end{align*} 
Given any $a\in V^{\sigma+1}_\tau$,
$$(a_\flat)^{(i)}_{(j;k)}:=g_{kl} a^{(i;l)}_{(j)}.$$
Similarly, we have
$|a_\flat|_{g^{\tau+1}_\sigma}=|a|_{g_{\sigma+1}}^\tau. $
\smallskip\\
Suppose that $\sigma+\tau\geq 1$. We put for $a\in V$ and $\alpha_i\in T^*\M$, $\beta^j\in T\M$
$$(G^\tau_\sigma a)(\alpha_1,\cdots,\alpha_\tau; \beta^1,\cdots,\beta^\sigma):=a((\beta^1)_\flat ,\cdots,(\beta^\sigma)_\flat;(\alpha_1)^\sharp,\cdots,(\alpha_\tau)^\sharp ).$$
Then it induces a conjugate linear bijection
$$G^\tau_\sigma: V\rightarrow V^\prime, \quad (G^\tau_\sigma)^{-1}=G^\sigma_\tau. $$
Consequently, for $a,b\in V$
$$(a|b)_g= \langle a, G^\tau_\sigma \bar{b}\rangle. $$
From this, it is easy to show
\begin{align}
\label{S2: G^tau_sigma}
|G^\tau_\sigma a|_{g^\sigma_\tau}=|a|_{g^\tau_\sigma}.
\end{align}
Throughout the rest of this paper, unless stated otherwise, we always assume that 
\smallskip
\begin{mdframed}
\begin{itemize}
\item $(\M,g;\rho)$ is a {\em singular manifold}.
\item $\rho\in \mathfrak{T}(\M)$, $s\geq 0$, $1<p<\infty$ and $\vartheta\in \R$.
\item $(\pk,\zeta_{\kappa})_{\kappa\in\K}$ is a localization system subordinate to $\mathfrak{A}$.
\item $\sigma,\tau\in \Nz$, $V=V^{\sigma}_{\tau}:=\{T^{\sigma}_{\tau}\M, (\cdot|\cdot)_g\}$, $E=E^{\sigma}_{\tau}:=\{\C^{m^{\sigma}\times m^{\tau}},(\cdot|\cdot)\}$.
\end{itemize}
\end{mdframed}
In \cite[Lemma~3.1]{Ama13}, it is shown that $\M$ satisfies the following properties:
\begin{itemize}
\item[(P1)] ${\kf}g\sim \rho^2_{\kappa}g_m$ and ${\kf}g^{\ast}\sim \rho^{-2}_{\kappa}g_m$, where $g^{\ast}$ is the induced contravariant metric.
\item[(P2)] $\rho^{-2}_{\kappa}\|{\kf}g\|_{k,\infty}+\rho^2_{\kappa}\|{\kf}g^{\ast}\|_{k,\infty}\leq c(k)$, $k\in\N_0$ and $\kappa\in\K$.
\item[(P3)] For $\sigma,\tau\in\N_0$ given, then 
\begin{center}
${\kf}(|a|_g)\sim \rho^{\sigma-\tau}_{\kappa}|{\kf}a|_{g_m}$, \hspace{1em}$a\in \mathcal{T}^{\sigma}_{\tau}\M$,
\end{center}
and
\begin{center}
$|{\kb}b|_g \sim \rho^{\sigma-\tau}_\kappa \kb(|b|_{g_m})$, \hspace{1em}$b\in \mathcal{T}^{\sigma}_{\tau}\Qk$.
\end{center}
\end{itemize}
\smallskip
For $K\subset \M$, we put $\K_{K}:=\{\kappa\in \K: \Ok\cap K\neq\emptyset\}$. Then, given $\kappa\in\K$,
\begin{align*}
\Xk:=
\begin{cases}
\R^m \hspace*{1em}\text{if }\kappa\in \K\setminus \K_{\partial\M},\\
\H \hspace*{1em}\text{otherwise,}
\end{cases}
\end{align*}
endowed with the Euclidean metric $g_m$.

Given $a\in \Gamma(\M,V)$ with local representation $\eqref{local}$
we define ${\kf}a\in E$ by means of
$ {\kf}a=[a^{(i)}_{(j)}]$,
where $[a^{(i)}_{(j)}]$ stands for the $(m^{\sigma}\times m^{\tau})$-matrix with entries $a^{(i)}_{(j)}$ in the $((i),(j))$ position, with $(i)$, $(j)$ arranged lexicographically.

\subsection{\bf Weighted function spaces}
For the sake of brevity, we set $\boldsymbol{L}_{1,loc}(\X,E):=\prod_{\kappa}{L}_{1,loc}({\Xk},E)$. Then we introduce two linear maps for $\kappa\in\K$:
\begin{center}
$\Rck:{L}_{1,loc}({\M},V)\rightarrow{L}_{1,loc}({\Xk},E)$, $u\mapsto{\psi_{\kappa}^{\ast}({\pk}u)}$,
\end{center}
and
\begin{center}
$\Rek:{L}_{1,loc}({\Xk},E)\rightarrow{L}_{1,loc}({\M},V)$, $v_{\kappa}\mapsto{\pk}{\kb}v_{\kappa}$.
\end{center}
Here and in the following it is understood that a partially defined and compactly supported tensor field is automatically extended over the whole base manifold by identifying it to be zero  outside its original domain.
We define
$$\Rc:{L}_{1,loc}({\M},V)\rightarrow\boldsymbol{L}_{1,loc}(\R^m),\quad u\mapsto{(\Rck u)_{\kappa}},$$
and
$$\Re:\boldsymbol{L}_{1,loc}(\R^m)\rightarrow{L}_{1,loc}({\M},V),\quad (v_{\kappa})_{\kappa}\mapsto{\sum_{\kappa}\Rek v_\kappa}.$$ 
\smallskip\\
In the rest of this subsection we suppose that  $k\in\Nz$. 
We denote by $\mathcal{D}({\M},V)$ the space of smooth sections of $V$ that is compactly supported in $\M$.
Then the weighted Sobolev space $W^{k,\vartheta}_p({\M},V)$ is defined as the completion of $\mathcal{D}({\M},V)$ in $L_{1,loc}(\M,V)$ with respect to the norm
\begin{center}
$\|\cdot\|_{k,p;\vartheta}: u\mapsto(\sum_{i=0}^{k}\|\rho^{\vartheta+i+\tau-\sigma}|\nabla^{i}u|_{g}\|_p^p)^{\frac{1}{p}}$.
\end{center}
Note that $W^{0,\vartheta}_p(\M, V)=L^{\vartheta}_p(\M, V)$ with equal norms. In particular, we can define the weighted spaces $L_q^\vartheta(\M,V)$ for $q\in \{1,\infty\}$ in a similar manner.

Analogously, the weighted Besov spaces are defined for $k\in\N$ by
\begin{align}
\label{S2: Besov space}
B^{k,\vartheta}_p(\M,V):=(W^{k-1,\vartheta}_p(\M, V),W^{k+1,\vartheta}_p(\M, V))_{1/2,p}. 
\end{align}
Define
$$BC^{k,\vartheta}(\M,V):=(\{u\in{C^k({\M},V)}:\|u\|_{k,\infty;\vartheta}<\infty\},\|\cdot\|_{k,\infty;\vartheta}),$$
where $\|u\|_{k,\infty;\vartheta}:={\max}_{0\leq{i}\leq{k}}\|\rho^{\vartheta+i+\tau-\sigma}|\nabla^{i}u|_{g}\|_{\infty}$.
We also set
$$BC^{\infty,\vartheta}({\M},V):=\bigcap_{k}BC^{k,\vartheta}(\M, V).$$ 
The weighted Sobolev-Slobodeckii spaces are defined as
\begin{align}
\label{S2: fractional Sobolev}
W^{s,\vartheta}_p({\M},V):=(L^{\vartheta}_p(\M, V),W^{k,\vartheta}_p(\M, V))_{s/k,p},
\end{align}
for $ s\in \R_+\setminus\Nz$, $k=[s]+1$,
where $(\cdot,\cdot)_{\theta,p}$ is the real interpolation method \cite[Section~1.3]{Trib78}.
\smallskip\\
Whenever $\partial\M\neq \emptyset$,
we denote by 
$\mathring{W}^{s,\vartheta}_p(\M, V)$ the closure of $\mathcal{D}(\mathring{\M},V)$ in $W^{s,\vartheta}_p(\M, V)$.
In particular,
$$\mathring{W}^{s,\vartheta}_p(\M, V)=W^{s,\vartheta}_p(\M, V),\quad 0\leq s <1/p.$$ 
See \cite[Theorem~8.3(ii)]{Ama13}.

In the special case that $(\M,g)$ is uniformly regular, since $\rho\sim {\bf 1}_\M$, the definition of any weighted space $\F^{s,\vartheta}(\M,V)$ is actually independent of the weight $\vartheta$. In this case, all spaces are indeed unweighted. We thus denote these spaces simply by $\F^s(\M,V)$.

In the following context, assume that $E_\kappa$ is a sequence of Banach spaces for $\kappa\in\K$. Then $\mathbf{E}:=\prod_{\kappa}E_{\kappa}$. For $1\leq{q}\leq{\infty}$, we denote by $l_q^\vartheta(\mathbf{E}):=l_q^\vartheta(\mathbf{E};\rho)$ the linear subspace of $\mathbf{E}$ consisting of all $\boldsymbol{x}=(x_\kappa)$ such that
\begin{align*}
\|\boldsymbol{x}\|_{l_q^\vartheta(\mathbf{E})}:=
\begin{cases}
(\sum\limits_\kappa\|\rho_\kappa^{\vartheta+m/q } x_\kappa\|^q_{E_\kappa})^{1/q},\quad&1\leq{q}<{\infty},\\
\sup\limits_\kappa\|\rho_\kappa^\vartheta x_\kappa\|_{E_\kappa},& q=\infty
\end{cases}
\end{align*}
is finite. Then $l_q^\vartheta(\mathbf{E})$ is a Banach space with norm $\|\cdot\|_{l^\vartheta_q(\mathbf{E})}$.

For ${\F}\in \{BC,W_p, \mathring{W}_p\}$, we put $\boldsymbol{\F}^s:=\prod_{\kappa}{\F}^s_{\kappa}$, where ${\F}^s_{\kappa}:={\F}^s(\Xk,E)$. 

\begin{prop}
\label{S2: retraction&coretraction}
$\Re$ is a retraction from $l^\vartheta_q(\bF^s)$ onto $\F^{s,\vartheta}(\M, V)$ with $\Rc$ as a coretraction, where $q=p$ for $\F\in \{ W_p, \mathring{W}_p\}$, or $q=\infty$ for $\F=BC$.
\end{prop}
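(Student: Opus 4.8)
The plan is to verify the retraction/coretraction pair property by the standard localization technique, reducing everything to the model case and then patching with the localization system $(\pi_\kappa)_{\kappa\in\K}$. The first reduction is to the case $s\in\Nz$ (and $\mathring W_p$, $W_p$), since the fractional scale is defined by real interpolation in \eqref{S2: fractional Sobolev} and the Besov scale by \eqref{S2: Besov space}, and a retraction on the two ``endpoint'' integer-order spaces passes to the interpolation space automatically: if $\Re\colon l^\vartheta_p(\bF^k)\to \F^{k,\vartheta}(\M,V)$ is a retraction with coretraction $\Rc$ for $k$ and $k+2$ (resp. $k-1$ and $k+1$), then by the interpolation property of $(\cdot,\cdot)_{\theta,p}$ both maps restrict to bounded maps on the interpolation spaces, and the identity $\Re\circ\Rc=\id$ persists. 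Similarly the $BC$ case needs only the integer orders. So I would reduce at the outset to proving the statement for $\F\in\{BC,W_p,\mathring W_p\}$ and $s=k\in\Nz$.

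For the integer case the two things to check are: (a) $\Rc$ is bounded from $\F^{k,\vartheta}(\M,V)$ into $l^\vartheta_q(\bF^k)$; (b) $\Re$ is bounded from $l^\vartheta_q(\bF^k)$ into $\F^{k,\vartheta}(\M,V)$; and (c) $\Re\circ\Rc=\id_{\F^{k,\vartheta}(\M,V)}$. Claim (c) is essentially formal: for $u\in\mathcal D(\M,V)$, $\Re\Rc u=\sum_\kappa \pi_\kappa\,\kb\psk^*(\pi_\kappa u)=\sum_\kappa \pi_\kappa^2 u=u$ by (L1), and one extends by density. For (a) and (b), the key is the quantitative comparison of norms under pushforward/pullback: by (P1)--(P3), for a tensor field supported in a single patch $\Ok$ one has $\|\psk^*(\pi_\kappa u)\|_{\F^k(\Xk,E)}\sim \rho_\kappa^{-(\vartheta+\tau-\sigma)-m/q}\|\pi_\kappa u\|_{\F^{k,\vartheta}(\Ok,V)}$, where the $\rho_\kappa^{\pm m/q}$ comes from the change of variables in the $L_q$-norm of the localization (the volume element scales like $\rho_\kappa^m$, cf. (P1)), the $\rho_\kappa^{\sigma-\tau}$ from the tensor-norm comparison (P3), and the remaining powers of $\rho$ inside the weighted norm are comparable to $\rho_\kappa$ on $\Ok$ by (S4); the derivatives up to order $k$ are handled because $\nabla_g$ and the Euclidean $\partial$ differ by Christoffel symbols whose localizations are bounded by (P2), and the localizing factors $\kf\pi_\kappa$ and their derivatives are bounded by (L2). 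Summing the $p$-th powers over $\kappa$ (using finite multiplicity (R1) so that the sum $\sum_\kappa \|\pi_\kappa u\|^p \sim \|u\|^p$, i.e. the $(\pi_\kappa^2)$ partition of unity controls the full norm from above and below) yields (a); for (b) one runs the same estimates in reverse, using finite multiplicity again to control $\|\sum_\kappa \Rek v_\kappa\|$ by $(\sum_\kappa\|\Rek v_\kappa\|^p)^{1/p}$ since at each point only boundedly many summands are nonzero.

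The main obstacle is the bookkeeping in step (b): one must show that $\Re$ lands in $\F^{k,\vartheta}(\M,V)$ and not merely in $L_{1,loc}$, and that its norm is controlled, \emph{despite} $\Re$ being applied to an arbitrary sequence $(v_\kappa)$ rather than to something of the form $\Rc u$. The point is that the ranges of the $\Rek$ overlap only with finite multiplicity, so the infinite sum $\sum_\kappa \pi_\kappa\,\kb v_\kappa$ is locally finite and the cross terms in $\|\nabla^i(\sum_\kappa \pi_\kappa\kb v_\kappa)\|_p^p$ are dominated by the diagonal terms after an application of the discrete Hölder/finite-multiplicity inequality; this is where (R1)--(R2), (L1)--(L2), (S3)--(S4) and (P1)--(P3) all get used simultaneously, and where one must be careful that all the implied constants are uniform in $\kappa$. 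The $\mathring W_p$ case requires the additional remark that $\Re$ and $\Rc$ respect the boundary condition, i.e. carry $\mathcal D(\mathring\M,V)$ to sequences in $\prod_\kappa\mathcal D(\mathring{\Xk},E)$ and back, which is immediate since $\pi_\kappa\in\mathcal D(\Ok)$ and pullback by a chart preserves compact support in the interior; density then transfers the mapping properties from the integer case already established. I would also note that this proposition is essentially \cite[Theorem~6.3 and Section~7]{Ama13} restated in the present tensor-bundle notation, so for the routine estimates one may simply cite that source and only indicate the scaling exponents above.
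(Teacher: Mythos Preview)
Your proposal is correct and, at bottom, takes the same approach as the paper: the paper's proof consists entirely of the citation ``See \cite[Theorems~6.1, 6.3, 7.1, 11.1]{Ama13}'', and you arrive at the same reference after sketching the standard localization argument that underlies it. Your outline of the integer-order estimates via (P1)--(P3), (S3)--(S4), (L1)--(L2) and finite multiplicity, followed by interpolation for the fractional scale, is exactly how those theorems in \cite{Ama13} are proved, so there is no substantive difference.
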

\begin{proof}
See \cite[Theorems~6.1, 6.3, 7.1, 11.1]{Ama13}.
\end{proof}

Let $V_j=V^{\sigma_j}_{\tau_j}:=\{T^{\sigma_j}_{\tau_j}\M,(\cdot|\cdot)_g\}$ with $j=1,2,3$ be $\C$-valued tensor bundles on $\M$. By bundle multiplication from $V_1\times V_2$ into $V_3$, denoted by
\begin{center}
${\mathsf{m}}: V_1\times V_2\rightarrow V_3$,\hspace{1em} $(v_1,v_2)\mapsto {\mathsf{m}}(v_1,v_2)$,
\end{center}
we mean a smooth bounded section $\mathfrak{m}$ of $\Hom(V_1\otimes V_2,V_3)$, i.e., 
\begin{align}
\label{section 2: bundle multiplication}
\mathfrak{m}\in BC^{\infty}(\M, \text{Hom}(V_1\otimes V_2,V_3)), 
\end{align}
such that $\mathsf{m}(v_1,v_2):=\mathfrak{m}(v_1\otimes v_2)$. \eqref{section 2: bundle multiplication} implies that  for some $c>0$
\begin{center}
$|{\mathsf{m}}(v_1,v_2)|_g \leq c|v_1|_g |v_2|_g$,\hspace{1em} $v_i\in \Gamma(\M,V_i)$ with $i=1,2$.
\end{center}
Its point-wise extension from $\Gamma(\M,V_1\oplus V_2)$ into $\Gamma(\M,V_3)$ is defined by:
\begin{align*}
\mathsf{m}(v_1,v_2)(\p):=\mathsf{m}(p)(v_1(\p),v_2(\p))
\end{align*}
for $v_i\in \Gamma(\M,V_i)$ and $p\in\M$. We still denote it by ${\mathsf{m}}$. We can formulate the following point-wise multiplier theorem for function spaces over {\em singular manifolds}.

\begin{prop}
\label{S2: pointwise multiplication}
Let $k\in\Nz$. Assume that the tensor bundles $V_j=V^{\sigma_j}_{\tau_j}:=\{T^{\sigma_j}_{\tau_j}\M,(\cdot|\cdot)_g\}$ with $j=1,2,3$ satisfy 
\begin{align}
\label{section 2: ptm-condition}
\sigma_3-\tau_3=\sigma_1+\sigma_2-\tau_1-\tau_2.
\end{align}
Suppose that $\mathsf{m}:V_1\times V_2\rightarrow V_3$ is a bundle multiplication, and $\vartheta_3=\vartheta_1+\vartheta_2$. Then $[(v_1,v_2)\mapsto \mathsf{m}(v_1,v_2)]$ is a bilinear and continuous map for $k\in\Nz$ and $s\leq k$
$$BC^{k,\vartheta_1}(\M, V_1)\times W^{s,\vartheta_2}_p(\M, V_2)\rightarrow W^{s,\vartheta_3}_p(\M, V_3).$$ 
\end{prop}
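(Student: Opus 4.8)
The plan is to reduce the multiplier statement on $\M$ to the corresponding statement on the model spaces $\Xk$ via the retraction-coretraction pair $(\Re,\Rc)$ of Proposition~\ref{S2: retraction&coretraction}, and then invoke the classical pointwise multiplier theorem for $BC^k \times W^s_p \to W^s_p$ on $\R^m$ and $\H$ with Euclidean metric (where both spaces are unweighted). Concretely, given $v_1 \in BC^{k,\vartheta_1}(\M,V_1)$ and $v_2 \in W^{s,\vartheta_2}_p(\M,V_2)$, I would first observe that $\mathsf{m}(v_1,v_2) = \sum_\kappa \pk^2 \, \mathsf{m}(v_1,v_2)$ because $(\pk^2)_\kappa$ is a partition of unity, and then write
$$\Rck\bigl(\mathsf{m}(v_1,v_2)\bigr) = \psk^*\bigl(\pk\,\mathsf{m}(v_1,v_2)\bigr).$$
Since $\mathfrak{m}$ is a smooth bounded section of $\Hom(V_1\otimes V_2,V_3)$, its local representatives $\kf\mathfrak{m}$ have $\|\cdot\|_{k,\infty}$-norms bounded uniformly in $\kappa$ by (R2), (S2)-(S3) and property (P3) (here one uses that $\mathfrak{m}$ carries no tensor weight, i.e. it contributes a factor $\rho_\kappa^{(\sigma_3-\tau_3)-(\sigma_1+\sigma_2)+(\tau_1+\tau_2)} = \rho_\kappa^0 = 1$ by \eqref{section 2: ptm-condition}). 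Thus in local coordinates $\kf(\mathsf{m}(v_1,v_2))$ is, up to a uniformly-$BC^k$-bounded coefficient matrix, the entrywise product of $\kf v_1$ and $\kf v_2$.

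The key step is the commutation/compatibility estimate: I claim there is a bundle multiplication $\mathsf{m}_\kappa$ on the model and constants independent of $\kappa$ such that
$$\|\Rck(\mathsf{m}(v_1,v_2))\|_{W^s_p(\Xk,E)} \leq c\, \|\Rck^{(1)} v_1\|_{BC^k(\Xk,E_1)}\, \|\Rck^{(2)} v_2\|_{W^s_p(\Xk,E_2)},$$
where $\Rck^{(i)}$ denotes the coretraction adapted to $V_i$. This follows from: (i) $\psk^*(\pk \mathsf{m}(v_1,v_2))$ equals $\kf\mathfrak{m}$ applied to $\psk^*(\pk v_1) \otimes \psk^*(\chi_\kappa v_2)$ where $\chi_\kappa$ is a cutoff that is $\equiv 1$ on $\supp \pk$ and supported in $\Ok$ — such $\chi_\kappa$ exists by the localization system properties (L1)-(L2), and $\psk^*(\chi_\kappa v_2)$ is controlled by $\Rck^{(2)} v_2$ using a neighboring-chart argument as in \cite[Sections~6,7]{Ama13}; (ii) the Euclidean pointwise multiplier theorem $BC^k \cdot W^s_p \hookrightarrow W^s_p$ (valid for $s \le k$), applied on $\R^m$ or $\H$. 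Raising to the $p$-th power, weighting by $\rho_\kappa^{\vartheta_3 + m/p}$, summing over $\kappa$, and using $\vartheta_3 = \vartheta_1+\vartheta_2$ together with the $l_\infty$-bound on the $v_1$-factor gives
$$\|\mathsf{m}(v_1,v_2)\|_{W^{s,\vartheta_3}_p(\M,V_3)} \eqsim \|\Rc \mathsf{m}(v_1,v_2)\|_{l^{\vartheta_3}_p(\bF^s)} \leq c\, \|v_1\|_{l^{\vartheta_1}_\infty(\bF^k)}\, \|v_2\|_{l^{\vartheta_2}_p(\bF^s)} \eqsim c\, \|v_1\|_{BC^{k,\vartheta_1}(\M,V_1)}\, \|v_2\|_{W^{s,\vartheta_2}_p(\M,V_2)},$$
where the outer equivalences are Proposition~\ref{S2: retraction&coretraction} and the middle inequality combines the per-chart estimate with Hölder's inequality on the $\kappa$-sum. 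For $s\notin\Nz$ the estimate follows from the integer cases $s = k-1,\, k+1$ (or $s=0,\,k$) by real interpolation, since pointwise multiplication by a fixed $v_1 \in BC^{k,\vartheta_1}$ is a bounded operator on each endpoint space and interpolation \eqref{S2: fractional Sobolev} is exact.

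The main obstacle is the per-chart estimate in (i): controlling $\psk^*(\chi_\kappa v_2)$ — a cutoff of $v_2$ that need not be supported where $\pk \ne 0$ — by the coretraction norm $\|\Rc v_2\|_{l^{\vartheta_2}_p(\bF^s)}$ uniformly in $\kappa$. This requires the finite-multiplicity and uniform-transition-map bounds (R1), (R2), (E2) to re-express $\chi_\kappa v_2$ as a finite sum $\sum_{\eta\in\mathfrak{N}(\kappa)} (\text{transition})\cdot(\pi_\eta v_2)$ and then transport each term to chart $\kappa$ with uniformly bounded coordinate changes; this is exactly the mechanism underlying the retraction property, so the needed ingredients are all available from \cite[Lemma~3.1, Sections~6,7,11]{Ama13}, but assembling them with uniform constants and keeping track of the $\rho_\kappa$-powers from (P3) is the delicate bookkeeping. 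Everything else — the Euclidean multiplier theorem, the weight algebra $\vartheta_3=\vartheta_1+\vartheta_2$, and the interpolation step — is routine.
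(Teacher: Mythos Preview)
Your argument is sound and is precisely the localization-and-summation scheme that underlies \cite[Theorem~13.5]{AmaAr}, which is all the paper actually invokes for its proof; the paper gives no argument beyond that citation, so your sketch is more detailed than what appears here. One minor slip: in the interpolation step you mention endpoints $s=k-1,\,k+1$, but since $s\leq k$ the relevant pair is $0$ and $[s]+1$ as in definition~\eqref{S2: fractional Sobolev}; your parenthetical alternative ``$s=0,\,k$'' is the one that works.
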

\begin{proof}
This follows from \cite[Theorem~13.5]{AmaAr}.
\end{proof}

\begin{prop}
\label{S2: nabla}
For $\F\in \{BC,W_p , \mathring{W}_p\}$, we have
$$\nabla\in \L(\F^{s+1,\vartheta}(\M, V^\sigma_\tau), \F^{s,\vartheta}(\M, V^\sigma_{\tau+1})).$$
\end{prop}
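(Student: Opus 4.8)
The plan is to reduce the statement to the already-established retraction--coretraction picture of Proposition~\ref{S2: retraction&coretraction} together with a purely local computation on the model spaces $\F^s(\Xk,E)$. Recall that $\Re$ is a retraction from $l^\vartheta_q(\bF^{s+1})$ onto $\F^{s+1,\vartheta}(\M,V^\sigma_\tau)$ with coretraction $\Rc$, and likewise $\Re$ is a retraction from $l^\vartheta_q(\bF^s)$ onto $\F^{s,\vartheta}(\M,V^\sigma_{\tau+1})$ with coretraction $\Rc$ (the target tensor bundle having one more covariant slot). Since the covariant derivative raises $\tau$ by one, the two weighted scales are matched: the weight exponent $\vartheta$ is the same, only the bundle index $\tau$ shifts, and this is exactly the shift built into the norms $\|\cdot\|_{s,p;\vartheta}$ and $\|\cdot\|_{s,\infty;\vartheta}$. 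So it suffices to show that the ``localized'' operator $\Rc\circ\nabla\circ\Re$ acts continuously from $l^\vartheta_q(\bF^{s+1})$ to $l^\vartheta_q(\bF^s)$; composing with the retractions then yields $\nabla\in\L(\F^{s+1,\vartheta}(\M,V^\sigma_\tau),\F^{s,\vartheta}(\M,V^\sigma_{\tau+1}))$.

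First I would write out, in a fixed normalized chart $(\Ok,\vpk)$, the local expression of $\nabla a$ in terms of the local coefficients of $a$: it is $\partial(\psk^\ast a)$ plus Christoffel-symbol corrections, i.e. schematically $\psk^\ast(\nabla a) = \partial(\psk^\ast a) + \Gamma \ast \psk^\ast a$, where $\Gamma$ are the Christoffel symbols of $\kf g$. The flat derivative $\partial$ maps $\F^{s+1}(\Xk,E)\to\F^s(\Xk,E')$ with norm bounded independently of $\kappa$ (this is just the standard property of the model Euclidean/half-space spaces). For the zeroth-order correction term I need the $BC^k$-norms of the Christoffel symbols, with the correct $\rho_\kappa$-weight: from (P2), $\rho_\kappa^{-2}\|\kf g\|_{k,\infty} + \rho_\kappa^2\|\kf g^\ast\|_{k,\infty}\le c(k)$, and differentiating the formula for $\Gamma$ in terms of $g$ and $g^\ast$ one gets $\|\kf\Gamma\|_{k,\infty}\le c(k)\rho_\kappa^{-1}$ after accounting for the metric contractions. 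One must then check this $\rho_\kappa^{-1}$ factor is precisely compensated by the difference in the $l^\vartheta_q$-weights between the $\tau$ and $\tau+1$ levels (the $\rho_\kappa^{\vartheta+i+\tau-\sigma}$ versus $\rho_\kappa^{\vartheta+i+(\tau+1)-\sigma}$ bookkeeping), so that the multiplication operator $[v\mapsto \kf\Gamma \ast v]$ is uniformly bounded $\F^{s+1}(\Xk,E)\to \F^s(\Xk,E')$ after reweighting. Alternatively, and more cleanly, one can invoke Proposition~\ref{S2: pointwise multiplication}: the Christoffel term is a bundle multiplication of the connection coefficients (an element of $BC^{\infty,-1}$, up to the relevant tensor type) against $a$, with weight exponents adding correctly, so the product lands in $W^{s,\vartheta}_p(\M,V^\sigma_{\tau+1})$.

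The remaining point is to control the cross terms coming from the localization system: when we apply $\Rc$ after $\Re$, the cutoffs $\pk$ get differentiated, producing terms $(\partial\kf\pk)\ast(\text{stuff})$ and overlaps across neighboring charts $\eta\in\mathfrak N(\kappa)$. These are handled by (L2) ($\|\kf\pk\|_{k,\infty}\le c(k)$), the finite multiplicity of $\A$ (so only boundedly many $\eta$ contribute), and (R2)/(S3) to control the transition maps and the comparison of $\rho_\eta$ with $\rho_\kappa$ via \eqref{section 1:singular data}. These are exactly the estimates packaged into the proof of Proposition~\ref{S2: retraction&coretraction} in \cite{Ama13}, so I expect this part to be routine ``soft'' bookkeeping.

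I expect the main obstacle to be the weight bookkeeping: making sure that the single power of $\rho_\kappa$ lost to a Christoffel symbol is exactly the single power gained by moving from the $l^\vartheta_q(\bF^{s+1})$-norm at tensor type $(\sigma,\tau)$ to the $l^\vartheta_q(\bF^s)$-norm at type $(\sigma,\tau+1)$ — i.e. that the index $\vartheta$ is genuinely the same on both sides and all the $\rho_\kappa$-powers cancel. This is where an off-by-one in the exponents would break the argument, and it is also the reason the statement is phrased with the same $\vartheta$ on both sides rather than a shifted weight. Everything else — the flat estimate for $\partial$, the cutoff terms, the passage between $\F\in\{BC,W_p,\mathring W_p\}$ (with $q=\infty$ in the $BC$ case and $q=p$ otherwise, and noting $\nabla$ maps $\mathcal D(\mathring\M,V)$ to itself so it preserves the closed subspace $\mathring W_p$) — follows from the results already cited in the excerpt, in particular Propositions~\ref{S2: retraction&coretraction} and \ref{S2: pointwise multiplication}.
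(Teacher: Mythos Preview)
Your localization strategy via the retraction--coretraction system would work, but it is considerably more elaborate than what the paper does, and your weight bookkeeping contains an error that happens to be harmless.

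The paper's argument is essentially one line for integer $s$: the intrinsic norms are built precisely so that $\nabla$ acts as a shift. Since $\nabla u\in V^\sigma_{\tau+1}$,
\[
\|\nabla u\|_{k,p;\vartheta}^p \;=\; \sum_{i=0}^k \big\|\rho^{\vartheta+i+(\tau+1)-\sigma}|\nabla^{i+1} u|_g\big\|_p^p \;=\; \sum_{j=1}^{k+1}\big\|\rho^{\vartheta+j+\tau-\sigma}|\nabla^{j} u|_g\big\|_p^p \;\le\; \|u\|_{k+1,p;\vartheta}^p,
\]
and likewise for $BC$; no charts, Christoffel symbols, or cutoffs enter at all. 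The non-integer case is then obtained by citing \cite[Theorem~16.1]{AmaAr}, and $\mathring W_p$ by the density argument you also mention.

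On your approach: the estimate $\|\kf\Gamma\|_{k,\infty}\le c(k)\rho_\kappa^{-1}$ is incorrect. From (P2) one has $\|\kf g\|_{k,\infty}\le c(k)\rho_\kappa^2$ and $\|\kf g^*\|_{k,\infty}\le c(k)\rho_\kappa^{-2}$; since $\Gamma$ is schematically $g^*\cdot\partial g$, Leibniz gives $\|\kf\Gamma\|_{k,\infty}\le c(k)$ with no $\rho_\kappa$-dependence. Correspondingly, the $l^\vartheta_q$-weight $\rho_\kappa^{\vartheta+m/q}$ is independent of $\sigma,\tau$, so there is no compensation mechanism of the kind you describe---and none is needed. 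With this correction your localized argument does go through, but it reproduces by hand what the intrinsic definition of the norm makes automatic.
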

\begin{proof}
When $\F\in \{BC,W_p \}$, the case $s\in\Nz$ is immediate from the definition of the weighted function spaces. The non-integer case follows from \cite[Theorem~16.1]{AmaAr}. When $\F=\mathring{W}_p$, the assertion is an immediate consequence of its definition and a density argument. Indeed, for any $u\in \mathring{W}^{s+1}_p(\M,V^\sigma_\tau)$, there exists a sequence $(u_n)_n\in \mathcal{D}(\mathring{\M},V^\sigma_\tau)$ converging to $u$ in $W^{s+1}_p(\M, V^\sigma_\tau)$. Then the assertion for $\F=W_p$ implies that $(\nabla u_n)_n\in \mathcal{D}(\mathring{\M},V^\sigma_{\tau+1})$ converges to $\nabla u$ in $W^s_p(\M,V^\sigma_{\tau+1})$. Therefore, $\nabla u \in \mathring{W}^s_p(\M, V^\sigma_{\tau+1})$.
\end{proof}

\begin{prop}
\label{S2: change of wgt}
For $\F\in\{BC,W_p, \mathring{W}_p\}$, we have
$$f_{\vartheta}:=[u\mapsto \rho^{\vartheta}u] \in \Lis(\F^{s,\vartheta^\prime+\vartheta}(\M, V),\F^{s,\vartheta^\prime}(\M, V)).$$ 
\end{prop}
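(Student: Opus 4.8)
The plan is to show that multiplication by $\rho^\vartheta$ is an isomorphism by exhibiting its inverse as multiplication by $\rho^{-\vartheta}$ and checking that both maps are bounded between the appropriate weighted spaces; boundedness in both directions, together with the obvious fact that $f_\vartheta \circ f_{-\vartheta} = \id = f_{-\vartheta}\circ f_\vartheta$ on $L_{1,loc}$, then yields the claim. So the real content is the norm estimate $\|\rho^\vartheta u\|_{\F^{s,\vartheta'}} \sim \|u\|_{\F^{s,\vartheta'+\vartheta}}$.

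First I would handle the integer case $s = k \in \Nz$ for $\F \in \{BC, W_p\}$ directly from the definition of the norm. For $W^{k,\vartheta'}_p$, the norm of $\rho^\vartheta u$ involves the terms $\|\rho^{\vartheta'+i+\tau-\sigma}|\nabla^i(\rho^\vartheta u)|_g\|_p$. The point is that $\nabla^i(\rho^\vartheta u) = \sum_{j\le i} \binom{i}{j}(\nabla^{i-j}\rho^\vartheta)\otimes \nabla^j u$ (schematically, using the Leibniz rule for $\nabla$), and the factors $\rho^{-\vartheta}\nabla^{i-j}\rho^\vartheta$ are controlled: iterating (S3), which gives $\|\kf\rho\|_{k,\infty}\le c(k)\rho_\kappa$, together with (S4) $\rho\sim\rho_\kappa$ on $\O_\kappa$, one shows $\rho^{-\vartheta}|\nabla^l \rho^\vartheta|_g \in BC^{\infty,0}(\M)$, i.e. it is a bounded tensor field whose weighted sup-norms are finite. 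Hence $\rho^{\vartheta'+i+\tau-\sigma}|\nabla^i(\rho^\vartheta u)|_g \lesssim \sum_{j\le i}\rho^{\vartheta}\cdot\rho^{\vartheta'+i+\tau-\sigma}|\nabla^j u|_g \lesssim \sum_{j\le i}\rho^{(\vartheta'+\vartheta)+j+\tau-\sigma}|\nabla^j u|_g$ (absorbing the excess powers of $\rho$, which is bounded near regular ends and whose relevant combinations stay bounded — here one must be a little careful, but $\rho^{i-j}$ paired with the weight shift is exactly what makes the exponents match). This gives $\|f_\vartheta u\|_{k,p;\vartheta'} \lesssim \|u\|_{k,p;\vartheta'+\vartheta}$, and the reverse inequality follows by applying the same bound to $f_{-\vartheta}$. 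The $BC$ case is identical with $\|\cdot\|_\infty$ replacing $\|\cdot\|_p$. Alternatively, and perhaps more cleanly, one can invoke Proposition~\ref{S2: pointwise multiplication}: since $\rho^\vartheta \in BC^{\infty,\vartheta}(\M)$ (a scalar bundle, $\sigma=\tau=0$) — which again reduces to the statement $\rho^{-\vartheta}\nabla^l\rho^\vartheta$ bounded — pointwise multiplication $BC^{k,\vartheta}\times W^{s,\vartheta'+\vartheta}_p \to W^{s,\vartheta'}_p$ is continuous for all $s\le k$, covering the non-integer range at once.

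For the non-integer case $s\in\R_+\setminus\Nz$ with $\F\in\{BC,W_p\}$, I would obtain it from the integer case by real interpolation: $f_\vartheta$ is bounded $W^{k_0,\vartheta'+\vartheta}_p\to W^{k_0,\vartheta'}_p$ and $W^{k_1,\vartheta'+\vartheta}_p\to W^{k_1,\vartheta'}_p$ for integers $k_0 < s < k_1$, so by \eqref{S2: fractional Sobolev} (or \eqref{S2: Besov space}) and the interpolation property, $f_\vartheta$ is bounded on the interpolation spaces $W^{s,\vartheta'+\vartheta}_p\to W^{s,\vartheta'}_p$; likewise for $f_{-\vartheta}$, giving the isomorphism. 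Finally, for $\F=\mathring{W}_p$ with $\partial\M\ne\emptyset$, I would use a density argument exactly as in the proof of Proposition~\ref{S2: nabla}: for $u\in\mathring{W}^{s,\vartheta'+\vartheta}_p$ take $u_n\in\mathcal{D}(\mathring\M,V)$ with $u_n\to u$; then $\rho^\vartheta u_n\in\mathcal{D}(\mathring\M,V)$ (multiplication by the smooth positive function $\rho^\vartheta$ preserves compact support in the interior) and $\rho^\vartheta u_n\to\rho^\vartheta u$ in $W^{s,\vartheta'}_p$ by the $W_p$ case, so $\rho^\vartheta u\in\mathring{W}^{s,\vartheta'}_p$; the inverse is handled symmetrically.

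I expect the main obstacle to be the estimate $\rho^{-\vartheta}|\nabla^l\rho^\vartheta|_g \in BC^{\infty,0}(\M)$ — that is, verifying that the logarithmic-derivative-type quantities built from $\rho$ are uniformly bounded tensor fields — since this is where one genuinely uses the singular-manifold axioms (S3), (S4) and property (P2)/(P3) rather than soft functional analysis, and one must track the powers of $\rho$ carefully so that the weight exponents balance. Everything else (interpolation, density, the algebraic inverse relation) is routine once this multiplier bound is in hand; indeed, if one is willing to cite Proposition~\ref{S2: pointwise multiplication}, the bulk of the argument collapses to checking $\rho^{\pm\vartheta}\in BC^{\infty,\pm\vartheta}(\M)$.
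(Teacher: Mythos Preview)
Your approach is correct and matches the paper's, which simply cites \cite[Proposition~2.6]{Shao14} for the $BC$ case (done exactly as you outline: Leibniz rule plus the multiplier bound coming from (S3)--(S4)), remarks that $W_p$ is analogous, and invokes the same density argument you give for $\mathring{W}_p$.

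One slip to correct: in the alternative route via Proposition~\ref{S2: pointwise multiplication} you need $\rho^\vartheta \in BC^{\infty,-\vartheta}(\M)$, not $BC^{\infty,\vartheta}(\M)$, so that the weights add as $(-\vartheta)+(\vartheta'+\vartheta)=\vartheta'$. Relatedly, the precise multiplier estimate underlying your Leibniz computation is that the \emph{tensor} $\rho^{-\vartheta}\nabla^l\rho^\vartheta$ lies in $BC^{\infty,0}(\M,T^0_l\M)$, which at level zero reads $\rho^{l-\vartheta}|\nabla^l\rho^\vartheta|_g\in L_\infty$ (not $\rho^{-\vartheta}|\nabla^l\rho^\vartheta|_g$); this is exactly what makes your exponent-matching remark ``$\rho^{i-j}$ paired with the weight shift'' work out, and it follows from (S3), (S4), and (P3) as you indicate.
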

\begin{proof}
The case $\F=BC$ was shown in \cite[Proposition~2.6]{Shao14}. The proof for $\F=W_p$ follows in a similar manner. 
The remaining case, i.e., $\F=\mathring{W}_p$, is a direct consequence of its definition and a density argument as in the previous proposition.
\end{proof}


\subsection{\bf Surface divergence}
\begin{prop}
\label{S2: sharp-flat}
For $\F\in\{BC,W_p, \mathring{W}_p\}$, we have
$$[a \mapsto a^\sharp]\in \L(\F^{s,\vartheta}(\M, V^\sigma_{\tau+1}),\F^{s,\vartheta+2}(\M, V^{\sigma+1}_\tau)) .$$
$$[a \mapsto a_\flat]\in \L(\F^{s,\vartheta}(\M, V^{\sigma+1}_\tau),\F^{s,\vartheta-2}(\M, V^\sigma_{\tau+1})) .$$
\end{prop}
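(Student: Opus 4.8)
The plan is to reduce everything to the corresponding statement about the covariant derivative and a change of weight, both of which are already available as Propositions~\ref{S2: nabla} and~\ref{S2: change of wgt}, together with the pointwise multiplier theorem Proposition~\ref{S2: pointwise multiplication}. Recall from the discussion preceding the proposition that the raising operator $[a\mapsto a^\sharp]$ is, in local coordinates, the contraction of the last covariant index of $a\in V^\sigma_{\tau+1}$ against the contravariant metric $g^\ast$, i.e.\ $(a^\sharp)^{(i;k)}_{(j)}=g^{kl}a^{(i)}_{(j;l)}$; likewise $[a\mapsto a_\flat]$ contracts the last contravariant index against $g$. So the key observation is that each of these maps is a bundle multiplication (in the sense of \eqref{section 2: bundle multiplication}) by a fixed smooth tensor field built from the metric, followed by a raising/lowering of tensor type. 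Concretely, $a^\sharp=\mathsf{m}(g^\ast,a)$ where $g^\ast\in\mathcal{T}^2_0\M$ and $\mathsf{m}$ is the obvious contraction into $V^{\sigma+1}_\tau$, and dually $a_\flat=\mathsf{m}(g,a)$ with $g\in\mathcal{T}^0_2\M$.

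First I would verify that the tensor-type bookkeeping condition \eqref{section 2: ptm-condition} holds for these contractions: for the sharp map, $V_1=V^2_0$, $V_2=V^\sigma_{\tau+1}$, $V_3=V^{\sigma+1}_\tau$, and indeed $(\sigma+1)-\tau=2+\sigma-(\tau+1)$; for the flat map the analogous identity holds with $V_1=V^0_2$. Next I would check the weight bookkeeping. By (S3) and (P1)--(P2), the metric $g$ and its inverse $g^\ast$ have controlled localizations; more precisely one has $g\in BC^{\infty,-2}(\M,V^0_2)$ and $g^\ast\in BC^{\infty,2}(\M,V^2_0)$, since $\kf g\sim\rho_\kappa^2 g_m$ with all derivatives scaling correctly — this is exactly the content of (P1), (P2), and the bound (S3) on $\kf\rho$. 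With these memberships in hand, Proposition~\ref{S2: pointwise multiplication} (taking $\vartheta_1=2$, $\vartheta_2=\vartheta$, so $\vartheta_3=\vartheta+2$ for the sharp map, and $\vartheta_1=-2$ for the flat map) gives the claimed continuity for $\F\in\{BC,W_p\}$ with $s\le k$ and $k$ arbitrarily large, hence for all $s\ge0$.

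For the remaining space $\F=\mathring W_p$, I would argue by density exactly as in the proof of Proposition~\ref{S2: nabla}: given $u\in\mathring W^{s,\vartheta}_p(\M,V^\sigma_{\tau+1})$, pick $u_n\in\mathcal D(\mathring\M,V^\sigma_{\tau+1})$ with $u_n\to u$ in $W^{s,\vartheta}_p$; then $u_n^\sharp\in\mathcal D(\mathring\M,V^{\sigma+1}_\tau)$ and $u_n^\sharp\to u^\sharp$ in $W^{s,\vartheta+2}_p$ by the case $\F=W_p$ just established, so $u^\sharp\in\mathring W^{s,\vartheta+2}_p$; the flat map is handled symmetrically.

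The only genuine obstacle is the first bulleted step: confirming that $g$ and $g^\ast$ lie in the weighted spaces $BC^{\infty,-2}$ and $BC^{\infty,2}$ respectively, i.e.\ that $\|\rho^{-2+i}|\nabla^i g|_g\|_\infty<\infty$ and $\|\rho^{2+i}|\nabla^i g^\ast|_g\|_\infty<\infty$ for all $i$. For $i=0$ this is immediate since $|g|_g$ and $|g^\ast|_g$ are constants and the weight exponents $-2$, $2$ match the tensor types $\tau-\sigma=2$, $-2$; for $i\ge1$ one passes to the chart, where $\nabla^i g$ is a universal polynomial in the Christoffel symbols and their derivatives, and invokes (P1)--(P2) together with (S3) to see that the localized quantities $\rho_\kappa^{-2+i}\|\kf(\nabla^i g)\|_{BC}$ are bounded uniformly in $\kappa$ — this is the standard computation underlying the existence of a controlled connection on a singular manifold, and is implicit in \cite[Section~3]{Ama13}. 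Once that is in place, the rest is a mechanical application of the two cited propositions and a density argument, so I would keep the written proof short by simply citing Propositions~\ref{S2: pointwise multiplication}, \ref{S2: nabla} (for the density trick), and the properties (P1)--(P3).
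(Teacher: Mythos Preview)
Your approach is correct but takes a different route from the paper's. The paper's proof is more direct: it uses metric compatibility of the Levi-Civita connection to show $\nabla_X(a_\flat)=(\nabla_X a)_\flat$ (since $\nabla g=0$), hence by induction $\nabla^k(a_\flat)=(\nabla^k a)_\flat$. Combined with the pointwise identity $|a_\flat|_{g^{\tau+1}_\sigma}=|a|_{g^\tau_{\sigma+1}}$ already recorded in Section~2.1, one gets
\[
\|\rho^{(\vartheta-2)+i+(\tau+1)-\sigma}|\nabla^i(a_\flat)|_g\|_p=\|\rho^{\vartheta+i+\tau-(\sigma+1)}|\nabla^i a|_g\|_p
\]
exactly, so the flat map is in fact an \emph{isometry} between the relevant weighted spaces for integer $s$; the non-integer case then follows by interpolation via \eqref{S2: fractional Sobolev}.

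Your route via Proposition~\ref{S2: pointwise multiplication} works too, but it only yields boundedness rather than isometry, and your verification that $g\in BC^{\infty,-2}(\M,V^0_2)$ and $g^\ast\in BC^{\infty,2}(\M,V^2_0)$ is overcomplicated: you invoke chart computations with Christoffel symbols for $\nabla^i g$, when in fact $\nabla g=0$ by metric compatibility, so $\nabla^i g=0$ for all $i\ge1$ and the membership is trivial. Ironically, this is precisely the observation that drives the paper's shorter argument. Note also that Proposition~\ref{S2: pointwise multiplication} as stated only covers $BC\times W_p\to W_p$, not $BC\times BC\to BC$; the latter is true and in the same spirit, but you would need to cite or verify it separately for the $\F=BC$ case.
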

\begin{proof}
We only prove the second assertion. The first one follows in an analogous manner. 
For any $X\in T\M$, 
\begin{align*}
\nabla_X a_\flat=\nabla_X \langle g, a \rangle = \langle\nabla_X  g, a \rangle + \langle g, \nabla_X a \rangle= \langle g, \nabla_X a \rangle= (\nabla_X a )_\flat.
\end{align*}
The third equality follows from the metric preservation of the Levi-Civita connection.
This implies
$$
\nabla (a_\flat)= (\nabla a)_\flat.
$$
By induction, we have
$$ \nabla^k (a_\flat)= (\nabla^k a)_\flat.$$
Then the statement for the case $s\in\Nz$ is an immediate consequence of the definitions of the corresponding function spaces, and the non-integer case follows by interpolation theory and Definition~\eqref{S2: fractional Sobolev}.
\end{proof}

We denote by $\ev^{\sigma+1}_{\tau+1}: V^{\sigma+1}_{\tau+1}\rightarrow V^\sigma_\tau$ the contraction with respect to position $\sigma+1$ and $\tau+1$, that is for any $(i)\in\J^\sigma$, $(j)\in\J^\tau$ and $k,l\in\J^1$ and $\p\in\M$
$$
\ev^{\sigma+1}_{\tau+1}a  :=\ev^{\sigma+1}_{\tau+1} a^{(i;k)}_{(j;l)} \frac{\partial}{\partial x^{(i)}}\otimes \frac{\partial}{\partial x^k}\otimes dx^{(j)}\otimes dx^l:=a^{(i;k)}_{(j;k)} \frac{\partial}{\partial x^{(i)}} \otimes dx^{(j)} 
$$
in every local chart. Recall that the surface divergence of tensor fields with respect to the metric $g$ is the map
\begin{equation}
\label{S2: divergence}
\div=\div_g: C^1(\M,V^{\sigma+1}_\tau)\rightarrow C(\M, V^\sigma_\tau), \quad a\mapsto \ev^{\sigma+1}_{\tau+1}(\nabla a).
\end{equation}
\smallskip\\
Suppose that $\partial\M\neq \emptyset$. Since $T(\partial\M)$ is a subbundle of codimension $1$ of the vector bundle $(T\M)_{\partial\M}$ over $\partial\M$, there exists a unique vector field $\boldsymbol{n}$ in $(T\M)_{\partial\M}$ of length $1$ orthogonal to $T(\partial\M)$, and inward pointing. In every local coordinates, $\varphi_\kappa=\{x_1,\cdots,x_m\}$
$$\boldsymbol{n}=\frac{1}{\sqrt{g_{11}|\partial\Ok}} \frac{\partial}{\partial x^1}. $$

Put $V^\prime:=V^\tau_\sigma$. Let $\ev: V^{\sigma+\sigma_1}_{\tau+\tau_1} \times V^\prime \rightarrow V^{\sigma_1}_{\tau_1}$ denote the complete contraction. For any $a\in V^{\sigma+\sigma_1}_{\tau+\tau_1}$ and $b\in V^\prime$, the complete contraction (on the right)  is defined by
\begin{align*}
\ev(a,b)= a^{(i;i_1)}_{(j;j_1)} b^{(j)}_{(i)} \frac{\partial}{\partial x^{(i_1)}}\otimes dx^{(j_1)},
\end{align*}
with $(i)\in\mathbb{J}^\sigma$ ,$(i_1)\in\mathbb{J}^{\sigma_1}$, $(j) \in\mathbb{J}^\tau$,$(j_1)\in\mathbb{J}^{\tau_1}$, in local coordinates. 
The complete contraction (on the left) is defined in an analogous manner. Note that the complete contraction is a bundle multiplication.
\begin{theorem}
\label{S2: divergence thm}
For any $a\in W^{1,-\vartheta}_2(\M,V^\prime)$  and $b\in \mathring{W}^{1,\vartheta}_2(\M,V^{\sigma+1}_{\tau})$
$$-\int\limits_\M \langle \div b, a \rangle\, dV_g= \int\limits_\M \langle b, \nabla a \rangle\, dV_g.$$
\end{theorem}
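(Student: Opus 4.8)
The plan is to reduce the divergence theorem on $\M$ to the classical one on Euclidean space (or the half-space) via the retraction--coretraction machinery of Proposition~\ref{S2: retraction&coretraction}, and to argue first for smooth compactly supported fields, then pass to the limit by density. More precisely, I would first establish the identity for $a\in \mathcal{D}(\M,V^\prime)$ and $b\in \mathcal{D}(\mathring{\M},V^{\sigma+1}_\tau)$, where everything is smooth and the supports are compact and away from $\partial\M$ for $b$. The starting point is the pointwise Leibniz-type identity
\begin{equation*}
\div_g \ev(b,a) = \langle \div_g b, a\rangle + \langle b, \nabla a\rangle,
\end{equation*}
which follows from the defining properties (i)--(iii) of the Levi-Civita connection together with the definition \eqref{S2: divergence} of $\div_g$ and the fact that complete contraction commutes with $\nabla$ (the connection is metric, so it commutes with raising/lowering and with traces). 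Here $\ev(b,a)$ is a vector field (a section of $V^1_0 = T\M$), and integrating the identity reduces the claim to
\begin{equation*}
\int_\M \div_g X\, dV_g = 0
\end{equation*}
for $X=\ev(b,a)$ a compactly supported $C^1$ vector field — or, when $\supp b$ meets $\partial\M$, to a boundary term that vanishes because $b$ vanishes near $\partial\M$ (this is why $b$ is taken in the $\mathring{W}$-space). This scalar divergence identity is the classical one; on a single chart it is the ordinary Gauss--Green formula, and one assembles the global statement using the localization system $(\pi_\kappa)_{\kappa\in\K}$: write $X=\sum_\kappa \pi_\kappa^2 X$, push each piece to $\Xk$ via $\psi_\kappa$, apply the Euclidean result, and sum, the interchange of sum and integral being justified by finite multiplicity of $\A$ and compactness of $\supp X$.

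The second stage is the density/continuity argument. Both sides of the asserted identity are, by Hölder's inequality together with Proposition~\ref{S2: nabla}, Proposition~\ref{S2: sharp-flat} and Proposition~\ref{S2: change of wgt}, continuous bilinear functionals on $W^{1,-\vartheta}_2(\M,V^\prime)\times \mathring{W}^{1,\vartheta}_2(\M,V^{\sigma+1}_\tau)$: indeed $\langle\div b,a\rangle$ is estimated by $\|\rho^{\vartheta}\div b\|_2\,\|\rho^{-\vartheta}a\|_2$ and $\div b\in W^{0,\vartheta}_2 = L^{\vartheta}_2$ by \eqref{S2: divergence}, Proposition~\ref{S2: nabla} and the contraction being a bundle multiplication (hence bounded, cf. Proposition~\ref{S2: pointwise multiplication} with $\vartheta$-shift governed by \eqref{section 2: ptm-condition}); similarly $\langle b,\nabla a\rangle$ is controlled by $\|\rho^{\vartheta}b\|_2\,\|\rho^{-\vartheta}\nabla a\|_2$. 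Since $\mathcal{D}(\M,V^\prime)$ is dense in $W^{1,-\vartheta}_2(\M,V^\prime)$ by definition of the weighted Sobolev space, and $\mathcal{D}(\mathring{\M},V^{\sigma+1}_\tau)$ is dense in $\mathring{W}^{1,\vartheta}_2(\M,V^{\sigma+1}_\tau)$ by definition of the ring space, the identity extends from the smooth compactly supported pair to all of $W^{1,-\vartheta}_2\times \mathring{W}^{1,\vartheta}_2$.

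I expect the main obstacle to be bookkeeping the weights consistently through the contraction and the $\sharp/\flat$ operations so that the two pairings really land in dual weighted $L_2$-spaces — that is, verifying that the weight exponent $\vartheta$ on $b$ and $-\vartheta$ on $a$ are exactly matched once one accounts for the tensor-type shifts $\tau-\sigma$ built into the norms $\|\cdot\|_{k,p;\vartheta}$ and for the index-conditions \eqref{section 2: ptm-condition}, \eqref{S2: G^tau_sigma}. The geometric content (the pointwise Leibniz identity and the Euclidean Gauss--Green formula) is standard; the delicate point is that the unweighted-looking statement hides a compatibility between the weight shifts of $V^\prime=V^\tau_\sigma$ and $V^{\sigma+1}_\tau$, and one must check that $\div$ lowers the weight-adjusted regularity by exactly one in the sense compatible with the pairing, so that no spurious power of $\rho$ survives. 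A secondary technical point is ensuring the interchange of summation over $\kappa$ with integration in the localized Gauss--Green step, which is handled by the finite multiplicity of the atlas together with compact support at the smooth level, before the density passage removes the compact-support restriction.
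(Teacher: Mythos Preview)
Your proposal is correct and follows essentially the same strategy as the paper: reduce to the pointwise Leibniz identity $\div(\ev(b,a))=\langle \div b, a\rangle+\langle b,\nabla a\rangle$ for $a\in\mathcal{D}(\M,V^\prime)$ and $b\in\mathcal{D}(\mathring{\M},V^{\sigma+1}_\tau)$, integrate using the classical divergence theorem (the boundary term vanishing because $b$ vanishes near $\partial\M$), and then extend by density and continuity of both sides as bilinear forms. The only difference is cosmetic: the paper verifies the Leibniz identity by an explicit local-coordinate computation with Christoffel symbols, expanding $\div(\ev(b,a))$, $\langle\div b,a\rangle$, and $\langle b,\nabla a\rangle$ separately and matching terms, whereas you invoke it abstractly via the fact that $\nabla$ commutes with contractions; these are two presentations of the same computation. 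Your worries about weight bookkeeping are not an issue at the level of the paper's argument, since the density step is stated without further elaboration and the pairing $\int_\M\langle\cdot,\cdot\rangle\,dV_g$ is manifestly well-defined on $L^{\vartheta}_2\times L^{-\vartheta}_2$ for the relevant tensor types.
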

\begin{proof}
By the divergence theorem and the density of $\mathcal{D}(\M,V^\prime)$ and $\mathcal{D}(\mathring{\M},V^{\sigma+1}_{\tau})$ in $ W^{1,-\vartheta}_2(\M,V^\prime)$ and $ \mathring{W}^{1,\vartheta}_2(\M,V^{\sigma+1}_{\tau})$, it suffices to show that 
\begin{align}
\label{S2: divergence eq}
\div(\ev(b,a))=\langle \div b, a\rangle+\langle b, \nabla a \rangle,
\end{align}
for any $a\in \mathcal{D}(\M,V^\prime)$  and $b\in \mathcal{D}(\mathring{\M},V^{\sigma+1}_{\tau})$.
Definition~\eqref{S2: divergence} yields
\begin{align*}
\div(\ev(b,a))&=\div(a^{(j)}_{(i)} b^{(i;k)}_{(j)} \frac{\partial}{\partial x^k})
= \partial_k (a^{(j)}_{(i)} b^{(i;k)}_{(j)}) +\Gamma^l_{lk}a^{(j)}_{(i)} b^{(i;k)}_{(j)},
\end{align*}
for $(i)\in\J^\sigma$, $(j)\in\J^\tau$. By \cite[formula~(3.17)]{Ama13} and \eqref{S2: divergence}
\begin{align*}
&\quad \langle \div b, a\rangle\\
&=a^{(j)}_{(i)} \partial_k(b^{(i;k)}_{(j)}) +(\sum\limits_{s=1}^\sigma \Gamma^{i_s}_{kh} b^{(i_1,\cdots,h,\cdots,i_\sigma;k)}_{(j)} -\sum\limits_{t=1}^\tau \Gamma_{k j_t}^{h} b^{(i;k)}_{(j_1,\cdots,h,\cdots,j_\tau)} +\Gamma_{k h}^k b^{(i;h)}_{(j)})a^{(j)}_{(i)},
\end{align*}
and 
\begin{align*}
\langle b, \nabla a \rangle=(\partial_k a^{(j)}_{(i)})b^{(i;k)}_{(j)} +(\sum\limits_{t=1}^\tau \Gamma_{k h }^{j_t} a^{(j_1,\cdots,h,\cdots,j_\tau)}_{(i)} -\sum\limits_{s=1}^\sigma \Gamma^{h}_{k i_s} a_{(i_1,\cdots,h,\cdots,i_\sigma)}^{(j)})b^{(i;k)}_{(j)}.
\end{align*}
This proves \eqref{S2: divergence eq}.
\end{proof}
\begin{cor}
\label{S2: divergence cor}
For any $a\in W^{1,-\vartheta+2\sigma-2\tau}_2(\M,V)$  and $b\in \mathring{W}^{1,\vartheta}_2(\M,V^{\sigma+1}_{\tau})$ 
$$-\int\limits_\M (  \div b | a )_g\, dV_g= \int\limits_\M (b | \gd a  )_g\, dV_g.$$
\end{cor}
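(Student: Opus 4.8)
The plan is to reduce Corollary~\ref{S2: divergence cor} to the already-proven Theorem~\ref{S2: divergence thm} by rewriting the metric inner products $(\cdot|\cdot)_g$ in terms of the duality pairing $\langle\cdot,\cdot\rangle$ via the musical isomorphism $G^\tau_\sigma$. Recall from the excerpt that for $a,b\in V$ one has $(a|b)_g=\langle a,G^\tau_\sigma\bar b\rangle$, and that $G^\tau_\sigma:V=V^\sigma_\tau\to V'=V^\tau_\sigma$ is a conjugate-linear bijection with $|G^\tau_\sigma a|_{g^\sigma_\tau}=|a|_{g^\tau_\sigma}$ (formula~\eqref{S2: G^tau_sigma}). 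First I would set $\tilde a:=G^\tau_\sigma\bar a$, which lies in $\Gamma(\M,V')$ with $V'=V^\tau_\sigma$. The key point is that $G^\tau_\sigma$ (being the pointwise musical isomorphism built from $g$ and $g^*$, i.e. a finite composition of $\sharp$ and $\flat$ operations) maps the weighted space $W^{1,-\vartheta+2\sigma-2\tau}_2(\M,V^\sigma_\tau)$ isomorphically onto $W^{1,-\vartheta}_2(\M,V^\tau_\sigma)$; indeed, raising all $\sigma$ upper indices costs weight $-2\sigma$ and lowering all $\tau$ lower indices costs weight $+2\tau$ by Proposition~\ref{S2: sharp-flat}, so the net weight shift is $-2\sigma+2\tau$, taking $-\vartheta+2\sigma-2\tau$ to $-\vartheta$. (Complex conjugation is trivially an isometry on each weighted space.) Hence $\tilde a\in W^{1,-\vartheta}_2(\M,V')$, which is exactly the regularity required of the first argument in Theorem~\ref{S2: divergence thm}.

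Next I would compute, pointwise, that $(\div b\,|\,a)_g=\langle\div b,\tilde a\rangle$ and $(b\,|\,\gd a)_g=\langle b,\nabla\tilde a\rangle$. For the first identity this is immediate from $(u|a)_g=\langle u,G^\tau_\sigma\bar a\rangle$ applied with $u=\div b\in V^\sigma_\tau$. For the second, I need $G^{\tau+1}_{\sigma}\overline{\nabla a}=\nabla(G^\tau_\sigma\bar a)=\nabla\tilde a$, i.e. that $G$ commutes with the Levi-Civita connection and with conjugation; this follows because $\nabla g=0$ and $\nabla g^*=0$ (metric preservation of the connection, already used in the proof of Proposition~\ref{S2: sharp-flat}) and $\nabla$ is real, so $G^{\tau+1}_\sigma\overline{\nabla a}=\nabla G^\tau_\sigma\bar a$; then $(b\,|\,\gd a)_g=(b\,|\,\nabla a)_g=\langle b,G^{\tau+1}_\sigma\overline{\nabla a}\rangle=\langle b,\nabla\tilde a\rangle$. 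With these two pointwise identities in hand, integrating over $\M$ and invoking Theorem~\ref{S2: divergence thm} with the pair $(\tilde a,b)$ — noting $b\in\mathring W^{1,\vartheta}_2(\M,V^{\sigma+1}_\tau)$ is already of the right form — gives
$$-\int_\M(\div b\,|\,a)_g\,dV_g=-\int_\M\langle\div b,\tilde a\rangle\,dV_g=\int_\M\langle b,\nabla\tilde a\rangle\,dV_g=\int_\M(b\,|\,\gd a)_g\,dV_g,$$
which is the claim.

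The main obstacle, and the only step needing genuine care, is the bookkeeping of weights: verifying that $G^\tau_\sigma$ really does carry $W^{1,-\vartheta+2\sigma-2\tau}_2(\M,V^\sigma_\tau)$ boundedly onto $W^{1,-\vartheta}_2(\M,V^\tau_\sigma)$, including that the inverse $G^\sigma_\tau$ is bounded in the reverse direction. I would handle this by decomposing $G^\tau_\sigma$ as a composition of $\sigma$ successive $\sharp$-operations (each an isomorphism $\F^{1,\cdot}\to\F^{1,\cdot+2}$ by the first half of Proposition~\ref{S2: sharp-flat}, with the conjugate-linear structure irrelevant to boundedness) followed by $\tau$ successive $\flat$-operations (each $\F^{1,\cdot}\to\F^{1,\cdot-2}$), tracking how the weight exponent changes at each stage; alternatively one can observe directly that $G^\tau_\sigma$ is a bundle multiplication against tensor powers of $g$ and $g^*$, whose components satisfy the requisite $BC^{\infty,0}$-type bounds by (P1)–(P2), and appeal to Proposition~\ref{S2: pointwise multiplication}. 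A minor secondary point is checking the connection-commutation identity $G^{\tau+1}_\sigma\overline{\nabla a}=\nabla\tilde a$ at the level of smooth sections, which is routine given $\nabla g=\nabla g^*=0$, and then everything extends to the stated Sobolev classes by the density and continuity already packaged into Theorem~\ref{S2: divergence thm}.
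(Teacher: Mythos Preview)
Your approach is essentially the same as the paper's: both reduce to Theorem~\ref{S2: divergence thm} by converting $(\cdot|\cdot)_g$ to $\langle\cdot,\cdot\rangle$ via $G^\tau_\sigma$ and invoking the commutation of $G$ with the Levi-Civita connection. The paper records this commutation as the identity $\nabla G^\tau_\sigma u = G^\tau_{\sigma+1}(\gd u)$ (formula~\eqref{S2: nabla-grad}), citing $\nabla_X(G^\tau_\sigma a)=G^\tau_\sigma(\nabla_X a)$, and then simply says ``this implies the asserted result''; your write-up is in fact more explicit than the paper's about the weight bookkeeping for $\tilde a$ and the final reduction to Theorem~\ref{S2: divergence thm}.

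One correction to your intermediate computation: the step $(b|\gd a)_g=(b|\nabla a)_g=\langle b,G^{\tau+1}_\sigma\overline{\nabla a}\rangle$ does not type-check, since $b\in V^{\sigma+1}_\tau$ while $\nabla a\in V^\sigma_{\tau+1}$, and $G^{\tau+1}_\sigma$ maps $V^\sigma_{\tau+1}$ to $V^{\tau+1}_\sigma$ rather than to $V^\tau_{\sigma+1}=(V^{\sigma+1}_\tau)'$. The correct chain is $(b|\gd a)_g=\langle b,G^\tau_{\sigma+1}\overline{\gd a}\rangle$, and then one needs $G^\tau_{\sigma+1}\overline{\gd a}=\nabla(G^\tau_\sigma\bar a)=\nabla\tilde a$, which is exactly~\eqref{S2: nabla-grad} applied to $\bar a$. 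Your conclusion $\langle b,\nabla\tilde a\rangle$ is correct; only the indices on $G$ in the middle need fixing.
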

\begin{proof}
In \cite[p. 10]{AmaAr}, it is shown that for any $X\in T\M$ and $a\in V$
$$\nabla_X (G^\tau_\sigma a) = G^\tau_\sigma(\nabla_X a). $$
Therefore,
$$\nabla G^\tau_\sigma a = (G^{\tau+1}_\sigma(\nabla u))_\flat. $$
Now it is an easy task to check
\begin{align}
\label{S2: nabla-grad}
\nabla G^\tau_\sigma u =(G^{\tau+1}_\sigma(\gd u)_\flat)_\flat= G^\tau_{\sigma+1}(\gd u).
\end{align}
This implies the asserted result.
\end{proof}

\begin{prop}
\label{S2: div-tangent}
$\div\in\L(\F^{s+1,\vartheta}(\M, V^{\sigma+1}_\tau), \F^{s,\vartheta}(\M, V^\sigma_\tau))$ for $\F\in \{BC,W_p, \mathring{W}_p\}$.
\end{prop}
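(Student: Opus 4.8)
The plan is to decompose the surface divergence into the covariant derivative followed by a contraction, and to control each piece separately using the tools already established in the excerpt. Recall from \eqref{S2: divergence} that $\div = \ev^{\sigma+1}_{\tau+1}\circ\nabla$, so it suffices to show that $\nabla$ raises smoothness by one (at the cost of one covariant slot) in the appropriate weighted scale, and that the contraction $\ev^{\sigma+1}_{\tau+1}$ is bounded on the relevant weighted spaces. The first fact is precisely Proposition~\ref{S2: nabla}, applied with the bundle $V^{\sigma+1}_\tau$ in place of $V^\sigma_\tau$: it gives
$$\nabla\in\L\big(\F^{s+1,\vartheta}(\M,V^{\sigma+1}_\tau),\ \F^{s,\vartheta}(\M,V^{\sigma+1}_{\tau+1})\big).$$

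For the contraction, the key observation is that $\ev^{\sigma+1}_{\tau+1}$ is a bundle multiplication in the sense of \eqref{section 2: bundle multiplication}: in local coordinates it is a fixed algebraic (indeed, constant-coefficient) operation on the components, hence trivially a smooth bounded section of the relevant Hom-bundle, and it satisfies the index-balance condition since $(\sigma+1)-(\tau+1)=\sigma-\tau$, which matches \eqref{section 2: ptm-condition} with one factor being the identity-type multiplier. Therefore Proposition~\ref{S2: pointwise multiplication} (with the constant multiplier in $BC^{\infty,0}$, or alternatively a direct localization argument using property (P3)) yields
$$\ev^{\sigma+1}_{\tau+1}\in\L\big(\F^{s,\vartheta}(\M,V^{\sigma+1}_{\tau+1}),\ \F^{s,\vartheta}(\M,V^\sigma_\tau)\big)$$
for $\F\in\{BC,W_p\}$; here the weights are unchanged because contracting over one covariant and one contravariant index does not alter the net tensor type $\tau-\sigma$ that enters the weight exponent in the norm $\|\cdot\|_{s,p;\vartheta}$. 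Composing the two displays gives the assertion for $\F\in\{BC,W_p\}$.

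For the remaining case $\F=\mathring{W}_p$, I would argue by density exactly as in the proof of Proposition~\ref{S2: nabla}: given $u\in\mathring{W}^{s+1,\vartheta}_p(\M,V^{\sigma+1}_\tau)$, pick $u_n\in\mathcal{D}(\mathring{\M},V^{\sigma+1}_\tau)$ with $u_n\to u$ in $W^{s+1,\vartheta}_p$; then $\div u_n\in\mathcal{D}(\mathring{\M},V^\sigma_\tau)$ and, by the $W_p$-case already proved, $\div u_n\to\div u$ in $W^{s,\vartheta}_p$, so $\div u\in\mathring{W}^{s,\vartheta}_p(\M,V^\sigma_\tau)$ with the corresponding norm bound. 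I do not anticipate a genuine obstacle here; the only point that needs a little care is the bookkeeping of weight exponents under the contraction — one must check that passing from $V^{\sigma+1}_{\tau+1}$ to $V^\sigma_\tau$ leaves $\vartheta$ invariant in the definition of the weighted norm — and that the constant-coefficient contraction genuinely falls under the hypotheses of Proposition~\ref{S2: pointwise multiplication} (or, failing a clean fit, can be handled by the same localization estimate based on (P1)--(P3) and Proposition~\ref{S2: retraction&coretraction} that underlies that theorem).
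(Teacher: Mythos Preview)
Your proposal is correct and follows essentially the same strategy as the paper: decompose $\div=\ev^{\sigma+1}_{\tau+1}\circ\nabla$, invoke Proposition~\ref{S2: nabla} for the covariant derivative, and establish boundedness of the contraction with unchanged weight. The only minor difference is that the paper handles the contraction directly via the retraction--coretraction system of Proposition~\ref{S2: retraction&coretraction} (your fallback option) rather than through Proposition~\ref{S2: pointwise multiplication}, and treats all three cases $\F\in\{BC,W_p,\mathring W_p\}$ uniformly through that localization rather than appealing to density for $\mathring W_p$.
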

\begin{proof}
Given any $a\in \F^{s,\vartheta}(\M, V^{\sigma+1}_\tau)$,
it is easy to see that
$$\| \Rc \ev^{\sigma+1}_{\tau+1} a\|_{l_q^\vartheta(\bF^s)} \leq C \|\Rc a\|_{l_q^\vartheta(\bF^s(E^{\sigma+1}_{\tau+1}))} $$
with $\kf(\pi_\kappa a^{(i;k)}_{(j;k)})$ in the $((i),(j))$ position.
Here $q=\infty$ for $\F=BC$, or $q=p$ for $\F\in \{W_p, \mathring{W}_p\}$.
Combining with Proposition~\ref{S2: retraction&coretraction}, it implies that
$$\ev^{\sigma+1}_{\tau+1}\in \L(\F^{s,\vartheta}(\M,V^{\sigma+1}_{\tau+1}), \F^{s,\vartheta}(\M, V)) .$$
Using Proposition~\ref{S2: nabla}, we can now prove the asserted result.
\end{proof}


\subsection{\bf Spaces of negative order}
For any $u\in \mathcal{D}(\M,V)$ and $v\in \mathcal{D}(\M,V^\prime)$, we put 
$$\langle u , v\rangle_\M:= \int\limits_\M \langle u,\bar{v} \rangle\, dV_g. $$
Then we define
\begin{equation}
\label{S2.3:def- neg order space}
W^{-s,\vartheta}_p(\M, V):=(\mathring{W}^{s,-\vartheta}_{p^\prime}(\M, V^\prime))^\prime
\end{equation}
by mean of the duality pairing $\langle \cdot ,\cdot \rangle_\M$. It is convenient to denote by $\mathring{W}^{-s,\vartheta}_p(\M, V)$ the closure of $\mathcal{D}(\mathring{\M},V)$ in $W^{-s,\vartheta}_p(\M, V)$. Then
\begin{align}
\label{S2: t<1/p}
\mathring{W}^{t,\vartheta}_p(\M, V)= W^{t,\vartheta}_p(\M, V),\quad t<1/p.
\end{align}
We refer  the reader to \cite[Section~12]{Ama13} for more details. Given $u\in \F^{-s,\vartheta}(\M, V)$ with $\F\in \{W_p, \mathring{W}_p\}$ and $v\in \mathcal{D}(\mathring{\M},V^{\tau+1}_\sigma)$
\begin{align}
\label{S2: def-nabla-neg}
\langle \nabla u , v\rangle_\M:= -\int\limits_\M \langle u, \div(\bar{v}) \rangle\, dV_g.
\end{align}
Theorem~\ref{S2: divergence thm} shows for $u\in \F^{-s,\vartheta}(\M,V^{\sigma+1}_\tau)$  and $v\in \mathcal{D}(\mathring{\M},V^\prime)$
$$\langle \div u , v\rangle_\M= -\int\limits_\M \langle u, \nabla \bar{v} \rangle\, dV_g. $$
By means of Proposition~\ref{S2: nabla} and \ref{S2: div-tangent}, it is not hard to prove the following proposition.
\begin{prop}
\label{S2: nabla-neg}
Suppose that $\F\in \{ W_p, \mathring{W}_p \}$. Then
$$\nabla \in \L(\F^{-s,\vartheta}(\M, V), \F^{-s-1,\vartheta}(\M,V^\sigma_{\tau+1})),$$ 
and 
$$\div \in \L(\F^{-s,\vartheta}(\M,V^{\sigma+1}_\tau), \F^{-s-1,\vartheta}(\M, V)).$$
\end{prop}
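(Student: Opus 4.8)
The plan is to obtain both mapping properties by duality from the corresponding positive-order statements already in hand, namely Propositions~\ref{S2: nabla} and \ref{S2: div-tangent}, together with the definition \eqref{S2.3:def- neg order space} of the negative-order spaces and the pairing formulas \eqref{S2: def-nabla-neg} and the identity from Theorem~\ref{S2: divergence thm}. First I would treat the case $\F=W_p$. By definition $W^{-s,\vartheta}_p(\M,V)=(\mathring{W}^{s,-\vartheta}_{p'}(\M,V'))'$, and $W^{-s-1,\vartheta}_p(\M,V^\sigma_{\tau+1})=(\mathring{W}^{s+1,-\vartheta}_{p'}(\M,(V^\sigma_{\tau+1})'))'$. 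Observe that $(V^\sigma_{\tau+1})'=V^{\tau+1}_\sigma=V^{(\tau)+1}_{(\sigma)}$ in the indexing of Proposition~\ref{S2: div-tangent}, so that $\div\in\L(\mathring{W}^{s+1,-\vartheta}_{p'}(\M,V^{\tau+1}_\sigma),\mathring{W}^{s,-\vartheta}_{p'}(\M,V^\tau_\sigma))$ by that proposition (with the roles of $\sigma,\tau$ exchanged, the weight $-\vartheta$, and exponent $p'$). Its Banach-space adjoint then maps $(\mathring{W}^{s,-\vartheta}_{p'}(\M,V^\tau_\sigma))'=W^{-s,\vartheta}_p(\M,V)$ into $(\mathring{W}^{s+1,-\vartheta}_{p'}(\M,V^{\tau+1}_\sigma))'=W^{-s-1,\vartheta}_p(\M,V^\sigma_{\tau+1})$, and \eqref{S2: def-nabla-neg} identifies this adjoint (up to the sign and conjugation built into the pairing) precisely with $\nabla$. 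The same scheme with $\div$ and $\nabla$ interchanged, using the identity $\langle\div u,v\rangle_\M=-\int_\M\langle u,\nabla\bar v\rangle\,dV_g$ recorded just before the proposition, gives the second mapping property $\div\in\L(W^{-s,\vartheta}_p(\M,V^{\sigma+1}_\tau),W^{-s-1,\vartheta}_p(\M,V))$; here one uses $\nabla\in\L(\mathring{W}^{s+1,-\vartheta}_{p'}(\M,(V)'),\mathring{W}^{s,-\vartheta}_{p'}(\M,(V^{\sigma+1}_\tau)'))$ from Proposition~\ref{S2: nabla}, again transposing.

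Once the $W_p$ case is done, the $\mathring{W}_p$ case follows by a routine density argument exactly parallel to the one used in Propositions~\ref{S2: nabla} and \ref{S2: change of wgt}: for $u\in\mathring{W}^{-s,\vartheta}_p(\M,V)$ pick $u_n\in\mathcal{D}(\mathring{\M},V)$ with $u_n\to u$ in $W^{-s,\vartheta}_p$; by continuity of $\nabla$ (resp.\ $\div$) on the larger space, $\nabla u_n\to\nabla u$ in $W^{-s-1,\vartheta}_p$, and since $\nabla u_n\in\mathcal{D}(\mathring{\M},V^\sigma_{\tau+1})$ the limit lies in $\mathring{W}^{-s-1,\vartheta}_p(\M,V^\sigma_{\tau+1})$. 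One small point to verify is that $\nabla$ and $\div$ defined on $\F^{-s,\vartheta}$ via \eqref{S2: def-nabla-neg} genuinely restrict, on $\mathcal{D}(\mathring{\M},V)$, to the classical differential operators, so that the density argument is consistent; this is immediate from Theorem~\ref{S2: divergence thm} applied to test sections.

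The only real obstacle I anticipate is bookkeeping of the index conventions and the conjugation in the pairing $\langle\cdot,\cdot\rangle_\M$. The pairing $\langle u,v\rangle_\M=\int_\M\langle u,\bar v\rangle\,dV_g$ is conjugate-linear in $v$, so the "adjoint'' in play is the one appropriate to antiduality, and one must check that $\div$ (resp.\ $\nabla$) is its own formal transpose up to sign under this particular pairing rather than picking up a spurious conjugation on the coefficients; the computation \eqref{S2: divergence eq} in the proof of Theorem~\ref{S2: divergence thm} already handles this at the level of $\mathcal{D}$, so it suffices to extend it by the density of $\mathcal{D}$ in the relevant $\mathring{W}_{p'}$ spaces. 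I would also make sure the duals $(V^\sigma_\tau)'=V^\tau_\sigma$ are matched correctly so that the target bundles $V^\sigma_{\tau+1}$ and $V$ in the statement come out right after transposition. Modulo this indexing care, the proof is a direct transposition of Propositions~\ref{S2: nabla} and \ref{S2: div-tangent}, and I would phrase it tersely, in the same spirit as the remark preceding the proposition that this is "not hard to prove.''
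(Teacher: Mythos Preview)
Your proposal is correct and follows essentially the same approach the paper indicates: the statement is obtained by transposing Propositions~\ref{S2: nabla} and \ref{S2: div-tangent} via the duality \eqref{S2.3:def- neg order space} and the pairing formulas \eqref{S2: def-nabla-neg} and Theorem~\ref{S2: divergence thm}, with the $\mathring{W}_p$ case handled by density. Your caveats about index bookkeeping and the conjugate-linearity of the pairing are the only points requiring care, and you have identified them correctly.
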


Let $\langle\cdot | \cdot\rangle_{2,\vartheta^\prime}$ be the inner product in $L_2^{\vartheta^\prime}(\M, V)$, that is,
\begin{align}
\label{S2.4: dual space}
\langle u | v\rangle_{2,\vartheta^\prime}:=\int\limits_\M \rho^{2\vartheta^\prime+2\tau-2\sigma}(  u | v )_g\, dV_g .
\end{align}
\begin{prop}
\label{S2: V-V'}
Suppose that $\F\in \{ W_p, \mathring{W}_p \}$ and $s\in\R$. Then
$$[u\mapsto \rho^{2\tau-2\sigma}G^\tau_\sigma u]\in \Lis(\F^{s,\vartheta}(\M, V), \F^{s,\vartheta}(\M,V^\prime)).$$
\end{prop}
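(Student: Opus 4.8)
The plan is to reduce the claim to two ingredients already available in the excerpt: the algebraic identity $(a\mid b)_g=\langle a,G^\tau_\sigma\bar b\rangle$ (equivalently $(u\mid v)_{2,\vartheta'}$ is represented by pairing against $\rho^{2\vartheta'+2\tau-2\sigma}G^\tau_\sigma$), and the mapping properties of the Levi--Civita connection under $G^\tau_\sigma$ recorded in the proof of Corollary~\ref{S2: divergence cor}, namely $\nabla_X(G^\tau_\sigma a)=G^\tau_\sigma(\nabla_X a)$ and hence $\nabla^k(G^\tau_\sigma a)=(G^\tau_{\sigma}(\nabla^k a))$ in the appropriate bundle. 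First I would show that $G^\tau_\sigma$ itself, as a $BC^\infty$-bounded bundle endomorphism intertwining covariant derivatives, maps $\F^{s,\vartheta}(\M,V)$ continuously into $\F^{s,\vartheta'}(\M,V^\prime)$ for a suitable shift of the weight: by \eqref{S2: G^tau_sigma} we have $|G^\tau_\sigma a|_{g^\sigma_\tau}=|a|_{g^\tau_\sigma}$ pointwise, and since $V^\prime=V^\tau_\sigma$ the intrinsic weight exponent in the norm $\|\cdot\|_{s,p;\vartheta}$ changes from $\vartheta+i+\tau-\sigma$ to $\vartheta+i+\sigma-\tau$; this is exactly why the correction factor $\rho^{2\tau-2\sigma}$ appears. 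Combining this with Proposition~\ref{S2: change of wgt} (multiplication by $\rho^{2\tau-2\sigma}\in\Lis$ between the corresponding weighted spaces) gives that $[u\mapsto \rho^{2\tau-2\sigma}G^\tau_\sigma u]$ is a bounded linear map $\F^{s,\vartheta}(\M,V)\to\F^{s,\vartheta}(\M,V^\prime)$.

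Next I would produce the inverse. Since $(G^\tau_\sigma)^{-1}=G^\sigma_\tau$, the candidate inverse is $[v\mapsto \rho^{2\sigma-2\tau}G^\sigma_\tau v]$, and the same argument applied with the roles of $\sigma,\tau$ exchanged shows it is bounded $\F^{s,\vartheta}(\M,V^\prime)\to\F^{s,\vartheta}(\M,V)$. A direct pointwise computation $\rho^{2\sigma-2\tau}G^\sigma_\tau(\rho^{2\tau-2\sigma}G^\tau_\sigma u)=u$ (the scalar powers of $\rho$ cancel and $G^\sigma_\tau G^\tau_\sigma=\id$) confirms that the two maps are mutually inverse, so the map is in $\Lis$.

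For the integer-order positive case $s\in\Nz$ this is immediate from the definition of $W^{k,\vartheta}_p$ together with $\nabla^k(G^\tau_\sigma a)=G^\tau_\sigma(\nabla^k a)$ (up to reindexing of lower slots, harmless for the norm) and $|G^\tau_\sigma(\nabla^k u)|_g=|\nabla^k u|_g$; the $BC$ case is identical. The fractional case $s\in\R_+\setminus\Nz$ follows by real interpolation using \eqref{S2: fractional Sobolev}, and the negative-order case follows by duality from \eqref{S2.3:def- neg order space}, noting that $G^\tau_\sigma$ is (up to the weight factor) self-adjoint with respect to the pairing $\langle\cdot,\cdot\rangle_\M$; for $\F=\mathring{W}_p$ one invokes a density argument as in Propositions~\ref{S2: nabla} and \ref{S2: change of wgt}. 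The main obstacle I anticipate is purely bookkeeping rather than conceptual: carefully tracking how the exponent $\tau-\sigma$ enters the defining weighted norms on $V$ versus $V^\prime$ and verifying that the precise power $\rho^{2\tau-2\sigma}$ (and not some other power) makes the weights match, and then checking that the interpolation and duality steps are compatible with this identification of bundles — in particular that the duality pairing $\langle\cdot,\cdot\rangle_\M$ between $V$ and $V^\prime$ is the one under which the claimed self-adjointness holds.
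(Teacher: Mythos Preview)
Your proposal is correct and follows essentially the same route as the paper: the paper's proof simply cites Proposition~\ref{S2: change of wgt}, ``an analogue of the proof for Proposition~\ref{S2: sharp-flat}'' (i.e., exactly your commutation $\nabla_X(G^\tau_\sigma a)=G^\tau_\sigma(\nabla_X a)$ plus \eqref{S2: G^tau_sigma}, then interpolation), and the open mapping theorem. The only cosmetic difference is that you exhibit the inverse $\rho^{2\sigma-2\tau}G^\sigma_\tau$ explicitly, whereas the paper appeals to the open mapping theorem; your explicit handling of the negative-order case via duality is also appropriate, since the statement covers all $s\in\R$ while the model Proposition~\ref{S2: sharp-flat} is only proved for $s\ge 0$.
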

\begin{proof}
The statement follows from Proposition~\ref{S2: change of wgt}, an analogue of the proof for Proposition~\eqref{S2: sharp-flat} and the open mapping theorem.
\end{proof}

In virtue of Proposition~\ref{S2: change of wgt} and \ref{S2: V-V'},  now one readily checks that 
\begin{align}
\label{S2.3: Lp-reflexive}
(\mathring{W}^{s,\vartheta}_p(\M, V))^\prime_{\vartheta^\prime}\doteq W^{-s,2\vartheta^\prime-\vartheta}_{p^\prime}(\M, V),
\end{align}
where $(\mathring{W}^{s,\vartheta}_p(\M, V))^\prime_{\vartheta^\prime}$ is the dual space of $W^{s,\vartheta}_p(\M, V)$ with respect to $\langle\cdot | \cdot\rangle_{2,\vartheta^\prime}$. 


\section{\bf $L_p$-theory of $(\rho,\lambda)$-singular elliptic operators}

Let $\sigma,\tau\in\Nz$ and $\lambda^\prime\in\R$. Suppose that $\cA: \mathcal{D}(\mathring{\M},V)\rightarrow \Gamma(\M,V)$ is a second order differential operator defined as follows.
\begin{align}
\label{S3: operator A}
\cA u:= -\div(\vec{a}\cdot\gd u)+\ev(\nabla u, a_1) +a_0 u,
\end{align}
with $\vec{a}\in C^1(\M,T^1_1\M)$, $a_1\in \Gamma(\M,T\M)$ and $a_0\in \C^\M$, 
for any $u\in C^\infty(\M, V)$ and some $\lambda\in\R$.
We put for all $\omega\geq 0$
\begin{align*}
\Aw u:= \cA u + \omega \rho^{-\lambda}u. 
\end{align*}
Center contraction $[u\mapsto\vec{a}\cdot\gd u]$ is defined by the relationship
$$\cdot: V^1_1 \times V^{\sigma+1}_{\tau}\rightarrow V^{\sigma+1}_{\tau}: (a,b)\mapsto a\cdot b , $$
and in every local chart for $\p\in \M$, we have
\begin{align*}
&(a\cdot b)(\p):= \{ a^{l}_{k} \frac{\partial}{\partial x^l} \otimes dx^k  (\p)\}  \cdot \{ b^{(i;h)}_{(j)} \frac{\partial}{\partial x^{(i)}}\otimes \frac{\partial}{\partial x^h}\otimes dx^{(j)} (\p) \}\\
&:=a^l_k b^{(i;k)}_{(j)} \frac{\partial}{\partial x^{(i)}}\otimes \frac{\partial}{\partial x^l}\otimes dx^{(j)} (\p)
\end{align*}
with $(i)\in\J^\sigma$, $(j)\in \J^\tau$ and $l,k,h\in\J^1$.
Here we write a differential operator in divergence form, which will benefit us in giving a precise bound for the constant $\omega$.

\subsection{\bf $L_2$-theory}
We impose the following assumptions on the coefficients of $\cA$ and the compensation term $\omega\rho^{-\lambda}$.
\begin{itemize}
\item[(A1)] $\cA$ is $(\rho,\lambda)$-regular, by which we means that $\vec{a}\in BC^{1,\lambda-2}(\M,T^1_1\M)$ is symmetric and 
$$a_1\in L_\infty^\lambda(\M, T\M),\quad a_0\in L_\infty^\lambda(\M).$$
\item[(A2)] $\cA$ is $(\rho,\lambda)$-singular elliptic. More precisely, there exists some $C_{\hat{\sigma}}>0$ such that
$$( \vec{a}\cdot\xi | \xi)_{g(\p)} \geq C_{\hat{\sigma}}\rho^{2-\lambda} |\xi|_{g}^2(\p), \quad \xi\in V^{\sigma+1}_\tau,\quad \p\in\M .$$
\item[(A3)] $\omega>\omega_\cA$, where $\omega_\cA\in \R$ satisfies for some $C_1<2$
\begin{align}
\label{S3: A3}
&\text{essinf}(\Rp(\rho^{\lambda}a_0) +\omega_\cA)>0; \\
\label{S3: A2}
&\rho^{\lambda-1} |(2\lambda^\prime+2\tau-2\sigma)\vec{a}\cdot\gd \log \rho+a_1|_g \leq C_1 \sqrt{C_{\hat{\sigma}}(\Rp(\rho^{\lambda}a_0) +\omega_\cA)};\\
\label{S3: A4}
&\rho^{\lambda-1} | (2\lambda^\prime-\lambda+2\tau-2\sigma)\vec{a}\cdot\gd \log \rho +a_1|_g \leq C_1 \sqrt{C_{\hat{\sigma}}(\Rp(\rho^{\lambda}a_0) +\omega_\cA)}. 
\end{align}
\end{itemize}
Here the weighted $L_\infty$-space $L^\lambda_\infty(\M,T\M)$ and $L_\infty^\lambda(\M)$ are defined in an obvious manner as for the weighted $L_p$-spaces in Section~2.2. 
We may replace the compensation term $\omega\rho^{-\lambda}$ by a largeness condition for the potential term $a_0$, which can be stated as follows.
\begin{itemize}
\item[(A3')] $\Rp(\rho^\lambda a_0)$ is so large that there exists some $C_1<2$ and $\omega_\cA<0$ such that
\begin{align*}
&\text{essinf}(\Rp(\rho^{\lambda}a_0)+\omega_\cA)>0; \\
&\rho^{\lambda-1} |(2\lambda^\prime+2\tau-2\sigma)\vec{a}\cdot\gd \log \rho+a_1|_g \leq C_1 \sqrt{C_{\hat{\sigma}}(\Rp(\rho^{\lambda}a_0) +\omega_\cA)};\\
&\rho^{\lambda-1} | (2\lambda^\prime-\lambda+2\tau-2\sigma)\vec{a}\cdot\gd \log \rho +a_1|_g \leq C_1 \sqrt{C_{\hat{\sigma}}(\Rp(\rho^{\lambda}a_0) +\omega_\cA)}. 
\end{align*}
\end{itemize}
Note that in (A3') only negative values $\omega_\cA$ are admissible, which is different from (A3).

Throughout, we assume that the singular data $[\![\rho]\!]$ and the constant $\lambda$ satisfy
\begin{align}
\label{S3: rho & lambda}
\left\{\begin{aligned}
&\|\rho\|_\infty \leq 1,\quad \lambda\geq 0, &&\text{or}&&\\
&\|\rho\|_\infty \geq 1,\quad \lambda\leq 0. &&
\end{aligned}\right.
\end{align}
Note that  the case $\lambda=0$ has been studied in \cite{Ama13b}.  In this case, actually no restriction for $\|\rho\|_\infty$ is required.

Since $W_p(\M,V)=\mathring{W}_p(\M,V)$ when $\partial\M=\emptyset$, in the sequel, we always focus on the space $\mathring{W}_p(\M,V)$.
Given $\lambda^\prime\in\R$, let $X:=\mathring{W}^{1,\lambda^\prime-\lambda/2}_2(\M, V)$. Then we can associate with $\Aw$ a form operator $\aw$ with $D(\aw)=X$, defined by 
\begin{align*}
\aw(u,v)
&= \langle \vec{a}\cdot\gd u | \gd v  \rangle_{2,\lambda^\prime}
+ \langle \ev(\nabla u, (2\lambda^\prime+2\tau-2\sigma)\vec{a}\cdot \gd \log \rho +a_1) | v \rangle_{2,\lambda^\prime}
\\
&\quad+\langle (a_0+\omega\rho^{-\lambda}) u | v \rangle_{2,\lambda^\prime} 
\end{align*}
for all $u,v\in X$. Recall that $\langle\cdot | \cdot\rangle_{2,\lambda^\prime}$ is the inner product in $L_2^{\lambda^\prime}(\M, V)$, see~\eqref{S2.4: dual space}.

\begin{lem}
\label{S3: pts-mul}
For any $\sigma,\tau, \sigma^\prime,\tau^\prime\in\Nz$, it holds that
\begin{itemize}
\item[(a)] $|\vec{a}\cdot\xi |_{g^{\tau}_{\sigma+1}}\leq |\vec{a}|_{g^1_1} |\xi|_{g^{\tau}_{\sigma+1}},\quad \xi\in V^{\sigma+1}_\tau.$
\item[(b)] $(a | b)_{g^\tau_\sigma} \leq |a|_{g^\tau_\sigma} |b|_{g^\tau_\sigma},\quad a,b\in V^\sigma_\tau. $
\item[(c)] $|\ev(a,b)|_{g_\sigma^\tau}\leq |a|_{g^{\sigma^\prime}_{\tau^\prime}} |b|_{g_{\sigma+\sigma^\prime}^{\tau+\tau^\prime}},\quad a\in V^{\tau^\prime}_{\sigma^\prime}, b\in V^{\sigma+\sigma^\prime}_{\tau+\tau^\prime}.$
\end{itemize}
\end{lem}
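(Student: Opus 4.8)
The plan is to prove each of the three Cauchy--Schwarz-type estimates by reducing to the fiberwise inequality, i.e.\ to the analogous statement at a single point $\p\in\M$, since all the operations involved ($\vec{a}\cdot$, the inner product $(\cdot|\cdot)_g$, and the contraction $\ev$) are defined pointwise. At a fixed $\p$, the fibers $V^\sigma_\tau|_\p$ carry a genuine Hermitian inner product (induced by $g(\p)$), so the statements become finite-dimensional linear algebra.

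For (b), this is literally the Cauchy--Schwarz inequality for the Hermitian inner product $(\cdot|\cdot)_{g^\tau_\sigma}$ on the fiber, together with the observation that the left-hand side is real (or rather, one takes its real part / absolute value as appropriate). For (a), I would write $\vec{a}\cdot\xi$ as the action of the $(1,1)$-tensor $\vec{a}(\p)$, viewed as an endomorphism of $T\M|_\p$ (hence of the slot it contracts), on $\xi$; then $|\vec{a}\cdot\xi|_g \le \|\vec{a}(\p)\|_{\mathrm{op}}|\xi|_g$ where the operator norm is bounded by the Hilbert--Schmidt norm, which is exactly $|\vec{a}|_{g^1_1}(\p)$. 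Concretely, in an orthonormal frame at $\p$ the estimate is the standard bound $\|Ax\|\le \|A\|_{HS}\|x\|$ for matrices. For (c), the complete contraction $\ev(a,b)$ with $a\in V^{\tau'}_{\sigma'}$, $b\in V^{\sigma+\sigma'}_{\tau+\tau'}$ pairs $a$ against the ``$(\sigma',\tau')$-part'' of $b$; again in an orthonormal frame at $\p$ one sees $|\ev(a,b)|_{g^\tau_\sigma} = |\sum a^{(\cdot)}_{(\cdot)}b^{(\cdot;\cdot)}_{(\cdot;\cdot)}|$ and applies Cauchy--Schwarz over the contracted indices while keeping the free indices, which gives $|\ev(a,b)|_{g^\tau_\sigma}\le |a|_{g^{\sigma'}_{\tau'}}|b|_{g^{\tau+\tau'}_{\sigma+\sigma'}}$ pointwise.

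The reduction to a single point is justified because in each assertion both sides are genuine scalar functions on $\M$ (they are the norms/pairings $|\cdot|_g$, which by definition at each $\p$ are computed from $g(\p)$), so it suffices to verify the inequality of nonnegative real numbers at each $\p$; there is no regularity or integrability issue to worry about, and no need to pass through the charts $\vpk$ or use Proposition~\ref{S2: retraction&coretraction}. One can either fix coordinates in which $g_{ij}(\p)=\delta_{ij}$ (a $g(\p)$-orthonormal basis of $T_\p\M$, with dual basis for $T^*_\p\M$), which makes all the raising/lowering trivial and the index bookkeeping transparent, or argue invariantly: $(\cdot|\cdot)_g$ makes each fiber a Hermitian space, $\vec{a}(\p)$ and the contraction maps are linear, and the claimed inequalities are respectively $\|Ax\|\le\|A\|_{HS}\|x\|$, ordinary Cauchy--Schwarz, and Cauchy--Schwarz applied to a partial trace-type pairing.

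The only mild obstacle is the index gymnastics in (c): one must be careful about which slots of $b$ are contracted against $a$ and verify that the norm of the contracted tensor, expanded in an orthonormal frame, is dominated by the product of the two Hilbert--Schmidt-type norms; this is routine once the convention for $\ev$ (the complete contraction on the left/right, as defined just before Theorem~\ref{S2: divergence thm}) is pinned down. I would therefore spend the bulk of the (short) proof on (c), and dispatch (a) and (b) in a line each by citing fiberwise Cauchy--Schwarz; I expect the whole argument to be two short paragraphs.

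\begin{proof}
All three inequalities are pointwise statements: both sides of each are the scalar functions $|\cdot|_g=|\cdot|_{g^\bullet_\bullet}$ (resp.\ the pairing $(\cdot|\cdot)_g$), which by definition are computed at each $\p\in\M$ from the Hermitian inner product on the corresponding fiber induced by $g(\p)$. Hence it suffices to fix $\p\in\M$ and verify the inequalities of nonnegative reals there. Choosing a $g(\p)$-orthonormal basis of $T_\p\M$ with the dual basis of $T_\p^*\M$, we may assume $g_{ij}(\p)=g^{ij}(\p)=\delta_{ij}$, so that all metric contractions are Euclidean and raising/lowering of indices is the identity.

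(b) is the Cauchy--Schwarz inequality for the Hermitian inner product $(\cdot|\cdot)_{g^\tau_\sigma}$ on the fiber $V^\sigma_\tau|_\p \cong \C^{m^{\sigma+\tau}}$.

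(a) In the chosen frame, $(\vec{a}\cdot\xi)^{(i;l)}_{(j)}=a^l_k\,\xi^{(i;k)}_{(j)}$, so for fixed $(i),(j)$ the vector $\bigl((\vec{a}\cdot\xi)^{(i;l)}_{(j)}\bigr)_l$ is the matrix $[a^l_k]$ applied to $\bigl(\xi^{(i;k)}_{(j)}\bigr)_k$. By Cauchy--Schwarz in the index $k$,
\begin{align*}
\sum_l \bigl|(\vec{a}\cdot\xi)^{(i;l)}_{(j)}\bigr|^2
\le \Bigl(\sum_{l,k}|a^l_k|^2\Bigr)\Bigl(\sum_k \bigl|\xi^{(i;k)}_{(j)}\bigr|^2\Bigr).
\end{align*}
Summing over $(i),(j)$ and using $\sum_{l,k}|a^l_k|^2 = |\vec{a}|_{g^1_1}^2(\p)$ gives $|\vec{a}\cdot\xi|_{g^\tau_{\sigma+1}}\le |\vec{a}|_{g^1_1}|\xi|_{g^\tau_{\sigma+1}}$ at $\p$.

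(c) With $a\in V^{\tau'}_{\sigma'}$, $b\in V^{\sigma+\sigma'}_{\tau+\tau'}$, the complete contraction reads $\ev(a,b)^{(i_1)}_{(j_1)}=\sum_{(i),(j)} a^{(j)}_{(i)}\, b^{(i;i_1)}_{(j;j_1)}$ in the chosen frame, with $(i)\in\J^{\sigma'}$, $(j)\in\J^{\tau'}$ the contracted multi-indices and $(i_1)\in\J^{\sigma}$, $(j_1)\in\J^{\tau}$ the free ones. By Cauchy--Schwarz over $((i),(j))$,
\begin{align*}
\bigl|\ev(a,b)^{(i_1)}_{(j_1)}\bigr|^2
\le \Bigl(\sum_{(i),(j)} \bigl|a^{(j)}_{(i)}\bigr|^2\Bigr)\Bigl(\sum_{(i),(j)} \bigl|b^{(i;i_1)}_{(j;j_1)}\bigr|^2\Bigr).
\end{align*}
Summing over $(i_1),(j_1)$, the first factor is $|a|_{g^{\sigma'}_{\tau'}}^2(\p)$ and the double sum of the second factor over all indices is $|b|_{g^{\tau+\tau'}_{\sigma+\sigma'}}^2(\p)$, whence $|\ev(a,b)|_{g^\tau_\sigma}\le |a|_{g^{\sigma'}_{\tau'}}|b|_{g^{\tau+\tau'}_{\sigma+\sigma'}}$ at $\p$. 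Since $\p\in\M$ was arbitrary, the proof is complete.
\end{proof}
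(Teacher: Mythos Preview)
Your proof is correct and follows essentially the same approach as the paper: direct pointwise computation via Cauchy--Schwarz. The paper's own proof is terse---it says (a) ``can be verified via direct computation'' and that (b) and (c) ``follow from identity \eqref{S2: G^tau_sigma} and \cite[formula~(A5)]{Ama13b}''---so your explicit orthonormal-frame argument is a fleshed-out, self-contained version of what the paper only sketches or cites, rather than a genuinely different route.
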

\begin{proof}
Statement (a) can be verified via direct computation.
Statements (b) and (c) follow  from identity \eqref{S2: G^tau_sigma} and \cite[formula~(A5)]{Ama13b}. 
\end{proof}

\begin{prop}
\label{S3: cont-L_2-coer}
$\aw$ is continuous and $X$-coercive. More precisely,
\begin{itemize}
\item[](Continuity) there exists some constant $M$ such that for all $u,v\in X$
$$|\aw(u,v)|\leq M\|u\|_X \|v\|_X ;$$
\item[]($X$-Coercivity) for $\omega$ large enough, there is some $M$ such that for any $u\in X$
$$\Rp(\aw(u,u)) \geq M \|u\|^2_X.$$
\end{itemize}
\end{prop}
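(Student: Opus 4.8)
The plan is to verify the two estimates by expanding $\aw(u,v)$ into its three constituent pieces and controlling each one via Lemma~\ref{S3: pts-mul}, the structural hypotheses (A1)--(A3), and the boundedness of the relevant coefficients measured in the weighted $L_\infty$-norms. For \emph{continuity}, I would first observe that $\gd u = (\nabla u)^\sharp$ together with Proposition~\ref{S2: sharp-flat} and Proposition~\ref{S2: nabla} shows $\gd\in\L(X,\mathring{W}^{0,\lambda^\prime-\lambda/2+?}_2)$ with the correct weight shift, so that the term $\langle \vec{a}\cdot\gd u\mid \gd v\rangle_{2,\lambda^\prime}$ is dominated by a constant times $\|\rho^{\lambda^\prime-\lambda/2}|\gd u|_g\|_2\,\|\rho^{\lambda^\prime-\lambda/2}|\gd v|_g\|_2$ after inserting the factor $|\vec{a}|_{g^1_1}\le c\rho^{\lambda-2}$ from (A1) (this is exactly the weight accounting that makes $\vec{a}\cdot\gd u$ land in the $L_2^{\lambda^\prime-\lambda/2}$-dual-paired space against $\gd v$). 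The first-order term is handled the same way using the bound $a_1\in L_\infty^\lambda$ and $\rho\,|\gd\log\rho|_g\sim {\bf 1}$ (a consequence of (S3)), so $\rho^{\lambda-1}|(2\lambda^\prime+2\tau-2\sigma)\vec a\cdot\gd\log\rho+a_1|_g$ is bounded; the zeroth-order term uses $a_0,\omega\rho^{-\lambda}\in L_\infty^\lambda$. Summing the three estimates and using $\|\cdot\|_2 \le \|\cdot\|_X$ on each weighted gradient/function norm gives the continuity constant $M$.

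For \emph{$X$-coercivity}, the real point. Writing $w:=\rho^{\lambda^\prime-\lambda/2}$-weighted quantities, I would start from
\begin{align*}
\Rp\aw(u,u) &= \Rp\langle \vec a\cdot\gd u\mid\gd u\rangle_{2,\lambda^\prime} + \Rp\langle \ev(\nabla u, b)\mid u\rangle_{2,\lambda^\prime} + \Rp\langle (a_0+\omega\rho^{-\lambda})u\mid u\rangle_{2,\lambda^\prime},
\end{align*}
where $b:=(2\lambda^\prime+2\tau-2\sigma)\vec a\cdot\gd\log\rho+a_1$. The ellipticity hypothesis (A2) gives the lower bound $\Rp\langle\vec a\cdot\gd u\mid\gd u\rangle_{2,\lambda^\prime}\ge C_{\hat\sigma}\int_\M \rho^{2\lambda^\prime+2\tau-2\sigma+2-\lambda}|\gd u|_g^2\,dV_g$, which is comparable to $C_{\hat\sigma}\|\rho^{?}|\nabla u|_g\|_2^2$ — i.e. the full top-order part of $\|u\|_X^2$. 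The potential term, by the first line of (A3), contributes a nonnegative multiple of $\int_\M \rho^{2\lambda^\prime+2\tau-2\sigma-\lambda}\,\Rp(\rho^\lambda a_0+\omega_{\cA})|u|_g^2\ge 0$ plus the genuinely positive $(\omega-\omega_{\cA})\int\rho^{2\lambda^\prime+2\tau-2\sigma-\lambda}|u|_g^2$, giving control of the zeroth-order part of $\|u\|_X^2$. The cross term is the obstacle: it is only bounded, not signed, so I would estimate it by Cauchy--Schwarz (Lemma~\ref{S3: pts-mul}(c), then (b)) as
\begin{align*}
\bigl|\langle \ev(\nabla u,b)\mid u\rangle_{2,\lambda^\prime}\bigr| \le \int_\M \rho^{2\lambda^\prime+2\tau-2\sigma}\,|\nabla u|_g\,|b|_g\,|u|_g\,dV_g,
\end{align*}
and then split $|b|_g = \rho^{1-\lambda}\cdot\rho^{\lambda-1}|b|_g$ so that hypothesis \eqref{S3: A2} bounds $\rho^{\lambda-1}|b|_g$ by $C_1\sqrt{C_{\hat\sigma}(\Rp(\rho^\lambda a_0)+\omega_{\cA})}$, distributing one power-of-$\rho$ each onto the gradient and the function to match the weights of the top-order and zeroth-order terms respectively. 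Applying Young's inequality $xy\le \varepsilon x^2 + \tfrac1{4\varepsilon}y^2$ with $\varepsilon$ chosen so that $C_1 < 2$ leaves a strictly positive fraction of $C_{\hat\sigma}\|\rho^?|\nabla u|_g\|_2^2$ and a strictly positive fraction of the weighted potential term after absorption. The reason the hypothesis is written with the sharp constant $C_1<2$ is exactly that this Young-inequality absorption needs $\tfrac{C_1}{2}<1$ on the gradient side and a matching slack on the potential side.

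The remaining step is bookkeeping: combine the surviving top-order term with the surviving $(\omega-\omega_{\cA})$-term to bound $\|u\|_X^2 = \|\rho^{\lambda^\prime-\lambda/2}|u|_g\|_2^2 + \|\rho^{\lambda^\prime-\lambda/2+1}|\nabla u|_g\|_2^2$ from below (using Proposition~\ref{S2: change of wgt} implicitly to know these are the right weights defining the $X$-norm, together with $|\gd u|_g=|\nabla u|_g^\sharp$, $|(\nabla u)^\sharp|_{g}=|\nabla u|_{g}$). Choosing $\omega$ large enough (which only helps, since it enlarges $\Rp(\rho^\lambda a_0)+\omega_{\cA}$ and therefore the right-hand sides of \eqref{S3: A2}--\eqref{S3: A4} as well — one should double-check that enlarging $\omega$ preserves those inequalities, which it does since $C_1<2$ is a fixed slack and both sides scale consistently) yields $\Rp\aw(u,u)\ge M\|u\|_X^2$. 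I expect the main obstacle to be getting the weight exponents to match up precisely in the Cauchy--Schwarz/Young step — i.e. confirming that the single power $\rho^{1-\lambda}$ split off of $|b|_g$ distributes as $\rho^{?}$ on $|\nabla u|_g$ and $\rho^{?}$ on $|u|_g$ in a way consistent with the definitions of the $X$-norm and the $L_2^{\lambda^\prime}$-pairing; everything else is Cauchy--Schwarz plus Young plus (A2)--(A3).
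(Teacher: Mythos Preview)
Your plan is correct and matches the paper's proof essentially step for step: split $\aw$ into its three pieces, use (A1) and Cauchy--Schwarz for continuity, and for coercivity use (A2) on the principal part, the pointwise bound \eqref{S3: A2} on the drift after factoring out $\rho^{\lambda-1}$, and then Young's inequality $C_1 xy \le \tfrac{C_1^2}{4}x^2 + y^2$ (with $x=\sqrt{C_{\hat\sigma}}\,\rho^{\lambda'+1-\lambda/2+\tau-\sigma}|\nabla u|_g$, $y=\sqrt{\Rp(\rho^\lambda a_0)+\omega_\cA}\,\rho^{\lambda'-\lambda/2+\tau-\sigma}|u|_g$) to absorb the cross term, leaving $(1-\tfrac{C_1^2}{4})C_{\hat\sigma}$ on the gradient piece and $(\omega-\omega_\cA)$ on the zeroth-order piece; your weight bookkeeping is exactly right. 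One small correction: your parenthetical that ``enlarging $\omega$ enlarges $\Rp(\rho^\lambda a_0)+\omega_\cA$'' is not how (A3) works --- $\omega_\cA$ is a fixed threshold determined by the coefficients, and the inequalities \eqref{S3: A3}--\eqref{S3: A4} involve only $\omega_\cA$, not $\omega$; the only place $\omega$ enters is the surviving coefficient $(\omega-\omega_\cA)>0$, which is already positive for \emph{any} $\omega>\omega_\cA$, so no further ``large enough'' is needed beyond that.
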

\begin{proof}
(i) By \cite[formula~(5.8)]{Ama13b}, we have
\begin{align}
\label{S3: gd log r}
\gd \log \rho \in BC^{1,2}(\M,T\M).
\end{align}
Proposition~\ref{S2: pointwise multiplication}, (A1) and Lemma~\ref{S3: pts-mul} then imply that
$$\vec{a}\cdot \gd\log \rho= \ev(\vec{a}, \gd \log \rho)\in BC^{1,\lambda}(\M,T\M). $$
For any $u,v\in X$, 
\begin{align*}
&\quad|\aw(u,v)|\\
&\leq  \int\limits_\M \rho^{2\lambda^\prime+2\tau-2\sigma} |\vec{a}\cdot \gd u|_g |\gd v|_{g}\, dV_g\\
&\quad + \int\limits_\M \rho^{2\lambda^\prime+2\tau-2\sigma}|(2\lambda^\prime+2\tau-2\sigma)\vec{a}\cdot\gd \log \rho+a_1|_g |\nabla u|_{g} |v|_g \, dV_g\\
&\quad + \int\limits_\M \rho^{2\lambda^\prime+2\tau-2\sigma}(\rho^{\lambda}a_0+\omega)|\rho^{-\lambda/2}u|_g |\rho^{-\lambda/2}v|_g\, dV_g \\
& \leq \|\rho^{\lambda-2} \vec{a}\|_\infty^\M (\int\limits_\M |\rho^{\lambda^\prime+1-\lambda/2+\tau-\sigma}|\nabla u|_{g}|^2\, dV_g)^{1/2}(\int\limits_\M |\rho^{\lambda^\prime+1-\lambda/2+\tau-\sigma}|\nabla v|_{g}|^2\, dV_g)^{1/2}\\
&\quad + \|\rho^{\lambda-1} |(2\lambda^\prime+2\tau-2\sigma)\vec{a}\cdot\gd \log \rho+a_1|_g\|_\infty\\
&\quad
 (\int\limits_\M |\rho^{\lambda^\prime+1-\lambda/2+\tau-\sigma}|\nabla u|_{g}|^2\, dV_g)^{1/2}(\int\limits_\M |\rho^{\lambda^\prime-\lambda/2+\tau-\sigma}|v|_{g}|^2\, dV_g)^{1/2}\\
 &\quad + \|\rho^{\lambda}a_0 +\omega\|_\infty (\int\limits_\M |\rho^{\lambda^\prime-\lambda/2+\tau-\sigma}u|_g^2\, dV_g)^{1/2}(\int\limits_\M |\rho^{\lambda^\prime-\lambda/2+\tau-\sigma}v|_{g}^2\, dV_g)^{1/2}\\
 &\leq M(\omega)\|u\|_X\|v\|_X.
\end{align*}
This proves the continuity of $\aw$.

(ii) 
Given any $u\in X$, we have
\begin{align*}
&\quad \Rp(\aw(u,u))& \\
& \geq C_{\hat{\sigma}}\int\limits_\M |\rho^{\lambda^\prime+1-\lambda/2+\tau-\sigma}|\gd u|_{g}|^2\, dV_g  \\
&\quad- C_1\int\limits_\M \sqrt{C_{\hat{\sigma}}(\Rp(\rho^{\lambda}a_0)+\omega_\cA)}|\rho^{\lambda^\prime-\lambda/2+\tau-\sigma} u|_g|\rho^{\lambda^\prime+1-\lambda/2+\tau-\sigma} \nabla u|_{g}\, dV_g\\
&\quad + \int\limits_\M (\Rp(\rho^{\lambda} a_0) +\omega)|\rho^{\lambda^\prime-\lambda/2+\tau-\sigma}u|_g^2\, dV_g\\
&\geq (1-\frac{C_1^2}{4})C_{\hat{\sigma}}\int\limits_\M |\rho^{\lambda^\prime+1-\lambda/2+\tau-\sigma}|\nabla u|_{g}|^2\, dV_g +(\omega-\omega_\cA)\int\limits_\M |\rho^{\lambda^\prime-\lambda/2+\tau-\sigma}u|^2\, dV_g\\
&\geq M(\omega) \|u\|^2_X
\end{align*}
for all $\omega>\omega_\cA$ and some $M(\omega)>0$. In the second line, we have adopted Lemma~\ref{S3: pts-mul} and \eqref{S3: A2}.
\end{proof}
Proposition~\ref{S3: cont-L_2-coer} shows that $\aw$ with $D(\aw)=X$ is densely defined, sectorial and closed on $L_2^{\lambda^\prime}(\M, V)$. By \cite[Theorems~VI.2.1, IX.1.24]{Kato80}, there exists an associated operator $T$ such that $-T$ generates a contractive strongly continuous analytic semigroup on $L_2^{\lambda^\prime}(\M, V)$, i.e., $\|e^{-tT}\|_{\L(L_2^{\lambda^\prime}(\M, V))}\leq 1$ for all $t\geq 0$, with domain 
$$D(T):=\{u\in X, \exists ! v\in L_2^{\lambda^\prime}(\M, V):\aw(u,\phi)=\langle v | \phi \rangle_{2,\lambda^\prime}, \forall \phi\in X \},\quad T u=v, $$
which is a core of $\aw$.
$T$ is unique in the sense that there exists only one operator satisfying 
$$\aw(u,v)= \langle T u, v \rangle_{2,\lambda^\prime},\quad u\in D(T),\, v\in X.$$
On the other hand,  by \eqref{S2: nabla-grad} and definition~\eqref{S2: def-nabla-neg}, we can get
$$\langle \Aw u | v \rangle_{2,\lambda^\prime}= \aw(u,v),\quad u,v\in X.$$
So by the uniqueness of $T$, we have
$$\Aw|_{D(T)}=T .$$
Therefore, $-\Aw$ generates a contractive strongly continuous analytic semigroup on $L_2^{\lambda^\prime}(\M, V)$ with domain $D(\Aw)$:
$$D(\Aw):=\{u\in X, \exists! v\in L_2^{\lambda^\prime}(\M, V):\aw(u,\phi)=\langle v | \phi \rangle_{2,\lambda^\prime}, \forall \phi\in X \},\quad \Aw u=v. $$

In the rest of this subsection, our aim is to show that $D(\Aw)\doteq \mathring{W}^{2,\lambda^\prime-\lambda}_2(\M, V)$. 
Define 
$$\Bw u:=-\div(\rho^{\lambda} \vec{a}\cdot \gd  u) +  \ev(\nabla u, \rho^{\lambda} a_1) + (\rho^{\lambda} a_0 +\omega)u.$$

Recall an operator $A$ is said to belong to the class $\cH(E_1,E_0)$ for some densely embedded Banach couple $E_1\overset{d}{\hookrightarrow}E_0$, if $-A$ generates a strongly continuous analytic semigroup on $E_0$ with $dom(-A)=E_1$. 

(A1)-(A2) imply that
$$\rho^{\lambda} \vec{a}\in BC^{1,-2}(\M,T^1_1\M),\quad \rho^{\lambda}a_1\in L_\infty(\M,T\M),\quad \rho^{\lambda}a_0\in L_\infty(\M), $$
and
$$( \rho^{\lambda} \vec{a}\cdot \xi | \xi)_{g(\p)} \geq C_{\hat{\sigma}} \rho^2(\p)|\xi|_{g(\p)}^2,\quad \xi\in V^{\sigma+1}_\tau,\quad \p\in\M. $$
By \cite[Theorem~5.2]{Ama13b}, we obtain
\begin{align}
\label{S3.1: Max-reg-Bw}
\Bw\in \cH(\mathring{W}^{2,\lambda^\prime-\lambda}_2(\M, V), L^{\lambda^\prime-\lambda}_2(\M, V)).
\end{align}
Note that although  \cite[Theorem~5.2]{Ama13b} is only formulated for scalar functions, this theorem can be easily generalized to arbitrary tensor fields.

For any $u\in\mathcal{D}(\M,V)$, one checks that
\begin{align}
\label{S3: Aw & Bw}
\rho^{-\lambda}\Bw u= \Aw u -\lambda\ev(\nabla u, \vec{a}\cdot \gd \log \rho)=:\Aw u +\cP u.
\end{align}
It follows from Propositions~\ref{S2: pointwise multiplication}, \ref{S2: nabla}  and \eqref{S3: gd log r} that 
$$\cP \in \L(\mathring{W}^{1,\lambda^\prime-\lambda}_2(\M, V),  L_2^{\lambda^\prime}(\M, V)). $$
Combining with Proposition~\ref{S2: change of wgt}, we have
$$\rho^{\lambda}\cP \in \L(\mathring{W}^{1,\lambda^\prime-\lambda}_2(\M, V),  L_2^{\lambda^\prime-\lambda}(\M, V)). $$
Let $\fB_\omega:=\Bw-\rho^{\lambda}\cP$.
By well-known perturbation results of analytic semigroups and Definition~\eqref{S2: fractional Sobolev}, we infer that
\begin{align}
\label{S3: Bw-P}
\fB_\omega \in \cH(\mathring{W}^{2,\lambda^\prime-\lambda}_2(V), L^{\lambda^\prime-\lambda}_2(V)). 
\end{align}

Then for $\omega>\omega_\cA$, the previous discussion on $\Aw$ and \eqref{S3: A4} show that $-\fB_\omega$ generates a contractive strongly continuous analytic semigroup on $L_2^{\lambda^\prime-\lambda}(V)$. Then, together with \eqref{S3.1: Max-reg-Bw}, this implies that for $\omega$ sufficiently large,
$$\fB_\omega\in \Lis(D(\fB_\omega), L^{\lambda^\prime-\lambda}_2(V))\cap\Lis(\mathring{W}^{2,\lambda^\prime-\lambda}_2(V), L^{\lambda^\prime-\lambda}_2(V)).$$
Now we infer that $D(\fB_\omega)\doteq \mathring{W}^{2,\lambda^\prime-\lambda}_2(V)$.  
Observe that $D(\fB_\omega)$ is invariant for $\omega>\omega_\cA$. Thus for all $\omega>\omega_\cA$, the operator $-\fB_\omega$ generates a contractive strongly continuous analytic semigroup on $L_2^{\lambda^\prime-\lambda}(V)$ with domain $\mathring{W}^{2,\lambda^\prime-\lambda}_2(V)$.

\begin{theorem}
\label{S3: domain}
Suppose that the differential operator 
$$\cA u:= -\div(\vec{a}\cdot\gd u)+\ev(\nabla u, a_1) +a_0 u,$$ 
is $(\rho,\lambda)$-regular and $(\rho,\lambda)$-singular elliptic, and  
the constant $\omega$ satisfies  (A3). 
Define $\Aw:=\cA +\omega \rho^{-\lambda}.$
Then
$$\Aw\in \cH(\mathring{W}^{2,\lambda^\prime-\lambda}_2(\M, V), L_2^{\lambda^\prime}(\M, V)) \cap \Lis(\mathring{W}^{2,\lambda^\prime-\lambda}_2(\M, V), L_2^{\lambda^\prime}(\M, V)), $$
and the semigroup $\{e^{-t\Aw}\}_{t\geq 0}$ is contractive.
\end{theorem}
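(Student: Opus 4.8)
The plan is to leverage the fact that most of the work is already done: the discussion preceding the statement shows that $-\Aw$ generates a contractive strongly continuous analytic semigroup on $L_2^{\lambda^\prime}(\M, V)$ with domain $D(\Aw)$, the latter described abstractly through the form $\aw$. It remains to establish the two assertions $D(\Aw)\doteq\mathring{W}^{2,\lambda^\prime-\lambda}_2(\M, V)$ and $\Aw\in\Lis(\mathring{W}^{2,\lambda^\prime-\lambda}_2(\M, V),L_2^{\lambda^\prime}(\M, V))$; contractivity and analyticity then follow at once. The device I would use is the factorization
$$\Aw u=\rho^{-\lambda}\Bw u-\cP u=\rho^{-\lambda}\bigl(\Bw-\rho^{\lambda}\cP\bigr)u=\rho^{-\lambda}\fB_\omega u\qquad(u\in\mathcal{D}(\M,V)),$$
obtained by rewriting \eqref{S3: Aw & Bw}, together with the facts that $\fB_\omega$ is uniformly strongly $\rho$-elliptic with $D(\fB_\omega)\doteq\mathring{W}^{2,\lambda^\prime-\lambda}_2(\M, V)$ (see \eqref{S3: Bw-P} and the ensuing discussion) and that $\rho^{-\lambda}$ is an isomorphism between the weighted $L_2$-scales by Proposition~\ref{S2: change of wgt}.

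First I would check that the \emph{differential} operator $\Aw$ belongs to $\L(\mathring{W}^{2,\lambda^\prime-\lambda}_2(\M, V),L_2^{\lambda^\prime}(\M, V))$: the leading term is treated by Propositions~\ref{S2: nabla}, \ref{S2: sharp-flat}, \ref{S2: div-tangent} and the multiplier theorem Proposition~\ref{S2: pointwise multiplication} using (A1) and \eqref{S3: gd log r}, while $\ev(\nabla u,a_1)$, $a_0u$ and $\omega\rho^{-\lambda}u$ are controlled via the pointwise bounds of Lemma~\ref{S3: pts-mul}, (A1) and Proposition~\ref{S2: change of wgt} — the weights balancing precisely because $\vec a\in BC^{1,\lambda-2}$, $a_1,a_0\in L^\lambda_\infty$. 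Consequently both $\Aw$ and $\rho^{-\lambda}\fB_\omega$ lie in $\L(\mathring{W}^{2,\lambda^\prime-\lambda}_2(\M, V),L_2^{\lambda^\prime}(\M, V))$, so the identity above extends from $\mathcal{D}(\M,V)$ to all of $\mathring{W}^{2,\lambda^\prime-\lambda}_2(\M, V)$ by density. Next, the preceding discussion (with \eqref{S3: A4} replacing \eqref{S3: A2}) shows, for every $\omega>\omega_\cA$, that $-\fB_\omega$ is associated with a continuous, coercive form on $L_2^{\lambda^\prime-\lambda}(\M, V)$ and is therefore invertible there; since $D(\fB_\omega)\doteq\mathring{W}^{2,\lambda^\prime-\lambda}_2(\M, V)$, this gives $\fB_\omega\in\Lis(\mathring{W}^{2,\lambda^\prime-\lambda}_2(\M, V),L_2^{\lambda^\prime-\lambda}(\M, V))$, and composing with $\rho^{-\lambda}\in\Lis(L_2^{\lambda^\prime-\lambda}(\M, V),L_2^{\lambda^\prime}(\M, V))$ yields $\Aw\in\Lis(\mathring{W}^{2,\lambda^\prime-\lambda}_2(\M, V),L_2^{\lambda^\prime}(\M, V))$. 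For the domain, \eqref{S3: rho & lambda} gives the continuous embeddings $\mathring{W}^{2,\lambda^\prime-\lambda}_2(\M, V)\hookrightarrow X$ and $\mathring{W}^{2,\lambda^\prime-\lambda}_2(\M, V)\hookrightarrow L_2^{\lambda^\prime}(\M, V)$; combined with the identity $\aw(u,\phi)=\langle\Aw u | \phi\rangle_{2,\lambda^\prime}$ for $u,\phi\in X$ (and $\Aw u\in L_2^{\lambda^\prime}$ from the first step) this forces $\mathring{W}^{2,\lambda^\prime-\lambda}_2(\M, V)\subseteq D(\Aw)$, with the differential and form operators agreeing on this subspace. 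For the reverse inclusion I would exploit that the form operator $\Aw$ is injective on $L_2^{\lambda^\prime}(\M, V)$: testing $\aw(u,u)$ against the $X$-coercivity of Proposition~\ref{S3: cont-L_2-coer} and the embedding $X\hookrightarrow L_2^{\lambda^\prime}(\M, V)$ gives $\|\Aw u\|_{L_2^{\lambda^\prime}}\ge c\,\|u\|_{L_2^{\lambda^\prime}}$, so any $u\in D(\Aw)$ must coincide with the unique preimage in $\mathring{W}^{2,\lambda^\prime-\lambda}_2(\M, V)$ of $\Aw u$ supplied by the isomorphism just established. Hence $D(\Aw)=\mathring{W}^{2,\lambda^\prime-\lambda}_2(\M, V)$, with equivalence of norms by the open mapping theorem.

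The main obstacle — and the reason the domain cannot be read off more cheaply — is that $\rho^{-\lambda}$ is \emph{unbounded} on $L_2^{\lambda^\prime}(\M, V)$, so $\Aw$ is not a (relatively) bounded perturbation of $\fB_\omega$ on $L_2^{\lambda^\prime}(\M, V)$ and the factorization $\Aw=\rho^{-\lambda}\fB_\omega$ is \emph{not} a conjugation; hence neither the generator property nor the domain can be transported automatically from $L_2^{\lambda^\prime-\lambda}$ to $L_2^{\lambda^\prime}$, which is precisely why the generation on $L_2^{\lambda^\prime}$ had to be obtained directly through $\aw$ and why the domain must be identified via the injectivity/surjectivity argument above. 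A secondary technical point is that the preceding discussion records $\fB_\omega\in\Lis$ only for $\omega$ large; to cover all $\omega>\omega_\cA$ one either invokes, as above, the coercivity of the $\fB_\omega$-form throughout that range, or reduces to the large-$\omega$ case by observing that $\Aw-\mathcal{A}_{\omega_0}=(\omega-\omega_0)\rho^{-\lambda}$ factors through $\mathring{W}^{1,\lambda^\prime-\lambda}_2(\M, V)$ (Proposition~\ref{S2: change of wgt}) and is thus a lower-order perturbation, altering neither the generator property nor the domain.
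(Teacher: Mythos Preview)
Your proposal is correct and follows essentially the same route as the paper: boundedness of $\Aw$ into $L_2^{\lambda^\prime}$ gives the inclusion $\mathring{W}^{2,\lambda^\prime-\lambda}_2\hookrightarrow D(\Aw)$, the factorization $\Aw=\rho^{-\lambda}\fB_\omega$ together with the already established facts about $\fB_\omega$ (equations \eqref{S3: Bw-P} and the subsequent paragraph) yields $\Aw\in\Lis(\mathring{W}^{2,\lambda^\prime-\lambda}_2,L_2^{\lambda^\prime})$, and the reverse inclusion follows by injectivity. The paper compresses the reverse inclusion into the single line ``Now by \eqref{S3: Bw-Lis}, we can establish $D(\Aw)\doteq\mathring{W}^{2,\lambda^\prime-\lambda}_2$'', whereas you make explicit both the embedding $\mathring{W}^{2,\lambda^\prime-\lambda}_2\hookrightarrow X$ (via \eqref{S3: rho & lambda}) and the coercivity-based injectivity on $D(\Aw)$; these are exactly the ingredients the paper's one-liner is relying on.
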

\begin{proof}
By Propositions~\ref{S2: pointwise multiplication}, \ref{S2: nabla}, \ref{S2: div-tangent} and Lemma~\ref{S3: pts-mul}, we obtain
$$\Aw \in\L(\mathring{W}^{2,\lambda^\prime-\lambda}_2(\M, V),L_2^{\lambda^\prime}(\M,V)). $$
This implies together with the definition of $D(\Aw)$ that 
$$\mathring{W}^{2,\lambda^\prime-\lambda}_2(\M, V) \hookrightarrow D(\Aw). $$
We have shown that for $\omega>\omega_\cA$, 
\begin{align}
\label{S3: Bw-Lis}
\Aw=\rho^{-\lambda}\fB_\omega \in\Lis(\mathring{W}^{2,\lambda^\prime-\lambda}_2(\M, V),L_2^{\lambda^\prime}(\M, V)).
\end{align}
Now by \eqref{S3: Bw-Lis}, we can establish
$$D(\Aw)\doteq \mathring{W}^{2,\lambda^\prime-\lambda}_2(\M, V). $$
The asserted statement thus follows.
\end{proof}
\begin{cor}
Suppose that $\cA$ is $(\rho,\lambda)$-regular and $(\rho,\lambda)$-singular elliptic, and  satisfies  (A3'). Then 
$$\cA\in \cH(\mathring{W}^{2,\lambda^\prime-\lambda}_2(\M, V), L_2^{\lambda^\prime}(\M, V)) \cap \Lis(\mathring{W}^{2,\lambda^\prime-\lambda}_2(\M, V), L_2^{\lambda^\prime}(\M, V)), $$
and the semigroup $\{e^{-t\cA}\}_{t\geq 0}$ is contractive.
\end{cor}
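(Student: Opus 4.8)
The plan is to observe that hypothesis (A3$'$) is precisely the instance $\omega=0$ of hypothesis (A3). Indeed, (A3$'$) postulates a constant $C_1<2$ and a number $\omega_\cA<0$ for which the three displayed inequalities hold; since $\omega_\cA<0$ we automatically have $0>\omega_\cA$, so the choice $\omega:=0$ satisfies the requirement $\omega>\omega_\cA$ imposed in (A3), and with this choice the three inequalities \eqref{S3: A3}, \eqref{S3: A2}, \eqref{S3: A4} are exactly those listed in (A3$'$). Moreover $\cA_0=\cA+0\cdot\rho^{-\lambda}=\cA$. Thus $\cA$ is $(\rho,\lambda)$-regular, $(\rho,\lambda)$-singular elliptic, and satisfies (A3) with $\omega=0$.

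Given this, I would simply invoke Theorem~\ref{S3: domain} with $\omega=0$. That theorem yields
$$\cA=\cA_0\in \cH(\mathring{W}^{2,\lambda^\prime-\lambda}_2(\M, V), L_2^{\lambda^\prime}(\M, V)) \cap \Lis(\mathring{W}^{2,\lambda^\prime-\lambda}_2(\M, V), L_2^{\lambda^\prime}(\M, V)),$$
together with the contractivity of the semigroup $\{e^{-t\cA_0}\}_{t\ge0}=\{e^{-t\cA}\}_{t\ge0}$, which is exactly the assertion of the corollary.

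The only point worth checking carefully — and it is already guaranteed by the way Section~3 is written — is that every step leading to Theorem~\ref{S3: domain} used only $\omega>\omega_\cA$ and never $\omega>0$ by itself: the running restriction \eqref{S3: rho & lambda} and assumptions (A1), (A2) are independent of $\omega$; Proposition~\ref{S3: cont-L_2-coer} produces continuity and $X$-coercivity of $\aw$ for all $\omega>\omega_\cA$; and the isomorphism \eqref{S3: Bw-Lis}, hence $D(\Aw)\doteq\mathring{W}^{2,\lambda^\prime-\lambda}_2(\M,V)$, was established for all $\omega>\omega_\cA$. In particular, the condition $\operatorname{essinf}(\Rp(\rho^\lambda a_0)+\omega_\cA)>0$ with $\omega_\cA<0$ forces $\Rp(\rho^\lambda a_0)$ to be bounded below by the positive constant $-\omega_\cA$; this is the ``largeness of the potential term'' that here plays the role of the compensation $\omega\rho^{-\lambda}$, and it is built into (A3$'$). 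Consequently there is no genuine obstacle: the corollary is a direct specialization of Theorem~\ref{S3: domain}, and the proof is a one-line reduction.
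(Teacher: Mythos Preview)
Your reduction is exactly the intended one: the paper states the corollary immediately after Theorem~\ref{S3: domain} without proof, and your observation that (A3$'$) is precisely (A3) with $\omega=0$ (which is admissible because $\omega_\cA<0$) together with $\cA_0=\cA$ is the one-line argument the paper leaves implicit. Your additional check that all steps leading to Theorem~\ref{S3: domain} use only $\omega>\omega_\cA$ rather than $\omega>0$ is correct and makes the reduction airtight.
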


\subsection{\bf $L_p$-theory for scalar functions}
In this subsection, we assume that $V=\C$ and abbreviate the corresponding functions space to be $\mathring{W}^{s,\vartheta}_p(\M)$. The aim of this subsection is to prove that the differential operator $\Aw$ generates a contractive strongly continuous analytic semigroup on $L_p^{\lambda^\prime}(\M)$ with $1<p<\infty$ for large $\omega$.

We first show the following Riesz-Thorin interpolation theorem for the weighted $L_p$-spaces with $1\leq p\leq \infty$.
\begin{lem}
\label{S3.2: Riesz-Thorin}
Let $1\leq p_0 <p_1\leq \infty$, $\theta\in (0,1)$, and $\vartheta\in\R$. Define $\frac{1}{p_\theta}=\frac{1-\theta}{p_0} +\frac{\theta}{p_1}$. Then for every $f\in L_{p_0}^\vartheta(\M)\cap L_{p_1}^\vartheta(\M)$,
$$\|f\|_{L_{p_\theta}^\vartheta} \leq \|f\|_{L_{p_0}^\vartheta}^{1-\theta} \|f\|_{L_{p_1}^\vartheta}^\theta. $$ 
\end{lem}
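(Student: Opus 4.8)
The plan is to reduce this weighted Riesz--Thorin statement to the classical scalar Riesz--Thorin (three-lines) theorem by absorbing the weight into the function. Concretely, recall from Section~2.2 that $L_q^\vartheta(\M)$ carries the norm $\|u\|_{L_q^\vartheta}=\|\rho^{\vartheta}u\|_q$ for $1\le q<\infty$ (here $\sigma=\tau=0$ since $V=\C$, and there is no $m/q$-type correction at the level of the manifold norm, only in the discrete spaces $l_q^\vartheta$). So the first step is simply to set $F:=\rho^\vartheta f$ and observe that $\|f\|_{L_{q}^\vartheta}=\|F\|_{L_q(\M,dV_g)}$ for each $q\in\{p_0,p_1,p_\theta\}$ — being careful to handle $q=\infty$ separately, where $\|f\|_{L_\infty^\vartheta}=\|\rho^\vartheta f\|_\infty=\|F\|_\infty$ as well, so the identity is uniform in $q$.

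Once the weight is absorbed, the claim becomes exactly $\|F\|_{L_{p_\theta}(\M)}\le\|F\|_{L_{p_0}(\M)}^{1-\theta}\|F\|_{L_{p_1}(\M)}^{\theta}$ for $F\in L_{p_0}(\M)\cap L_{p_1}(\M)$ on the measure space $(\M,dV_g)$. This is the standard Lyapunov/H\"older interpolation inequality for $L_p$-norms on an arbitrary ($\sigma$-finite) measure space: write $1/p_\theta=(1-\theta)/p_0+\theta/p_1$, split $|F|^{p_\theta}=|F|^{p_\theta(1-\theta)p_0/p_0}\cdot|F|^{p_\theta\theta p_1/p_1}$ — i.e. apply H\"older with conjugate exponents $\frac{p_0}{p_\theta(1-\theta)}$ and $\frac{p_1}{p_\theta\theta}$ to $|F|^{p_\theta(1-\theta)}$ and $|F|^{p_\theta\theta}$ — and take $p_\theta$-th roots. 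The case $p_1=\infty$ degenerates to the elementary bound $\int|F|^{p_\theta}\le\|F\|_\infty^{\theta p_\theta}\int|F|^{p_0(1-\theta)p_\theta/p_0}$, which after simplification ($p_0(1-\theta)p_\theta\cdot\frac{1}{p_0}$ is not quite $p_0$, so one uses instead $|F|^{p_\theta}\le\|F\|_\infty^{p_\theta-p_0}|F|^{p_0}$ since $p_\theta\ge p_0$) gives the result directly.

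Alternatively, and perhaps more in the spirit of the paper's title for the lemma, one can invoke the Riesz--Thorin theorem proper: consider the identity operator as a map, or rather observe that this is the classical statement that $L_{p_\theta}$ is the complex-interpolation space $[L_{p_0},L_{p_1}]_\theta$ with norm domination; but since we only need the scalar inequality for a fixed function and not operator norm interpolation, the H\"older argument above is cleaner and self-contained, so I would present that. I do not anticipate a genuine obstacle here: the only thing requiring a moment's care is the bookkeeping of exponents and the $q=\infty$ endpoint, together with noting $\sigma$-finiteness of $(\M,dV_g)$ (which follows since $\M$ is separable and second-countable, hence $dV_g$ is $\sigma$-finite). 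The proof will therefore be short: absorb the weight, then apply H\"older.
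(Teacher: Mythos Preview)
Your approach is correct and essentially identical to the paper's: the paper also observes that $u\mapsto\rho^\vartheta u$ is an isometry $L_p^\vartheta(\M)\to L_p(\M)$ for all $1\le p\le\infty$, and then applies the classical (unweighted) interpolation inequality in one line. Your additional H\"older-based justification of the classical inequality and the $p_1=\infty$ endpoint are extra detail that the paper omits.
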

\begin{proof}
Observe that the operator $f_\vartheta$ defined in Proposition~\ref{S2: change of wgt} is indeed an isometry from $L_p^\vartheta(\M)$ to $L_p(\M)$ for $1\leq p\leq \infty$. Then we have
\begin{align*}
\|f\|_{L_{p_\theta}^\vartheta}= \|\rho^\vartheta f\|_{L_{\p_\theta}}\leq  \|\rho^\vartheta f\|_{L_{p_0}}^{1-\theta} \|\rho^\vartheta f\|_{L_{p_1}}^\theta = \|f\|_{L_{p_0}^\vartheta}^{1-\theta} \|f\|_{L_{p_1}^\vartheta}^\theta.
\end{align*}
\end{proof}

The adjoint, $\Awla$, of $\Aw$ with respect to $L_2^{\lambda^\prime/2}(\M)$ can be easily computed as follows.
$$\Awla u = -\div(\vec{a}\cdot \gd u) -\ev(\nabla u , 2\lambda^\prime\vec{a}\cdot\gd \log \rho+\bar{a}_1 ) + (b(\lambda^\prime,\ba) +\omega \rho^{-\lambda})u, $$
where with $\ba:=(\vec{a},a_1,a_0)$
\begin{align*}
b(\lambda^\prime,\ba):= \bar{a}_0 - \div(\lambda^\prime \vec{a}\cdot\gd\log\rho+\bar{a}_1)-\lambda^\prime(\lambda^\prime \vec{a}\cdot\gd\log\rho+\bar{a}_1 |  \gd\log\rho)_g.
\end{align*}
Here we have used the equality 
$$
\langle \ev( \nabla u, a) | v \rangle_{2,\lambda^\prime/2}= -\langle u | \ev(\nabla v, \bar{a})\rangle_{2,\lambda^\prime/2}- \langle u |(\div\bar{a} +\lambda^\prime (\bar{a}| \gd\log\rho)_g ) v\rangle_{2,\lambda^\prime/2} 
$$
for $a\in C^1(\M,T\M)$ and $u,v \in \mathcal{D}(\mathring{\M})$.

The adjoint, $\Awl:=(\Awla)^*$, of $\Awla$ with respect to $L_2 (\M)$ is 
\begin{align*}
\Awl u =-\div(\vec{a}\cdot \gd u) +\ev(\nabla u , 2\lambda^\prime\vec{a}\cdot\gd \log \rho+a_1 ) + (\tilde{b}(\lambda^\prime,\ba) +\omega \rho^{-\lambda})u,
\end{align*}
where
\begin{align*}
\tilde{b}(\lambda^\prime,\ba):= a_0 + \div(\lambda^\prime \vec{a}\cdot\gd\log\rho)-\lambda^\prime(\lambda^\prime \vec{a}\cdot\gd\log\rho+a_1 |  \gd\log\rho)_g.
\end{align*}

We impose the following conditions on the compensation term $\omega \rho^{-\lambda}$.
\begin{itemize}
\item[(A4)] $\omega>\omega_\cA$, where $\omega_\cA\in\R$ satisfies for some $C_1<2$
\begin{align*}
&\text{essinf}(\Rp(\rho^{\lambda}b(\lambda^\prime,\ba) +\omega_\cA)>0;\\
&\rho^{\lambda-1} |2\lambda^\prime \vec{a}\cdot\gd \log \rho+a_1|_g \leq C_1 \sqrt{C_{\hat{\sigma}}(\Rp(\rho^{\lambda}b(\lambda^\prime,\ba)) +\omega_\cA)};\\
&\text{essinf}(\Rp(\rho^{\lambda}\tilde{b}(\lambda^\prime,\ba) +\omega_\cA)>0;\\
&\rho^{\lambda-1} |2\lambda^\prime \vec{a}\cdot\gd \log \rho+a_1|_g \leq C_1 \sqrt{C_{\hat{\sigma}}(\Rp(\rho^{\lambda}\tilde{b}(\lambda^\prime,\ba)) +\omega_\cA)},
\end{align*}
\end{itemize}
and
\begin{itemize}
\item[(A5)] $\omega>\omega_\cA$, where $\omega_\cA\in\R$ satisfies for some $C_1<2$
\begin{align*}
&\text{essinf}(\Rp(\rho^{\lambda}\tilde{b}(\lambda^\prime-\lambda,\ba) +\omega_\cA)>0;\\
&\rho^{\lambda-1} |(2\lambda^\prime -\lambda) \vec{a}\cdot\gd \log \rho+a_1|_g \leq C_1 \sqrt{C_{\hat{\sigma}}(\Rp(\rho^{\lambda}b(\lambda^\prime,\ba)) +\omega_\cA)};\\
&\rho^{\lambda-1} |(2\lambda^\prime -\lambda) \vec{a}\cdot\gd \log \rho+a_1|_g \leq C_1 \sqrt{C_{\hat{\sigma}}(\Rp(\rho^{\lambda}\tilde{b}(\lambda^\prime-\lambda,\ba)) +\omega_\cA)}.
\end{align*}
\end{itemize}
We can also formulate an analogue of (A3') for the largeness of the potential term $a_0$ to replace the compensation condition~(A4) and (A5).
\smallskip\\
Then the discussion in Section~3.1 and (A4) imply that $-\Awla$ and $-\Awl$ generate contractive strongly continuous analytic semigroups on $L_2(\M)$ for all $\omega$ satisfying (A4). 

\begin{definition}
Let $q\in [1,\infty]$ and $\vartheta\in\R$.
A strongly continuous semigroup $\{T(t)\}_{t\geq 0}$ on $L_2^\vartheta(\M)$ is said to be $L_q^\vartheta$-contractive if
$$ \|T(t) u\|_{0,q;\vartheta} \leq \|u\|_{0,q;\vartheta},\quad t\geq 0,\quad   u\in L_2^\vartheta(\M)\cap L_q^\vartheta(\M).$$
\end{definition}

\begin{theorem}
\label{S3.2: Lp-continuity}
Suppose that the differential operator 
$$\cA u:= -\div(\vec{a}\cdot\gd u)+\ev(\nabla u, a_1) +a_0 u,$$ 
is $(\rho,\lambda)$-regular and $(\rho,\lambda)$-singular elliptic. 
For $\omega$ satisfying (A3)-(A5),
define $\Aw:=\cA +\omega \rho^{-\lambda}.$
Then
$$\Aw\in\cH(\mathring{W}^{2,\lambda^\prime-\lambda}_p(\M),L_p^{\lambda^\prime}(\M)) \cap \Lis(\mathring{W}^{2,\lambda^\prime-\lambda}_p(\M),L_p^{\lambda^\prime}(\M)), \quad 1<p<\infty ,$$
and the semigroup $\{e^{-t\Aw}\}_{t\geq 0}$ is contractive.
\end{theorem}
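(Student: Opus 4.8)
The plan is to obtain the $L_p$-result by an interpolation–duality argument, bootstrapping from the $L_2$-theory established in Theorem~\ref{S3: domain}. First I would record that, by Theorem~\ref{S3: domain} applied with weight $\lambda^\prime/2$ (so that the relevant $L_2$-space is $L_2^{\lambda^\prime/2}(\M)$ in place of $L_2^{\lambda^\prime}(\M)$), the operator $-\Aw$ generates a contractive strongly continuous analytic semigroup on $L_2^{\lambda^\prime/2}(\M)$. The same theorem applied to the formal adjoints shows, using (A4), that $-\Awla$ and $-\Awl$ generate contractive strongly continuous analytic semigroups on $L_2(\M)$, as already noted in the text preceding the statement. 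The point of introducing $\Awla$ and $\Awl$ is that $e^{-t\Awl}=(e^{-t\Awla})^*$, so contractivity of $e^{-t\Awla}$ on $L_2(\M)$ gives contractivity of $e^{-t\Awl}$ on $L_2(\M)$ as well; the weighted isometry $f_\vartheta$ of Proposition~\ref{S2: change of wgt} then transports these contractivity statements to the weighted spaces $L_1^{\lambda^\prime}(\M)$ and $L_\infty^{\lambda^\prime}(\M)$.

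The main step is to show that $\{e^{-t\Aw}\}_{t\ge 0}$ is simultaneously $L_1^{\lambda^\prime}$- and $L_\infty^{\lambda^\prime}$-contractive. The $L_\infty^{\lambda^\prime}$-contractivity should follow from an Ouhabaz-type criterion applied to the form $\aw$ with weight $\lambda^\prime/2$: one checks that the form is invariant under truncations $u\mapsto (1\wedge |u|/|u|)u$ after conjugation by $\rho^{\lambda^\prime/2}$, which is exactly what conditions (A3) and the structure of $\aw$ are designed to deliver. Equivalently, and perhaps more cleanly given what is already in the paper, I would identify $e^{-t\Aw}$ acting on $L_1^{\lambda^\prime}$ with the semigroup generated (via the $L_2(\M)$-theory and $f_\vartheta$) by $-\rho^{-\lambda^\prime/2}\Awla\rho^{\lambda^\prime/2}$-type conjugates, whose contractivity on $L_2(\M)$ has been arranged through (A4)–(A5); then $L_1^{\lambda^\prime}$-contractivity of $e^{-t\Aw}$ is the dual statement of $L_\infty$-contractivity of the adjoint semigroup on $L_2(\M)$. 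The role of (A5) specifically is to handle the shift of the weight by $\lambda$ that is forced by the multiplier $\rho^{-\lambda}$ in $\Aw$, i.e.\ to make the adjoint computations close up at the weight $\lambda^\prime-\lambda$ rather than $\lambda^\prime$.

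Once contractivity on $L_1^{\lambda^\prime}(\M)$, $L_2^{\lambda^\prime}(\M)$, and $L_\infty^{\lambda^\prime}(\M)$ is in hand, the Riesz–Thorin interpolation of Lemma~\ref{S3.2: Riesz-Thorin} (applied to $e^{-t\Aw}$, which is consistently defined on $L_2^{\lambda^\prime}\cap L_q^{\lambda^\prime}$ for each $q$) yields $\|e^{-t\Aw}\|_{\L(L_p^{\lambda^\prime}(\M))}\le 1$ for all $1<p<\infty$ and $t\ge 0$, together with strong continuity on $L_p^{\lambda^\prime}(\M)$ for $1<p<\infty$ by the usual density argument. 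This gives a contractive strongly continuous semigroup on $L_p^{\lambda^\prime}(\M)$; analyticity follows because the semigroup is bounded analytic on $L_2^{\lambda^\prime}(\M)$ and positivity-free interpolation of the resolvent bounds (Stein interpolation of the analytic family $z\mapsto e^{-z\Aw}$, or the Kato square root / functional calculus transference of an analytic semigroup that is contractive on all $L_p$) upgrades $L_p$-boundedness of $e^{-t\Aw}$ to $L_p$-analyticity. Thus $\Aw\in\cH(\mathring W^{2,\lambda^\prime-\lambda}_p(\M),L_p^{\lambda^\prime}(\M))$ for every $1<p<\infty$, provided the domain is identified.

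It remains to pin down the domain as $\mathring W^{2,\lambda^\prime-\lambda}_p(\M)$ and to obtain the $\Lis$-statement. For this I would repeat the device of Section~3.1: set $\Bw:=-\div(\rho^\lambda\vec a\cdot\gd u)+\ev(\nabla u,\rho^\lambda a_1)+(\rho^\lambda a_0+\omega)u$, which by the $L_p$-maximal regularity result for uniformly strongly $\rho$-elliptic operators (\cite[Theorem~5.2]{Ama13b}, in its tensor-valued and $L_p$ form) belongs to $\cH(\mathring W^{2,\lambda^\prime-\lambda}_p(\M),L^{\lambda^\prime-\lambda}_p(\M))$, and use the identity $\rho^{-\lambda}\Bw u=\Aw u+\cP u$ from \eqref{S3: Aw & Bw} with $\cP u=-\lambda\ev(\nabla u,\vec a\cdot\gd\log\rho)$, which by Propositions~\ref{S2: pointwise multiplication}, \ref{S2: nabla}, \ref{S2: change of wgt} and \eqref{S3: gd log r} is a lower-order (hence analytic-semigroup-admissible) perturbation. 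Then $\fB_\omega:=\Bw-\rho^\lambda\cP\in\cH(\mathring W^{2,\lambda^\prime-\lambda}_p(\M),L^{\lambda^\prime-\lambda}_p(\M))$, and for $\omega$ large it is invertible onto $L^{\lambda^\prime-\lambda}_p(\M)$; composing with $f_\lambda\in\Lis(L^{\lambda^\prime-\lambda}_p,L^{\lambda^\prime}_p)$ gives $\Aw=\rho^{-\lambda}\fB_\omega\in\Lis(\mathring W^{2,\lambda^\prime-\lambda}_p(\M),L^{\lambda^\prime}_p(\M))$ for large $\omega$, and the invariance of the domain in $\omega$ together with the already-established semigroup bounds yields the full statement for all $\omega$ satisfying (A3)–(A5). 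The step I expect to be the main obstacle is the $L_1^{\lambda^\prime}$/$L_\infty^{\lambda^\prime}$-contractivity, since that is where the three bounds \eqref{S3: A3}–\eqref{S3: A4} together with (A4)–(A5) must be used with exactly the right weights, and where one has to be careful that the adjoint operators $\Awla$, $\Awl$ are genuinely the $L_p$-duals of $\Aw$ in the appropriate weighted pairing rather than merely formal adjoints.
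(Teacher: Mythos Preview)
Your overall architecture matches the paper's: Ouhabaz-type $L_\infty$-contractivity, a duality step to reach $L_1^{\lambda^\prime}$, Riesz--Thorin (Lemma~\ref{S3.2: Riesz-Thorin}) for intermediate $p$, Stein interpolation for analyticity, and finally the $\fB_\omega$ device to pin down the domain. However, there is one genuine gap and one imprecision worth flagging.

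The gap is in the contractivity step. Your two suggestions are \emph{not} equivalent, and the first one fails. A direct weighted Ouhabaz criterion for $L_\infty^{\lambda^\prime}$-contractivity would require $\Rp\,\aw\bigl(u,(|u|-\rho^{-\lambda^\prime})^+\sg u\bigr)\geq 0$; the paper's remark following the theorem points out that this condition does not hold in general, so ``truncation after conjugation by $\rho^{\lambda^\prime/2}$'' is exactly the approach that breaks. What the paper actually does is prove \emph{unweighted} $L_\infty$-contractivity of $e^{-t\Aw}$, $e^{-t\Awla}$, and $e^{-t\Awl}$ separately (each via the standard truncation $(|u|-1)^+\sg u$ and its own set of form bounds---this is where (A4) enters), and then obtains $L_1^{\lambda^\prime}$-contractivity of $e^{-t\Aw}$ from the unweighted $L_\infty$-contractivity of $e^{-t\Awla}$ via the pairing $\langle\cdot\,|\,\cdot\rangle_{2,\lambda^\prime/2}$, cf.~\eqref{S3.2: duality}. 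Note also that the $L_2$-semigroup for $\Aw$ lives on $L_2^{\lambda^\prime}(\M)$, not $L_2^{\lambda^\prime/2}(\M)$; the half-weight appears only in the choice of duality pairing used to define $\Awla$.

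The imprecision is in the domain identification. Your sketch treats (A5) as merely compensating a weight shift, but more is needed: one has to rerun the entire $L_p$-semigroup argument (Steps~(i)--(iv)) for $\fB_\omega$ with its own adjoints $\fB_\omega^*(\lambda^\prime-\lambda)$ and $\fB_\omega(\lambda^\prime-\lambda)$ at the weight $\lambda^\prime-\lambda$, and (A5) is precisely the set of conditions that makes the form bounds close up for those adjoints. Only then does $\fB_\omega$ generate a contractive analytic semigroup on $L_p^{\lambda^\prime-\lambda}(\M)$ for all $1<p<\infty$, after which \cite[Theorem~5.2]{Ama13b} and the argument of Theorem~\ref{S3: domain} identify the domain and give the $\Lis$-statement.
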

\begin{proof}
(i)
By  Proposition~\ref{S2: retraction&coretraction}, it is not hard to verify that $u\in X$ implies $(|u|-1)^+  \sg u \in X$ and
\begin{align}
\label{S3: der-(|u|-1)+}
\nabla ((|u|-1)^+  \sg u)=
\begin{cases}
\nabla u, \quad & |u|>1;\\
0, & |u|\leq 1.
\end{cases}
\end{align}
Here it is understood that 
\begin{align*}
\sg u:=
\begin{cases}
u/|u|, \quad & u\neq 0;\\
0, &u=0.
\end{cases}
\end{align*}
Now following a similar  proof to  step (ii) of Proposition~\ref{S3: cont-L_2-coer}, we get
$$\Rp(\aw(u,(|u|-1)^+  \sg u))\geq 0 , \quad \omega >\omega_\cA.$$
By \cite[Theorem~2.7]{Ouh92}, the semigroup $\{e^{-t\Aw}\}_{t\geq 0}$ is $L_\infty$-contractive.

Similarly, based on (A4), we can show that $\{e^{-t\Awla}\}_{t\geq 0}$ and $\{e^{-t\Awl}\}_{t\geq 0}$ are $L_\infty$-contractive as well. By a well-known argument, see \cite[Chapter~1.4]{Dav89}, this implies that for each $1<p<\infty$, $\{e^{-t\Awla}\}_{t\geq 0}$ and $\{e^{-t\Awl}\}_{t\geq 0}$ can be extended to contractive strongly continuous analytic semigroups on $L_p(\M)$ with angle 
$$ \theta_p\geq \theta(1-|2/p-1|), $$
where $\theta$ is the smaller one of the angles of the semigroups on $L_2(\M)$ generated by $\{e^{-t\Awla}\}_{t\geq 0}$ and $\{e^{-t\Awl}\}_{t\geq 0}$.

(ii)
Pick $v\in L_2^{\lambda^\prime}(\M)\cap L_1^{\lambda^\prime}(\M)$ and $u\in L_2(\M)\cap L_\infty(\M)$. 
We then have
\begin{align}
\label{S3.2: duality}
\notag |\langle e^{-t\Aw}v | u \rangle_{2,{\lambda^\prime/2}}| &=|\langle v | e^{-t\Awla}u \rangle_{2,{\lambda^\prime/2}}|
=|\langle \rho^{\lambda^\prime} v | e^{-t\Awla} u \rangle_{2,0}|\\
\notag&\leq \|v\|_{L_1^{\lambda^\prime}} \|e^{-t\Awla}u \|_{L_\infty}\\
&\leq \|v\|_{L_1^{\lambda^\prime}} \|   u \|_{L_\infty} .
\end{align}
We have thus established the $L_1^{\lambda^\prime}$-contractivity of the semigroup $\{e^{-t\Aw}\}_{t\geq 0}$.
It is then an immediate consequence of Lemma~\ref{S3.2: Riesz-Thorin} that $\{e^{-t\Aw}\}_{t\geq 0}$  is $L_p^{\lambda^\prime}$-contractive for  $1\leq p\leq 2$. 

(iii)
Now we modify a widely used argument, see \cite[Chapter~1.4]{Dav89}, for weighted $L_p$-spaces.
Choose $u\in L_2^{\lambda^\prime}(\M)$ with $\supp(u)\subset K$ with $K\subset \M$ satisfying $V_g(K)<\infty$. Then
\begin{align}
\label{S3.2: Chi_K}
\notag\lim\limits_{t\to 0^+} \| \chi_K e^{-t\Aw}u\|_{L_1^{\lambda^\prime}}
&=\lim\limits_{t\to 0^+} \langle \rho^{-\lambda^\prime}\chi_K | |e^{-t\Aw} u| \rangle_{2,{\lambda^\prime}}\\
&= \langle \rho^{-\lambda^\prime}\chi_K | |u| \rangle_{2,{\lambda^\prime}} = \|u\|_{L_1^{\lambda^\prime}}
\end{align}
by the strong $L_2^{\lambda^\prime}$-continuity of $\{e^{-t\Aw}\}_{t\geq 0}$. 
On the other hand, we also have 
$$\|e^{-t\Aw} u\|_{L_1^{\lambda^\prime}}\leq \|u\|_{L_1^{\lambda^\prime}}.$$
This together with \eqref{S3.2: Chi_K} implies that
$$\lim\limits_{t\to 0^+} \|\chi_{\M\setminus K} e^{-t\Aw} u\|_{L_1^{\lambda^\prime}}=0. $$
Now one can compute that 
\begin{align*}
\lim\limits_{t\to 0^+} \| e^{-t\Aw} u-u\|_{L_1^{\lambda^\prime}} &\leq \lim\limits_{t\to 0^+} \|\chi_K (e^{-t\Aw}u-u)\|_{L_1^{\lambda^\prime}}\\
&\leq \lim\limits_{t\to 0^+} \| e^{-t\Aw}u-u\|_{L_2^{\lambda^\prime}} \mu(K)^{1/2}=0.
\end{align*}
The set of such $u$ contains $\mathcal{D}(\M)$ and thus is dense in $L_1^{\lambda^\prime}(\M)$. This establishes the strong continuity of $\{e^{-t\Aw}\}_{t\geq 0}$ on $L_2^{\lambda^\prime}(\M)\cap L_1^{\lambda^\prime}(\M)$. 
Lemma~\ref{S3.2: Riesz-Thorin} then implies the strong continuity of $\{e^{-t\Aw}\}_{t\geq 0}$ on $L_2^{\lambda^\prime}(\M)\cap L_p^{\lambda^\prime}(\M) $ for $1\leq p\leq 2$.

By \eqref{S2.3: Lp-reflexive}, $L_p^{\lambda^\prime}(\M)$ is reflexive for $1<p<\infty$.
The strong continuity of $\{e^{-t\Aw}\}_{t\geq 0}$ on $L_2^{\lambda^\prime}(\M)\cap L_p^{\lambda^\prime}(\M) $ for $2< p<\infty$ now follows from
\cite[Theorem~1.4.9]{Gold85} and the strong continuity of $\{e^{-t\Awla}\}_{t\geq 0}$ on $L_q(\M)$ with $1<q<2$.

(iv)
Assume that $\{e^{-t\Aw}\}_{t\geq 0}$ is analytic on $L_2^{\lambda^\prime}(\M)$ with angle $\phi$. We define  $$H_z:=\rho^{\lambda^\prime} e^{-\Aw h(z)} \rho^{-\lambda^\prime},\quad \text{on} \quad S:=\{z\in\C: 0\leq \Rp z\leq 1\},$$ 
where $h(z):=re^{i\theta z}$ with $r>0$ and $|\theta|<\phi$. Then given any $u\in L_2(\M) \cap L_1(\M)$ and $v\in L_2(\M) \cap L_\infty(\M)$, we have
\begin{align}
\label{S3.2: Stein}
\notag |\langle H_z u | v \rangle_{2,0}|&\leq \|H_z u\|_{L_2} \|v\|_{L_2}\leq \|e^{\Aw h(z)}\rho^{-\lambda^\prime}u\|_{L_2^{\lambda^\prime}} \|v\|_{L_2}\\
& \leq \|\rho^{-\lambda^\prime}u\|_{L_2^{\lambda^\prime}} \|v\|_{L_2} \leq \|u\|_{L_2} \|v\|_{L_2}
\end{align}
for $z\in S$. Similarly, one can verify that $\langle H_z u | v \rangle_{2,0}$ is continuous on $S$ and analytic inside $\mathring{S}$. Moreover, 
\begin{align*}
|\langle H_z u | v \rangle_{2,0}|\leq
\|u\|_{L_1} \|v\|_{L_\infty},\quad \text{if }\Rp z=0. 
\end{align*}
By the Stein interpolation theorem, see \cite[Section~1.1.6]{Dav89}, we conclude that for all $0<t<1$, $u\in L_2^{\lambda^\prime}(\M) \cap L_1^{\lambda^\prime}(\M)$ and $\frac{1}{p}=1-t +\frac{t}{2}$,
$$ \| \rho^{\lambda^\prime}u\|_{L_p}\geq \| H_t \rho^{\lambda^\prime} u \|_{L_p}= \| \rho^{\lambda^\prime} e^{-\Aw h(t)}  u\|_{L_p}=  \|e^{-\Aw h(t)}  u\|_{L_p^{\lambda^\prime}}  .$$
Therefore, $\{e^{-t\Aw}\}_{t\geq 0}$ can be extended to a contractive strongly continuous analytic semigroup on $L_p^{\lambda^\prime}(\M)$ with angle $\phi(2-2/p)$ for $1<p<2$. 

When $2<p<\infty$, the analytic extension of $\{e^{-t\Aw}\}_{t\geq 0}$ follows from a duality argument as in \eqref{S3.2: duality}.

(v)
In order to determine the domain of $\Aw$,  we apply a similar discussion to the proof for Theorem~\ref{S3: domain}. 
We consider the adjoint, $\fB_\omega^*(\lambda^\prime-\lambda)$, of $\fB_\omega=\Bw -\rho^\lambda \mathcal{P}_\lambda$ with respect to $L_2^{(\lambda^\prime-\lambda)/2}(\M)$, i.e.,
$$\fB_\omega^*(\lambda^\prime-\lambda)u= -\div(\rho^\lambda \vec{a}\cdot\gd u) -\rho^\lambda\ev(\nabla u, (2\lambda^\prime-\lambda) \vec{a}\cdot\gd\log\rho +\bar{a}_1) +\rho^\lambda b(\lambda^\prime, \boldsymbol{\vec{a}})u, $$
and the adjoint, $\fB_\omega(\lambda^\prime-\lambda)$, of $\fB_\omega^*(\lambda^\prime-\lambda)$ with respect to $L_2(\M)$, i.e.,
\begin{align*}
\fB_\omega(\lambda^\prime-\lambda)u=& -\div(\rho^\lambda \vec{a}\cdot\gd u) +\rho^\lambda\ev(\nabla u, (2\lambda^\prime-\lambda) \vec{a}\cdot\gd\log\rho +a_1)\\
 &+\rho^\lambda \tilde{b}(\lambda^\prime-\lambda, \boldsymbol{\vec{a}})u.
\end{align*}

Following Step (i)-(iv), under Assumptions (A3) and (A5), 
we can show that $\fB_\omega$ generates a contractive strongly continuous analytic semigroup on $L_p^{\lambda^\prime-\lambda}(\M)$ for any $1<p<\infty$. By \cite[Theorem~5.2]{Ama13b}, for $\omega$ large enough, 
$$ \fB_\omega \in \cH(\mathring{W}^{2,\lambda^\prime-\lambda}_p(\M), L^{\lambda^\prime-\lambda}_p(\M)) \cap \Lis(\mathring{W}^{2,\lambda^\prime-\lambda}_p(\M), L^{\lambda^\prime-\lambda}_p(\M)).$$
An analogous argument to the proof for Theorem~\ref{S3: domain} and the discussion prior to this proof
yields that 
$$ \Aw= \rho^{-\lambda}\fB_\omega \in \cH(\mathring{W}^{2,\lambda^\prime-\lambda}_p(\M), L^{\lambda^\prime}_p(\M)) \cap \Lis(\mathring{W}^{2,\lambda^\prime-\lambda}_p(\M), L^{\lambda^\prime}_p(\M)).$$
\end{proof}

\begin{remark}
The proof of $L_\infty$-contractivity for unweighted $L_p$-spaces in \cite[Theorem~2.7]{Ouh92} suggests that there seems to be a more straightforward way to prove $L_\infty^{\lambda^\prime}$-contractivity. 
\smallskip\\
In fact, we can show that
$\{e^{-t\Aw} \}_{t\geq 0}$ is $L_\infty^{\lambda^\prime}$-contractive if for any $u\in X$
\begin{itemize}
\item[(i)] $(|u|-\rho^{-\lambda^\prime})^+ \sg u \in X$, and
\item[(ii)] $\Rp \aw(u , (|u|-\rho^{-\lambda^\prime})^+ \sg u)\geq 0$.
\end{itemize}
However, Condition~(ii), in general, does not hold for all $u\in X$.
\end{remark}

\begin{remark}
When the tensor field $V\neq \C$, it requires much more effort to establish the $L_p$-semigroup theory for the differential operator 
$$\cA u:= -\div(\vec{a}\cdot\gd u)+\ev(\nabla u, a_1) +a_0 u.$$
The author is not aware of how to obtain the $L_\infty^{\lambda^\prime}$-contractivity of the semigroup $\{e^{-t\Aw}\}_{t\geq 0}$. 
Instead, one needs to go through the local expressions of $\Aw$ and establish a similar contractivity property for these local expressions, and then prove generation of analytic semigroups of the local expressions.
However, the drawback of this technique  is reflected by the fact that it is hard to determine the precise bound for the constant $\omega$.
Indeed, we only know that for $\omega$ sufficiently large and $1<p<\infty$
$$\Aw\in\cH(\mathring{W}^{2,\lambda^\prime-\lambda}_p(\M, V),L_p^{\lambda^\prime}(\M, V)) \cap \Lis(\mathring{W}^{2,\lambda^\prime-\lambda}_p(\M, V),L_p^{\lambda^\prime}(\M, V)),$$
and the semigroup $\{e^{-\Aw}\}_{t\geq 0}$ is bounded on $L_p^{\lambda^\prime}(\M, V)$.
Because it is hard to apply this result, a rigorous proof for this assertion will not be stated in this article.
\end{remark}


\section{\bf Singular manifolds of pipe and wedge type and manifolds with holes}

As was shown by the examples in \cite{Ama14}, we can find manifolds with singularities of arbitrarily high dimension. Among them, a very important family is the {\em singular manifolds} of pipe and wedge type.

Following \cite{Ama14}, throughout we write $J_0:=(0,1]$ and $J_\infty:=[1,\infty)$, and assume $J\in \{J_0,J_\infty \}$. 
\smallskip\\
We denote by $\mathscr{R}(J)$ the set of all $R\in C^\infty(J,(0,\infty))$ with $R(1)=1$ such that 
$R(\alpha):=\lim\limits_{t\to \alpha} R(t)$ exists in $[0,\infty]$ if $J=J_\alpha$ with $\alpha\in\{0,\infty\}$. We write $R\in \mathscr{C}(J)$ if
\begin{align}
\label{S4: cusp ch}
\begin{cases}
\text{(i)} \quad & R\in \mathscr{R}(J), \text{ and }R(\infty)=0 \text{ if }J=J_\infty;\\
\text{(ii)}  & \int\limits_J dt/R(t)=\infty;\\
\text{(iii)}  & 	\|\partial^k_t R\|_{\infty} <\infty,\quad k\geq 1.
\end{cases}
\end{align}
The elements in $\mathscr{C}(J)$ are called {\em cusp characteristics} on $J$.

The following results from \cite{Ama14} are the cornerstones of the construction of {\em singular manifolds} of pipe and wedge type.
\begin{lem}
\label{S4: lem 4.1}
{\cite[Theorem~3.1]{Ama14}}
Suppose that $\rho$ is a bounded singularity function on $(\M,g)$, and $\tilde{\rho}$ is one for $(\tilde{\M},\tilde{g})$. Then $\rho\otimes\tilde{\rho}$ is a singularity function for $(\M\times\tilde{\M}, g+\tilde{g})$.
\end{lem}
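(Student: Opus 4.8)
The plan is to exhibit an atlas $\hat\A$ on the product $\M\times\tilde\M$ for which $(\rho\otimes\tilde\rho,\hat\A)$ is a singularity datum for the product metric $g+\tilde g$; here $\rho\otimes\tilde\rho$ denotes $(\p,\tilde\p)\mapsto\rho(\p)\tilde\rho(\tilde\p)$. Since multiplying a singularity function by a positive constant only replaces it by an equivalent one, I would first normalize so that $\|\rho\|_\infty\le1$ and $\|\tilde\rho\|_\infty\le1$; in particular all chart values satisfy $\rho_\kappa\le1$ and $\tilde\rho_{\tilde\kappa}\le1$. Once a suitable $\hat\A$ is in hand, conditions (S3) and (S4) for $\rho\otimes\tilde\rho$ come for free: (S4) follows by multiplying the two-sided bounds $\rho_\kappa/c\le\rho\le c\rho_\kappa$ on $\Ok$ with $\tilde\rho_{\tilde\kappa}/c\le\tilde\rho\le c\tilde\rho_{\tilde\kappa}$ on $\tilde{\mathsf O}_{\tilde\kappa}$, and (S3) follows from the Leibniz rule applied to the chart pullback of $\rho\otimes\tilde\rho$ — a product of the two factor pullbacks — together with (S3) for each factor (and the chart value of $\rho\otimes\tilde\rho$ is $\sim\rho_\kappa\tilde\rho_{\tilde\kappa}$ by (S4) on the factors). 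So the real content is to build $\hat\A$ as a uniformly regular atlas (this yields (S2)) with respect to which $(g+\tilde g)/(\rho\otimes\tilde\rho)^2$ satisfies (R3)--(R4) (this yields (S1)).

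The naive product atlas $\bigl(\Ok\times\tilde{\mathsf O}_{\tilde\kappa},\ \varphi_\kappa\times\tilde\varphi_{\tilde\kappa}\bigr)$ does not suffice for (S1): using (P1), $\varphi_\kappa^\ast\rho\sim\rho_\kappa$ and $\tilde\varphi_{\tilde\kappa}^\ast\tilde\rho\sim\tilde\rho_{\tilde\kappa}$, one computes that the pullback of $(g+\tilde g)/(\rho\otimes\tilde\rho)^2$ along $\psi_\kappa\times\tilde\psi_{\tilde\kappa}$ is comparable to the block metric $\operatorname{diag}\bigl(\tilde\rho_{\tilde\kappa}^{-2}g_m,\ \rho_\kappa^{-2}g_{\tilde m}\bigr)$, which is equivalent to the Euclidean metric only when $\rho_\kappa\sim\tilde\rho_{\tilde\kappa}\sim1$. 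The fix is to postcompose each such chart with the linear dilation $\Lambda_{\kappa,\tilde\kappa}:=\operatorname{diag}\bigl(\tilde\rho_{\tilde\kappa}^{-1}I_m,\ \rho_\kappa^{-1}I_{\tilde m}\bigr)$, which turns the pullback into a metric $\sim g_{m+\tilde m}$. Because $\rho_\kappa,\tilde\rho_{\tilde\kappa}\le1$, this dilation enlarges the cube, so to stay within normalized charts I would first subdivide: cut the cube $\mathsf Q^m$ of each $\M$-chart into congruent subcubes of side $\sim\tilde\rho_{\tilde\kappa}$ and the cube $\mathsf Q^{\tilde m}$ of each $\tilde\M$-chart into subcubes of side $\sim\rho_\kappa$, form all products, recenter each product subcube at the origin, and rescale it (by $\Lambda_{\kappa,\tilde\kappa}$) to $\mathsf Q^{m+\tilde m}$ (to $\mathsf Q^{m+\tilde m}\cap\mathbb H^{m+\tilde m}$ near the boundary). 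This produces $\hat\A$, indexed by triples $(\kappa,\tilde\kappa,j)$ with $j$ enumerating the subcubes of the $(\kappa,\tilde\kappa)$-patch.

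It then remains to verify the axioms for $\hat\A$. Uniform shrinkability is inherited from $\A$ and $\tilde\A$ (and orientation-preservation when relevant), and finite multiplicity holds because a point of $\M\times\tilde\M$ lies in at most $K\tilde K$ of the product patches and, within each, in a bounded number of the (essentially tiling) subcubes. For (R2): transitions between subcubes of the same product patch are translations; between subcubes of patches $(\kappa,\tilde\kappa)$ and $(\eta,\tilde\eta)$ a transition equals $\Lambda_{\eta,\tilde\eta}\circ\bigl((\varphi_\eta\circ\psi_\kappa)\times(\tilde\varphi_{\tilde\eta}\circ\tilde\psi_{\tilde\kappa})\bigr)\circ\Lambda_{\kappa,\tilde\kappa}^{-1}$ up to translations, with block-diagonal Jacobian whose blocks are $(\tilde\rho_{\tilde\kappa}/\tilde\rho_{\tilde\eta})\,D(\varphi_\eta\circ\psi_\kappa)$ and $(\rho_\kappa/\rho_\eta)\,D(\tilde\varphi_{\tilde\eta}\circ\tilde\psi_{\tilde\kappa})$; on overlaps $\rho_\kappa\sim\rho_\eta$ and $\tilde\rho_{\tilde\kappa}\sim\tilde\rho_{\tilde\eta}$ by (S3)--(S4), while a derivative of order $|\alpha|$ generates only an extra factor $\tilde\rho_{\tilde\kappa}^{|\alpha|-1}$ resp.\ $\rho_\kappa^{|\alpha|-1}$, which is $\le1$, so the (R2)-bounds for $\A$ and $\tilde\A$, available from (S2), give uniform $C^k$-control. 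Condition (R3) holds by the choice of $\Lambda_{\kappa,\tilde\kappa}$. For (R4) one writes the pullback of $(g+\tilde g)/(\rho\otimes\tilde\rho)^2$ through the chart pullbacks of $g/\rho^2$ and $\tilde g/\tilde\rho^2$ and scalar factors built from $\varphi_\kappa^\ast\rho$ and $\tilde\varphi_{\tilde\kappa}^\ast\tilde\rho$; each differentiation in a new chart costs a factor $\tilde\rho_{\tilde\kappa}$ or $\rho_\kappa$ ($\le1$), so the uniform $C^k$-bounds on the chart pullbacks of $g/\rho^2$ and $\tilde g/\tilde\rho^2$ — which hold since, by (S1), both factors are uniformly regular — together with (S3) for $\rho$ and $\tilde\rho$, yield uniform $C^k$-bounds. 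This establishes (S1)--(S2), and with the first paragraph, $(\rho\otimes\tilde\rho,\hat\A)$ is a singularity datum, so $\rho\otimes\tilde\rho\in\mathfrak T(\M\times\tilde\M)$.

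The step I expect to be the main obstacle is the bookkeeping behind the subdivision: the number of subcharts per product patch, of order $(\rho_\kappa\tilde\rho_{\tilde\kappa})^{-\max(m,\tilde m)}$, is unbounded, so finite multiplicity and the uniform $C^k$-estimates (R2), (R4) must be checked directly rather than merely inherited. What makes it go through are exactly (i) the boundedness hypotheses $\|\rho\|_\infty,\|\tilde\rho\|_\infty\le1$, which force every dilation factor occurring under differentiation to be $\le1$, and (ii) the local near-constancy $\rho_\kappa\sim\rho_\eta$, $\tilde\rho_{\tilde\kappa}\sim\tilde\rho_{\tilde\eta}$ over overlapping charts, which keeps the dilations of neighboring subcharts comparable. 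A secondary point worth flagging: if $\partial\M\neq\emptyset$ and $\partial\tilde\M\neq\emptyset$ the product carries corners, which lies slightly outside the manifolds-with-boundary framework used here, so one then restricts to $\partial\M=\emptyset$ or $\partial\tilde\M=\emptyset$, the corner case being treated as in \cite{Ama14}.
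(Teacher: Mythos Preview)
The paper does not give its own proof of this lemma; it is simply quoted from \cite[Theorem~3.1]{Ama14}. Your construction---refining the naive product atlas by subdividing each product cube into subcubes of side $\sim\rho_\kappa\tilde\rho_{\tilde\kappa}$ and rescaling via $\Lambda_{\kappa,\tilde\kappa}=\operatorname{diag}(\tilde\rho_{\tilde\kappa}^{-1}I_m,\rho_\kappa^{-1}I_{\tilde m})$---is precisely the argument Amann uses in that reference, and your verification of (S1)--(S4) is correct, including your identification of the two crucial inputs: boundedness (so that all extra dilation factors arising under differentiation are $\le1$) and the overlap comparability $\rho_\kappa\sim\rho_\eta$, $\tilde\rho_{\tilde\kappa}\sim\tilde\rho_{\tilde\eta}$ (so that neighboring refined charts have compatible scalings). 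Your remark on corners when both factors have boundary is also exactly the caveat Amann makes.
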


\begin{lem}
\label{S4: lem 4.2}
{\cite[Lemma~3.4]{Ama14}}
Let $f: \tilde{\M}\rightarrow\M$ be a diffeomorphism of manifolds. Suppose that $(\M,g;\rho)$ is a singular manifold. Then so is
$(\tilde{\M},f^*g;f^*\rho)$.
\end{lem}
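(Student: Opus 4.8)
The plan is to verify that the pull-back of a singularity datum under a diffeomorphism is again a singularity datum, working chart by chart. Let $(\rho,\A)$ be a singularity datum for $(\M,g)$ with $\A=(\Ok,\vpk)_{\kappa\in\K}$, and set $\tilde{\A}:=(f^{-1}(\Ok),\vpk\circ f)_{\kappa\in\K}$, which is an atlas for $\tilde{\M}$ since $f$ is a diffeomorphism. First I would observe that $\tilde{\A}$ inherits the combinatorial properties of $\A$ for free: it is normalized (the new chart maps have the same images $\Qk$), it has the same multiplicity and nerve structure since $f$ is a bijection, and it is uniformly shrinkable with the same $r\in(0,1)$ because $f^{-1}(\psk(r\Qk))$ covers $\tilde{\M}$. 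Thus (R1) holds for $\tilde{\A}$, and the transition maps of $\tilde{\A}$ coincide with those of $\A$ (the $f$'s cancel: $(\varphi_\eta\circ f)\circ(\vpk\circ f)^{-1}=\varphi_\eta\circ\psk$), so (R2) and condition (E3)-type estimates are immediate; in particular $\tilde{\A}$ is a uniformly regular atlas, giving (S2).

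Next I would handle the metric and the conformal factor. The pulled-back chart expression of $f^*g$ is $(\vpk\circ f)_*(f^*g)=\kf g$ (again the $f$'s cancel), so the local representation of $f^*g$ in the $\kappa$-th chart of $\tilde{\A}$ is literally the same matrix-valued function on $\Qk$ as that of $g$ in the $\kappa$-th chart of $\A$. Hence (R3) and (R4) for $(\M,g/\rho^2)$ transfer verbatim to $(\tilde{\M},f^*g/(f^*\rho)^2)$ once we know the chart expression of $f^*\rho$ equals that of $\rho$, which is clear since $(f^*\rho)\circ\psk\circ f^{-1}=\rho\circ\psk$ means $(\vpk\circ f)_*(f^*\rho)=\kf\rho$. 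This gives (S1). Setting $\tilde{\rho}_\kappa:=(f^*\rho)((\vpk\circ f)^{-1}(0))=\rho(\psk(0))=\rho_\kappa$, conditions (S3) and (S4) for $(f^*\rho,\tilde{\A})$ are word-for-word the same inequalities as (S3) and (S4) for $(\rho,\A)$. Therefore $(f^*\rho,\tilde{\A})$ is a singularity datum for $\tilde{\M}$, and declaring its equivalence class to be the singularity structure on $\tilde{\M}$ makes $(\tilde{\M},f^*g;f^*\rho)$ a singular manifold.

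The one genuinely substantive point — and the step I expect to be the main obstacle — is the smoothness and positivity of $f^*\rho$, i.e.\ checking $f^*\rho\in C^\infty(\tilde{\M},(0,\infty))$, together with the bookkeeping that the class thus obtained is independent of the representative $(\rho,\A)$ chosen from $\mathfrak{S}(\M)$. The former is routine ($f$ is a $C^\infty$-diffeomorphism and $\rho>0$), but the latter requires one to check that if $(\rho,\K)\sim(\tilde\rho,\tilde\K)$ then $(f^*\rho,f^*\A)\sim(f^*\tilde\rho,f^*\tilde\A)$, which again follows because (E1)-(E3) are preserved under the cancellation $(\varphi_{\tilde\kappa}\circ f)\circ(\vpk\circ f)^{-1}=\varphi_{\tilde\kappa}\circ\psk$ and $f^*\rho\sim f^*\tilde\rho\iff\rho\sim\tilde\rho$. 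Since every condition in the definition of a singular manifold is local and expressed purely in terms of chart representations that are invariant under precomposition with $f$, essentially nothing needs to be estimated; the proof is a verification that the diffeomorphism $f$ acts trivially on all the relevant chart data. I would present it as: pull back the atlas, note all chart expressions are unchanged, and read off (R1)--(R4), (S1)--(S4) and (E1)--(E3) from those for $(\M,g;\rho)$.
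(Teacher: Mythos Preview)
Your proposal is correct and is the natural argument: pull back the atlas, observe that all chart expressions (transition maps, local metric, local singularity function) are literally unchanged because the diffeomorphism cancels, and read off (R1)--(R4) and (S1)--(S4). The paper itself does not supply a proof of this lemma; it is quoted directly from \cite[Lemma~3.4]{Ama14}, so there is nothing to compare against beyond noting that your verification is exactly what one would expect the cited proof to contain.
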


\begin{lem}
\label{S4: lem 4.3}
{\cite[Lemma~5.2]{Ama14}}
Suppose that $R\in \mathscr{C}(J)$. Then $R$ is a singularity function for $(J, dt^2)$.
\end{lem}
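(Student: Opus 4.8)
The plan is to verify directly that $(J,dt^2;R)$ satisfies the defining conditions (S1)--(S4) of a singularity datum, by constructing an explicit atlas on the one-dimensional manifold $J$ whose charts have $R$-comparable sizes. First I would build the atlas: using condition (ii) in \eqref{S4: cusp ch}, namely $\int_J dt/R(t)=\infty$, one can choose a strictly monotone sequence $(t_j)$ exhausting $J$ (accumulating at the singular end $\alpha$) such that each interval $\mathsf{O}_j:=(t_{j-1},t_{j+1})$ has length comparable to $R(t_j)$; concretely, pick the $t_j$ so that $\int_{t_{j-1}}^{t_j}dt/R(t)$ equals a fixed small constant. The divergence of the integral guarantees infinitely many such points reaching $\alpha$, so $(\mathsf{O}_j)$ covers $J$ with finite multiplicity (in fact multiplicity $2$), and is uniformly shrinkable. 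Define the chart maps $\varphi_j:\mathsf{O}_j\to\mathsf{Q}^1=(-1,1)$ (or onto $\mathsf{Q}^1\cap\mathbb{H}^1$ at the endpoint $t=1$ of $J$) to be the affine reparametrization sending $\mathsf{O}_j$ onto the interval, rescaled so that $\varphi_j(t_j)=0$.

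Next I would check the quantitative bounds. Condition (R3)/(S1), $\psi_j^*(g/R^2)\sim g_1$, amounts to the statement that on $\mathsf{O}_j$ one has $|dt/d\sigma|\sim R(t)$ (where $\sigma$ is the chart coordinate), which holds by construction since $|\mathsf{O}_j|\sim R(t_j)$ and $R(t)\sim R(t_j)$ on $\mathsf{O}_j$. The last comparison, $R(t)\sim R(t_j)$ for $t\in\mathsf{O}_j$, follows from condition (iii), $\|\partial_t R\|_\infty<\infty$: since $R$ is Lipschitz with a uniform constant and $|\mathsf{O}_j|\sim R(t_j)$, the oscillation of $R$ on $\mathsf{O}_j$ is $O(R(t_j))$, and one adjusts the constant in the choice of $(t_j)$ so this oscillation is a small fraction of $R(t_j)$; this gives (S3) and (S4) as well, with $\rho_j=R(t_j)$. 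The higher-order bounds (R2), (R4) and (S2), (S3) for $k\ge2$ reduce to estimates on the derivatives $\partial_t^k R$ and on the transition maps; the transition maps between overlapping affine charts are themselves affine with derivative ratios controlled by $R(t_j)/R(t_{j+1})\sim 1$, so (R2) is immediate, and (R4), (S3) follow by combining $\|\partial_t^k R\|_\infty<\infty$ from (iii) with the scaling $R(t_j)^{-1}|\mathsf{O}_j|\sim1$, noting that each chart-derivative of $\psi_j^*R$ carries a factor $|\mathsf{O}_j|\sim R(t_j)$ that exactly compensates.

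The main obstacle is the bookkeeping in the last step: one must show $R(t_j)^{-1}\|\psi_j^*R\|_{k,\infty}\le c(k)$ and $R(t_j)^{-2}\|\psi_j^*(g/R^2)\|_{k,\infty}\le c(k)$ uniformly in $j$, i.e.\ that after pulling back to the fixed cube $\mathsf{Q}^1$ all derivatives up to any order are bounded independently of which end-approaching chart one is in. This is where condition (iii) of \eqref{S4: cusp ch} is essential and where the affine (rather than, say, nonlinear) choice of chart pays off, since it keeps the chain rule transparent: $\partial_\sigma^k(\psi_j^*R)=|\mathsf{O}_j|^k(\partial_t^kR)\circ\psi_j$, and $|\mathsf{O}_j|^k\lesssim R(t_j)^k\le R(t_j)$ once $R\le 1$ near the end (which holds by (i) when $J=J_\infty$, and after normalizing by $R(1)=1$ and monotonicity considerations on $J_0$). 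I would conclude by invoking Lemma~\ref{S4: lem 4.2} if a cleaner model interval is convenient, but in fact the direct construction above already exhibits $(R,(\mathsf{O}_j,\varphi_j)_j)$ as a singularity datum, which is exactly the assertion.
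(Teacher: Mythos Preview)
The paper does not give a proof of this lemma; it is simply quoted from \cite[Lemma~5.2]{Ama14}. Your direct construction of an atlas with $R$-scaled affine charts is the right idea and carries through. Note, incidentally, that your choice of nodes $t_j$ via $\int_{t_{j-1}}^{t_j}dt/R(t)=\text{const}$ is exactly the statement that the $t_j$ are the preimages of equally spaced points under the diffeomorphism $\Phi(t):=\int_1^t ds/R(s)$; since $\Phi_*(dt^2/R^2)=d\sigma^2$, this map sends $(J,dt^2/R^2)$ isometrically onto an unbounded Euclidean interval, which gives (S1) and (S2) for free and is presumably the organizing device in \cite{Ama14}. Your affine reparametrization is a harmless variant that makes the chain rule bookkeeping for (S3), (R4) especially clean.

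There is one small slip in your final paragraph: you invoke ``$R\le 1$ near the end'' and ``monotonicity considerations on $J_0$'', but neither is part of the hypotheses and $R$ need not be monotone. What you actually need to close the estimate $R(t_j)^{-1}\|\psi_j^*R\|_{k,\infty}\le c(k)$ is merely that $R$ is \emph{bounded} on $J$. This is immediate: on $J_0=(0,1]$ the Lipschitz bound from condition~(iii) gives $|R(t)-R(1)|\le \|\dot R\|_\infty$, and on $J_\infty$ boundedness follows from continuity together with $R(\infty)=0$. With $\|R\|_\infty<\infty$ in hand, your computation $\partial_\sigma^k(\psi_j^*R)=|\mathsf{O}_j|^k(\partial_t^kR)\circ\psi_j$ yields $R(t_j)^{-1}|\partial_\sigma^k(\psi_j^*R)|\lesssim R(t_j)^{k-1}\|\partial_t^kR\|_\infty\le \|R\|_\infty^{k-1}c(k)$, and the rest of your argument goes through as written.
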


Assume that $(B,g_B;b)$ is a $d$-dimensional singular submanifold of $\R^{\bar{d}}$ with singularity function $b$, and $R\in \mathscr{C}(J)$. The (model) $(R,B)$-pipe $P(R,B)$ on $J$, also called $R$-pipe over $B$ on $J$, is defined by 
$$P(R,B)=P(R,B;J):=\{(t,R(t)y):\, t\in J, \, y\in B\}\subset \R^{1+\bar{d}} .$$
It is a $(1+d)$-dimensional submanifold of $\R^{1+\bar{d}}$. An $R$-pipe is an $R$-cusp if $R(\alpha)=0$ with $\alpha\in\{0,\infty\}$. The map
$$\phi_P=\phi_P(R):P \rightarrow J\times B: \quad (t,R(t)y)\rightarrow (t,y) $$
is a diffeomorphism, the {\em canonical stretching diffeomorphism} of $P$.

Then the above three lemmas show
\begin{lem}
$(P(R,B), \phi_P^*(dt^2 + g_B); \phi_P^*(R\otimes b))$ is a singular manifold.
\end{lem}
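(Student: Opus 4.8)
The plan is to obtain the statement by concatenating the three lemmas just recorded. First I would note that, by Lemma~\ref{S4: lem 4.3}, the cusp characteristic $R\in\mathscr{C}(J)$ is a singularity function for $(J,dt^2)$, so that $(J,dt^2;R)$ is a singular manifold. It is moreover a \emph{bounded} singularity function: condition~(iii) in \eqref{S4: cusp ch} gives $\|\partial_t R\|_\infty<\infty$, hence $R$ is Lipschitz on $J$; on $J_0=(0,1]$ this forces $\lim_{t\to 0}R(t)$ to exist and be finite, so $R$ extends continuously to the compact interval $[0,1]$, while on $J_\infty=[1,\infty)$ we have $R(\infty)=0$ by~(i); in either case $R$ is bounded on $J$.

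Next I would apply the product lemma, Lemma~\ref{S4: lem 4.1}, to the bounded singularity function $R$ on $(J,dt^2)$ and to the singularity function $b$ on the singular submanifold $(B,g_B)$ of $\R^{\bar d}$. This yields that $R\otimes b$ is a singularity function for the Riemannian product $(J\times B,\,dt^2+g_B)$, that is, $(J\times B,\,dt^2+g_B;\,R\otimes b)$ is a singular manifold.

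Finally, as recorded in the construction of the $(R,B)$-pipe, the set $P(R,B)$ is a $(1+d)$-dimensional submanifold of $\R^{1+\bar d}$ and the canonical stretching map $\phi_P=\phi_P(R)\colon P(R,B)\to J\times B$, $(t,R(t)y)\mapsto(t,y)$, is a diffeomorphism of manifolds. I would then invoke Lemma~\ref{S4: lem 4.2} with $f=\phi_P$ and $(\M,g;\rho)=(J\times B,\,dt^2+g_B;\,R\otimes b)$ to conclude that $(P(R,B),\,\phi_P^*(dt^2+g_B);\,\phi_P^*(R\otimes b))$ is a singular manifold, which is exactly the assertion. One may also check directly that $\phi_P^*(R\otimes b)$ takes the value $R(t)b(y)$ at the point $(t,R(t)y)$, which matches the expected shape of the singularity function on a pipe.

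There is no substantial obstacle here: the statement is a direct consequence of the three cited lemmas, chained together. The only points that warrant a line of verification are the boundedness of $R$ needed to apply Lemma~\ref{S4: lem 4.1} — which, as above, is immediate from the Lipschitz estimate built into the definition of a cusp characteristic — and the fact that $\phi_P$ is a genuine diffeomorphism of manifolds, which was already established when the $(R,B)$-pipe was introduced.
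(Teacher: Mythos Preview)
Your approach is correct and is exactly the paper's: the paper states the lemma with the one-line justification ``the above three lemmas show,'' and you have simply unpacked this chain --- Lemma~\ref{S4: lem 4.3} for $(J,dt^2;R)$, then Lemma~\ref{S4: lem 4.1} for the product, then Lemma~\ref{S4: lem 4.2} to pull back along $\phi_P$. Your verification that $R$ is bounded (needed for Lemma~\ref{S4: lem 4.1}) is a welcome detail the paper omits; note, however, that Lemma~\ref{S4: lem 4.1} as stated also requires the second factor $b$ to be a \emph{bounded} singularity function, which you (and the paper) leave implicit.
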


Assume that $(\Gamma, g_\Gamma)$ is a compact connected Riemannian manifold without boundary. Then the (model) $\Gamma$-wedge over the $(R,B)$-pipe, $P(R,B)$, is defined by
$$W=W(R,B,\Gamma):=P(R,B)\times\Gamma.$$
If $\Gamma$ is a one-point space, then $W$ is naturally identified with $P$. Thus every pipe is also a wedge.

Lemmas~\ref{S4: lem 4.1}-\ref{S4: lem 4.3} yield
\begin{lem}
\label{S4: wedge}
$(W(R,B,\Gamma), \phi_P^*(dt^2 + g_B)+g_\Gamma; \phi_P^*(R\otimes b)\otimes {\bf 1}_\Gamma)$ is a singular manifold.
\end{lem}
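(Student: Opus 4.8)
The plan is to assemble the claim from the three foundational lemmas already quoted, exactly as was done for the pipe case. First I would recall that by Lemma~\ref{S4: lem 4.3} the cusp characteristic $R\in\mathscr{C}(J)$ is a singularity function for $(J,dt^2)$, and by hypothesis $b$ is a singularity function for $(B,g_B)$. Since $R$ is bounded (either $R$ maps into $(0,1]$-type behavior via $R(1)=1$ together with monotonic-type control, or more precisely because (iii) in \eqref{S4: cusp ch} forces $R$ to be bounded on $J$), Lemma~\ref{S4: lem 4.1} applies to give that $R\otimes b$ is a singularity function for the product $(J\times B,\, dt^2+g_B)$. Then, since the canonical stretching diffeomorphism $\phi_P:P(R,B)\to J\times B$ is a diffeomorphism, Lemma~\ref{S4: lem 4.2} transports the singular structure back: $(P(R,B),\,\phi_P^*(dt^2+g_B);\,\phi_P^*(R\otimes b))$ is a singular manifold. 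This is precisely the content of the unnamed lemma immediately preceding, so in the write-up I would simply cite it rather than redo it.

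The remaining step is to incorporate the compact factor $\Gamma$. A compact connected Riemannian manifold $(\Gamma,g_\Gamma)$ without boundary is trivially a uniformly regular Riemannian manifold, hence a singular manifold with constant singularity function ${\bf 1}_\Gamma$ (indeed one may take a finite atlas; all the conditions (R1)--(R4) hold by compactness, and (S1)--(S4) are immediate with $\rho\equiv 1$). Since $R\otimes b$ is bounded on $P(R,B)$ — here I would invoke that $\phi_P^*(R\otimes b)$ is bounded because $R$ is bounded on $J$ (by \eqref{S4: cusp ch}(iii) and $R(1)=1$, together with the existence of the limit $R(\alpha)\in[0,\infty]$; in the cusp case $R(\alpha)=0$ and in any case $R$ stays bounded on the relevant end) and $b$ is a bounded singularity function on $B$ by assumption — I can apply Lemma~\ref{S4: lem 4.1} once more to the product $W=P(R,B)\times\Gamma$ with metric $\phi_P^*(dt^2+g_B)+g_\Gamma$. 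This yields that $\phi_P^*(R\otimes b)\otimes{\bf 1}_\Gamma$ is a singularity function for $W$, which is exactly the assertion.

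So the proof is: (1) the preceding lemma gives the pipe $P(R,B)$ as a singular manifold with bounded singularity function; (2) $\Gamma$ is a singular manifold with singularity function ${\bf 1}_\Gamma$; (3) Lemma~\ref{S4: lem 4.1} (stability of singular manifolds under Riemannian products, for bounded singularity functions) applied to $P(R,B)\times\Gamma$ finishes it. I do not anticipate a genuine obstacle here; the one point requiring a word of care is the boundedness hypothesis in Lemma~\ref{S4: lem 4.1}, so I would make explicit that $\phi_P^*(R\otimes b)$ is bounded, which reduces via $\phi_P$ to boundedness of $R$ on $J$ and of $b$ on $B$. Everything else is a direct citation chain, and the statement is really just Lemma~\ref{S4: wedge} restated, so the "proof" is a three-line bookkeeping argument. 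For the write-up I would phrase it as: apply the preceding lemma to get the pipe, note $(\Gamma,g_\Gamma)$ is uniformly regular hence singular with $\rho_\Gamma={\bf 1}_\Gamma$, and conclude by Lemma~\ref{S4: lem 4.1}.

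\begin{proof}
By the preceding lemma, $(P(R,B),\phi_P^*(dt^2+g_B);\phi_P^*(R\otimes b))$ is a singular manifold, and its singularity function $\phi_P^*(R\otimes b)$ is bounded, since $R$ is bounded on $J$ by \eqref{S4: cusp ch} and $b$ is a bounded singularity function on $B$. On the other hand, the compact manifold $(\Gamma,g_\Gamma)$ without boundary is a uniformly regular Riemannian manifold, and thus a singular manifold with singularity function ${\bf 1}_\Gamma$. Since $W(R,B,\Gamma)=P(R,B)\times\Gamma$ with the metric $\phi_P^*(dt^2+g_B)+g_\Gamma$, Lemma~\ref{S4: lem 4.1} implies that $\phi_P^*(R\otimes b)\otimes{\bf 1}_\Gamma$ is a singularity function for $W(R,B,\Gamma)$. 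This proves the assertion.
\end{proof}
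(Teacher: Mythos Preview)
Your proof is correct and follows essentially the same approach as the paper: the paper's proof is the single line ``Lemmas~\ref{S4: lem 4.1}--\ref{S4: lem 4.3} yield'', and you have simply unpacked this citation chain, routing through the preceding pipe lemma and then applying Lemma~\ref{S4: lem 4.1} once more with the uniformly regular factor $(\Gamma,g_\Gamma;{\bf 1}_\Gamma)$. One minor remark: boundedness of $b$ is not stated explicitly in the setup, only that $b$ is a singularity function for $(B,g_B)$; it is, however, implicitly required already for the pipe lemma (via Lemma~\ref{S4: lem 4.1}), so citing the preceding lemma as you do is the cleanest way to handle this.
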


Another interesting class of manifolds is those with holes.
\begin{lem}
\label{S4: holes}
Suppose that $(\sM,g)$ is a  uniformly regular Riemannian manifold, and 
${\bf \Sigma}=\{\Sigma_1,\cdots,\Sigma_k\}$ is a finite set of disjoint $m$-dimensional compact manifolds with boundary such that $\Sigma_j\subset \mathring{\sM}$.
Put 
$$\M:=\sM\setminus \cup_{j=1}^k \Sigma_j.$$
Then
$(\M,g)$
is a singular manifold.
\end{lem}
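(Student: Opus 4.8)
The plan is to exhibit a concrete singularity datum $(\rho,\A)\in\mathfrak{S}(\M)$, i.e.\ to construct a bounded singularity function $\rho$ together with an atlas verifying (R1)--(R4) and (S1)--(S4). First I would fix $\delta>0$ small enough that for each $j$ the open $\delta$-neighbourhood $U_j$ of $\partial\Sigma_j$ in $\sM$ has compact closure in $\mathring{\sM}$, misses $\partial\sM$, and the $U_j$ are pairwise disjoint; this is possible because the $\partial\Sigma_j$ are compact, disjoint and contained in $\mathring{\sM}$. On $U_j\cap\M$ the function $\p\mapsto{\sf dist}_g(\p,\partial\Sigma_j)$ is smooth and positive, so one can choose $\rho\in C^\infty(\M,(0,\delta])$ with $\rho={\sf dist}_g(\cdot,\partial\Sigma_j)$ within distance $\delta/2$ of some $\partial\Sigma_j$ and $\rho\equiv\delta$ outside $\bigcup_jU_j$. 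Then $\rho$ is bounded and $\rho\sim{\bf 1}_\M$ on the ``bulk'' region $\M_0:=\M\setminus\bigcup_jU_j$, on which $(\M,g)$ agrees isometrically with a region of the uniformly regular manifold $\sM$; here one just uses (the part meeting $\M_0$ of) a uniformly regular atlas of $\sM$, and since $\rho\sim{\bf 1}$ the weighted conditions (S1)--(S4) collapse to (R1)--(R4), which hold. It is harmless to refine this atlas of $\sM$ beforehand so that its charts have $g$-diameter less than $\delta/4$, which preserves uniform regularity and guarantees that the charts needed to cover $\M_0$ do not reach into any hole.

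Near a hole I would use a collar. Since $\partial\Sigma_j$ is a compact two-sided hypersurface in $\mathring{\sM}$, the outward normal exponential map gives, after shrinking $\delta$, a diffeomorphism $f_j:J_0\times\partial\Sigma_j\to\{0<{\sf dist}_g(\cdot,\partial\Sigma_j)\le\delta\}$ in which the first coordinate $t$ is $g$-distance to $\partial\Sigma_j$, so $f_j^\ast\rho=t$ near $t=0$. The pull-back $h_j:=f_j^\ast g$ is a smooth metric on the compact manifold-with-boundary $\overline{J_0}\times\partial\Sigma_j$, hence differs from the product metric $dt^2+g_{\partial\Sigma_j}$ by a tensor field bounded in every $C^k$-norm on the collar. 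By Lemma~\ref{S4: lem 4.3} one has $\id_{J_0}\in\mathscr{C}(J_0)$, and $\partial\Sigma_j$, being compact, is a singular manifold with singularity function $b_{\partial\Sigma_j}\sim{\bf 1}$; so by Lemma~\ref{S4: wedge} with trivial $\Gamma$ and the diffeomorphism invariance of Lemma~\ref{S4: lem 4.2}, $(J_0\times\partial\Sigma_j,\,dt^2+g_{\partial\Sigma_j};\,t\cdot b_{\partial\Sigma_j})$ is a singular manifold, with $t\cdot b_{\partial\Sigma_j}\sim t$. Its singularity atlas may be taken dyadic: cover $\partial\Sigma_j$ by a fixed finite atlas, subdivide each chart cube into cubes of side $2^{-n}$, take the product with $t\in[2^{-n},2^{-n+1}]$, and rescale by $2^n$ to a unit cube, so that in the chart indexed by such a cube the local scale is $\rho_\kappa\sim2^{-n}$. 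Since every coordinate differential picks up a factor $2^{-n}$ under this rescaling while the $(t,y)$-derivatives of the coefficients of $h_j$ on the compact collar are bounded, in each such chart $\rho_\kappa^{-2}$ times the pull-back of $h_j$ is, uniformly in $\kappa$ and in every $C^k$-norm, comparable to $g_m$, with comparably good bounds for transition maps and for $\kappa\mapsto\rho_\kappa$. Thus these charts verify (R1)--(R4) and (S1)--(S4) for the genuine metric $h_j$ exactly as for the model product metric, because every one of those axioms refers to the charts only up to uniform multiplicative constants and bounded derivatives; transporting back along $f_j$ yields charts of $\M$ with all the required uniform properties covering $\{0<{\sf dist}_g(\cdot,\partial\Sigma_j)<\delta\}$.

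Finally I would let $\A$ be the union over $j$ of these collar charts together with the (pre-refined) uniformly regular atlas of $\sM$ restricted to a neighbourhood of $\M_0$; by the diameter choice the two families already cover $\M$, and their overlaps occur only over the compact interface $\{\delta/2\le{\sf dist}_g(\cdot,\partial\Sigma_j)\le\delta\}$, involving finitely many charts, so all transition-map and compatibility estimates there are automatic while elsewhere they are the ones above. Hence $(\rho,\A)$ satisfies (R1)--(R4) and (S1)--(S4), so it is a singularity datum and $(\M,g)$ is a singular manifold, of type $[\![\rho]\!]$ with $\rho\sim{\sf dist}_g(\cdot,\bigcup_j\partial\Sigma_j)$ near the removed boundaries. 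The main obstacle is the bookkeeping in the middle step: arranging the dyadic cover of each collar to have uniformly bounded multiplicity and to make the derivative bounds in (R2), (R4) and (S3) uniform across the infinitely many scales $2^{-n}$, and checking that gluing this scale-dependent atlas to the fixed, scale-one atlas of $\sM$ along the compact interface preserves all of them. Equivalently, the only substantive point is that replacing the model product metric on the cusp $P(\id_{J_0},\partial\Sigma_j)$ by the true metric $h_j=f_j^\ast g$ leaves the singular-manifold axioms intact, which works precisely because $\partial\Sigma_j$ is compact.
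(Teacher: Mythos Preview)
Your construction is correct. The paper's own proof is a one-line citation of \cite[Theorem~1.6]{Ama14}, so you are taking a genuinely different route: rather than invoking that external theorem, you rebuild the special case needed here from the pipe/product machinery already quoted in the paper (Lemmas~\ref{S4: lem 4.1}--\ref{S4: lem 4.3}) together with a direct verification that the dyadic collar atlas satisfies (P1)--(P2) for the actual pulled-back metric $h_j=f_j^\ast g$, not just for the model product metric. The key technical point --- that in the rescaled chart at scale $2^{-n}$ the coefficients $g_{ij}(2^{-n}t'+c,\,2^{-n}y'+c')$ of $h_j$ and all their $(t',y')$-derivatives are uniformly bounded, because each derivative picks up a factor $2^{-n}\le 1$ and the original coefficients are smooth on the compact collar $[0,\delta]\times\partial\Sigma_j$ --- is exactly right, and it is what makes the transfer from the model metric to $h_j$ work.

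What the two approaches buy: the paper's citation is shorter and defers all bookkeeping to \cite{Ama14}. Your argument is more self-contained within this paper, produces the explicit singularity function $\rho\sim{\sf dist}_g(\cdot,\partial\Sigma_j)$ as a by-product, and in fact anticipates almost exactly the construction the paper itself carries out later in Proposition~\ref{S5: sing mdf with holes} (where the distance function is shown to be a singularity function and the $\sH_\lambda$-end structure is verified). One small wording point: when you invoke Lemma~\ref{S4: wedge} ``with trivial $\Gamma$'' you are really using the pipe lemma just before it; and what you need beyond Lemma~\ref{S4: lem 4.2} is not diffeomorphism invariance per se but stability of the axioms under a $C^\infty$-bounded change of metric on the compact collar, which is precisely the direct chart-by-chart estimate you sketch.
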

\begin{proof}
This lemma immediately follows from \cite[Theorem~1.6]{Ama14}.
\end{proof}
We will show in Proposition~\ref{S5: sing mdf with holes} below how to choose a singularity function for such $(\M,g)$.


\section{\bf Differential operators on singular manifolds with $\sH_\lambda$-ends}

\subsection{\bf Differential operators on singular manifolds with {\em property $\sH_\lambda$}}
In the first subsection, we will exhibit a technique to remove the ``largeness" assumption on the potential term or the compensation term $\omega\rho^{-\lambda}$.

Suppose that $(\M,g;\rho)$ is a {\em singular manifold}. 
Without loss of generality, we assume that $\M$ is connected. 
Before beginning the discussion of any particular model, we first consider a variant of the operator $\cA$ defined in \eqref{S3: operator A}, i.e., 
\begin{align*}
\cA u:= -\div(\vec{a}\cdot\gd u)+\ev(\nabla u, a_1) +a_0 u.
\end{align*}
Put $v:=e^{-zh}u$ for some $z=a+ib\in\C$ with $|z|=1$, and $h\in C^2(\M,\R)$.
Then
\begin{align}
\label{S5: a tech}
\notag\cA u =& -\div(\vec{a} \cdot \gd (e^{zh} v)) + \ev(\nabla e^{zh} v, a_1 ) +e^{zh}a_0 v\\
\notag=&-\div(e^{zh} \vec{a} \cdot\gd v) -z\div(\vec{a}\cdot (e^{zh} v\otimes \gd h))+e^{zh} \ev(\nabla v, a_1) \\
\notag&+z e^{zh} \ev(\nabla h, a_1 )v
+e^{zh}a_0 v\\
\notag=&  e^{zh} \{ \cA v - 2z \ev(\nabla v, \vec{a} \cdot\gd h) \\
& -[z \div(\vec{a}\cdot\gd h)+ z^2(\vec{a} \cdot\gd h |\gd h)_g -z \ev(\nabla h, a_1)]v \}.
\end{align}
 
In the sequel, we let $\vec{a}:=\rho^{2-\lambda} g_\flat$, which means that
we will consider differential operators of the following form
$$\cA u:= -\div(\rho^{2-\lambda} \gd u) +\ev(\nabla u,a_1) + a_0 u $$
with $\rho$ and $\lambda$ satisfying \eqref{S3: rho & lambda}. Assume that $\cA$ is $(\rho,\lambda)$-regular.

Define 
\begin{align}
\label{S5: def Ah}
\notag \Ah v
:=&\cA v - 2z\rho^{2-\lambda}\ev(\nabla v, \gd h) \\
& -[z \div(\rho^{2-\lambda}\gd h)+ z^2\rho^{2-\lambda} |\gd h|_g^2 -z\ev(\nabla h, a_1)]v .  
\end{align}
By \eqref{S5: a tech}, we thus have
$\Ah=e^{-zh}\circ \cA \circ e^{zh}$.

A function $h\in C^2(\M,\R)$ is said to belong to the class $\sH_\lambda(\M,g;\rho)$ with parameters $(c,M)$, if
\begin{itemize}
\item[($\sH_\lambda$1)] $M/c \leq \rho |\gd h|_g \leq Mc$;
\item[($\sH_\lambda$2)] $M/c \leq \rho^\lambda \div(\rho^{2-\lambda}\gd h) \leq Mc.$
\end{itemize}
Observe that if $h\in \sH_\lambda(\M,g;\rho)$ with parameters $(c,1)$, then $Mh\in \sH_\lambda(\M,g;\rho)$ with parameters $(c,M)$. 
\begin{definition}
A  singular manifold  $(\M,g;\rho)$ is said to enjoy {\em property $\sH_\lambda$}, if there exists some $h\in \sH_\lambda(\M,g;\rho)$. 
\end{definition}

We impose the following assumptions on the function $h$,  and the constant $z=a+ib$.
\begin{itemize}
\item[(H1)] $(\M,g;\rho)$ satisfies {\em property $\sH_\lambda$}, and $h\in \sH_\lambda(\M,g;\rho)$ with parameters $(c,M)$.
\item[(H2)] $\displaystyle a\in  (-\frac{1}{2Mc^3}, 0)$, and $|z|=1$.
\end{itemize}

Let $A_z:=-z \div(\rho^{2-\lambda}\gd h)- z^2\rho^{2-\lambda} |\gd h|_g^2 +z\ev(a_1,\nabla h) +a_0$.
By (H1), one can check that the operator $\Ah$ is $(\rho,\lambda)$-regular and $(\rho,\lambda)$-singular elliptic with $C_{\hat{\sigma}}=1$.
Moreover, (H1) implies
$$\rho^2|\gd h|^2_g/(Mc^3)\leq   \rho^\lambda |\div(\rho^{2-\lambda} \gd h)|  . $$
Lemma~\ref{S3: pts-mul}(c) yields
$$\rho^\lambda|\ev(\nabla h, a_1)|\leq  \rho|\gd h|_g \|a_1\|_{\infty;\lambda} .$$
Note that (H2) gives $b^2-a^2- \frac{a}{Mc^3}>1$.
We then have
\begin{align}
\label{S5: Ah-1}
\notag \rho^\lambda\Rp(A_z)
\notag&=\rho^2(b^2-a^2)|\gd h|^2_g-a\rho^\lambda\div(\rho^{2-\lambda} \gd h) + \rho^\lambda( a\ev(\nabla h, a_1)+ \Rp(a_0))\\
\notag&\geq  \rho^2 (b^2-a^2-\frac{a}{Mc^3} - \frac{c}{M}\|a_1\|_{\infty;\lambda})|\gd h|^2_g + \rho^\lambda\Rp(a_0) \\
& >C_0 \rho^2|\gd h|^2_g -\omega_\cA
\end{align}
for some $C_0>1$ and $\omega_\cA<0$ by choosing $M$  sufficiently large and the real part of $z$, i.e., $a$, satisfying (H2) accordingly. This shows that
$$
\rho|2z\gd h|_g=2\rho |\gd h|_g< \frac{2}{\sqrt{C_0}} \sqrt{\rho^\lambda\Rp(A_z)+\omega_\cA}.
$$
For any $\lambda^\prime\in\R$, let 
$$I(\lambda^\prime,\lambda,\tau,\sigma):=\{2\lambda^\prime+2\tau-2\sigma, 2\lambda^\prime-\lambda+2\tau-2\sigma\}.$$ 
By choosing $M$ large enough and making $z=a+ib$ satisfying (H2),  it holds that
\begin{align}
\label{S5: Ah-2}
\notag &\quad\rho^{\lambda-1}|-2z\rho^{2-\lambda}\gd h + t\rho^{2-\lambda}\gd\log\rho +a_1|_g\\
&< \frac{2}{\sqrt{C_1}} \sqrt{\rho^\lambda\Rp(A_z)+\omega_\cA}
\end{align}
for all $t\in I(\lambda^\prime,\lambda,\tau,\sigma)$ and some $\omega_\cA<0$, $C_1\in (1,C_0)$. Therefore, $\omega_\cA<0$ satisfies \eqref{S3: A3}-\eqref{S3: A4}.

We consider the following condition.
\begin{itemize}
\item[(H3)] $M$ is sufficiently large such that \eqref{S5: Ah-1} and \eqref{S5: Ah-2} hold.
\end{itemize}

Summarizing the above discussions, for $z=a+ib$ and $M$ satisfying (H2) and (H3), we conclude from Theorem~\ref{S3: domain} with $\omega=0$ that
\begin{align}
\label{S5: Ah-semigroup}
\Ah\in \cH(\mathring{W}^{2,\lambda^\prime -\lambda}_2(\M, V), L^{\lambda^\prime}_2(\M, V))\cap \Lis(\mathring{W}^{2,\lambda^\prime -\lambda}_2(\M, V), L^{\lambda^\prime}_2(\M, V)) , 
\end{align}
and the semigroup $\{e^{-t\Ah}\}_{t\geq 0}$ is contractive. 


For any function space $\F^{s,\vartheta}(\M, V)$ defined in Section~2,  the space 
$$e^{zh}\F^{s,\vartheta}(\M, V):=\{u\in L_{1,loc}(\M,V):\, e^{-zh}u\in  \F^{s,\vartheta}(\M, V)\}$$ 
is a Banach space equipped with the norm $\|\cdot \|_{e^{zh}\F^{s,\vartheta}}$, where
$$\|u \|_{e^{zh}\F^{s,\vartheta}}:= \|e^{-zh} u\|_{\F^{s,\vartheta}}. $$
It is easy to see that
\begin{align}
\label{S5.1: e^zh-Lis}
e^{zh}\in \Lis(e^{-zh}\F^{s,\vartheta}(\M, V), \F^{s,\vartheta}(\M, V)).
\end{align}
\begin{theorem}
\label{S5: A-semigroup}
Suppose that $(\M,g; \rho)$ is a singular manifold with  property $\sH_\lambda$, and $h\in \sH_\lambda(\M,g; \rho)$ with parameters $(c,M)$. 
Let $\lambda^\prime\in \R$, $\rho$ and $\lambda$ satisfy \eqref{S3: rho & lambda}.
Furthermore, assume  that the differential operator
\begin{align*}
\cA u:= -\div(\rho^{2-\lambda} \gd u)+\ev(\nabla u, a_1) +a_0 u
\end{align*}
is $(\rho,\lambda)$-regular.
Then, for any constant $z=a+ib$ and $M$ fulfilling (H2) and (H3), we have
$$\cA\in \cH(e^{zh}\mathring{W}^{2,\lambda^\prime -\lambda}_2(\M, V), e^{zh} L^{\lambda^\prime}_2(\M, V))\cap \Lis(e^{zh}\mathring{W}^{2,\lambda^\prime -\lambda}_2(\M, V), e^{zh} L^{\lambda^\prime}_2(\M, V)) ,$$
and the semigroup $\{e^{-t\cA}\}_{t\geq 0}$ is contractive.
\end{theorem}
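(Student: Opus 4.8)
The plan is to transport the already-established generation result \eqref{S5: Ah-semigroup} for $\Ah$ back to $\cA$ by conjugating with the multiplication operator $e^{zh}$, exploiting that this conjugation is \emph{isometric} on the $L_2$-level. First I would record the two mapping properties of $e^{zh}$ that make the transfer work: by the very definition of the space $e^{zh}L^{\lambda^\prime}_2(\M,V)$ and its norm, $e^{zh}$ is an \emph{isometric} isomorphism from $L^{\lambda^\prime}_2(\M,V)$ onto $e^{zh}L^{\lambda^\prime}_2(\M,V)$; and, by \eqref{S5.1: e^zh-Lis}, it restricts to a topological isomorphism from $\mathring{W}^{2,\lambda^\prime-\lambda}_2(\M,V)$ onto $e^{zh}\mathring{W}^{2,\lambda^\prime-\lambda}_2(\M,V)$, with $e^{-zh}$ as its inverse in both cases.

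Next I would promote the formal conjugation identity to an operator identity. From \eqref{S5: a tech} and \eqref{S5: def Ah} one has $\Ah = e^{-zh}\circ\cA\circ e^{zh}$ on $\mathcal{D}(\mathring{\M},V)$; since by \eqref{S5: Ah-semigroup} the closed realization of $\Ah$ has $\mathcal{D}(\mathring{\M},V)$ as a core and domain $\mathring{W}^{2,\lambda^\prime-\lambda}_2(\M,V)$, and since $e^{zh}$ is a homeomorphism of the relevant scale of function spaces, this identity upgrades to $\cA = e^{zh}\circ\Ah\circ e^{-zh}$ with $\mathrm{dom}(\cA) = e^{zh}\bigl(\mathrm{dom}\,\Ah\bigr) = e^{zh}\mathring{W}^{2,\lambda^\prime-\lambda}_2(\M,V)$; in other words, the realization of the differential operator $\cA$ on $e^{zh}\mathring{W}^{2,\lambda^\prime-\lambda}_2(\M,V)$ \emph{is} the similarity transform of $\Ah$ by $e^{zh}$.

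Then I would invoke the elementary principle that if $-\Ah$ generates a strongly continuous analytic semigroup $\{e^{-t\Ah}\}_{t\ge0}$ on $L^{\lambda^\prime}_2(\M,V)$ and $S:=e^{zh}\colon L^{\lambda^\prime}_2(\M,V)\to e^{zh}L^{\lambda^\prime}_2(\M,V)$ is a Banach-space isomorphism, then $-\cA=-S\Ah S^{-1}$ generates the conjugated semigroup $\{S e^{-t\Ah} S^{-1}\}_{t\ge0}$, which is again strongly continuous and analytic (with the same angle). Together with the domain identification this gives $\cA\in\cH(e^{zh}\mathring{W}^{2,\lambda^\prime-\lambda}_2(\M,V), e^{zh}L^{\lambda^\prime}_2(\M,V))$. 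Because here $S$ is an isometry, $\|e^{-t\cA}\|_{\L(e^{zh}L^{\lambda^\prime}_2(\M,V))} = \|e^{-t\Ah}\|_{\L(L^{\lambda^\prime}_2(\M,V))}\le 1$, so the semigroup is contractive (not merely quasi-contractive). Finally the $\Lis$ assertion is immediate, since $\cA = e^{zh}\circ\Ah\circ e^{-zh}$ is a composition of the three isomorphisms $e^{-zh}\in\Lis(e^{zh}\mathring{W}^{2,\lambda^\prime-\lambda}_2(\M,V),\mathring{W}^{2,\lambda^\prime-\lambda}_2(\M,V))$, $\Ah\in\Lis(\mathring{W}^{2,\lambda^\prime-\lambda}_2(\M,V), L^{\lambda^\prime}_2(\M,V))$, and $e^{zh}\in\Lis(L^{\lambda^\prime}_2(\M,V), e^{zh}L^{\lambda^\prime}_2(\M,V))$.

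The argument is essentially soft; the only point needing care is the passage from the formal conjugation identity on $\mathcal{D}(\mathring{\M},V)$ to the operator identity with the correct domain — which is where one uses that $\mathcal{D}(\mathring{\M},V)$ is a core and that $e^{zh}$ is a homeomorphism of the scale — together with the (easy but crucial) observation that genuine contractivity survives the similarity transform precisely because $e^{zh}$ acts isometrically on $L^{\lambda^\prime}_2(\M,V)$.
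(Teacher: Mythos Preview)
Your proposal is correct and follows essentially the same approach as the paper: both transfer the result for $\Ah$ to $\cA$ via the conjugation $\cA=e^{zh}\circ\Ah\circ e^{-zh}$, using that $e^{zh}$ is a topological isomorphism between the relevant weighted spaces. The paper phrases the transfer through resolvent estimates on a sector (verifying $|\mu|^{1-k}\|(\mu+\cA)^{-1}\|\le\cE'$ and then invoking standard semigroup theory), whereas you invoke the abstract similarity principle for generators directly; these are equivalent formulations. Your explicit observation that $e^{zh}$ is \emph{isometric} from $L^{\lambda'}_2$ onto $e^{zh}L^{\lambda'}_2$ (by the very definition of the norm) is the clean way to see that genuine contractivity, not merely quasi-contractivity, is preserved---a point the paper leaves implicit in ``the assertion follows from the well-known semigroup theory.'' The core argument you give for upgrading the formal identity on $\mathcal{D}(\mathring{\M},V)$ is a bit more than necessary (density plus boundedness on the scale would suffice), but it is not wrong.
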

\begin{proof}
Given any angle $\theta\in [0,\pi]$, set 
$$\Sigma_\theta:=\{z\in\C: |{\rm arg}z|\leq \theta \}\cup\{0\}.$$
\eqref{S5: Ah-semigroup} implies that
$S:=\Sigma_\theta \subset \rho(-\Ah)$ so that
$$|\mu|^{1-k} \| (\mu +\Ah)^{-1} \|_{\L(L^{\lambda^\prime}_2(\M,V), \mathring{W}^{2k,\lambda^\prime -k\lambda}_2(\M, V))} \leq \cE, \quad \mu\in S, \quad k=0,1, $$
for some $\theta\in [\pi/2,\pi)$ and $\cE>0$. By \eqref{S5.1: e^zh-Lis}
and $\cA= e^{zh} \circ \Ah \circ e^{-zh}$, it holds that $S \subset \rho(-\cA)$ and for all $\mu\in S$ and $k=0,1$
\begin{align*}
&\quad|\mu|^{1-k} \| (\mu +\cA)^{-1} \|_{e^{zh}\L(L^{\lambda^\prime}_2(\M,V),e^{zh} \mathring{W}^{2k,\lambda^\prime -k\lambda}_2(\M, V))} \\
&=|\mu|^{1-k} \| (\mu +e^{zh} \circ \Ah \circ e^{-zh})^{-1} \|_{\L(e^{zh} L^{\lambda^\prime}_2(\M,V), e^{zh}\mathring{W}^{2k,\lambda^\prime -k\lambda}_2(\M, V))} \leq \cE^\prime.
\end{align*}
Then the assertion follows from the well-known semigroup theory.
\end{proof}

\begin{remark}
Because the choice  of the constant $z$ and $M$ is not unique, it seems that the assertion in Theorem~\ref{S5: A-semigroup} is not well formulated. 
\smallskip\\
However, as is shown in Section~5.3 below, this is indeed not a problem.
In Theorem~\ref{S5: sing mfd-ends}, we will generalize the result in Theorem~\ref{S5: A-semigroup} to {\em singular manifolds} with $\sH_\lambda$-ends, which roughly speaking, means that a manifold satisfies {\em property $\sH_\lambda$} close to the singularities and is uniformly regular elsewhere. 
\smallskip\\
As  we will see in Theorem~\ref{S5: main theorem} and Corollary~\ref{S5: main corollary} below, for most of the practical examples, once an $h\in \sH_\lambda(\M,g;\rho)$ with parameters $(c,1)$ is fixed, we will see that the space $e^{zMh}\mathring{W}^{s,\vartheta}_p(\M, V)$ actually coincides with the weighted Sobolev-Slobodeckii space $\mathring{W}^{s,\vartheta+aM}_p(\M, V)$, for any $z=a+ib$ and $M$ fulfilling (H3) and (H4). 
\smallskip\\
Note that $aM\in (-\frac{1}{2c^3},0)$ in fact only depends on the constant $c$. 
Since the weight $\lambda^\prime$ is arbitrary, in Theorem~\ref{S5: A-semigroup}, we actually have that for any $\lambda^\prime$,
$$\cA\in \cH( \mathring{W}^{2,\lambda^\prime -\lambda}_2(\M, V),  L^{\lambda^\prime}_2(\M, V))\cap \Lis( \mathring{W}^{2,\lambda^\prime -\lambda}_2(\M, V),  L^{\lambda^\prime}_2(\M, V)) .$$
The result in Theorem~\ref{S5: A-semigroup} thus parallels to those in Section~3.
\end{remark}


\subsection{\bf Singular manifolds  with $\sH_\lambda$-ends}

\begin{definition}
An $m$-dimensional singular manifold $(\M,g; \rho)$ is called a  singular manifold with $\sH_\lambda$-ends if it 
satisfies the following conditions.
\begin{itemize}
\item[(i)] ${\bf G}=\{G_1,\cdots,G_n\}$ is a finite set of disjoint closed subsets of $\M$. Each $(G_i,g; \rho_i)$ is an $m$-dimensional  singular manifold satisfying  property $\sH_\lambda$.
\item[(ii)] $G_0$ is closed in $\M$, and  $(G_0,g)$ is an $m$-dimensional  uniformly regular Riemannian manifold.
\item[(iii)] $\{G_0\}\cup {\bf G}$ forms a covering for $\M$. $\partial_0 G_i:= G_0\cap G_i \subset \partial G_0 \cap \partial G_i$.
\item[(vi)] Let $\rho_i:=\rho|_{G_i}$. Either of the following conditions holds true
$$\rho_i \leq 1,\quad  i=1,\cdots,n;\quad  \text{ or }\quad  \rho_i \geq 1,\quad  i=1,\cdots,n.$$
\end{itemize}
$G_i$ are called the  $\sH_\lambda$-ends of $\M$.
\end{definition}

In the following, we will present several examples of {\em singular manifolds}  with  $\sH_\lambda$-ends, and show how to construct such manifolds in a systematic way.

The proof for the following lemma is  straightforward.
\begin{lem}
\label{S5: lem-product}
Suppose that $(\M,g;\rho)$ has {\em property $\sH_\lambda$}, $h\in \mathscr{H}(\M,g;\rho)$ with parameter $(c,M)$, and $(B,g_B)$ is a  uniformly regular Riemannian manifold. Then $(\M\times B, g + g_B ;\rho\otimes {\bf 1}_B)$ also has {\em property $\sH_\lambda$}, and
$$h\otimes{\bf 1}_B  \in \sH_\lambda(\M\times B, g + g_B ;\rho\otimes {\bf 1}_B) $$
with parameter $(c,M)$. 
\end{lem}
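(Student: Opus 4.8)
The plan is to produce the function $h\otimes{\bf 1}_B$ explicitly and verify the two defining inequalities ($\sH_\lambda$1) and ($\sH_\lambda$2) directly, exploiting the product structure of both the metric and the singularity function. First I would record the elementary facts about the product manifold: since $(\M,g;\rho)$ is a singular manifold with {\em property $\sH_\lambda$} and $(B,g_B)$ is uniformly regular (so ${\bf 1}_B$ is a singularity function for $B$), Lemma~\ref{S4: lem 4.1} gives that $\rho\otimes{\bf 1}_B$ is a singularity function for $(\M\times B,g+g_B)$; I will write $\tilde\rho:=\rho\otimes{\bf 1}_B$ and note $\tilde\rho=\rho$ under the canonical identification of functions on $\M$ with functions on $\M\times B$ pulled back by the first projection. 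The key pointwise identities are that for $\tilde h:=h\otimes{\bf 1}_B$ one has $\gd_{g+g_B}\tilde h=(\gd_g h,0)$ in the product splitting $T(\M\times B)=T\M\oplus TB$, hence $|\gd_{g+g_B}\tilde h|_{g+g_B}=|\gd_g h|_g$; and similarly the divergence splits, $\div_{g+g_B}(\tilde\rho^{2-\lambda}\gd_{g+g_B}\tilde h)=\div_g(\rho^{2-\lambda}\gd_g h)$, because the $B$-component of the relevant vector field vanishes and $\tilde\rho$ depends only on the $\M$-variable.

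Granting these identities, the verification is immediate. For ($\sH_\lambda$1): $\tilde\rho|\gd_{g+g_B}\tilde h|_{g+g_B}=\rho|\gd_g h|_g$, which lies in $[M/c,Mc]$ by hypothesis on $h$. For ($\sH_\lambda$2): $\tilde\rho^{\lambda}\div_{g+g_B}(\tilde\rho^{2-\lambda}\gd_{g+g_B}\tilde h)=\rho^\lambda\div_g(\rho^{2-\lambda}\gd_g h)$, again in $[M/c,Mc]$. One should also check $\tilde h\in C^2(\M\times B,\R)$, which is clear since $h\in C^2$ and the pullback by a smooth projection preserves $C^2$. This shows $h\otimes{\bf 1}_B\in\sH_\lambda(\M\times B,g+g_B;\tilde\rho)$ with the same parameters $(c,M)$, and in particular $(\M\times B,g+g_B;\tilde\rho)$ has {\em property $\sH_\lambda$}.

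The only point requiring a little care — and the place I would spend the most words — is the justification of the splitting of $\gd$ and $\div$ for the product metric, i.e.\ that the Levi-Civita connection of $g+g_B$ restricted to functions/vector fields of the special product form behaves as expected. For the gradient this is a one-line computation from $\langle df,X\rangle$ and the block-diagonal form of $g+g_B$. For the divergence the cleanest route is the coordinate formula used already in Theorem~\ref{S2: divergence thm}: in a product chart $(x^1,\dots,x^m,y^1,\dots,y^{d})$ the metric coefficients are block diagonal and $y$-independent along the $\M$-block (and $x$-independent along the $B$-block), so the Christoffel symbols $\Gamma^\alpha_{\alpha\beta}$ mixing the two blocks vanish and $\div_{g+g_B}$ of a vector field supported in the $T\M$-block with $B$-independent, $\rho$-weighted coefficients reduces to $\div_g$ of the corresponding field on $\M$. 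Everything else is routine, so I do not anticipate a genuine obstacle; this is why the excerpt calls the proof ``straightforward.''
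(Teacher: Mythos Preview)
Your proposal is correct and is precisely the ``straightforward'' verification the paper has in mind: the paper does not actually write out a proof, merely stating ``The proof for the following lemma is straightforward.'' Your argument supplies exactly the routine check one would do, namely that in product coordinates the gradient and divergence of $h\otimes{\bf 1}_B$ with respect to $g+g_B$ reduce to those of $h$ with respect to $g$, so conditions ($\sH_\lambda$1) and ($\sH_\lambda$2) transfer with identical parameters $(c,M)$.
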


\begin{lem}
\label{S5: lem-diff}
Let $f: \tilde{\M} \rightarrow \M$ be a diffeomorphism of manifolds.
Suppose that $(\M,g;\rho)$ has {\em property $\sH_\lambda$}, and $h\in \sH(\M,g;\rho)$ with parameters $(c,M)$.
\smallskip\\
Then so does $(\tilde{\M}, f^* g; f^*\rho)$, and $f^* h\in \sH(\tilde{\M}, f^* g; f^*\rho)$ with parameters $(c,M)$.
\end{lem}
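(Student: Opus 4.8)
The plan is to show that the diffeomorphism $f$ transports {\em property $\sH_\lambda$} along with its parameters by directly verifying that $f^*h$ satisfies ($\sH_\lambda$1) and ($\sH_\lambda$2) on $(\tilde{\M}, f^*g; f^*\rho)$. First I would recall that Lemma~\ref{S4: lem 4.2} already guarantees that $(\tilde{\M}, f^*g; f^*\rho)$ is a {\em singular manifold}, so only the two pointwise estimates defining $\sH_\lambda(\tilde{\M}, f^*g; f^*\rho)$ need checking. Since $f$ is a diffeomorphism and $h\in C^2(\M,\R)$, clearly $f^*h\in C^2(\tilde{\M},\R)$, so the regularity requirement is automatic.

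The key observation is the naturality of the operators appearing in ($\sH_\lambda$1) and ($\sH_\lambda$2) under pullback by a diffeomorphism. Concretely, for a diffeomorphism $f$, the gradient, the norm induced by the metric, and the divergence all commute with $f^*$ in the appropriate sense: $\gd_{f^*g}(f^*h) = f^*(\gd_g h)$, $|f^*X|_{f^*g} = f^*(|X|_g)$ for any vector field $X$, and $\div_{f^*g}(f^*T) = f^*(\div_g T)$ for any tensor field $T$. From these, $(f^*\rho)\,|\gd_{f^*g}(f^*h)|_{f^*g} = f^*(\rho|\gd_g h|_g)$ and $(f^*\rho)^\lambda\,\div_{f^*g}\big((f^*\rho)^{2-\lambda}\gd_{f^*g}(f^*h)\big) = f^*\big(\rho^\lambda\div_g(\rho^{2-\lambda}\gd_g h)\big)$, using also that $f^*$ is a ring homomorphism on functions so it respects products and powers. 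Then ($\sH_\lambda$1) and ($\sH_\lambda$2) for $h$ on $\M$, which are pointwise two-sided bounds by the constants $M/c$ and $Mc$, pull back verbatim to the corresponding bounds for $f^*h$ on $\tilde{\M}$, because a pointwise inequality $M/c \le \Phi \le Mc$ on $\M$ becomes $M/c \le f^*\Phi \le Mc$ on $\tilde{\M}$ as $f$ is a bijection. This shows $f^*h\in \sH_\lambda(\tilde{\M}, f^*g; f^*\rho)$ with the same parameters $(c,M)$, and in particular $(\tilde{\M}, f^*g; f^*\rho)$ has {\em property $\sH_\lambda$}.

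I do not expect a genuine obstacle here; the only point requiring a little care is verifying the naturality identity $\div_{f^*g}(f^*T) = f^*(\div_g T)$ for the relevant tensor field $T = (f^*\rho)^{2-\lambda}\gd_{f^*g}(f^*h)$. This follows because the Levi-Civita connection is natural under isometries between $(\tilde{\M}, f^*g)$ and $(\M,g)$—indeed $f$ is by construction an isometry from $(\tilde{\M},f^*g)$ onto $(\M,g)$—and the divergence in \eqref{S2: divergence} is built from $\nabla$ and the metric contraction $\ev^{\sigma+1}_{\tau+1}$, both of which commute with pullback by an isometry. One can either invoke this general principle or, if a self-contained argument is preferred, compute in local coordinates: choosing charts on $\M$ and pulling them back via $f$ to charts on $\tilde{\M}$, the coordinate expressions of $f^*g$, $f^*\rho$, $f^*h$ coincide with those of $g$, $\rho$, $h$, so the Christoffel symbols and hence the local formula for $\div$ match, giving the identity. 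Since the whole statement is a pointwise assertion, this local verification suffices.
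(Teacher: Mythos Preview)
Your proposal is correct and follows essentially the same approach as the paper: both arguments rest on the naturality of $\gd$, $|\cdot|_g$, and $\div$ under pullback by the diffeomorphism $f$, so that the quantities in ($\sH_\lambda$1) and ($\sH_\lambda$2) for $f^*h$ are precisely the $f^*$-pullbacks of those for $h$. The paper carries this out via the local-coordinate verification you mention as an alternative---pulling back the atlas so that the local expressions of $f^*g$, $f^*\rho$, $f^*h$ coincide with those of $g$, $\rho$, $h$---whereas you first invoke the abstract naturality principle; the content is the same.
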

\begin{proof}
It is a simple matter to check that $(f^{-1}(\Ok), f^*\vpk)_{\kappa\in\K}$ forms a uniformly regular atlas for $\tilde{\M}$ and
$$(f^*\vpk)_* f^*h= \kf h, \quad (f^*\vpk)_*(f^*g)=\kf g. $$
As a direct consequence, we have the identities 
$$ (f^*\vpk)_* \gd_{f^*g} f^*h= \kf \gd_g h,$$
and
$$(f^*\vpk)_* \div_{f^*g}((f^*\rho)^{2-\lambda} \gd_{f^*g} f^*h )= \div_g(\rho^{2-\lambda} \gd_g h). $$
\end{proof}

The following examples show that we can construct a family of {\em singular manifolds} with $\sH_\lambda$-ends in a great variety of geometric constellations. 
In particular, we can find manifolds with $\sH_\lambda$-type singularities of arbitrarily high dimension.

Let $J_0:=(0,1]$ as in Section~4. 
We will introduce some subsets of the class $\mathscr{C}(J_0)$, which is very useful for constructing examples of {\em singular manifolds} with $\sH_\lambda$-ends.
We call a {\em cusp characteristic} $R\in \mathscr{C}(J_0)$ a {\em mild cusp characteristic} if $R$ satisfies \eqref{S4: cusp ch} and \eqref{S5: mild cusp ch} below.
\begin{align}
\label{S5: mild cusp ch}
\dot{R} \sim {\bf 1}_{J_0}.
\end{align}
If $R$ further satisfies
\begin{align}
\label{S5: unif mild cusp ch}
|\ddot{R}| <\infty,
\end{align}
then we call it a {\em uniformly mild cusp characteristic}.
We write $R\in \mathscr{C_U}(J_0)$.
\begin{example}
$R(t)=t$, $R(t)=\frac{4}{\pi}\arctan t$, $R(t)=\log(1+(e-1)t)$, $R(t)=2t/3+\sin(\frac{\pi}{2}t)/3$ are examples of  uniformly mild cusp characteristics.
\end{example} 
\begin{lem}
\label{S5: Eg-mild cusp ch}
Suppose that $R\in \mathscr{C_U}(J_0) $ and $\lambda\in [0,1)\cup(1,\infty)$. Then $(J_0,dt^2; R)$ is a   singular manifold  with  $\sH_\lambda$-end.
\end{lem}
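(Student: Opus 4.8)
The plan is to verify the two defining conditions of a singular manifold with $\sH_\lambda$-ends directly, using the single singular end $G_1 := J_0$ itself, with $G_0$ a compact collar where $R\sim\mathbf 1$. First I would recall from Lemma~\ref{S4: lem 4.3} that $R$ is a singularity function for $(J_0,dt^2)$, so $(J_0,dt^2;R)$ is a singular manifold; this handles the underlying structure. For the covering, fix $\delta\in(0,1)$ and set $G_0:=[\delta,1]$ (a compact, hence uniformly regular, one-dimensional manifold with boundary, on which $\rho=R\sim\mathbf 1$) and $G_1:=(0,1]=J_0$ with $\rho_1:=R$; then $\{G_0,G_1\}$ covers $J_0$, $\partial_0 G_1=\{\delta\}\subset\partial G_0\cap\partial G_1$, and since $R\le1$ on $J_0$ (because $R(1)=1$ and, by \eqref{S5: mild cusp ch}, $\dot R\sim\mathbf 1_{J_0}>0$, so $R$ is increasing with maximum $R(1)=1$) condition~(vi) holds. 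So the only substantive point is that $(G_1,dt^2;R)=(J_0,dt^2;R)$ enjoys \emph{property $\sH_\lambda$}.

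To produce a witness $h\in\sH_\lambda(J_0,dt^2;R)$ I would look for $h$ of the form $h(t)=\int_1^t\psi(s)\,ds$ and compute the two conditions ($\sH_\lambda$1), ($\sH_\lambda$2) in the single coordinate $t$, where $\rho=R$, $\gd h=\dot h\,dt$ has $|\gd h|_g=|\dot h|$, and $\div(\rho^{2-\lambda}\gd h)=\partial_t(R^{2-\lambda}\dot h)$. Condition ($\sH_\lambda$1) becomes $R|\dot h|\sim\mathbf 1$, which forces $\dot h\sim R^{-1}$ up to sign; the natural choice is $\dot h = -1/R$ (the sign chosen so that later, in applications, $h\to+\infty$ as $t\to0$ when $R(0)=0$, matching the hypothesis $a<0$ in (H2)), i.e. $h(t):=\int_t^1 ds/R(s)$, which is finite on $J_0$ and well-defined because $R>0$ on the compact-away-from-$0$ part — and $h(t)\to\infty$ as $t\to0^+$ precisely by \eqref{S4: cusp ch}(ii), though finiteness of $h$ on all of $J_0$ is all we need for $h\in C^2$. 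With this choice $R\dot h=-1$, so ($\sH_\lambda$1) holds with constants $(1,1)$. For ($\sH_\lambda$2), compute
\begin{align*}
\rho^\lambda\div(\rho^{2-\lambda}\gd h)=R^\lambda\,\partial_t\bigl(R^{2-\lambda}\cdot(-R^{-1})\bigr)=-R^\lambda\,\partial_t\bigl(R^{1-\lambda}\bigr)=-(1-\lambda)\dot R.
\end{align*}
Since $\lambda\ne1$ and, by \eqref{S5: mild cusp ch}, $\dot R\sim\mathbf 1_{J_0}$, this quantity is $\sim\mathbf 1_{J_0}$; if $\lambda<1$ it is $\sim -\dot R<0$, so I would instead take $h(t):=\int_t^1 ds/R(s)$ for $\lambda>1$ and $h(t):=-\int_t^1 ds/R(s)$ for $\lambda<1$ (the sign of $h$ does not affect ($\sH_\lambda$1)), ensuring $\rho^\lambda\div(\rho^{2-\lambda}\gd h)\sim\mathbf 1_{J_0}$ with appropriate constants. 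Thus $h\in\sH_\lambda(J_0,dt^2;R)$, so $(J_0,dt^2;R)$ has property $\sH_\lambda$.

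The regularity $h\in C^2(J_0)$ and the control of the $\sim$-constants use \eqref{S4: cusp ch}(iii) and \eqref{S5: unif mild cusp ch}: $\dot h=\mp R^{-1}$ is $C^1$ since $R$ is $C^\infty$ and bounded below on $J_0$, and $\ddot h=\pm\dot R/R^2$ is bounded because $\dot R$ is bounded (iii) and $R^{-1}$ is — wait, $R^{-1}$ is \emph{not} bounded near $0$, but we do not need $\ddot h$ bounded; property $\sH_\lambda$ only requires $h\in C^2$, which holds pointwise on the open manifold $J_0=(0,1]$. The uniform-mildness hypothesis \eqref{S5: unif mild cusp ch} enters only to guarantee the $\sim$-equivalences in ($\sH_\lambda$1)--($\sH_\lambda$2) have $\kappa$-uniform constants when one passes to the normalized atlas of $(J_0,dt^2;R)$ — concretely, it bounds $\kf\ddot R$ after the stretching, so that the localized versions of $R\dot h$ and $R^\lambda\partial_t(R^{2-\lambda}\dot h)$ stay comparable to $\mathbf 1$ uniformly. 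The main obstacle I anticipate is exactly this bookkeeping: checking that the two $\sH_\lambda$-conditions, which I verified in the global coordinate $t$, transfer to uniform two-sided bounds in the charts $(\Ok,\vpk)$ of the singular manifold $(J_0,dt^2;R)$, using properties (P1)--(P3) and (S3)--(S4); once that is in place, Definition~5.6 is satisfied with $n=1$ and the proof is complete.
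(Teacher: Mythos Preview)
Your argument is correct in substance but takes a genuinely different route from the paper. The paper chooses $h(t)=\sg(1-\lambda)\log R(t)$; then $R|\dot h|=\dot R\sim\mathbf 1$ gives ($\sH_\lambda$1), while ($\sH_\lambda$2) reads $|1-\lambda|\dot R^2+\sg(1-\lambda)R\ddot R$, which is only $\sim\mathbf 1$ on an initial segment $I_c=(0,c]$ (here the uniform-mildness bound $|\ddot R|<\infty$ is genuinely used, to make the second term small via $R(t)\to 0$). The paper then covers $J_0$ by $G_1=I_c$ and the uniformly regular piece $G_0=[c,1]$. Your choice $h(t)=\sg(\lambda-1)\int_t^1 ds/R(s)$ gives $R|\dot h|\equiv 1$ and $\rho^\lambda\div(\rho^{2-\lambda}\gd h)=|1-\lambda|\dot R\sim\mathbf 1$ on \emph{all} of $J_0$, using only the mild hypothesis $\dot R\sim\mathbf 1$ and never $|\ddot R|<\infty$; this is precisely the alternative the paper records in the Remark following the lemma. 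The paper prefers $\log R$ only because in Section~5.3 it makes $e^{zh}\mathring W^{s,\vartheta}_p$ identifiable with a weighted Sobolev space.

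Two small corrections. First, your covering $G_0=[\delta,1]$, $G_1=J_0$ violates condition~(iii): $G_0\cap G_1=[\delta,1]$ is not contained in $\partial G_0\cap\partial G_1$. Since your $h$ gives property $\sH_\lambda$ on all of $J_0$, simply take $G_1=(0,\delta]$ and $G_0=[\delta,1]$, so that $G_0\cap G_1=\{\delta\}=\partial G_0\cap\partial G_1$. Second, your closing worry about transferring the $\sim$-bounds to local charts is misplaced: conditions ($\sH_\lambda$1)--($\sH_\lambda$2) are \emph{global} pointwise bounds on the intrinsic quantities $\rho|\gd h|_g$ and $\rho^\lambda\div(\rho^{2-\lambda}\gd h)$, and you have already verified them in the global coordinate $t$; no chart-by-chart check is needed. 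In particular, with your $h$ the hypothesis \eqref{S5: unif mild cusp ch} is not used at all.
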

\begin{proof}
First, by \cite[Lemma~5.2]{Ama14}, $(J_0,dt^2; R)$ is a {\em singular manifold}. We set 
\begin{align}
\label{S5: h-unif mild cusp}
h(t)=\sg(1-\lambda)\log R(t).
\end{align}
Then $R(t)|\dot{h}(t)|=\dot{R}(t) \sim {\bf 1}_{J_0}$ on $ J_0$, and 
$$R^\lambda(t)\frac{d}{dt} (R^{2-\lambda}(t)\dot{h}(t))=|1-\lambda||\dot{R}(t)|^2 +\sg(1-\lambda) R(t) \ddot{R}(t) \sim {\bf 1}_{I_c},  $$
where $I_c:=(0,c]$ for $c$ small enough. Then the assertion follows from the fact that $([c,1], dt^2)$ is uniformly regular for any $c>0$.
\end{proof}
\begin{remark}
We can actually show that $(J_0,dt^2; R)$ is a {\em singular manifold} with {\em property $\sH_\lambda$} with 
$$h(t):=\sg(\lambda-1)\int\limits_t^1 ds/R(s)\in \sH_\lambda(J_0,dt^2;R), $$
as long as $R$ is a {\em mild cusp characteristic}.
But for the sake of practical usage, we will see in Section~5.3 below that \eqref{S5: h-unif mild cusp} benefits us more in establishing the correspondence of the space $e^{zh}\mathring{W}^{s,\vartheta}_p(\M,V)$ with weighted Sobolev-Slobodeckii spaces.
\end{remark}

Suppose that $R\in \mathscr{C_U}(J_0) $, $(B,g_B)$ is a uniformly regular Riemannian submanifold of $\R^{d-1}$, and $(\Gamma, g_\Gamma)$ is a compact connected Riemannian manifold without boundary. 
We call $(\M,g)$ a {\em uniformly mild $\Gamma$-wedge over $P(R,B)$},  if there is a diffeomorphism $f: \M \to W(R,B,\Gamma)$ such that $g=f^*( \phi_P^*(dt^2+g_B) +g_\Gamma)$.
\begin{prop}
\label{S5: Eg-mild cusp-wedge}
Let $\lambda\in [0,1)\cup(1,\infty)$. Assume that $(\M,g)$ is a uniformly mild $\Gamma$-wedge over $P(R,B)$. 
Then $(\M,g)$ is a singular manifold  with $\sH_\lambda$-end.
\end{prop}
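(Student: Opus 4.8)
The plan is to produce the decomposition of $(\M,g)$ required by the definition of a singular manifold with $\sH_\lambda$-ends by transporting, through the defining diffeomorphism $f\colon\M\to W(R,B,\Gamma)$ and the canonical stretching diffeomorphism $\phi_P$, the splitting $J_0=(0,c]\cup[c,1]$, where $c>0$ is the constant appearing in the proof of Lemma~\ref{S5: Eg-mild cusp ch}. First I would fix a singularity function on $\M$: by Lemmas~\ref{S4: lem 4.3}, \ref{S4: lem 4.1} and \ref{S4: lem 4.2}, applied with the trivial singularity functions ${\bf 1}_B$ and ${\bf 1}_\Gamma$ of the uniformly regular manifolds $B$ and $\Gamma$, the function $\rho_W:=\phi_P^*(R\otimes{\bf 1}_B)\otimes{\bf 1}_\Gamma$ is a singularity function for $(W(R,B,\Gamma),\,\phi_P^*(dt^2+g_B)+g_\Gamma)$ (equivalently, Lemma~\ref{S4: wedge} gives this with a general $b$, and since $B$ is uniformly regular we may replace $b$ by ${\bf 1}_B$, the two weights being Lipschitz equivalent and hence defining the same singularity structure); then $\rho:=f^*\rho_W$ turns $(\M,g)$ into a singular manifold by Lemma~\ref{S4: lem 4.2}.

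Next I would split $W:=W(R,B,\Gamma)$ along $t=c$ into the part $W_1$ lying over $(0,c]$ and the part $W_0$ lying over $[c,1]$, and set $G_i:=f^{-1}(W_i)$, $i=0,1$. For $G_0$: via $\phi_P\times\mathrm{id}_\Gamma$ the manifold $(W_0,\,\phi_P^*(dt^2+g_B)+g_\Gamma)$ is isometric to $([c,1]\times B\times\Gamma,\,dt^2+g_B+g_\Gamma)$, a product of uniformly regular manifolds, hence uniformly regular by Lemma~\ref{S4: lem 4.1} with all weights $\equiv{\bf 1}$; since $R\sim{\bf 1}_{J_0}$ on the compact interval $[c,1]$ this is compatible with $\rho\sim{\bf 1}$ on $G_0$, so $(G_0,g)$ is uniformly regular by Lemma~\ref{S4: lem 4.2}. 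For $G_1$: the proof of Lemma~\ref{S5: Eg-mild cusp ch} shows, using $R\in\mathscr{C_U}(J_0)$ and $\lambda\in[0,1)\cup(1,\infty)$, that $((0,c],dt^2;R)$ enjoys {\em property $\sH_\lambda$}, with $h(t)=\sg(1-\lambda)\log R(t)$. Two applications of Lemma~\ref{S5: lem-product}, first with the uniformly regular factor $B$ and then with the uniformly regular factor $\Gamma$, give that $((0,c]\times B\times\Gamma,\,dt^2+g_B+g_\Gamma;\,R\otimes{\bf 1}_B\otimes{\bf 1}_\Gamma)$ has {\em property $\sH_\lambda$} with $h\otimes{\bf 1}_B\otimes{\bf 1}_\Gamma$; Lemma~\ref{S5: lem-diff}, applied through $\phi_P\times\mathrm{id}_\Gamma$ and then through $f$, transports {\em property $\sH_\lambda$} to $(G_1,g;\rho|_{G_1})$.

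It then remains to check the bookkeeping in the definition: $G_0$ and $G_1$ are closed in $\M$ since $[c,1]$ and $(0,c]$ are closed in $J_0$; $\{G_0,G_1\}$ covers $\M$ since $(0,c]\cup[c,1]=J_0$; $\partial_0 G_1=G_0\cap G_1$ is the $f$-image of the slice over $t=c$, which lies in $\partial G_0\cap\partial G_1$; and $\rho|_{G_1}$ corresponds under the diffeomorphisms to $R$ on $(0,c]$, which satisfies $R\le 1$ because $\dot R\sim{\bf 1}_{J_0}$ forces $R$ to be increasing with $R(1)=1$, so condition~(vi) holds with the ``$\le 1$'' alternative; thus $G_1$ is the unique $\sH_\lambda$-end and $(\M,g)$ is a singular manifold with an $\sH_\lambda$-end. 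The one step that requires genuine care rather than routine verification is the choice of singularity function: one should not attempt to transport {\em property $\sH_\lambda$} across the equivalence $R\otimes b\sim R\otimes{\bf 1}_B$, since condition~($\sH_\lambda$2) is not invariant under Lipschitz-equivalent changes of weight; instead one works from the outset with the representative weight $\phi_P^*(R\otimes{\bf 1}_B)\otimes{\bf 1}_\Gamma$, for which Lemmas~\ref{S5: lem-product} and \ref{S5: lem-diff} furnish $h$ directly.
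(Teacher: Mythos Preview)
Your proof is correct and follows essentially the same route as the paper: split along $t=c$ using the constant from Lemma~\ref{S5: Eg-mild cusp ch}, then invoke Lemmas~\ref{S4: lem 4.1}, \ref{S5: lem-product}, \ref{S5: lem-diff} to transport property~$\sH_\lambda$ from $((0,c],dt^2;R)$ to the end $G_1$. Your version is in fact more thorough than the paper's---you explicitly verify that $(G_0,g)$ is uniformly regular and check the remaining clauses of the definition (closedness, covering, condition~(iv)), whereas the paper's proof records only the $\sH_\lambda$-end part and leaves the rest implicit; your closing remark about working with the representative weight $R\otimes{\bf 1}_B$ rather than $R\otimes b$ is also well taken. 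One trivial slip: you write ``the $f$-image of the slice over $t=c$'' when you mean the $f$-preimage.
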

\begin{proof}
Lemma~\ref{S4: wedge} implies that 
$$(\M,g; f^*(\phi_P^*(R\otimes{\bf 1}_B)\otimes {\bf 1}_\Gamma))$$
is a {\em singular manifold}.
We define 
$$h(t):=\sg(1-\lambda)\log R(t).$$
Put $I_c:=(0,c]$ and $\M_c:=f^{-1}(P(R|_{I_c},B)\times\Gamma)$.
It follows from Lemmas~\ref{S4: lem 4.1}, \ref{S5: lem-product}, \ref{S5: lem-diff}, and~\ref{S5: Eg-mild cusp ch} that for $c>0$ sufficiently small,  $(M_c,g)$ has {\em property $\sH_\lambda$} with 
$$f^*(\phi_P^*(h\otimes{\bf 1}_B)\otimes{\bf 1}_\Gamma)  \in \sH_\lambda(\M_c,g; f^*(\phi_P^*(R|_{I_c}\otimes{\bf 1}_B)\otimes {\bf 1}_\Gamma)).$$
\end{proof}
\begin{remark}
As before, in fact, we only need to require $R$ to be a {\em mild cusp characteristic}.
Let $h(t):=\sg(\lambda-1)\int\limits_t^1 ds/R(s).$ 
Then
$(\M,g)$ has {\em property $\sH_\lambda$} with 
$$f^*(\phi_P^*(h\otimes{\bf 1}_B)\otimes{\bf 1}_\Gamma)  \in \sH_\lambda(\M,g; f^*(\phi_P^*(R\otimes{\bf 1}_B)\otimes {\bf 1}_\Gamma)).$$
\end{remark}
In the following examples, we always assume that $(B,g_B)$ is a compact closed $C^\infty$-Riemannian manifold.
\begin{example}
By the above proposition, we can easily verify that the following manifolds enjoy  property $\sH_\lambda$.
\begin{itemize}
\item[(a)] Suppose $\M$ is a cone, i.e., $\M=([0,1] \times B)/( \{0\}\times B)$.  We equip $\M$ with the conventional metric $g=dt^2 + t^2 g_B$. Then $(\M,g;\phi_P^*(t\otimes {\bf 1}_B))$ enjoys  property $\sH_\lambda$  for $\lambda\in [0,1)\cup(1,\infty)$. 
\item[(b)] Suppose $\M$ is an edge manifold, that is, $(\M,g;\rho)=(P(t,B)\times\R^d, \phi_P^*(dt^2 + g_B)+g_d;\phi_P^*(t\otimes {\bf 1}_B)\otimes {\bf 1}_{\R^d})$. Then $(\M,g;\rho)$ enjoys property $\sH_\lambda$ for $\lambda\in [0,1)\cup(1,\infty)$. 
\end{itemize}
\end{example}
In some references, the authors equip an edge with the metric $g=dt^2/t^2 + g_B +g_\Gamma/t^2$, which makes $(\M,g)$ uniformly regular. This case has been studied in depth in \cite{Ama13b}.

Given any compact submanifold $\Sigma\subset (\M,g)$, the distance function 
is a well-defined smooth function in a collar neighborhood $\mathscr{U}_\Sigma$ of $\Sigma$. The distance ball at $\Sigma$ with radius $r$ is defined by 
$$\B_\M(\Sigma,r):= \{\p\in \M: {\rm dist}_\M(\p,\Sigma)<r \}. $$
\begin{prop}
\label{S5: sing mdf with holes}
Suppose that $(\M,g)$ is a singular manifold with holes.
More precisely, $(\sM,g)$ is a uniformly regular Riemannian manifold. 
${\bf \Sigma}=\{\Sigma_1,\cdots,\Sigma_k\}$ is a finite set of disjoint $m$-dimensional compact manifolds with boundary such that $\Sigma_j\subset \mathring{\sM}$. 
Let $\lambda\in [0,1)\cup(1,\infty)$.
Put $\M:=\sM\setminus \cup_{j=1}^k \Sigma_j$ and 
$$\mathscr{B}_{j,r}:= \bar{\B}_\sM(\partial\Sigma_j,r)\cap \M,\quad j=1,\cdots,k.$$ 
Then we can find a singularity function $\rho$ satisfying
$$\rho|_{\mathscr{B}_{j,r}}=:\rho_j= {\rm dist}_{\sM}(\cdot, \partial \Sigma_j), $$
for some $r\in [0,\delta)$, where $\delta<{\rm diam}(\sM)$ fulfils that $\mathscr{B}_{i,\delta}\cap \mathscr{B}_{j,\delta} =\emptyset$ for $i\neq j$, and
$$\rho\sim {\bf 1},\quad \text{elsewhere on }\M. $$
Moreover, 
$(\M,g;\rho)$
is a singular manifold with $\sH_\lambda$-ends.
\end{prop}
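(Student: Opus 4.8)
The plan is to exhibit a concrete singularity function $\rho$ of the stated form, check that it is admissible, and then verify the four conditions in the definition of a singular manifold with $\sH_\lambda$-ends, taking the ends to be thin distance tubes about the hypersurfaces $\partial\Sigma_j$ and the ``regular part'' $G_0$ to be the closure of the complement. For the construction of $\rho$: since each $\partial\Sigma_j$ is a compact hypersurface of the uniformly regular manifold $(\sM,g)$ contained in $\mathring{\sM}$, there is $\delta\in(0,{\rm diam}(\sM))$, small enough that the closed tubes $\bar{\B}_\sM(\partial\Sigma_j,\delta)$ are pairwise disjoint, such that $\rho_j:={\rm dist}_\sM(\cdot,\partial\Sigma_j)$ is smooth and positive on $\mathscr{B}_{j,\delta}$ and, via the normal exponential map, $(\mathscr{B}_{j,\delta},g)$ is diffeomorphic to $\bigl((0,\delta]\times\partial\Sigma_j,\ dt^2+g_t\bigr)$ for a smooth family $(g_t)_{t\in[0,\delta]}$ of metrics on $\partial\Sigma_j$; under this identification $\rho_j\leftrightarrow t$, $|\gd\rho_j|_g\equiv1$, and $\Delta_g\rho_j$ is bounded on $\mathscr{B}_{j,\delta}$. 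Fix $r\in(0,\delta)$, pick $\chi\in C^\infty((0,\infty),(0,1])$ with $\chi(t)=t$ for $t\le r$ and $\chi(t)=1$ for $t\ge\delta$, and set $\rho:=\chi\circ\rho_j$ on each $\mathscr{B}_{j,\delta}$ and $\rho:=1$ on $\M\setminus\bigcup_j\mathscr{B}_{j,\delta}$; this is a well-defined smooth function with $\rho=\rho_j$ on $\mathscr{B}_{j,r}$ and $\rho\sim{\bf 1}$ elsewhere.

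Next I would check that $\rho\in\mathfrak{T}(\M)$. By Lemma~\ref{S4: holes}, $\M$ carries a singularity structure, and the construction behind \cite[Theorem~1.6]{Ama14} furnishes a singularity function $\rho_0$ which is comparable to ${\rm dist}_\sM(\cdot,\bigcup_j\partial\Sigma_j)$ near $\bigcup_j\partial\Sigma_j$ and to ${\bf 1}_\M$ elsewhere. Since the $\partial\Sigma_j$ are pairwise separated, $\rho\sim\rho_0$, so (E1) holds; keeping the atlas $\A$ attached to $\rho_0$ gives (E2)--(E3). It remains to check (S1)--(S4) for $(\rho,\A)$: (S4) follows from $|\rho_j(\p)-\rho_j(\p')|\le{\rm dist}(\p,\p')$ and the fact that the charts of $\A$ have diameter of order $\rho_\kappa$; (S3) follows because in the rescaled cusp coordinates $\rho$ corresponds, near the holes, to the coordinate $t$, whose first derivative is $1$ and whose higher derivatives vanish, so $\|\kf\rho\|_{k,\infty}\le c(k)\rho_\kappa$; and (S1)--(S2) follow from these bounds together with the corresponding properties of $\rho_0$. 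Hence $(\rho,\A)\in\mathfrak{S}(\M)$, i.e. $\rho\in\mathfrak{T}(\M)$, and $\rho$ has the asserted shape.

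Finally I would set $G_j:=\mathscr{B}_{j,r}$ for $j=1,\dots,k$ and $G_0:=\overline{\M\setminus\bigcup_j\mathscr{B}_{j,r}}=\{\p\in\M:\rho_j(\p)\ge r\ \text{for all }j\}$, and verify (i)--(iv). Condition (iv) is clear, since $\rho|_{G_j}=\rho_j\le r<1$. Condition (iii) holds because $G_0\cup\bigcup_j G_j=\M$ and, the tubes being disjoint, $G_0\cap G_j=\{\p\in\M:\rho_j(\p)=r\}\subset\partial G_0\cap\partial G_j$. Condition (ii) holds because $G_0$ is obtained from the uniformly regular $\sM$ by deleting the interiors of finitely many relatively compact sets with smooth compact boundary $\{\rho_j=r\}$, and is therefore a uniformly regular Riemannian manifold. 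For (i): each $G_j$ is diffeomorphic to $(0,r]\times\partial\Sigma_j$ with a metric agreeing with the product $dt^2+g_0$ up to Lipschitz equivalence and with comparable derivatives of all orders, hence — by Lemmas~\ref{S4: lem 4.1} and~\ref{S4: lem 4.3} (product of the singular manifold $(J_0,dt^2;t)$ with the uniformly regular $(\partial\Sigma_j,g_0)$), the $t$-dependence of $g_t$ being an admissible perturbation — an $m$-dimensional singular manifold with singularity function $\rho_j\sim t$. It enjoys property $\sH_\lambda$: setting $h_j:=\sg(1-\lambda)\log\rho_j$, which is legitimate since $\lambda\ne1$, one has $\rho_j|\gd h_j|_g=|\gd\rho_j|_g\equiv1$, i.e. $(\sH_\lambda1)$, while $\rho_j^{2-\lambda}\gd h_j=\sg(1-\lambda)\rho_j^{1-\lambda}\gd\rho_j$ gives
\[
\rho_j^{\lambda}\div\bigl(\rho_j^{2-\lambda}\gd h_j\bigr)=\sg(1-\lambda)\bigl[(1-\lambda)+\rho_j\Delta_g\rho_j\bigr]=|1-\lambda|+\sg(1-\lambda)\,\rho_j\Delta_g\rho_j .
\]
Since $\Delta_g\rho_j$ is bounded on $\mathscr{B}_{j,\delta}$, shrinking $r$ so that $|\rho_j\Delta_g\rho_j|\le\tfrac12|1-\lambda|$ on $G_j$ yields $\rho_j^{\lambda}\div(\rho_j^{2-\lambda}\gd h_j)\sim{\bf 1}_{G_j}$, which is $(\sH_\lambda2)$; a rescaling $Mh_j$ then belongs to $\sH_\lambda(G_j,g;\rho_j)$ with suitable parameters $(c,M)$. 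This establishes (i)--(iv) and finishes the proof.

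The hard part will be the two verifications leaning on Amann's machinery: first, that the explicit $\rho$ fulfils (S1)--(S4) — equivalently, that it is equivalent, in the precise sense of (E1)--(E3) together with (S1)--(S4), to the canonical singularity function $\rho_0$ of a manifold with holes — for which one uses \cite{Ama14} together with the regularity of the distance function to a compact hypersurface in a uniformly regular manifold; and second, the computation of $\div(\rho_j^{2-\lambda}\gd h_j)$, which succeeds precisely because $\Delta_g\rho_j$ stays bounded on a tube of fixed width and, crucially, because $\lambda\ne1$ — for $\lambda=1$ the leading term $|1-\lambda|$ degenerates and $(\sH_\lambda2)$ fails, which is exactly why the hypothesis $\lambda\in[0,1)\cup(1,\infty)$ is imposed.
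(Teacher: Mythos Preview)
Your proof is correct and follows essentially the same route as the paper's: construct $\rho$ as a smooth function equal to the distance ${\rm dist}_\sM(\cdot,\partial\Sigma_j)$ on a thin one-sided tube $\mathscr{B}_{j,r}$ and $\sim{\bf 1}$ elsewhere, take the ends $G_j=\mathscr{B}_{j,r}$, and verify property $\sH_\lambda$ on each end via $h_j=\sg(1-\lambda)\log\rho_j$ using the identity
\[
\rho_j^{\lambda}\div(\rho_j^{2-\lambda}\gd h_j)=|1-\lambda|\,|\gd\rho_j|_g^2+\sg(1-\lambda)\,\rho_j\Delta_g\rho_j,
\]
together with the boundedness of $\Delta_g\rho_j$ on the tube; the paper does exactly this computation. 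Minor differences are purely cosmetic: the paper glues $\rho$ by a partition-of-unity combination $\xi_0{\bf 1}_\M+\sum_j\xi_j\rho_j$ rather than your radial cutoff $\chi\circ\rho_j$, and it appeals to the collar neighborhood theorem (stating the metric as an exact product $g|_{\partial\Sigma_j}+dt^2$) where you, more accurately, use Fermi coordinates with a $t$-dependent slice metric $g_t$; your version is the correct one, and the paper's works only because ultimately it just needs $|\gd\rho_j|_g\sim 1$ and $|\Delta\rho_j|<\infty$, both of which hold either way.
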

\begin{proof}
By Lemma~\ref{S4: holes},  $(\M,g)$ is a {\em singular manifold}.
We will show that $\rho_j:={\sf dist}_{\sM}(\cdot,\partial\Sigma_j)$ is a singularity function for $\mathscr{B}_{j,r}$ and  
$$h_j:= \sg(1-\lambda)\log \rho_j \in \sH_\lambda(\mathscr{B}_{j,r},g;\rho_j)  $$
for sufficiently small $r$.
By the collar neighborhood theorem, there exists an open neighborhood $\mathscr{V}_{j,\varepsilon}$ of $\partial \Sigma_j$ in the closure of $\M$ in $\sM$, i.e., $\bar{\M}$, and a diffeomorphism $f_j$ such that 
$$f_j:  \mathscr{V}_{j,\varepsilon} \to \partial \Sigma_j \times [0,\varepsilon), \quad (f_j)_* g|_{\mathscr{V}_{j,\varepsilon}}= g|_{\partial \Sigma_j} + dt^2 , $$
for some $\varepsilon>0$. 
Note that $\rho_j$ is a well defined smooth function in $\mathscr{V}_{j,\varepsilon}$ for $\varepsilon$ sufficiently small.
Let $T^\perp \partial \Sigma_j$ denote the normal bundle of $\partial \Sigma_j$ in $\bar{\M}$. At every point $\p\in \partial\Sigma_j$, there exists a unique $\nu_\p\in T^\perp_\p \partial \Sigma_j$ such that
$$ T_\p f_j \nu_\p=e_1\in T_0 \R. $$
Then, $f_j^{-1}(\p,t)=exp_\p(t\nu_\p)$, where $exp_\p$ is the exponential map at $\p$.
Therefore,
$$(f_j)_* \rho_j (\p,t):= t \beta_j(\p),\quad  \text{in }  \partial\Sigma_j \times [0,\varepsilon),$$
for some $\beta_j \in C^\infty (\partial\Sigma_j)$ and $\beta_j \sim {\bf 1}_{\partial\Sigma_j}$. Because of the compactness of $\partial\Sigma_j$, by choosing $\varepsilon$ small enough, we can easily show that
$$|\nabla \rho_j |_g \sim {\bf 1}_{\mathscr{V}_{j,\varepsilon}},\quad |\Delta \rho_j|<\infty ,\quad  \text{in }\mathscr{V}_{j,\varepsilon}. $$
Here $\Delta$ is the Laplace-Beltrami operator with respect to the metric $g$ defined by
$\Delta=\Delta_g:=\div\circ\gd$.
Since $\mathscr{B}_{j,r}\subset \mathscr{V}_{j,\varepsilon}$ for $r$ small enough, in view of
$$\rho_j\gd h_j =\sg(1-\lambda) \gd \rho_j,$$
and
$$
\rho_j^\lambda \div(\rho^{2-\lambda}\gd h_j)=\sg(1-\lambda)\rho_j \Delta \rho_j + |1-\lambda| |\gd \rho_j|_g^2,  $$
we immediately conclude that $h_j$ satisfies ($\sH_\lambda$1) and ($\sH_\lambda$2) in $\mathscr{B}_{j,r}$ for $r$ small enough. 

Because $|\nabla \rho_j |_g \sim {\bf 1}$ in $\mathscr{B}_{j,r}$ for $r$ small enough, we can infer from the implicit function theorem that 
$$S_{j,r_0}:=\{\p\in\M: {\sf dist}_{\sM}(\p, \partial\Sigma_j)=r_0\}\cap \M$$ 
is a compact submanifold for some $r_0\in (0,r)$. 
By the tubular neighborhood theorem, we can easily show that $(\mathscr{B}_{j,r_0},g)$ and 
$(\M\setminus \cup_{j=1}^k \mathring{\mathscr{B}}_{j,r_0},g)$ are all manifolds with boundary.

By \cite[Corollary~4.3]{Ama14}, $(\partial\Sigma_j, g|_{\partial\Sigma_j})$ is uniformly regular. 
In particular, taking $\beta_j$ as a singularity function, $(\partial\Sigma_j, g|_{\partial\Sigma_j}; \beta_j)$ can be considered as a {\em singular manifold}. By Lemmas~\ref{S5: lem-product} and \ref{S5: lem-diff},   we conclude that for $r$ sufficiently small
$(\mathscr{B}_{j,r_0},g; \rho_j) $ is a {\em singular manifold} with boundary $S_{j,r_0}$.

Based on the collar neighborhood theorem, we can find an open neighborhood $\mathscr{U}_{j,\varepsilon}\subset \mathscr{B}_{j,r}$ of $S_{j,r_0}$ in $\M\setminus \cup_{j=1}^k \mathring{\mathscr{B}}_{j,r_0}$ such that there is a diffeomorphism 
$$\phi_j: \mathscr{U}_{j,\varepsilon} \to S_{j,r_0} \times [0,\varepsilon), \quad \phi_j^* g|_{\mathscr{U}_{j,\varepsilon}}= g|_{S_{j,r_0}} + dt^2 ,$$
with $\phi_j (S_{j,r_0})=S_{j,r_0} \times \{0\}$.
We choose a function $\xi \in BC^\infty([0,\varepsilon), [0,1])$ such that
$$\xi|_{[0,\varepsilon/4]}\equiv 0, \quad \xi|_{[\varepsilon/2,\varepsilon)} \equiv 1.$$
Put $\xi_{j,0}:=\phi_j^*({\bf 1}_{S_{j,r_0}}\otimes \xi)$. 
Similarly, we can find  $\xi_{j,j}\in BC^\infty(\mathscr{U}_{j,\varepsilon},[0,1])$ such that
$$\xi_{j,j}|_{\phi_j^{-1}(S_{j,r_0} \times [0,\varepsilon/2])}\equiv 1,\quad \xi_{j,j}|_{\phi_j^{-1}(S_{j,r_0} \times[3\varepsilon/4,\varepsilon))}\equiv 0. $$
We define $\xi_i \in C^\infty(\M,[0,1])$ with $i=0,\cdots,k$  as follows. For $j=1,\cdots,k$,
\begin{align*}
\xi_j(\p) =
\begin{cases}
1, \quad \quad & \p \in \mathscr{B}_{j,r_0},\\
 \xi_{j,j}, &\p\in \mathscr{U}_{j,\varepsilon},\\
0, & \text{elsewhere},
\end{cases}
\quad \text{ and } \quad
\xi_0(\p) =
\begin{cases}
0, \quad \quad & \p \in \mathscr{B}_{j,r_0},\\
 \xi_{j,0}, &\p\in \mathscr{U}_{j,\varepsilon},\\
1, & \text{elsewhere}.
\end{cases}
\end{align*}
Put 
$\rho: = \xi_0 {\bf 1}_\M+ \sum_{j=1}^k \xi_{j}\rho_j. $ Then it is not hard to see that $\rho$ is a singularity function for $(\M,g)$ such that $\rho\sim {\bf 1}$ on  $\M\setminus \cup_{j=1}^k \mathring{\mathscr{B}}_{j,r_0}$ and $\rho|_{\mathscr{B}_{j,r_0}}= \rho_j$. Therefore, $(\M\setminus \cup_{j=1}^k \mathring{\mathscr{B}}_{j,r_0},g)$ is a {\em uniformly regular Riemannian manifold}.

Summarizing the above discussions, we have proved that $(\M,g;\rho)$ is a {\em singular manifold} with $\sH_\lambda$-ends.
\end{proof}
From the above proof, it is easy to see that the following corollary holds.
\begin{cor}
\label{S5: mfd with cpt bdry}
Suppose that $(\sM,g)$ is a uniformly regular Riemannian manifold  with compact boundary.
Let $\lambda\in [0,1)\cup(1,\infty)$. 
Put $\M:=\mathring{\sM}$ and 
$$\mathscr{B}_r:= \bar{\B}_\sM(\partial\sM,r)\cap \M,\quad j=1,\cdots,k.$$ 
Then there exists a singularity function $\rho$ satisfying
$$\rho|_{\mathscr{B}_r}=:\rho_j= {\rm dist}_{\sM}(\cdot, \partial \sM), $$
for some $r>0$,  and
$$\rho\sim {\bf 1},\quad \text{elsewhere on }\M. $$
Moreover,
$(\M,g;\rho)$
is a singular manifold with $\sH_\lambda$-ends.
\end{cor}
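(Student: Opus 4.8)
The plan is to derive Corollary~\ref{S5: mfd with cpt bdry} as a special case of the construction in Proposition~\ref{S5: sing mdf with holes} by doubling the manifold. First I would take $(\sM,g)$ with compact boundary $\partial\sM$ and form the double $\hat{\sM}:=\sM\cup_{\partial\sM}\sM$, which carries a natural smooth structure and, by \cite[Corollary~4.3]{Ama14} (already invoked in the proof above to handle $\partial\Sigma_j$), is a uniformly regular Riemannian manifold once the metric is smoothed near $\partial\sM$; alternatively, since only a collar of $\partial\sM$ matters, I would simply note that $\sM$ embeds isometrically into a slightly larger uniformly regular manifold $\hat{\sM}$ with $\partial\sM\subset\mathring{\hat{\sM}}$ as a compact hypersurface. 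Setting $\Sigma:=\hat{\sM}\setminus\mathring{\sM}$ (a compact manifold with boundary $\partial\Sigma=\partial\sM$, assuming it is taken with compact closure, which one arranges by working inside a suitable tubular neighborhood), we have $\M=\mathring{\sM}=\hat{\sM}\setminus\Sigma$, so that $(\M,g)$ is a manifold with one ``hole'' in the sense of Lemma~\ref{S4: holes}.

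Next I would apply Proposition~\ref{S5: sing mdf with holes} verbatim with $k=1$ and $\Sigma_1=\Sigma$. That proposition produces a singularity function $\rho$ on $\M$ with $\rho|_{\mathscr{B}_{1,r_0}}={\rm dist}_{\hat{\sM}}(\cdot,\partial\Sigma)$ and $\rho\sim\mathbf{1}$ elsewhere, and shows $(\M,g;\rho)$ is a singular manifold with $\sH_\lambda$-ends. The only point needing a remark is that ${\rm dist}_{\hat{\sM}}(\cdot,\partial\Sigma)$ restricted to $\M$ coincides, in the collar $\mathscr{B}_{1,r_0}$, with ${\rm dist}_{\sM}(\cdot,\partial\sM)$: this is immediate because near $\partial\sM$ the minimizing geodesics in $\hat{\sM}$ from a point of $\mathring{\sM}$ to $\partial\sM$ stay in $\sM$ (they are normal geodesics in the collar), so the two distance functions agree on a neighborhood of $\partial\sM$ inside $\M$. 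Hence $\rho|_{\mathscr{B}_r}={\rm dist}_{\sM}(\cdot,\partial\sM)$ as claimed, for $r=r_0$ small enough, and $\rho\sim\mathbf{1}$ on $\M\setminus\mathscr{B}_r$. The condition $\lambda\in[0,1)\cup(1,\infty)$ is inherited directly since it is exactly the hypothesis under which the function $h_1=\mathop{\rm sign}(1-\lambda)\log\rho_1$ was shown to lie in $\sH_\lambda(\mathscr{B}_{1,r},g;\rho_1)$.

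Alternatively, and perhaps more cleanly, I would simply repeat the proof of Proposition~\ref{S5: sing mdf with holes} with $\sM$ replaced by $\bar{\M}$ (the closure of $\M=\mathring{\sM}$) and $\partial\Sigma_j$ replaced by $\partial\sM$: the collar neighborhood theorem gives a diffeomorphism $f:\mathscr{V}_\varepsilon\to\partial\sM\times[0,\varepsilon)$ pulling the metric back to $g|_{\partial\sM}+dt^2$, the distance function $\rho_1={\rm dist}_{\sM}(\cdot,\partial\sM)$ satisfies $|\gd\rho_1|_g\sim\mathbf{1}$ and $|\Delta\rho_1|<\infty$ on $\mathscr{V}_\varepsilon$ by compactness of $\partial\sM$, one verifies $(\sH_\lambda 1)$ and $(\sH_\lambda 2)$ for $h_1$ exactly as before, and the cut-off construction with the functions $\xi_0,\xi_1$ globalizes $\rho$ so that $(\M\setminus\mathring{\mathscr{B}}_{r_0},g)$ is uniformly regular. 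I expect the only mild obstacle to be bookkeeping: making sure the ``hole'' $\Sigma$ is a genuine \emph{compact} manifold with boundary (so Lemma~\ref{S4: holes} applies) rather than just the complement of an open set, which is why the doubling/embedding step is stated carefully; everything after that is a line-by-line specialization of the already-proved proposition, so the corollary follows at once.
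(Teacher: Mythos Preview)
Your proposal is correct, and your second alternative---simply rerunning the proof of Proposition~\ref{S5: sing mdf with holes} with $\partial\Sigma_j$ replaced by $\partial\sM$---is exactly what the paper does; indeed the paper's entire proof is the single sentence ``From the above proof, it is easy to see that the following corollary holds.'' Your first alternative (doubling $\sM$ so as to literally invoke Proposition~\ref{S5: sing mdf with holes}) also works but is more roundabout, since it requires care in constructing a smooth uniformly regular extension $\hat{\sM}$ and a compact $\Sigma$ with $\partial\Sigma=\partial\sM$, whereas the direct repetition needs none of this scaffolding.
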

\begin{remark}
\label{S5.2: remove bdry}
More generally, we can take $\boldsymbol{\Sigma}=\{\Sigma_1,\cdots,\Sigma_k\}$ to be a finite set of disjoint compact closed submanifolds of codimension at least $1$ such that $\Sigma_j \subset \partial\mathscr{M}$ if $\Sigma_j \cap \partial\mathscr{M} \neq \emptyset.$ In \cite[Theorem~1.6]{Ama14}, it is shown that $\M:=\mathscr{M}\setminus \cup_{j=1}^k \Sigma_j$ is a {\em singular manifold}. Indeed, we can prove that this is a {\em singular manifold} with $\sH_\lambda$-ends. The proof is quite similar to that for Proposition~\ref{S5: sing mdf with holes}, but more technical. To keep this article at a reasonable length, we will not present a proof herein.
\end{remark}
\begin{remark}
\label{S5.2: remove pts}
In Proposition~\ref{S5: sing mdf with holes}, we can also allow $\boldsymbol{\Sigma}=\{\p_1,\cdots,\p_k\}$ to be a finite set of discrete points in $\mathring{\sM}$. Then 
$$(\M,g;\rho):= (\sM\setminus \cup_{i=1}^k \bar{\B}_{\sM}(\p_i,r), g;\rho) $$
is still a {\em singular manifold}. Here $\rho$ is defined in the same way as in Proposition~\ref{S5: sing mdf with holes}.
\smallskip\\
An estimate for $\Delta \rho_j$ can be obtained from the fact that for $r$ sufficiently small
$$\Delta \rho_j(\p)= \frac{m-1}{\rho_j(\p)} + O(\rho_j(\p)),\quad \text{in } \mathscr{B}_{j,r}.$$
See \cite[formulas~(1.134), (1,159)]{ChowLuNi06}. 
Taking $h_j=\log \rho_j$, we have
\begin{align*}
\rho_j^\lambda \div (\rho_j^{2-\lambda} \gd h_j)= \rho_j \Delta \rho_j + (1-\lambda) | \gd \rho_j|_g^2 =m-\lambda +O(\rho_j^2),
\end{align*}
since $|\gd \rho_j|_g=1$. We immediately have 
$$\sg(m-\lambda) h_j \in \sH_\lambda(\mathscr{B}_{j,r},g; \rho) $$
for sufficiently small $r$ and $\lambda\geq 0$ with $\lambda\neq m$. 
\smallskip\\
Therefore, $(\M,g;\rho)$ is indeed a {\em singular manifold} with $\sH_\lambda$-ends.
\end{remark}


\subsection{\bf $L_p$-theory on singular manifolds with $\sH_\lambda$-ends}

\begin{theorem}
\label{S5: sing mfd-ends}
Suppose that $(\M,g; \rho)$ is a singular manifold with $\sH_\lambda$-ends. 
Let $\lambda^\prime\in \R$, $\rho$ and $\lambda$ satisfy  \eqref{S3: rho & lambda}.
Furthermore, assume that the differential operator
\begin{align*}
\cA u:= -\div(\rho^{2-\lambda} \gd u)+\ev(\nabla u, a_1) +a_0 u
\end{align*}
is $(\rho,\lambda)$-regular.
Then, for any constant $z=a+ib$ and $M$ satisfying (H2) and (H3) on all the $\sH_\lambda$-ends $G_i$ with $i=1,\cdots,n$, we have
$$\cA\in \cH(e^{zh}\mathring{W}^{2,\lambda^\prime -\lambda}_2(\M, V), e^{zh} L^{\lambda^\prime}_2(\M,V)).$$
\end{theorem}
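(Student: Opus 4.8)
The plan is to conjugate away the exponential weight and then patch the resolvent together from model operators living on the individual ends and on the regular part. By \eqref{S5.1: e^zh-Lis} it is equivalent to prove that $\Ah:=e^{-zh}\circ\cA\circ e^{zh}$ belongs to $\cH(\mathring{W}^{2,\lambda^\prime-\lambda}_2(\M,V),L^{\lambda^\prime}_2(\M,V))$, where $h\in C^2(\M,\R)$ is any fixed function that coincides on each end $G_i$, away from a fixed neighborhood of $\partial_0 G_i$, with an $h_i\in\sH_\lambda(G_i,g;\rho_i)$ of parameters $(c,M)$, and is bounded on $G_0$. Such an $h$ exists because the transition between $G_0$ and the ends takes place in $\partial_0 G_i\subset\partial G_0$, where $\rho\sim{\bf 1}$; by Proposition~\ref{S2: change of wgt} the choice of $h$ on that region affects norms only up to equivalence, and in particular $e^{zh}\F^{s,\vartheta}\doteq\F^{s,\vartheta}$ over $G_0$.

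First I would fix a localization subordinate to the covering $\{G_0,G_1,\dots,G_n\}$: choose $\chi_0,\dots,\chi_n\in C^\infty(\M,[0,1])$ with $\sum_j\chi_j^2\equiv1$, $\supp\chi_j$ contained in a slightly enlarged copy of $G_j$, and---crucially---with $\gd\chi_j$ supported in the region $\{\rho\sim{\bf 1}\}$, which is possible since every $\partial_0 G_i$ lies in the uniformly regular part. On each end $G_i$ ($i\ge1$) the restricted operator $\cA|_{G_i}$ is $(\rho,\lambda)$-regular and $(\rho,\lambda)$-singular elliptic, and since $(G_i,g;\rho_i)$ enjoys property $\sH_\lambda$ with $h_i$ satisfying (H2), (H3), Theorem~\ref{S5: A-semigroup} yields $\Ah^{(i)}:=e^{-zh_i}\circ\cA|_{G_i}\circ e^{zh_i}\in\cH(\mathring{W}^{2,\lambda^\prime-\lambda}_2(G_i,V),L^{\lambda^\prime}_2(G_i,V))$ with a contractive semigroup, hence, arguing as in the proof of Theorem~\ref{S5: A-semigroup}, there are $\theta_i\in(\pi/2,\pi)$ and $\cE_i>0$ with
$$|\mu|^{1-k}\,\|(\mu+\Ah^{(i)})^{-1}\|_{\L(L^{\lambda^\prime}_2(G_i,V),\,\mathring{W}^{2k,\lambda^\prime-k\lambda}_2(G_i,V))}\le\cE_i,\qquad\mu\in\Sigma_{\theta_i},\ k=0,1,$$
where $\Sigma_\theta:=\{z\in\C:|\arg z|\le\theta\}\cup\{0\}$. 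On the regular piece, $h$ is bounded and $\rho\sim{\bf 1}$ on $G_0$, so $\Ah^{(0)}:=e^{-zh}\circ\cA|_{G_0}\circ e^{zh}$ is a second order divergence-form operator with bounded $C^1$ principal part and bounded lower order coefficients, uniformly elliptic on the uniformly regular manifold $(G_0,g)$; by \cite[Theorem~5.2]{Ama13b} (extended to tensor fields as in Section~3.1) there exist $\omega_0\in\R$, $\theta_0\in(\pi/2,\pi)$ and $\cE_0>0$ giving the analogous bound for $\mu\in\omega_0+\Sigma_{\theta_0}$. Each model operator is taken with the Dirichlet realization on its boundary, including the artificial interfaces $\partial_0 G_i$.

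Then I would build a parametrix. Fix $\theta\in(\pi/2,\min_j\theta_j)$ and, for $\mu\in\omega_*+\Sigma_\theta$ with $\omega_*\ge\omega_0$ large, set $R(\mu):=\sum_{j=0}^n\chi_j\,(\mu+\Ah^{(j)})^{-1}\,\chi_j$. A direct computation gives $(\mu+\Ah)R(\mu)=I+S(\mu)$, with $S(\mu)$ collecting the commutator terms $[\Ah,\chi_j]\,(\mu+\Ah^{(j)})^{-1}\chi_j$ (and the harmless bounded discrepancies between $\Ah$ and $\Ah^{(j)}$ on $\supp\chi_j$, all supported where $\rho\sim{\bf 1}$). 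Since $[\Ah,\chi_j]$ is first order with all coefficients supported in $\{\rho\sim{\bf 1}\}$ and bounded there, Propositions~\ref{S2: pointwise multiplication} and~\ref{S2: nabla} give $[\Ah,\chi_j]\in\L(\mathring{W}^{1,\lambda^\prime-\lambda}_2(\M,V),L^{\lambda^\prime}_2(\M,V))$, and the $k=0,1$ resolvent bounds together with interpolation and Definition~\eqref{S2: fractional Sobolev} yield $\|S(\mu)\|_{\L(L^{\lambda^\prime}_2(\M,V))}\le C|\mu|^{-1/2}$. Enlarging $\omega_*$ makes $\|S(\mu)\|\le1/2$, so $I+S(\mu)$ is invertible by Neumann series and $(\mu+\Ah)^{-1}=R(\mu)(I+S(\mu))^{-1}$ satisfies $|\mu|^{1-k}\|(\mu+\Ah)^{-1}\|_{\L(L^{\lambda^\prime}_2,\mathring{W}^{2k,\lambda^\prime-k\lambda}_2)}\le\cE$ on $\omega_*+\Sigma_\theta$. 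This produces a strongly continuous analytic semigroup for $-\Ah$; combining the $k=1$ bound at a single $\mu$ with $\Ah\in\L(\mathring{W}^{2,\lambda^\prime-\lambda}_2(\M,V),L^{\lambda^\prime}_2(\M,V))$ (Propositions~\ref{S2: pointwise multiplication}, \ref{S2: nabla}, \ref{S2: div-tangent} and Lemma~\ref{S3: pts-mul}) identifies $\mathrm{dom}(\Ah)=\mathring{W}^{2,\lambda^\prime-\lambda}_2(\M,V)$, i.e. $\Ah\in\cH(\mathring{W}^{2,\lambda^\prime-\lambda}_2(\M,V),L^{\lambda^\prime}_2(\M,V))$; undoing the conjugation via \eqref{S5.1: e^zh-Lis} gives the claim. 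Note that, in contrast with Theorem~\ref{S5: A-semigroup}, no global invertibility or contractivity is asserted, since on $G_0$ the shift $\omega_0$ is in general unavoidable.

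The main obstacle is the bookkeeping at the interface: one has to arrange simultaneously that the cut-offs $\chi_j$ respect the given decomposition, that their gradients sit inside $\{\rho\sim{\bf 1}\}$ so that the commutators $[\Ah,\chi_j]$ are genuinely lower order and bounded in the \emph{unweighted} sense there, and that the Dirichlet model operators on the enlarged ends still satisfy (H2)--(H3) with the same pair $(z,M)$---which is precisely why the hypothesis requires (H2), (H3) to hold on all the $G_i$ at once. Once this geometry is in place, the $|\mu|^{-1/2}$ smallness of $S(\mu)$ and the domain identification are routine consequences of the $k=0,1$ resolvent estimates.
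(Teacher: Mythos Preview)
Your approach is essentially the same as the paper's: localize via a squared partition of unity supported where $\rho\sim{\bf 1}$, invoke the end result \eqref{S5: Ah-semigroup} on each $G_i$ and \cite[Theorem~5.2]{Ama13b} on the regular core, then treat the commutators as lower-order perturbations controlled by interpolation. The paper phrases this through a retraction--coretraction pair $(\Lambda,\Lambda^c)$ into product spaces rather than writing an explicit parametrix $R(\mu)=\sum_j\chi_j(\mu+\Ah^{(j)})^{-1}\chi_j$, but the content is the same.

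Two points deserve more care than you give them. First, the existence of the global $h\in C^2(\M,\R)$ with $h|_{G_i}\in\sH_\lambda(G_i,g;\rho_i)$ and $h|_{G_0}\in BC^2(G_0)$ is not just ``bookkeeping at the interface''. The paper devotes an entire step to it: since only the $h_i$ on the ends are given, one must extend each $h_i$ across $\partial_0 G_i$ into $G_0$ using a universal extension operator on a collar $\partial_0 G_i\times[-1,1]$ (built from a uniformly regular atlas of the compact $\partial_0 G_i$), and then glue with cut-offs. Your appeal to Proposition~\ref{S2: change of wgt} does not produce such an $h$; it only says that whatever $h$ you choose, the norms over $G_0$ are equivalent.

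Second, your parametrix identity $(\mu+\Ah)R(\mu)=I+S(\mu)$ with $\|S(\mu)\|\le C|\mu|^{-1/2}$ gives surjectivity of $\mu+\Ah$ and the resolvent bound, but not injectivity; writing $(\mu+\Ah)^{-1}=R(\mu)(I+S(\mu))^{-1}$ presupposes the inverse exists. The paper handles this by proving the two directions separately: $\Lambda(\mu+\bar{\cA}_h+\cB\Lambda)^{-1}\Lambda^c$ is a left inverse (Step~(iv)) and $\Lambda(\mu+\bar{\cA}_h-\Lambda^c\fB)^{-1}\Lambda^c$ is a right inverse (Step~(v)). In your language, you must also verify $R(\mu)(\mu+\Ah)=I+S'(\mu)$ on $\mathring{W}^{2,\lambda'-\lambda}_2(\M,V)$ with $\|S'(\mu)\|$ small, which follows from the same commutator estimate but is a separate computation.
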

\begin{proof}
Without loss of generality, we may assume that $\partial_0 G_i\neq \emptyset$ for $i=1,\cdots,n$.
It is not hard to see that $\partial_0 G_i$ is a component of $\partial G_i$. 

(i) Based on the collar neighborhood theorem, we can find an open neighborhood $U_i$ of $\partial_0 G_i$ in $G_i$ such that there is a diffeomorphism 
$$\phi_i: U_i \to \partial_0 G_i \times [0,1), \quad \phi_i^* g|_{U_i}= g|_{\partial_0 G_i} + dt^2 ,$$
with $\phi_i (\partial_0 G_i)=\partial_0 G_i \times \{0\}$,
and
$$\rho_i|_{U_i}\sim {\bf 1}_{U_i},\quad i=1,\cdots,n. $$
We choose functions $\xi, \tilde{\xi} \in BC^\infty([0,1), [0,1])$ such that
$$\xi|_{[0,1/2]}\equiv 1, \quad \xi|_{[3/4,1)} \equiv 0; \quad \tilde{\xi}|_{[0,1/4]}\equiv 0, \quad \tilde{\xi}|_{[1/2,1)} \equiv 1.$$
Set $\hat{\pi}_{i,0}:=\phi_i^*({\bf 1}_{\partial_0 G_i}\otimes \xi)$ and  $\hat{\pi}_{i,i}:=\phi_i^*({\bf 1}_{\partial_0 G_i}\otimes \tilde{\xi})$.
We define $\tilde{\pi}_j \in C^\infty(\M,[0,1])$ with $j=0,\cdots,n$  as follows. For $i=1,\cdots,n$,
\begin{align*}
\tilde{\pi}_i(\p) =
\begin{cases}
1, \quad \quad & \p \in G_i\setminus U_i,\\
 \hat{\pi}_{i,i}, &\p\in U_i,\\
0, & \text{elsewhere},
\end{cases}
\quad \text{ and } \quad
\tilde{\pi}_0(\p) =
\begin{cases}
1, \quad \quad & \p \in G_0,\\
 \hat{\pi}_{i,0}, &\p\in U_i,\\
0, & \text{elsewhere}.
\end{cases}
\end{align*}
For $j=0,\cdots,n$, we set
$$\pi_j = \frac{\tilde{\pi}_j}{\sqrt{\sum_{i=0}^n \tilde{\pi}_i^2}}. $$
Then $(\pi_j^2)_{j=0}^n$ forms a partition of unity on $\M$, and $\pi_j\in BC^{\infty,0}(\M).$ 

Put $\hat{G}_0:=G_0\cup\bigcup_{i=1}^n \bar{U}_i$, which is uniformly regular. 
Define 
$$\mathring{\bW}_2^{s,\vartheta}(\M,V):=\prod_{j=0}^n \mathring{W}_2^{s,\vartheta}(X_j,V),$$ where $X_j:=G_j$ for $j=1,\cdots,n$, and $X_0:=\hat{G}_0$.
It is understood that on $X_0$, the singularity function can be taken as ${\bf 1}_{X_0}$, and thus the definition of weighted function spaces on $X_0$ is independent of the choice of the weight $\vartheta$.
We further introduce two maps:
$$\Lambda^c: \mathring{W}_2^{s,\vartheta}(\M,V)\to  \mathring{\bW}_2^{s,\vartheta}(\M,V):\quad u\mapsto (\pi_j u)_{j=0}^n,$$
and 
$$\Lambda: \mathring{\bW}_2^{s,\vartheta}(\M,V)\to  \mathring{W}_2^{s,\vartheta}(\M,V):\quad ( u_j )_{j=0}^n\mapsto \sum_{j=0}^n \pi_j u_j.$$
By Proposition~\ref{S2: pointwise multiplication}, we immediately conclude that $\Lambda$ is a retraction from the space $\mathring{\bW}_2^{s,\vartheta}(\M,V)$ to $\mathring{W}_2^{s,\vartheta}(\M,V)$ with $\Lambda^c$ as a coretraction.

(ii) We  show that  there exists some $h\in C^2(\M)$ such that $h_i:=h|_{G_i}\in \sH_\lambda(G_i,g; \rho_i)$ with uniform parameters $(c,M)$ for $i=1,\cdots,n$, and $h_0:=h|_{G_0}\in BC^2(G_0)$. 

Since $G_i$ has {\em property $\sH_\lambda$}, we can find $h_i\in \sH_\lambda(G_i,g; \rho_i)$ with uniform parameters $(c,M)$ on all $\sH_\lambda$-ends $G_i$ for $i=1,\cdots,n$. 
Note that for $u\in C^2(\M)$, it follows from \cite[formula~A.9]{Ama13b} and \eqref{S2: nabla-grad} that
\begin{align*}
|\Delta u|= |C^{\sigma+1}_{\tau+1} \nabla \gd u| =| \nabla \gd u|_g= | \gd^2 u|_g= |\nabla^2 u|_g. 
\end{align*}
Therefore, ($\sH_\lambda$1) and ($\sH_\lambda$2) actually imply that $h_i \in BC^{2,0}(G_i)$.

Since $\partial_0 G_i$ is a compact submanifold of $\M$, by the tubular neighborhood theorem, we can find an closed neighborhood $\tilde{U}_i$ of $\partial_0 G_i$ in $\M$ such that $\tilde{U}_i\cap G_j=\emptyset$ for $j\neq 0, i$, and  there is a diffeomorphism 
$$\tilde{\phi}_i: \tilde{U}_i \to \partial_0 G_i \times [-1,1], \quad \tilde{\phi}_i^* g|_{\tilde{U}_i}= g|_{\partial_0 G_i} + dt^2 ,$$
with the convention $\tilde{\phi}_i:\tilde{U}_i \cap G_i\to\partial_0 G_i \times [0,1)$,
and
$$\rho_i|_{\tilde{U}_i}\sim {\bf 1}_{\tilde{U}_i},\quad i=1,\cdots,n. $$
By a similar construction as in Step (i), we can find $\xi, \tilde{\xi}\in BC^\infty([-1,1],[0,1])$ with
$$\xi |_{[-1, -1/2]}\equiv 1,\quad \xi|_{[-1/4,1]}\equiv 0;\quad \tilde{\xi}|_{[-1, -3/4]}\equiv 0,\quad \tilde{\xi}|_{[-1/2,1]}\equiv 1. $$
Set $\xi_{i,0}:= \tilde{\phi}_i^*({\bf 1}_{\partial_0 G_i} \otimes \xi)$ and $\xi_{i,i}:= \tilde{\phi}_i^*({\bf 1}_{\partial_0 G_i} \otimes \tilde{\xi})$. Then we define
\begin{align*}
\xi_i:= 
\begin{cases}
\xi_{i,i},\quad &\text{on } \tilde{U}_i;\\
1, &\text{on } G_i\setminus\tilde{U}_i;\\
0, &\text{elsewhere},
\end{cases}
\quad \text{and}\quad
\xi_0:= 
\begin{cases}
\xi_{i,0},\quad &\text{on } \tilde{U}_i;\\
0, &\text{on } G_i\setminus\tilde{U}_i;\\
1, &\text{elsewhere}.
\end{cases}
\end{align*}
The compactness of $\partial_0 G_i$ and \cite[Corollary~4.3]{Ama14} imply that $\partial_0 G_i$ is uniformly regular. Therefore, we find for $\partial_0 G_i$  a uniformly regular atlas $\hat{\A}_i:=(\hat{\sf O}_{\kappa,i}, \hat{\varphi}_{\kappa,i})_{\kappa\in \frak{K}_i}$, 
and a localization system $(\hat{\pi}_{\kappa,i})_{\kappa\in \frak{K}_i}$.
We set 
$${\sf O}_{\kappa,i} = \tilde{\phi}_i^{-1}(\hat{\sf O}_{\kappa,i} \times [-1,1]),\quad \varphi_{\kappa,i}=(\hat{\varphi}_{\kappa,i},\id)\circ \tilde{\phi}_i, $$
and $\pi_{\kappa,i}:=\tilde{\phi}_i^*(\hat{\pi}_{\kappa,i} \otimes {\bf 1}_{[-1,1]})$. Then $(\pi_{\kappa,i}^2)_{\kappa\in\frak{K}_i}$ forms a partition of unity on $ \tilde{U}_i$.

Let $\psi_{\kappa,i}= [\varphi_{\kappa,i}]^{-1}$.
We define
$$\mathcal{R}_i^c: BC^k(\M_i)\to \boldsymbol{BC}^k(\mathbb{U}),\quad u \mapsto (\psi_{\kappa,i}^*(\pi_{\kappa,i}u))_{\kappa\in \frak{K}_i} ,$$
and
$$\mathcal{R}_i:  \boldsymbol{BC}^k(\mathbb{U})\to BC^k(\M_i) ,\quad (u_\kappa)_{\kappa\in \frak{K}_i} \mapsto \sum_{\kappa\in \frak{K}_i}\psi_{\kappa,i}^*(\pi_{\kappa,i}u_\kappa) .$$
Here $\boldsymbol{BC}^k(\mathbb{U}):= \prod_{\kappa\in\frak{K}_i} BC^k(\mathbb{U}_\kappa)$ and
\begin{align*}
\mathbb{U}_\kappa=
\begin{cases}
\R^{m-1}\times [-1,1], \quad & \text{if }\M_i=\tilde{U}_i;\\
\R^{m-1}\times [0,1], & \text{if }\M_i=\tilde{U}_i\cap G_i.
\end{cases}
\end{align*}
Then alike to Proposition~\ref{S2: retraction&coretraction}, we can show that $\mathcal{R}_i$ is a retraction from $\boldsymbol{BC}^k(\mathbb{U})$ to $BC^k(\M_i)$ with $\mathcal{R}_i^c$ as a coretraction. 

By a well-known extension theorem, there exists a universal extension operator 
$$ \fE\in \L( BC^k(\R^{m-1}\times [0,1]), BC^k(\R^{m-1}\times [-1,1])) .$$ 
Set $\boldsymbol{\fE}\in \L(\boldsymbol{BC}^k(\R^{m-1}\times [0,1]), \boldsymbol{BC}^k(\R^{m-1}\times [-1,1]))$ and
$$\fE_i:=\mathcal{R}_i\circ  \boldsymbol{\fE}\circ \mathcal{R}_i^c,\quad i=1,\cdots,n.$$
Note that $(G_i \cup\tilde{U}_i ,g ;\rho)$ is a {\em singular manifold}.
Then 
$$ \fE_i \in \L(BC^{k,0}(G_i), BC^{k,0}(G_i\cup \tilde{U}_i)), \quad i=1,\cdots,n,\quad k\in \Nz.$$
Here we adopt the convention that $\fE_i u(\p)=u(\p)$ for any point $\p \in G_i\setminus \tilde{U}_i$.
Put $\tilde{h}_i:= \fE_i h_i$. We thus have $\tilde{h}_i\in BC^{2,0}(G_i\cup \tilde{U}_i)$. 
Now we define 
$$h= \xi_0 {\bf 1}_{\M} + \sum_{i=1}^n\xi_i\tilde{h}_i. $$
Then $h \in C^2(\M)$ satisfies the desired properties.

(iii) One can verify that for $j=0,\cdots,n$ and any $v\in \mathcal{D}(\M,V)$
\begin{align}
\label{S5: commutator}
\notag \pi_j \Ah v=& \Ah (\pi_j v)  + 2\rho^{2-\lambda}\ev(\nabla v, \gd \pi_j)\\
\notag& +  [\div(\rho^{2-\lambda}\gd \pi_j)- \ev(\nabla \pi_j, a_1)+ 2z\rho^{2-\lambda}\ev(\nabla \pi_j, \gd h) ] v\\
=&:\Ah (\pi_j v) +\cB_j v ,
\end{align}
where the operator $\Ah$ is defined in \eqref{S5: def Ah}.
Note that $\rho|_{\cup_{j=0}^n \supp(|\nabla \pi_j|_g)}\sim {\bf 1},$ and thus 
$$\gd \pi_j \in BC^{\infty,\vartheta}(\M, T\M)$$ 
for any $\vartheta\in\R$.
Based on these observations and  Propositions~\ref{S2: pointwise multiplication}, \ref{S2: nabla}, \ref{S2: div-tangent},  and \cite[Corollaries~7.2, 12.2]{Ama13}, we infer that
\begin{align}
\label{S5.3: Besov-Lp}
\cB_j \in \L(\mathring{B}_2^{1,\lambda^\prime-\lambda/2}(\M,V), L_2^{\lambda^\prime}(X_j,V)), \quad j=0,\cdots,n.
\end{align}
Set $\cA_{h_j}:=\Ah|_{X_j}$.
\eqref{S5: Ah-semigroup} and \cite[Theorem~5.2]{Ama13b} yield
$$\cA_{h_j}\in \cH(\mathring{W}^{\lambda^\prime -\lambda}_2(X_j, V), L_2^{\lambda^\prime}(X_j, V)),\quad j=0,\cdots,n. $$ 
Put $\bar{\cA}_h:=(\cA_{h_j})_{j=0}^n$ and
$$E_1:=\mathring{\bW}^{2,\lambda^\prime-\lambda}_2(\M,V),\quad E_0:=\boldsymbol{L}_2^{\lambda^\prime}(\M,V). $$
Then there exist some $\theta\in [\pi/2,\pi)$, $\omega_0\geq 0$ and $\cE>0$ such that $S_0:=\omega_0+\Sigma_\theta \subset \rho(-\bar{\cA}_h)$ and
$$|\mu|^{1-k} \| (\mu+ \bar{\cA}_h)^{-1}\|_{\L(E_0,E_k)}\leq \cE,\quad k=0,1,\quad \mu\in S_0. $$
Put 
$$\cB:=(\cB_j)_{j=0}^n \in \L(\mathring{B}^{1,\lambda^\prime-\lambda/2}_2(\M,V), E_0).$$
From Definition~\eqref{S2: Besov space}, it is not hard to show that
$$\mathring{\boldsymbol{B}}^{1,\lambda^\prime-\lambda/2}_2(\M,V)\doteq (E_1,E_0)_{1/2,2}. $$
Then by \eqref{S5.3: Besov-Lp}, we have
$$\cB\Lambda \in \L(\mathring{\boldsymbol{B}}^{1,\lambda^\prime-\lambda/2}_2(\M,V), E_0).$$
Combining with interpolation theory, we infer that 
for every $\varepsilon>0$ there exists some positive constant $C(\varepsilon)$ such that for all $\bu=(u_j)_{j=0}^n \in E_1$
$$
\|\cB\Lambda \bu\|_{E_0} \leq \varepsilon \|\bu\|_{E_1} + C(\varepsilon)\|\bu\|_{E_0}.
$$
Given any $\bu\in E_0$ and  $\mu\in S_0$,
\begin{align*}
\|\cB\Lambda (\mu +\bar{\cA}_h)^{-1}\bu\|_{E_0}
\leq & \varepsilon \|(\mu +\bar{\cA}_h)^{-1}\bu\|_{E_1} + C(\varepsilon) \|(\mu +\bar{\cA}_h)^{-1}\bu\|_{E_0}\\
\leq & \cE(\varepsilon   + \frac{C(\varepsilon)}{|\mu|}) \|\bu\|_{E_0}.
\end{align*}
Hence we can find some $\omega_1 \geq \omega_0$ such that for all $\mu\in S_1:=\omega_1 +\Sigma_\theta$
$$\|\cB\Lambda (\mu +\bar{\cA}_h)^{-1}\|_{\L(E_0)} \leq 1/2, $$
which implies that $S_1 \subset \rho(-\bar{\cA}_h-\cB\Lambda)$ and 
$$ \|(I +\cB\Lambda(\mu  +\bar{\cA}_h)^{-1})^{-1}\|_{\L(E_0)} \leq 2 .$$
Now one can easily verify that
$$|\mu|^{1-k} \| (\mu+ \bar{\cA}_h +\cB\Lambda)^{-1}\|_{\L(E_0,E_k)}\leq 2\cE,\quad k=0,1,\quad \mu\in S_1. $$

(iv) \eqref{S5: commutator} shows that
$$\Lambda^c (\mu+\Ah)u =(\mu+\bar{\cA}_h)\Lambda^c u +\cB\Lambda\Lambda^c u = (\mu +\bar{\cA}_h+\cB\Lambda)\Lambda^c u.$$
For any $\mu\in S_1$, this yields
$$\Lambda(\mu +\bar{\cA}_h+\cB\Lambda)^{-1}\Lambda^c (\mu+\Ah)=\Lambda(\mu +\bar{\cA}_h+\cB\Lambda)^{-1} (\mu +\bar{\cA}_h+\cB\Lambda)\Lambda^c=\id_{\mathring{W}^{2,\lambda^\prime-\lambda}_2(\M,V)}. $$
This proves the injectivity of $\mu+\cA$ for $\mu\in S_1$.

(v) On the other hand, one can also view $\cB_j$ as an operator from $\mathring{B}_2^{1,\lambda^\prime-\lambda/2}(X_j,V)$ to $L_2^{\lambda^\prime}(\M,V)$. Then
$$ \cB_j \in \L(\mathring{B}_2^{1,\lambda^\prime-\lambda/2}(X_j,V), L_2^{\lambda^\prime}(\M,V)).  $$
Let $\mathfrak{B}\bu:=\sum_{j=0}^n \cB_j u_j$ for $\bu=(u_j)_{j=0}^n$.
Following an analogous argument as in (iii), we infer that there exists some $\omega_2 \geq \omega_1$ such that  $S_2:=\omega_2 +\Sigma_\theta \subset \rho(-\bar{\cA}_h+\Lambda^c \fB)$
and
\begin{align}
\label{S5: eq1}
|\mu|^{1-k} \| (\mu+ \bar{\cA}_h -\Lambda^c \fB)^{-1}\|_{\L(E_0,E_k)}\leq 2\cE,\quad k=0,1,\quad \mu\in S_2.
\end{align} 
We further have
$$(\mu+\Ah)\Lambda(\mu+\bar{\cA}_h-\Lambda^c\fB)^{-1}\Lambda^c=\Lambda(\mu+\bar{\cA}_h-\Lambda^c\fB)(\mu+\bar{\cA}_h-\Lambda^c\fB)^{-1}\Lambda^c=\id_{L_2^{\lambda^\prime}(\M,V)} .$$
Thus, $\mu+\cA$ is surjective for $\mu\in S_2$. Moreover, together with \eqref{S5: eq1}, we have
$$|\mu|^{1-k} \| (\mu+ \Ah)^{-1}\|_{\L(L_2^{\lambda^\prime}(\M,V), \mathring{W}_2^{2k,\lambda^\prime-\lambda}(\M,V))}\leq \cE^\prime,\quad k=0,1,\quad \mu\in S_2$$
for some $\cE^\prime>0$.
Now the asserted statement follows from the well-known semigroup theory and a similar argument to the proof for Theorem~\ref{S5: A-semigroup}.
\end{proof}

The following theorem is the main result of this paper.
\begin{theorem}
\label{S5: main theorem}
Suppose that $(\M,g; \rho)$ is a singular manifold satisfying $\rho\leq 1$,
$$|\nabla \rho|_g \sim {\bf 1},\quad \|\Delta \rho\|_\infty<\infty$$
on $\M_r:=\{\p\in \M: \rho(\p)<r\}$ for some $r\in (0,1]$. Moreover, assume that the set
$$S_{r_0}:=\{\p\in \M: \rho(\p)=r_0\} $$
is compact for $r_0\in (0,r)$.
Let $\lambda^\prime\in \R$,  and $\lambda\in [0,1)\cup(1,\infty)$.  
\begin{itemize}
\item[(a)] Then $(\M,g;\rho)$ is a singular manifold with $\sH_\lambda$-ends. 
\item[(b)] Furthermore, assume that the differential operator
\begin{align*}
\cA u:= -\div(\rho^{2-\lambda} \gd u)+\ev(\nabla u, a_1) +a_0 u
\end{align*}
is $(\rho,\lambda)$-regular.
Then
$$\cA\in \cH( \mathring{W}^{2,\lambda^\prime -\lambda}_p(\M, V),  L^{\lambda^\prime}_p(\M, V)),\quad 1<p<\infty.$$
Here $V=\C$ if $p\neq 2$, or $V=V^\sigma_\tau$ with  $\sigma,\tau\in\Nz$ if $p=2$.
\end{itemize}
\end{theorem}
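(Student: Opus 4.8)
The plan is to treat the two parts in turn, reducing (b) to Theorem~\ref{S5: sing mfd-ends} when $p=2$ and to an $L_p$-variant of it when $V=\C$, and in both cases to strip off an exponential weight by a change-of-weight isomorphism.

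\textbf{Part (a).} I would fix $r_0\in(0,r)$ small and set $G_1:=\{\p\in\M:\rho(\p)\le r_0\}$, $G_0:=\{\p\in\M:\rho(\p)\ge r_0\}$ and $\partial_0G_1:=S_{r_0}=G_0\cap G_1$. Since $|\nabla\rho|_g\sim{\bf1}$ on $\M_r$ and $S_{r_0}$ is compact, the collar and tubular neighborhood arguments in the proof of Proposition~\ref{S5: sing mdf with holes} show that $(G_0,g)$ and $(G_1,g)$ are singular manifolds sharing the boundary $S_{r_0}$; on $G_0$ one has $r_0\le\rho\le1$, hence $\rho\sim{\bf1}$ and $(G_0,g)$ is uniformly regular, while $\rho|_{G_1}\le r_0\le1$. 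To exhibit {\em property $\sH_\lambda$} on $G_1$, take $h:=\sg(1-\lambda)\log\rho$; then $\rho|\gd h|_g=|\gd\rho|_g\sim{\bf1}_{G_1}$, which is ($\sH_\lambda$1), and
\[
\rho^\lambda\div(\rho^{2-\lambda}\gd h)=\sg(1-\lambda)\,\rho\,\Delta\rho+|1-\lambda|\,|\gd\rho|_g^2,
\]
whose first term is bounded by $r_0\|\Delta\rho\|_\infty\to0$ as $r_0\to0$ while the second is $\sim{\bf1}_{G_1}$ (here $\lambda\ne1$ enters), so ($\sH_\lambda$2) holds for $r_0$ small. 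Hence $\{G_0,G_1\}$ covers $\M$ and $(\M,g;\rho)$ is a singular manifold with one $\sH_\lambda$-end, which is (a).

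\textbf{Part (b), the case $p=2$.} By (a) the hypotheses of Theorem~\ref{S5: sing mfd-ends} hold (note $\rho\le1$, $\lambda\ge0$ give \eqref{S3: rho & lambda}). Choosing the end function there to be $M\sg(1-\lambda)\log\rho$ near $G_1$, with $z=a+ib$ and $M$ satisfying (H2) and (H3), that theorem gives $\cA\in\cH(e^{zh}\mathring W^{2,\lambda^\prime-\lambda}_2(\M,V),e^{zh}L^{\lambda^\prime}_2(\M,V))$. It remains to identify the exponentially weighted spaces. Since $h$ equals $M\sg(1-\lambda)\log\rho$ near the singularity and lies in $BC^{2,0}(\M)$ where $\rho\sim{\bf1}$, one can write $e^{zh}=\rho^{c_0}\,w\,e^{ibh}$ with $c_0=aM\sg(1-\lambda)$, $w$ strictly positive and $w,1/w\in BC^{2,0}(\M)$, and $e^{ibh}$ unimodular with $e^{ibh}\in BC^{2,0}(\M)$ (the last from $\gd\log\rho\in BC^{1,2}(\M,T\M)$, cf.~\eqref{S3: gd log r}); multiplication by $\rho^{c_0}$ is an isomorphism between weighted spaces by Proposition~\ref{S2: change of wgt}, and multiplication by $w$ or $e^{ibh}$ is an automorphism of each $\mathring W^{2,\vartheta}_2(\M,V)$ by Proposition~\ref{S2: pointwise multiplication} and density. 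Thus $e^{zh}\mathring W^{s,\vartheta}_2(\M,V)\doteq\mathring W^{s,\vartheta+\epsilon}_2(\M,V)$ and $e^{zh}L^{\lambda^\prime}_2(\M,V)\doteq L^{\lambda^\prime+\epsilon}_2(\M,V)$ for a constant $\epsilon=\epsilon(c)$ (with $|\epsilon|<1/(2c^3)$), \emph{independent of $\lambda^\prime$} --- the content of the Remark following Theorem~\ref{S5: A-semigroup}. As $\lambda^\prime$ ranges over $\R$, replacing $\lambda^\prime$ by $\lambda^\prime-\epsilon$ gives $\cA\in\cH(\mathring W^{2,\lambda^\prime-\lambda}_2(\M,V),L^{\lambda^\prime}_2(\M,V))$ for all $\lambda^\prime$.

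\textbf{Part (b), the case $V=\C$, $1<p<\infty$.} Here I would first establish the scalar $L_p$-analogue of \eqref{S5: Ah-semigroup}: on a singular manifold with {\em property $\sH_\lambda$}, the conjugate operator $\Ah=e^{-zh}\circ\cA\circ e^{zh}$ --- which by (H1) is again $(\rho,\lambda)$-regular and $(\rho,\lambda)$-singular elliptic with $C_{\hat\sigma}=1$ --- satisfies (A3), (A4) and (A5) \emph{with $\omega=0$} whenever $z$ and $M$ obey (H2)--(H3). Indeed \eqref{S5: Ah-1}--\eqref{S5: Ah-2} give (A3), and the same estimates applied to the adjoints figuring in (A4)--(A5) differ only by complex conjugates and by divergence terms of order $M$, which are negligible against the quadratically large potential $\rho^\lambda\Rp(A_z)$, bounded below by $C_0\rho^2|\gd h|_g^2$ by \eqref{S5: Ah-1}, hence again yield a negative $\omega_\cA$ and $C_1<2$. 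Theorem~\ref{S3.2: Lp-continuity} with $\omega=0$ then gives $\Ah\in\cH(\mathring W^{2,\lambda^\prime-\lambda}_p(\M),L^{\lambda^\prime}_p(\M))$, and conjugating by $e^{zh}$ as in the proof of Theorem~\ref{S5: A-semigroup} gives $\cA\in\cH(e^{zh}\mathring W^{2,\lambda^\prime-\lambda}_p(\M),e^{zh}L^{\lambda^\prime}_p(\M))$. On a singular manifold with $\sH_\lambda$-ends I would then rerun the localization/patching argument in the proof of Theorem~\ref{S5: sing mfd-ends} \emph{verbatim} with $(\mathring W_p,L_p)$ in place of $(\mathring W_2,L_2)$; all its ingredients (Propositions~\ref{S2: retraction&coretraction}, \ref{S2: pointwise multiplication}, \ref{S2: nabla}, \ref{S2: div-tangent}, the universal $BC^k$-extension operator, \cite[Theorem~5.2]{Ama13b}, real interpolation of the weighted Besov spaces, and the resolvent-perturbation estimates) are available for $W_p$, and the $L_p$-step just proved together with \cite[Theorem~5.2]{Ama13b} replaces \eqref{S5: Ah-semigroup}. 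This gives $\cA\in\cH(e^{zh}\mathring W^{2,\lambda^\prime-\lambda}_p(\M),e^{zh}L^{\lambda^\prime}_p(\M))$ on the manifold of (a), and the change-of-weight argument of the previous paragraph (now with $\F=\mathring W_p$) together with the arbitrariness of $\lambda^\prime$ finishes the proof.

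\textbf{Expected main obstacle.} The delicate point is the quantitative one just indicated: showing that the conjugation $e^{-zh}\circ\cA\circ e^{zh}$, with $z$ and $M$ constrained only by (H2)--(H3), really drives $\Ah$ --- and, for $p\ne2$, its two relevant adjoints --- into the structural hypotheses (A3)--(A5) with a \emph{negative} constant $\omega_\cA$, i.e.\ that every first- and zeroth-order term created by the conjugation is dominated by the positive potential $\rho^2|\gd h|_g^2$. A lesser technicality is verifying that the unimodular multiplier $e^{ibh}=e^{ibM\sg(1-\lambda)\log\rho}$ is an automorphism of the weighted Sobolev spaces, which relies on the explicit choice of $h$ near the singularity and on boundedness of $\rho|\gd\log\rho|_g$.
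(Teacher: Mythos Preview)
Your treatment of (a) and of (b) for $p=2$ is essentially the paper's: set $h=M\sg(1-\lambda)\log\rho$, verify $(\sH_\lambda1)$--$(\sH_\lambda2)$ from $|\nabla\rho|_g\sim{\bf1}$ and $\|\Delta\rho\|_\infty<\infty$, apply Theorem~\ref{S5: sing mfd-ends}, and strip the exponential weight via $e^{zh}=\rho^{aM\sg(1-\lambda)}\cdot\rho^{ibM\sg(1-\lambda)}$ with the unimodular factor in $BC^{2,0}(\M)$.

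For $p\ne2$ you take a genuinely different route from the paper. You propose to verify that the conjugate $\Ah$ satisfies not only (A3) (which is \eqref{S5: Ah-1}--\eqref{S5: Ah-2}) but also (A4) and (A5) with $\omega_\cA<0$, then invoke Theorem~\ref{S3.2: Lp-continuity} with $\omega=0$, localize in $L_p$, conjugate back, and shift the weight. The paper instead never checks (A4)--(A5) for $\Ah$: it works with $\cA$ itself, applies the $L_2$-conjugation technique \emph{separately} to $\cA$ and to each adjoint $\Ata$, $\Azt$ (each with its own $z,M$) to obtain contractive $L_2^\alpha$-semigroups for a fixed $\alpha=aM\in(-1/2c^3,0)$; it then uses the commutation identity $e^{-zh}e^{-t\cA}e^{zh}=e^{-t\Ah}$ together with $L_\infty$-contractivity of the conjugates to deduce $L_\infty^\alpha$-contractivity of $\{e^{-t\cA}\}$ and its adjoints, and bootstraps by the weighted duality pairing $\langle\cdot\,|\,\cdot\rangle_{2,\vartheta/2}$ to $L_p^{\lambda'}$-contractivity for all $p$. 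The domain is finally identified via $\fB=\rho^\lambda\cA$, for which the paper separately invokes property~$\sH_0$ (available from the same hypotheses on $\rho$). Your approach, provided the (A4)--(A5) bookkeeping goes through --- and your heuristic that the extra divergence and inner-product terms in $b(\lambda',\ba_h)$, $\tilde b(\lambda',\ba_h)$, $\tilde b(\lambda'-\lambda,\ba_h)$ are $O(M)$ against the $O(M^2)$ potential $\rho^2|\gd h|_g^2$ is correct --- is more direct and carries the domain identification for free from Theorem~\ref{S3.2: Lp-continuity}, bypassing the separate $\sH_0$ step. The paper's route trades that bookkeeping for the $\alpha$-shift duality argument, which is slicker but less self-contained.
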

\begin{proof}
(i)
For $M>0$, we set 
\begin{align}
\label{S5.3: h}
h(\p)= M\sg(1-\lambda)\log \rho(\p), \quad \p\in\M. 
\end{align}
A direct computation shows that
$$\rho\gd h =M\sg(1-\lambda) \gd \rho,$$
and
$$
\rho^\lambda \div(\rho^{2-\lambda}\gd h)=M\sg(1-\lambda)\rho \Delta \rho + M|1-\lambda| |\gd \rho|_g^2.  $$
Together with (S3) and  (S4), one can then easily show that $h \in BC^{2,0}(\M)$,  and
\begin{align}
\label{S5.3: h on ends}
h\in \sH_\lambda(\M_{r_1},g; \rho) 
\end{align}
with parameters $(c,M)$ for some $r_1\leq r$ sufficiently small.
\smallskip\\
By the implicit function function theorem, $S_{r_0}$ is a compact submanifold. Then   the assertion that $(\M,g;\rho)$ is a {\em singular manifold} with $\sH_\lambda$-ends is simply a consequence of  the tubular neighborhood theorem.

(ii) 
The retraction-coretraction system defined in the proof for Theorem~\ref{S5: sing mfd-ends} allows us to decompose the problem into generation of analytic semigroup on every $\sH_\lambda$-end, and then to glue the complete operator together by the perturbation argument used therein. 

We thus can reduce the assumptions on the manifold $(\M,g;\rho)$ to only assuming $(\M,g;\rho)$ to be a {\em singular manifold} with {\em property $\sH_\lambda$}, and {\em property $\sH_0$} if $\lambda\neq 0$. Moreover, 
$$h= M\sg(1-\lambda)\log \rho \in \sH_\lambda(\M,g;\rho)  ,\quad h_0=  M\log \rho \in  \sH_0(\M,g;\rho),$$
both with parameter $(c,M)$.

The reason to include the extra assumption that $(\M,g;\rho)$ has {\em property $\sH_0$} will  be self-explanatory in Step (v) below, while we determine the domain of the $L_p^{\lambda^\prime}(\M)$-realization of the operator $\cA$.

(iii)
Take $h$ as in \eqref{S5.3: h} and $z=a+ib$, $M$ satisfying (H2) and (H3) in Section~5.1. In Theorem~\ref{S5: sing mfd-ends}, we have shown that 
$$\cA\in \cH(e^{zh}\mathring{W}^{2,\lambda^\prime -\lambda}_2(\M, V), e^{zh} L^{\lambda^\prime}_2(\M,V)).$$
We have $e^{zh}=\rho^{\sg(1-\lambda)zM}=\rho^{\sg(1-\lambda)aM} \rho^{\sg(1-\lambda) bMi}$.
By \eqref{S3: gd log r} and Proposition~\ref{S2: sharp-flat}, we infer that
$$\nabla \log \rho\in BC^{1,0}(\M,T^*M),$$
which implies 
$$\nabla \rho^{\sg(1-\lambda)bM i}= \sg(1-\lambda)bM i \rho^{\sg(1-\lambda)bM i} \nabla \log \rho \in   BC^{1,0}(\M,T^*M).$$
Combining with $|\rho^{\sg(1-\lambda)bM i}|\equiv 1$, we thus have 
$$\rho^{\sg(1-\lambda)bM i}\in BC^{2,0}(\M).$$
By Propositions~\ref{S2: pointwise multiplication}, \ref{S2: change of wgt} and the fact that
$e^{zh} e^{-zh}=e^{-zh} e^{zh} ={\bf 1}_{\M}$, we infer that
$$e^{zh}\in  \Lis(\mathring{W}^{s,\vartheta}_p(\M,V), \mathring{W}^{s,\vartheta+\sg(\lambda-1)aM}_p(\M,V)),\quad 1<p<\infty,\quad 0\leq s\leq 2.$$
A similar argument to Theorem~\ref{S5: A-semigroup} yields
$$\cA\in \cH(\mathring{W}^{2,\lambda^\prime -\lambda+\sg(\lambda-1)aM}_2(\M, V),  L^{\lambda^\prime+\sg(\lambda-1)aM}_2(\M,V)).$$
Since $\lambda^\prime$ is arbitrary and $\sg(\lambda-1)aM\in (-1/2c^3, 1/2c^3)$, it implies that 
$$\cA\in \cH(\mathring{W}^{2,\lambda^\prime -\lambda}_2(\M, V),  L^{\lambda^\prime}_2(\M,V)),\quad \lambda^\prime\in\R.$$

(iv)
Now we look at the general case $1<p<\infty$ and suppose that $V=\C$.
Recall that the adjoint, $\Ata$, of $\cA$ with respect to $L_2^{\vartheta/2}(\M)$ is
$$\Ata u = -\div(\rho^{2-\lambda} \gd u) -\ev(\nabla u , 2\vartheta\rho^{2-\lambda}\gd \log \rho+a_1 ) +  b(\vartheta,\ba) u, $$
where with $\ba=(\vec{a},a_1,a_0)$ and
\begin{align*}
b(\vartheta,\ba):= \bar{a}_0 - \div(\vartheta \rho^{2-\lambda}\gd\log\rho+a_1)
&-\vartheta(\vartheta \rho^{2-\lambda}\gd\log\rho+a_1 |  \gd\log\rho)_g.
\end{align*}
To simplify our usage of notation in the following computations, we first focus on the case $\lambda>1$. The remaining case follows easily by symmetry. Recall that when $\lambda>1$, we can set
$$h(\p)= -M\log \rho(\p), \quad \p\in\M,\quad M>0. $$
Let $\Aath:=e^{-zh}\circ \Ata \circ e^{zh}$. 
Since  $\Ata$is $(\rho,\lambda)$-regular and $(\rho,\lambda)$-singular elliptic, 
by choosing $z=z(\vartheta)=a+ib$ and $M=M(\vartheta)$ satisfying (H2) and (H3), we have
$$ \Aath \in \cH(\mathring{W}_2^{2,  -\lambda}(\M), L_2 (\M))\cap \Lis(\mathring{W}_2^{2,  -\lambda}(\M), L_2 (\M)). $$
We have thus established
$$ \Ata \in \cH(\mathring{W}_2^{2, aM-\lambda}(\M), L_2^{aM} (\M))\cap \Lis(\mathring{W}_2^{2, aM-\lambda}(\M), L_2^{aM} (\M)), $$
and the semigroup $\{e^{-t\Ata}\}_{t\geq 0}$ is contractive. 
Note that 
$$aM \in (-1/2c^3,0)\subset (-1,0)$$ 
only depends on $c$. Henceforth, we always take $\alpha:= aM=-1/4c^3$. 

For the adjoint, $\Azt$, of $\Ata$ with respect to $L_2^\alpha(\M)$, we can show similarly that
$$ \Azt \in \cH(\mathring{W}_2^{2, \alpha-\lambda}(\M), L_2^\alpha (\M))\cap \Lis(\mathring{W}_2^{2, \alpha-\lambda}(\M), L_2^\alpha (\M)), $$
and $\{e^{-t\Azt}\}_{t\geq 0}$ is contractive. Let $\cA_h(\vartheta;2\alpha)=e^{-zh}\circ \Azt \circ e^{zh}$.

The $L_\infty$-contractivity of $\{e^{-t\cA_h}\}_{t\geq 0}$, $\{e^{-t\Aath}\}_{t\geq 0}$ and  $\{e^{-t\cA_h(\vartheta;2\alpha)}\}_{t\geq 0}$ can be built up by a similar argument to Section~3.2. It yields for any $u\in L_2^\alpha(\M) \cap L_\infty^\alpha(\M)$
\begin{align}
\label{S5.3: A-Ah comm}
\notag \| e^{-t\cA} u \|_{L_\infty^\alpha} &=\|  e^{-t\cA}  e^{zh} e^{-zh}u\|_{L_\infty^\alpha}\\
&=\|e^{zh}   e^{-t\cA_h}  e^{-zh}u\|_{L_\infty^\alpha}\\
\notag&\leq \|e^{-t\cA_h}  e^{-zh}u\|_{L_\infty} \leq \|e^{-zh}u\|_{L_\infty} \leq \|u \|_{L_\infty^\alpha}.
\end{align}
\eqref{S5.3: A-Ah comm} follows from $\Ah= e^{-zh}\circ\cA \circ e^{zh}$ and 
\begin{align*}
e^{-zh} e^{-t\cA} e^{zh} v &= e^{-zh}\lim_{n\to \infty}[\frac{n}{t}( \frac{n}{t} +\cA)^{-1} ]^n e^{zh} v\\
&= \lim_{n\to \infty} e^{-zh} e^{zh} [\frac{n}{t} e^{-zh} ( \frac{n}{t} +\cA)^{-1} e^{zh}]^n v\\
&=\lim_{n\to \infty}[\frac{n}{t}  ( \frac{n}{t} +e^{-zh}\circ\cA \circ e^{zh})^{-1} ]^n v\\
&=\lim_{n\to \infty}[\frac{n}{t}  ( \frac{n}{t} +\Ah)^{-1} ]^n u = e^{-t\Ah} v.
\end{align*}
A similar argument applies to $\{e^{-t\Ata}\}_{t\geq 0}$ and $\{e^{-t\Azt}\}_{t\geq 0}$ as well. 
Thus we have established the $L_\infty^\alpha$-contractivity of the semigroups  $\{e^{-t\cA}\}_{t\geq 0}$, $\{e^{-t\Ata}\}_{t\geq 0}$, and $\{e^{-t\Azt}\}_{t\geq 0}$. 

Now we make use of the duality argument in Step~(ii) of the proof for Theorem~\ref{S3.2: Lp-continuity} again. For any $u\in L_2^{\vartheta-\alpha}(\M) \cap L_1^{\vartheta-\alpha}(\M)$ and  $v\in L_2^{\alpha}(\M) \cap L_\infty^{\alpha}(\M)$, it holds
\begin{align*}
|\langle e^{-t\cA}v | u \rangle_{2,{\vartheta/2}}| &= |\langle v | e^{-t\Aa}u \rangle_{2,{\vartheta/2}}|
=|\langle \rho^{\vartheta} v | e^{-t\Aa} u \rangle_{2,0}|\\
&\leq \|\rho^{\vartheta} v\|_{L_1^{-\alpha}} \|e^{-t\Aa}u \|_{L_\infty^{\alpha}}\\
&\leq \|v\|_{L_1^{\vartheta-\alpha}} \|   u \|_{L_\infty^{\alpha}} .
\end{align*}
Taking $\vartheta=\lambda^\prime+\alpha$, the above inequality proves that $\{e^{-t\cA}\}_{t\geq 0}$ is indeed $L_1^{\lambda^\prime}$-contractive. 
Applying this duality argument to $\{e^{-t\Ata}\}_{t\geq 0}$ and $\{e^{-t\Azt}\}_{t\geq 0}$ repeatedly with respect to $\langle \cdot | \cdot \rangle_{2,\alpha}$, we can then obtain the   $L_1^\alpha$-contractivity of these two semigroups. Adopting the duality argument once more, we have
\begin{align*}
|\langle e^{-t\cA}v | u \rangle_{2,{\vartheta/2}}| \leq \|v\|_{L_\infty^{\vartheta-\alpha}} \|   u \|_{L_1^{\alpha}} .
\end{align*}
Hence, by Lemma~\ref{S3.2: Riesz-Thorin} $\{e^{-t\cA}\}_{t\geq 0}$ is indeed $L_p^{\lambda^\prime}$-contractive for all $1\leq p \leq \infty$. After carefully following the proof for Theorem~\ref{S3.2: Lp-continuity}, one can show that $\{e^{-t\cA}\}_{t\geq 0}$ can be extended to a contractive strongly continuous analytic semigroup on $L^{\lambda^\prime}_p(\M)$ for $1<p<\infty$.

(v) To determine the domain of the realization of $\cA$ on $L^{\lambda^\prime}_p(\M)$, we look at the operator
$$\fB u:=-\div(\rho^{\lambda} \vec{a}\cdot \gd  u) +  \ev(\nabla u, \rho^{\lambda} a_1) +  \rho^{\lambda} a_0  u + \lambda\ev(\nabla u, \rho^\lambda\vec{a}\cdot \gd \log \rho). $$
We have computed in Section~3.1 that 
$\cA=  \rho^{-\lambda} \fB$.
Since in Step (ii), we assume that $(\M,g;\rho)$ has {\em property $\sH_0$}, 
following an analogous discussion to Step (iii)-(iv), we can show that $-\fB_\omega:=-\fB -\omega$ generates a contractive strongly continuous analytic semigroup on $L^{\lambda^\prime-\lambda}_p(\M)$ with domain $D(\fB_\omega)$ for $1<p<\infty$ and any $\omega\geq 0$. 
In particular, $D(\fB_\omega)$ is independent of $\omega$.
On the other hand, by \cite[Theorem~5.2]{Ama13b}, for $\omega$ sufficiently large and $\lambda^\prime\in\R$, $1<p<\infty$
$$\fB_\omega\in \cH( \mathring{W}^{2,\lambda^\prime -\lambda}_p(\M),  L^{\lambda^\prime-\lambda}_p(\M)) \cap \Lis( \mathring{W}^{2,\lambda^\prime -\lambda}_p(\M),  L^{\lambda^\prime-\lambda}_p(\M)).$$
Therefore, we indeed have $D(\fB)\doteq \mathring{W}^{\lambda^\prime -\lambda}_p(\M)$ and 
$$\fB\in \cH( \mathring{W}^{2,\lambda^\prime -\lambda}_p(\M),  L^{\lambda^\prime-\lambda}_p(\M)) \cap \Lis( \mathring{W}^{2,\lambda^\prime -\lambda}_p(\M),  L^{\lambda^\prime-\lambda}_p(\M)).$$
Now it follows from a similar argument to the proof for Theorem~\ref{S3: domain} that
$$\cA\in \cH( \mathring{W}^{2,\lambda^\prime -\lambda}_p(\M),  L^{\lambda^\prime}_p(\M)) \cap \Lis( \mathring{W}^{2,\lambda^\prime -\lambda}_p(\M),  L^{\lambda^\prime}_p(\M)),\quad \lambda^\prime\in\R, \quad 1<p<\infty.$$ 
\end{proof}

We say $u,v \in \R^\M$ are $C^k$-equivalent, which is denoted by $u\sim_k v$, if
$$u\sim v,\quad |\nabla ^i u |_g \sim | \nabla^i v|_g,\quad i=1,\cdots,k. $$
\begin{definition}
An $m$-dimensional  singular manifold $(\M,g;\rho)$ is called a singular manifold with holes and uniformly mild wedge ends if it fulfils the following conditions.
\begin{itemize}
\item[(i)] $(\sM,g)$ is an $m$-dimensional uniformly regular Riemannian manifold, and ${\bf \Sigma}=\{\Sigma_1,\cdots,\Sigma_k\}$ is a finite set of disjoint $m$-dimensional compact manifolds with boundary such that $\Sigma_j\subset \mathring{\sM}$. Put $G_0:=\sM\setminus \cup_{j=1}^k \Sigma_j$ and 
$$\mathscr{B}_{j,r}:= \bar{\B}_\sM(\partial\Sigma_j,r)\cap G_0,\quad j=1,\cdots,k.$$ 
Furthermore, the singularity function $\rho$ satisfies 
\begin{align}
\label{S5.3: near bdry}
\rho \sim_2 {\rm dist}_{\sM}(\cdot, \partial \Sigma_j)\quad \text{in } \mathscr{B}_{j,r} 
\end{align}
for some $r\in (0,\delta)$, where $\delta< {\rm diam}(\sM)$ and $\mathscr{B}_{i,\delta} \cap \mathscr{B}_{j,\delta}=\emptyset$ for $i\neq j$, and
$$\rho\sim {\bf 1},\quad \text{elsewhere on }G_0. $$
\item[(ii)] ${\bf G}=\{G_1,\cdots,G_n\}$ is a finite set of disjoint $m$-dimensional uniformly mild wedges.
More precisely, there is a diffeomorphism $f_i: G_i \to W(R_i,B_i,\Gamma_i)$ with $R_i\in \mathscr{C_U}(J_0)$.
Let $I_r:=(0,r]$ and
$$\mathscr{G}_{i,r}:=f_i^{-1} (\phi_P(I_r\times B_i)\times\Gamma_i),\quad i=1,\cdots,n.$$
Moreover, the singularity function $\rho$ satisfies 
\begin{align}
\label{S5.3: on wedge end}
\rho \sim_2 f_i^*(\phi_P^*(R_i|_{I_r} \otimes {\bf 1}_{B_i})\otimes {\bf 1}_{\Gamma_i}) \quad \text{in } \mathscr{G}_{j,r}
\end{align}
for some $r\in (0,1]$, and
$$\rho\sim {\bf 1},\quad \text{elsewhere on }G_i. $$
\item[(iii)] $\{G_0\}\cup {\bf G}$ forms a covering for $\M$. $\partial_0 G_i:= G_0\cap G_i \subset \partial G_0 \cap \partial G_i$.
\end{itemize}
\end{definition}
One can easily see that \eqref{S5.3: near bdry} and \eqref{S5.3: on wedge end} imply that
\begin{align}
\label{S5.3: equiv near ends}
|\Delta \rho|<\infty \quad \text{in }\mathscr{B}_{j,r} \text{ and } \mathscr{G}_{j,r}.
\end{align}


The following corollary does not directly stem from Theorems~\ref{S5: sing mfd-ends} and \ref{S5: main theorem}. But using the ideas in their proofs, we can prove this corollary without difficulty.
\begin{cor}
\label{S5: main corollary}
Suppose that $(\M,g; \rho)$ is a singular manifold with holes and uniformly mild wedge ends. 
Let $\lambda^\prime\in \R$,  and $\lambda\in [0,1)\cup(1,\infty)$. 
Furthermore, assume that the differential operator
\begin{align*}
\cA u:= -\div(\rho^{2-\lambda} \gd u)+\ev(\nabla u, a_1) +a_0 u
\end{align*}
is $(\rho,\lambda)$-regular.
Then
$$\cA\in \cH( \mathring{W}^{2,\lambda^\prime -\lambda}_p(\M, V),  L^{\lambda^\prime}_p(\M, V)),\quad 1<p<\infty.$$
Here $V=\C$ if $p\neq 2$, or $V=V^\sigma_\tau$ with $\sigma,\tau\in\Nz$ if $p=2$.
\end{cor}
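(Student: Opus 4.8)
The plan is to adapt the proofs of Theorems~\ref{S5: sing mfd-ends} and \ref{S5: main theorem} rather than to invoke them directly. First I would verify that $(\M,g;\rho)$ is a {\em singular manifold} with $\sH_\lambda$-ends and, when $\lambda\neq 0$, also enjoys {\em property $\sH_0$} near each end. On a ``hole'' end $\mathscr{B}_{j,r}$, set $h_j:=M\sg(1-\lambda)\log\rho$; the $C^2$-equivalence \eqref{S5.3: near bdry}, together with \eqref{S5.3: equiv near ends}, transfers the computation in the proof of Proposition~\ref{S5: sing mdf with holes} verbatim, giving $(\sH_\lambda 1)$, $(\sH_\lambda 2)$ and $h_j\in BC^{2,0}(\mathscr{B}_{j,r})$; likewise $M\log\rho$ realizes {\em property $\sH_0$} there. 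On a ``uniformly mild wedge'' end $\mathscr{G}_{i,r}$, set $h_i:=M\sg(1-\lambda)\log\rho$; since $R_i\in\mathscr{C_U}(J_0)$ and \eqref{S5.3: on wedge end} holds, the argument of Proposition~\ref{S5: Eg-mild cusp-wedge} (via Lemmas~\ref{S4: lem 4.1}, \ref{S5: lem-product}, \ref{S5: lem-diff}, \ref{S5: Eg-mild cusp ch}) applies. Because the level sets $S_{r_0}$ and the boundaries $\partial\Sigma_j$ are compact submanifolds, the collar and tubular neighborhood theorems produce the covering $\{G_0\}\cup{\bf G}$ and show that $(\M,g;\rho)$ is a {\em singular manifold} with $\sH_\lambda$-ends.

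Next I would glue the pieces following Steps~(i)--(ii) of the proof of Theorem~\ref{S5: sing mfd-ends}: enlarge $G_0$ slightly to a uniformly regular $\hat{G}_0$, build a partition of unity and an associated retraction--coretraction system $(\Lambda,\Lambda^c)$ subordinate to $\{\hat{G}_0\}\cup{\bf G}$, and patch the locally defined $h_i$ into a single $h\in C^2(\M)$ with $h|_{G_i}\in\sH_\lambda(G_i,g;\rho_i)$ (uniform parameters $(c,M)$) and $h|_{\hat{G}_0}\in BC^{2,0}(\hat{G}_0)$, using collar neighborhoods of $\partial_0 G_i$, cut-offs, and the universal extension operator $\fE$. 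Conjugating $\cA$ by $e^{zh}$, on each end $G_i$ one obtains an operator with {\em property $\sH_\lambda$}, so Theorem~\ref{S5: A-semigroup} (for $p=2$, $V=V^\sigma_\tau$) or Steps~(iii)--(iv) of the proof of Theorem~\ref{S5: main theorem} (for general $1<p<\infty$, $V=\C$, via $L_\infty^\alpha$-contractivity and Stein interpolation) yields generation of a contractive analytic semigroup on the corresponding piece; on $\hat{G}_0$, where $\rho\sim{\bf 1}$, the operator is uniformly strongly elliptic and \cite[Theorem~5.2]{Ama13b} applies. The commutator terms $\cB_j$ coming from $[\pi_j,\cdot]$ are first order and supported in $\{\rho\sim{\bf 1}\}$, hence bounded from $\mathring{B}^{1,\lambda^\prime-\lambda/2}_p(\M,V)$ into $L^{\lambda^\prime}_p(\M,V)$; by interpolation they form an $\varepsilon$-small perturbation plus a lower-order term, so shifting the spectral parameter $\mu$ into a sector $\omega_1+\Sigma_\theta$ restores the resolvent estimates for the glued operators exactly as in Steps~(iii)--(v) of the proof of Theorem~\ref{S5: sing mfd-ends}, whence $\cA\in\cH(e^{zh}\mathring{W}^{2,\lambda^\prime-\lambda}_p(\M,V),e^{zh}L^{\lambda^\prime}_p(\M,V))$.

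Finally I would conjugate back. Writing $e^{zh}=\rho^{\sg(1-\lambda)zM}=\rho^{\sg(1-\lambda)aM}\rho^{\sg(1-\lambda)bMi}$ and using $\nabla\log\rho\in BC^{1,0}(\M)$ (from \eqref{S3: gd log r} and Proposition~\ref{S2: sharp-flat}) together with $|\rho^{\sg(1-\lambda)bMi}|\equiv 1$, one gets $\rho^{\sg(1-\lambda)bMi}\in BC^{2,0}(\M)$, so Propositions~\ref{S2: pointwise multiplication} and \ref{S2: change of wgt} give $e^{zh}\in\Lis(\mathring{W}^{s,\vartheta}_p(\M,V),\mathring{W}^{s,\vartheta+\sg(\lambda-1)aM}_p(\M,V))$ for $0\le s\le 2$; since $\lambda^\prime$ is arbitrary this merely shifts the weight and yields $\cA\in\cH(\mathring{W}^{2,\lambda^\prime-\lambda}_p(\M,V),L^{\lambda^\prime}_p(\M,V))$. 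For the domain identification $D(\cA)\doteq\mathring{W}^{2,\lambda^\prime-\lambda}_p(\M,V)$ I would pass to $\fB:=\rho^\lambda\cA$, which carries {\em property $\sH_0$} near the ends (the reason $\sH_0$ was arranged above): the same gluing shows $\fB_\omega:=\fB+\omega\in\cH(\mathring{W}^{2,\lambda^\prime-\lambda}_p(\M),L^{\lambda^\prime-\lambda}_p(\M))$ with $\omega$-independent domain, and combining with \cite[Theorem~5.2]{Ama13b} as in the proof of Theorem~\ref{S3: domain} gives the claim.

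The main obstacle is organizational rather than analytical: one must run the retraction--coretraction and perturbation scheme for two genuinely different families of ends (holes and uniformly mild wedges) simultaneously, keeping the parameters $(c,M)$ of $h$ and of the auxiliary $\sH_0$-function uniform over all of them, and one must check that the $C^2$-equivalences \eqref{S5.3: near bdry}, \eqref{S5.3: on wedge end} are strong enough to transfer both $(\sH_\lambda 1)$--$(\sH_\lambda 2)$ and the bound \eqref{S5.3: equiv near ends} on $\Delta\rho$ to the individual pieces. Once this bookkeeping is in place, every remaining step is a routine adaptation of the proofs of Theorems~\ref{S5: sing mfd-ends} and \ref{S5: main theorem}.
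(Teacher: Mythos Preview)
Your overall strategy---adapting the proofs of Theorems~\ref{S5: sing mfd-ends} and~\ref{S5: main theorem} by constructing a global $h$, setting up a retraction--coretraction system, and running the perturbation and conjugation arguments---matches the paper. There is, however, one genuine gap.

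You assert that ``the level sets $S_{r_0}$ and the boundaries $\partial\Sigma_j$ are compact submanifolds'' and then invoke the abstract collar and tubular neighborhood theorems and the universal extension operator $\fE$ from Step~(ii) of the proof of Theorem~\ref{S5: sing mfd-ends}. This is exactly the point that can fail: on a uniformly mild wedge end $G_i$, the base $B_i$ is only assumed to be a \emph{uniformly regular} submanifold of $\R^{\bar d}$, not compact, so the level sets $S_{i,r}=\{\p\in G_i:\rho(\p)=r\}$ and the interfaces $\partial_0 G_i$ need not be compact. The paper's proof opens by flagging precisely this: if all $S_{i,r}$ were compact, Theorem~\ref{S5: main theorem} would apply directly and the corollary would be trivial. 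Without compactness, the tubular neighborhood theorem and the construction of $\fE$ (which relied on $\partial_0 G_i$ being compact, hence carrying a finite uniformly regular atlas) are not available as you use them.

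The paper repairs both Steps~(i) and~(ii) of Theorem~\ref{S5: sing mfd-ends} simultaneously. For $h$, it simply takes $h:=\sg(1-\lambda)\log\rho$ \emph{globally} on $\M$; since $\rho$ is already a global $C^\infty$ function, no patching or extension is required, and \eqref{S5.3: equiv near ends} together with the computations from Propositions~\ref{S5: Eg-mild cusp-wedge} and~\ref{S5: sing mdf with holes} gives $h\in\sH_\lambda$ on every $\mathscr{B}_{j,r}$ and $\mathscr{G}_{i,r}$ and $h\in BC^2$ elsewhere. For the retraction--coretraction system, instead of an abstract collar of $S_{i,r}$, the paper builds the collar directly from the wedge product structure, setting $U_i:=f_i^{-1}(\phi_P^{-1}(B_i\times(r_0,r])\times\Gamma_i)$ and pulling back cut-offs $\xi,\tilde\xi\in BC^\infty((r_0,r],[0,1])$ along $f_i$ and $\phi_P$; this works uniformly regardless of whether $B_i$ is compact. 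With these two replacements, the remainder of your argument (the commutator estimate, the perturbation in Steps~(iii)--(v), and the conjugation and domain identification via $\fB=\rho^\lambda\cA$) goes through exactly as you describe.
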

\begin{proof}
If $S_{i,r}:=\{\p\in G_i: \rho(\p)=r\}$ is compact for small $r$ and all $i=1,\cdots,n$, then by Theorem~\ref{S5: main theorem} the asserted result will be true. However, in general, $S_{i,r}$ might not be compact. 
Nevertheless, looking into the proofs for Theorem~\ref{S5: sing mfd-ends} and Theorem~\ref{S5: main theorem}, the compactness of $S_{i,r}$ will only be responsible for Step~(i) and (ii) in the proof for Theorem~\ref{S5: sing mfd-ends}.

Firstly, we take
$h:=\sg (1-\lambda)\log \rho. $
Then $h\in C^2(\M)$ satisfies
$$
h\in \sH_\lambda(\mathscr{B}_{j,r},g; \rho)\, \text{ and } \, h\in \sH_\lambda(\mathscr{G}_{i,r},g; \rho) 
$$
with parameters $(c,1)$ for some $r>0$, following from \eqref{S5.3: equiv near ends} and a similar argument to the proofs for Propositions~\ref{S5: Eg-mild cusp-wedge} and \ref{S5: sing mdf with holes}. 
Furthermore, 
$$
h\in BC^2(\M\setminus (\bigcup_{j=1}^k \mathring{\mathscr{B}}_{j,r}\cup \bigcup_{i=1}^n \mathring{\mathscr{G}}_{i,r})).
$$
Thus the properties of $h$ listed in Step~(ii) of the proof for Theorem~\ref{S5: sing mfd-ends} are all satisfied.

Next, we prove the existence of the retraction-coretraction system defined in Step~(i) of  the proof for Theorem~\ref{S5: sing mfd-ends}. 
For any $r\in (0,1)$,
picking $(r_0,r]$ with $r_0>0$, we can construct a collar neighborhood of $S_{i,r}$ on $G_i$ by
$$U_i:=f_i^{-1}(\phi_P^{-1}(B_i\times (r_0,r])\times \Gamma_i).$$
Moreover, we choose $\xi, \tilde{\xi} \in BC^\infty((r_0,r], [0,1])$ such that
$$\xi|_{[\frac{r+r_0}{2}, r]}\equiv 1,\quad \xi|_{(r_0, \frac{r+3r_0}{4}]}\equiv 0;\quad  \tilde{\xi}|_{(r_0,\frac{r+r_0}{2}]}\equiv 1,\quad \tilde{\xi}|_{[\frac{3r+r_0}{4},r]}\equiv 0.$$ 
Now we can define 
$\hat{\pi}_{i,0}:= f_i^*( \phi_P^*({\bf 1}_{B_i} \otimes \xi)\otimes {\bf 1}_{\Gamma_i}), $
and 
$\hat{\pi}_{i,i}:= f_i^*( \phi_P^*({\bf 1}_{B_i} \otimes \tilde{\xi})\otimes {\bf 1}_{\Gamma_i}). $
The rest of the proof just follows from a similar argument to Step~(i) of the proof for Theorem~\ref{S5: sing mfd-ends}.
\end{proof}
\begin{remark}
\label{S5.3: rmk-remove bdry/pts}
In view of Remarks~\ref{S5.2: remove bdry} and \ref{S5.2: remove pts}, the assertion in Corollary~\ref{S5: main corollary} remains true if we replace the condition of {\em singular manifolds with holes} by removing a finite set of disjoint compact submanifolds $\{\Sigma_1,\cdots,\Sigma_k\}$ or discrete points $\{\p_1,\cdots,\p_k\}$ from a {\em uniformly regular Riemannian manifold} $(\mathscr{M},g)$. Here $\Sigma_i\subset \partial\mathscr{M}$ if $\Sigma_i\cap \partial\mathscr{M}\neq \emptyset,$ or $\p_i \in \mathring{\mathscr{M}}$.
\end{remark}

\begin{remark}
\label{S5.3: main remark}
From our proofs in  Section 3 and 5, it is a simple matter to check that we do not require the {\em singular manifold} $(\M,g;\rho)$ to enjoy smoothness up to $C^\infty$. Indeed, in order to prove all the results in Section 3 and 5, it suffices to require $(\M,g;\rho)$ to be a $C^2$-{\em singular manifold}.
\end{remark}


\section{\bf Applications}

\subsection{\bf The Laplace-Beltrami operator}
Suppose that $(\M,g; \rho)$ is a {\em singular manifold}.

Recall that the Laplace-Beltrami operator with respect to $g$ is defined by 
$$\Delta=\Delta_g:=\div\circ\gd. $$
One readily checks that $\Delta$ is $(\rho,\lambda)$-regular and $(\rho,\lambda)$-singular elliptic with $C_{\hat{\sigma}}=1$, $\lambda=2$.

Given any Banach space $X$, $s\in (0,1)$, and any perfect interval $J$, we denote by 
$$C^s(J,X) $$
the set of all $u\in C(J,X)$ such that $u$ is H\"older continuous of order $s$.

Let $\M_T:=(0,T]\times\M$, and $\M_0:=\{0\}\times\M$.
Then Theorem~\ref{S5: main theorem}, Corollary~\ref{S5: main corollary} and \cite[Theorem~II.1.2.1]{Ama95}  imply the following existence and uniqueness theorem for the  heat equation.
\begin{theorem}
\label{S6.1: heat-thm}
Suppose that either $(\M,g; \rho)$ is a   singular manifold with holes and uniformly mild wedge ends, or  $(\M,g; \rho)$  satisfies the conditions in Theorem~\ref{S5: main theorem}. Let $\lambda^\prime\in \R$ and $J=[0,T]$. 
Then for any 
$$(f,u_0) \in C^s(J;L^{\lambda^\prime}_p(\M))\times L^{\lambda^\prime}_p(\M), $$ 
with some $s\in (0,1)$,
the boundary value problem
\begin{align}
\label{S6.1: heat eq}
\left\{\begin{aligned}
u_t -\Delta u&= f &&\text{on}&&\M_T\\
u&=0 &&\text{on}&& \partial\M_T &&\\
u&=u_0 &&\text{on}&& \M_0 &&
\end{aligned}\right.
\end{align}
has a unique solution 
$$u\in C^{1+s}(J\setminus \{0\};L^{\lambda^\prime}_p(\M))\cap C^s(J\setminus \{0\};\mathring{W}^{\lambda^\prime-2}_p(\M)).$$
\end{theorem}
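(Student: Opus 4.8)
The plan is to reduce the parabolic problem \eqref{S6.1: heat eq} to an abstract Cauchy problem on the Banach space $E_0 := L^{\lambda^\prime}_p(\M)$ and invoke the maximal regularity / generation theory packaged in \cite[Theorem~II.1.2.1]{Ama95}. First I would recall that by Remark~\ref{S5.3: main remark} the Laplace-Beltrami operator $\Delta = \div\circ\gd$ is $(\rho,2)$-regular and $(\rho,2)$-singular elliptic with $C_{\hat\sigma}=1$ (here $\lambda = 2$, so $\rho^{2-\lambda}=\mathbf 1_\M$ and the principal part is simply $-\div\circ\gd$, matching the normalized form $\cA u = -\div(\rho^{2-\lambda}\gd u)+\ev(\nabla u,a_1)+a_0 u$ with $a_1=0$, $a_0=0$). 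Thus, whenever $(\M,g;\rho)$ either satisfies the hypotheses of Theorem~\ref{S5: main theorem} or is a singular manifold with holes and uniformly mild wedge ends (so that Corollary~\ref{S5: main corollary} applies), we obtain, with $E_1 := \mathring W^{2,\lambda^\prime-2}_p(\M)$,
$$ -\Delta \in \cH(E_1, E_0), \qquad 1<p<\infty. $$
Note the weighted Sobolev index $\lambda^\prime - \lambda = \lambda^\prime - 2$ appearing in the statement is exactly $E_1$; the notation $\mathring{W}^{\lambda^\prime-2}_p(\M)$ in the theorem is shorthand for $\mathring W^{2,\lambda^\prime-2}_p(\M)$.

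The second step is purely abstract: having $-\Delta \in \cH(E_1,E_0)$ means precisely that $\Delta$ generates a strongly continuous analytic semigroup on $E_0$ with domain $E_1$. I would then set $A := -\Delta$ and write \eqref{S6.1: heat eq} as $\dot u + A u = f$ on $J$, $u(0) = u_0$, where the Dirichlet boundary condition $u=0$ on $\partial\M_T$ is already encoded in the choice of domain $E_1 = \mathring W^{2,\lambda^\prime-2}_p(\M)$, the closure of $\mathcal D(\mathring\M)$. With $f \in C^s(J;E_0)$ for some $s\in(0,1)$ and $u_0 \in E_0$, the classical theory of analytic semigroups — in the precise form of \cite[Theorem~II.1.2.1]{Ama95} — yields a unique solution
$$ u \in C^{1+s}(J\setminus\{0\};E_0) \cap C^s(J\setminus\{0\};E_1) \cap C(J;E_0), $$
with $u(0)=u_0$ recovered by strong continuity at $0$. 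This is exactly the regularity class asserted, namely $u \in C^{1+s}(J\setminus\{0\};L^{\lambda^\prime}_p(\M))\cap C^s(J\setminus\{0\};\mathring W^{\lambda^\prime-2}_p(\M))$.

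There is essentially no hard obstacle remaining once Theorem~\ref{S5: main theorem} and Corollary~\ref{S5: main corollary} are in hand: the entire analytic difficulty — removing the compensation term $\omega\rho^{-\lambda}$, identifying the domain as a weighted Sobolev space, handling the degeneracy/singularity at the ends — has already been absorbed into those results. The only points to check with a little care are (a) that $-\Delta$ genuinely fits the template of the differential operator $\cA$ in those statements (it does, with $\lambda=2$, $a_1 = 0$, $a_0 = 0$, and $\rho^{2-\lambda} = \mathbf 1_\M$; the regularity hypothesis $(\rho,2)$-regular is trivially met since the coefficients are constant tensors) and that the required index range $\lambda\in[0,1)\cup(1,\infty)$ is satisfied — here $\lambda = 2 > 1$, so it is — and (b) that the condition \eqref{S3: rho & lambda} holds, which for $\lambda = 2 \geq 0$ requires $\|\rho\|_\infty \le 1$; this is built into the hypotheses of Theorem~\ref{S5: main theorem} ($\rho\le 1$) and of the manifolds-with-holes-and-wedge-ends setting (where $\rho$ is a distance function near the ends, hence bounded by $1$ after normalization). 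After verifying these bookkeeping items, the conclusion follows directly from the cited abstract theorem, so the proof is short: state that $-\Delta\in\cH(\mathring W^{2,\lambda^\prime-2}_p(\M), L^{\lambda^\prime}_p(\M))$ by Theorem~\ref{S5: main theorem} or Corollary~\ref{S5: main corollary}, and then apply \cite[Theorem~II.1.2.1]{Ama95}.
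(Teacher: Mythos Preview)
Your proposal is correct and matches the paper's approach exactly: the paper simply observes (just before the theorem) that $\Delta$ is $(\rho,2)$-regular and $(\rho,2)$-singular elliptic with $C_{\hat\sigma}=1$, then states that Theorem~\ref{S5: main theorem}, Corollary~\ref{S5: main corollary}, and \cite[Theorem~II.1.2.1]{Ama95} imply the result. One minor correction: the fact that $\Delta$ fits the template is noted in the opening of Section~6.1 rather than in Remark~\ref{S5.3: main remark} (which concerns $C^2$ regularity of the manifold), but this does not affect the argument.
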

In the case of $L_2$-spaces, making use of \cite[Theorem~1.6]{Pru03}, we have the following corollary.
\begin{cor}
Under the conditions in Theorem~\ref{S6.1: heat-thm}, 
let $V=V^\sigma_\tau$ be a tensor field on $\M$ and $1<p<\infty$.
Then for any 
$$(f,u_0) \in L_p([0,T];L^{\lambda^\prime}_2(\M, V))\times \mathring{B}^{1,\lambda^\prime-1}_2(\M, V), $$ 
the boundary value problem~\eqref{S6.1: heat eq}
has a unique solution 
$$u\in   L_p([0,T];\mathring{W}^{,\lambda^\prime-2}_2(\M, V))\times W^1_p([0,T];L^{\lambda^\prime}_2(\M, V)).$$
\end{cor}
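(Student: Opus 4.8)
The plan is to deduce this corollary from the generation result already in hand, namely that under the stated hypotheses $\cA=-\Delta$ (which is $(\rho,\lambda)$-regular and $(\rho,\lambda)$-singular elliptic with $C_{\hat\sigma}=1$, $\lambda=2$) satisfies
$$\Delta\in \cH(\mathring{W}^{2,\lambda^\prime-2}_2(\M,V),L_2^{\lambda^\prime}(\M,V)),$$
by Theorem~\ref{S5: main theorem}(b) or Corollary~\ref{S5: main corollary} in the $p=2$, $V=V^\sigma_\tau$ case, combined with the maximal $L_p$-regularity theorem \cite[Theorem~1.6]{Pru03}. First I would record that $-\Delta$, being the negative generator of a contractive strongly continuous analytic semigroup on the Hilbert space $E_0:=L_2^{\lambda^\prime}(\M,V)$ with domain $E_1:=\mathring{W}^{2,\lambda^\prime-2}_2(\M,V)$, has bounded $H^\infty$-calculus with angle $<\pi/2$ on $E_0$ — this is automatic for generators of bounded analytic $C_0$-semigroups of contractions (or self-adjoint sectorial operators) on Hilbert spaces — and hence, by the Dore--Venni/Prüss--Sohr type result quoted as \cite[Theorem~1.6]{Pru03}, $-\Delta$ enjoys maximal $L_p$-regularity on $[0,T]$ for every $1<p<\infty$.

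Next I would identify the correct trace space. By the abstract maximal regularity theory, the solution to $u_t+(-\Delta)u=f$, $u(0)=u_0$ lives in
$$\mathbb{E}(T):=L_p([0,T];E_1)\cap W^1_p([0,T];E_0),$$
and this class is well-posed precisely when $u_0$ belongs to the real interpolation space $(E_0,E_1)_{1-1/p,p}$. So the remaining point is to check that
$$(L_2^{\lambda^\prime}(\M,V),\mathring{W}^{2,\lambda^\prime-2}_2(\M,V))_{1-1/p,p}\doteq \mathring{B}^{1,\lambda^\prime-1}_2(\M,V).$$
Here is where I would be a little careful: the statement of the corollary uses $\mathring{B}^{1,\lambda^\prime-1}_2(\M,V)$, which by the definition \eqref{S2: Besov space} is the $1/2$-interpolation space between $\mathring{W}^{0,\lambda^\prime-1}_2$ and $\mathring{W}^{2,\lambda^\prime-1}_2$ — note the weight shift from $\lambda^\prime-2$ to $\lambda^\prime-1$ is exactly the half-step in the weight scale that accompanies the half-step in smoothness, consistent with the weighted-space conventions in Section~2.2 and Proposition~\ref{S2: change of wgt}. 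I would pass through the isomorphism $\rho^{-1}\colon L_2^{\lambda^\prime}\to L_2^{\lambda^\prime-1}$ of Proposition~\ref{S2: change of wgt} (and its analogue on $\mathring{W}^{2}$-scale, using $\Delta=\rho^{-2}\fB$ with $\fB\in\Lis(\mathring{W}^{2,\lambda^\prime-2}_2,L_2^{\lambda^\prime-2}_2)$ from Step~(v) of the proof of Theorem~\ref{S5: main theorem}) to reduce the computation of the interpolation space to the statement in \cite[Section~12]{Ama13} / Proposition~\ref{S2: retraction&coretraction} that interpolation commutes with the retraction $\Re$, so that the identification reduces to the corresponding Euclidean fact $(L_2,W^2_2)_{1-1/p,p}\doteq B^{1}_2=W^1_2$ at $p=2$, and to the interpolation identity for general $p$ on $\R^m$ (or $\H$). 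The boundary condition ($\mathring{W}$ versus $W$) is preserved throughout since $1-1/p$ can be $<1/p$ or not, but the definition \eqref{S2: t<1/p} together with a density argument handles both ranges; for $1<p\le 2$ one has $1-1/p\le 1/p$ so $\mathring{W}=W$ and there is nothing to check, and for $p>2$ the homogeneous boundary condition on $E_1$ forces it into the trace space by standard real-interpolation-of-closed-subspace arguments.

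Finally I would assemble: given $(f,u_0)\in L_p([0,T];L_2^{\lambda^\prime}(\M,V))\times \mathring{B}^{1,\lambda^\prime-1}_2(\M,V)$, maximal $L_p$-regularity of $-\Delta$ furnishes a unique $u\in L_p([0,T];\mathring{W}^{2,\lambda^\prime-2}_2(\M,V))\cap W^1_p([0,T];L_2^{\lambda^\prime}(\M,V))$ solving \eqref{S6.1: heat eq} (the boundary condition $u=0$ on $\partial\M_T$ is encoded in the space $\mathring{W}^{2,\lambda^\prime-2}_2$), which is exactly the claimed regularity class up to the typo $\mathring{W}^{,\lambda^\prime-2}_2=\mathring{W}^{2,\lambda^\prime-2}_2$. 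The main obstacle, and the only genuinely non-routine step, is the identification of the trace space $(L_2^{\lambda^\prime},\mathring{W}^{2,\lambda^\prime-2}_2)_{1-1/p,p}$ with the weighted Besov space $\mathring{B}^{1,\lambda^\prime-1}_2(\M,V)$ — i.e., tracking the weight bookkeeping through the interpolation and confirming it matches definition \eqref{S2: Besov space}; everything else is either quoted verbatim from Theorem~\ref{S5: main theorem}/Corollary~\ref{S5: main corollary}, from \cite[Theorem~1.6]{Pru03}, or from the Hilbert-space $H^\infty$-calculus fact.
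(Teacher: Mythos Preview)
Your approach is exactly the paper's: combine the generation result $-\Delta\in\cH(\mathring{W}^{2,\lambda'-2}_2(\M,V),L_2^{\lambda'}(\M,V))$ from Theorem~\ref{S5: main theorem}(b) or Corollary~\ref{S5: main corollary} with \cite[Theorem~1.6]{Pru03} for maximal $L_p$-regularity. The paper offers nothing beyond that one-line citation; your elaboration on the trace-space identification $(E_0,E_1)_{1-1/p,p}\doteq \mathring{B}^{1,\lambda'-1}_2(\M,V)$ is additional bookkeeping the paper leaves entirely implicit.

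One small caveat: you justify maximal regularity by first claiming contractivity of $\{e^{-t(-\Delta)}\}_{t\ge 0}$ and then invoking the $H^\infty$-calculus that contractive analytic semigroups on Hilbert spaces enjoy. But Theorem~\ref{S5: main theorem} and Corollary~\ref{S5: main corollary} only assert $\cA\in\cH(\ldots)$; the gluing and perturbation arguments in their proofs (see Steps~(iii)--(v) of Theorem~\ref{S5: sing mfd-ends}) need not preserve contractivity, so this route is not on solid ground as written. The fix is immediate and simpler than your detour: since $L_2^{\lambda'}(\M,V)$ is a Hilbert space, every generator of a bounded analytic semigroup on it has maximal $L_p$-regularity for all $1<p<\infty$ by de~Simon's theorem, which is precisely what \cite[Theorem~1.6]{Pru03} delivers without any appeal to contractivity or $H^\infty$-calculus.
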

\begin{remark}
A similar result can also be formulated for the wave equation on {\em singular manifolds with holes and uniformly mild wedge ends}, or {\em singular manifolds}  satisfying the conditions in Theorem~\ref{S5: main theorem}. We refer the reader to \cite{Pazy83} for the corresponding semigroup theory for hyperbolic equations.
\end{remark}

\subsection{\bf Degenerate and singular equations on domains}
Suppose that $\Omega\subset \R^m$ is a smooth domain with compact boundary.
 
For $r$ small enough, $\partial\Omega$ admits an $r$-tubular neighborhood, which we denote by ${\sf T}_r$. Here $r$ depends on the {\em uniform exterior and interior ball condition} of $\partial\Omega$.
Let 
$$d_{\partial\Omega}(x):={\sf dist}(x, \partial\Omega),\quad x\in \Omega,$$ 
i.e., the distance function to the boundary.
We define $\d: \Omega\rightarrow \R^+$ by 
\begin{equation}
\label{S6.2: rescaled dist}
\d=d_{\partial\Omega} \quad \text{if $\Omega$ is bounded},\quad \text{or} \quad 
\begin{cases}
\d=d_{\partial\Omega} \quad &\text{in } \Omega\cap {\sf T}_r,\\
\d\sim {\bf 1} &\text{in }\Omega\setminus {\sf T}_r
\end{cases}
\quad \text{otherwise}.
\end{equation}
It follows from \cite[formula~(3.3), Corollary~4.5]{Ama14} that $\bar{\Omega}$ is a {\em uniformly regular Riemannian manifold}.
Then by Remarks~\ref{S5.2: remove bdry}, \ref{S5.3: rmk-remove bdry/pts}, and Corollary~\ref{S5: mfd with cpt bdry},
$(\Omega, g_m; \d)$ is a {\em singular manifold} with $\sH_\lambda$-ends for $\lambda\in [0,1)\cup (1,\infty)$. The weighted Sobolev-Slobodeckii spaces and H\"older spaces can thus be defined as in Section~2.2, which is denoted by $\F^{s,\vartheta}(\Omega,X)$ for any finite dimensional Banach space $X$.


Let $J=[0,T]$. We consider the following initial value problem.
\begin{align}
\label{S6.2: degenerate BVP}
\left\{\begin{aligned}
u_t + \cA u &= f &&\text{on}&&\Omega_T;\\
u&=u_0 &&\text{on}&& \Omega_0 .&&
\end{aligned}\right.
\end{align}
Here 
$$ \cA u=-a \Delta u + a_1 \cdot \nabla u + a_0 u, $$
and the coefficients $ (a,a_1,a_0)$ satisfy for some $s\in (0,1)$ and $\lambda\in [0,1)\cup (1,\infty)$
\begin{align}
\label{S6.2: lower order coef}
 a_1\in C^s(J; BC^{0,\lambda}(\Omega, T_0\R^m)),\quad  a_0\in C^s(J;L_\infty^\lambda(\Omega));
\end{align}
and if $\lambda=2$
\begin{align}
\label{S6.2: prc part-1}
a\in C^s(J;\R_+) ;
\end{align}
or if $\lambda\neq 2$
\begin{align}
\label{S6.2: prc part-2}
a\in C^s(J, BC^{2,\lambda-2}(\Omega)),\quad \text{ for every}\quad  t\in J,\quad  a(t)^{\frac{1}{2-\lambda}}\sim_2 \d.
\end{align}
Observe that \eqref{S6.2: lower order coef} can be equivalently stated as
$$\d^{\lambda-1} a_1\in C^s(J; BC(\Omega, \R^m)),\quad \d^{\lambda} a_0\in C^s(J;L_\infty(\Omega)).$$
By \eqref{S6.2: prc part-1} and \eqref{S6.2: prc part-2}, we can verify that when $\lambda\neq 2$, $(\Omega, g_m; a^{\frac{1}{2-\lambda}})$ is a $C^2$-{\em singular manifold} with $\sH_\lambda$-ends. 
When $\lambda=2$, we take the {\em singular manifold} to be $(\Omega, g_m, \d)$. 
In both cases, the conditions in Theorem~\ref{S5: main theorem} are satisfied.

Now we conclude from \cite[Theorem~II.1.2.1]{Ama95} that
\begin{theorem}
Suppose that $\Omega\subset \R^m$ is a smooth domain with compact boundary. 
Let $s\in (0,1)$, $\lambda\in [0,1)\cup (1,\infty)$, $\lambda^\prime\in\R$ and $1<p<\infty$. 
Assume that the coefficients $(a,a_1,a_0)$ of the differential operator  
$$ \cA u=-a \Delta u + a_1 \cdot \nabla u + a_0 u $$
satisfy \eqref{S6.2: lower order coef}-\eqref{S6.2: prc part-2}. Then given any 
$$(f,u_0) \in C^s(J; L_p^{\lambda^\prime}(\Omega))\times L_p^{\lambda^\prime}(\Omega),$$
the initial value problem~\eqref{S6.2: degenerate BVP} has a unique solution 
$$u\in C^{1+s}(J\setminus \{0\};L^{\lambda^\prime}_p(\Omega))\cap C^s(J\setminus \{0\};W^{\lambda^\prime-\lambda}_p(\Omega)).$$
\end{theorem}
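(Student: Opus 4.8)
The plan is to reduce the parabolic statement to the generation of an analytic semigroup by $\cA$ on the appropriate weighted $L_p$-space, and then invoke the abstract parabolic existence theory \cite[Theorem~II.1.2.1]{Ama95}. The key point is therefore to set things up so that the operator $\cA$ in the present statement falls under the scope of Theorem~\ref{S5: main theorem} (equivalently Corollary~\ref{S5: main corollary}), after choosing the correct singular-manifold structure on $\Omega$ depending on the value of $\lambda$.

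First I would fix the underlying {\em singular manifold}. The discussion preceding the theorem already records that $\bar\Omega$ is a {\em uniformly regular Riemannian manifold} by \cite[formula~(3.3), Corollary~4.5]{Ama14}, and hence, via Remarks~\ref{S5.2: remove bdry}, \ref{S5.3: rmk-remove bdry/pts} and Corollary~\ref{S5: mfd with cpt bdry}, that $(\Omega,g_m;\d)$ is a {\em singular manifold} with $\sH_\lambda$-ends for every admissible $\lambda$. When $\lambda\neq 2$, hypothesis~\eqref{S6.2: prc part-2} says that for each fixed $t\in J$ we have $a(t)^{1/(2-\lambda)}\sim_2 \d$; combined with $a(t)\in BC^{2,\lambda-2}(\Omega)$, this lets me take $\rho:=a(t)^{1/(2-\lambda)}$ as a singularity function, so that $-a(t)\Delta=-\div(\rho^{2-\lambda}\gd\,\cdot)$ is exactly of the form treated in Theorem~\ref{S5: main theorem}, and $(\Omega,g_m;\rho)$ is a $C^2$-{\em singular manifold} with $\sH_\lambda$-ends (by Remark~\ref{S5.3: main remark} the $C^2$ regularity is enough). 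When $\lambda=2$, hypothesis~\eqref{S6.2: prc part-1} gives $a(t)\in\R_+$ a positive scalar, and I keep $\rho:=\d$; then $-a(t)\Delta$ is $(\rho,2)$-singular elliptic with $C_{\hat\sigma}=a(t)>0$ constant in $x$.

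Next I would check the $(\rho,\lambda)$-regularity of $\cA(t)$ for each frozen $t$. Conditions~\eqref{S6.2: lower order coef} rewritten as $\d^{\lambda-1}a_1\in C^s(J;BC(\Omega,\R^m))$ and $\d^\lambda a_0\in C^s(J;L_\infty(\Omega))$ are precisely the requirements $a_1\in L_\infty^\lambda(\Omega,T\Omega)$, $a_0\in L_\infty^\lambda(\Omega)$ of (A1), while the leading coefficient $\vec a=\rho^{2-\lambda}g_\flat$ satisfies $\vec a\in BC^{1,\lambda-2}$ because $\rho^{2-\lambda}\sim a(t)\in BC^{2,\lambda-2}\subset BC^{1,\lambda-2}$ (or is a positive constant when $\lambda=2$). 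Hence for each $t\in J$, Theorem~\ref{S5: main theorem}(b) — using $V=\C$ since $p$ is arbitrary in $(1,\infty)$ — yields
$$
\cA(t)\in\cH\bigl(\mathring W^{2,\lambda^\prime-\lambda}_p(\Omega),\,L^{\lambda^\prime}_p(\Omega)\bigr),\qquad 1<p<\infty,
$$
with a domain $\mathring W^{2,\lambda^\prime-\lambda}_p(\Omega)=:E_1$ independent of $t$, and with $L^{\lambda^\prime}_p(\Omega)=:E_0$. The H\"older continuity in $t$ of the coefficients, together with Propositions~\ref{S2: pointwise multiplication}, \ref{S2: nabla}, \ref{S2: div-tangent}, shows $t\mapsto\cA(t)\in C^s\bigl(J;\L(E_1,E_0)\bigr)$. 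Thus $(\cA(t))_{t\in J}$ is a $C^s$-family in $\cH(E_1,E_0)$ with constant domain, which is exactly the hypothesis of \cite[Theorem~II.1.2.1]{Ama95}.

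Finally I would apply \cite[Theorem~II.1.2.1]{Ama95}: for $(f,u_0)\in C^s(J;E_0)\times E_0$ the Cauchy problem $u_t+\cA(t)u=f$, $u(0)=u_0$ has a unique solution
$$
u\in C^{1+s}\bigl(J\setminus\{0\};E_0\bigr)\cap C^s\bigl(J\setminus\{0\};E_1\bigr)=C^{1+s}\bigl(J\setminus\{0\};L^{\lambda^\prime}_p(\Omega)\bigr)\cap C^s\bigl(J\setminus\{0\};W^{\lambda^\prime-\lambda}_p(\Omega)\bigr),
$$
which is the assertion (note that $\mathring W^{2,\lambda^\prime-\lambda}_p(\Omega)$ is abbreviated $W^{\lambda^\prime-\lambda}_p(\Omega)$ in the statement, consistently with the convention in Section~2.2). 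I expect the only genuine subtlety — the ``main obstacle'' — to be verifying carefully that the nonautonomous $x$- and $t$-dependence of $a$ when $\lambda\neq 2$ still produces a fixed domain: one must confirm that $a(t)^{1/(2-\lambda)}\sim_2 \d$ uniformly in $t\in J$ (which follows from continuity on the compact interval $J$ and the compactness of $\partial\Omega$) so that the weighted space $\mathring W^{2,\lambda^\prime-\lambda}_p(\Omega)$ built from $\d$ agrees, with uniformly equivalent norms, with the one built from $a(t)^{1/(2-\lambda)}$; once this uniform equivalence is in hand, everything else is a routine bookkeeping of the $(\rho,\lambda)$-regularity constants and an appeal to the cited results.
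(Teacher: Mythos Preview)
Your proposal is correct and follows essentially the same route as the paper: the paper's ``proof'' is just the preceding paragraph, which sets up $(\Omega,g_m;a^{1/(2-\lambda)})$ (resp.\ $(\Omega,g_m;\d)$ when $\lambda=2$) as a $C^2$-singular manifold with $\sH_\lambda$-ends, observes that the hypotheses of Theorem~\ref{S5: main theorem} are met, and then concludes directly from \cite[Theorem~II.1.2.1]{Ama95}. One small imprecision in your write-up: the identity $-a(t)\Delta=-\div(\rho^{2-\lambda}\gd\,\cdot)$ is not literally true when $a$ depends on $x$; rather $-a\Delta u=-\div(a\,\gd u)+\gd a\cdot\gd u$, and the extra drift term must be absorbed into $a_1$ --- this is harmless since $a\in BC^{2,\lambda-2}(\Omega)$ forces $\rho^{\lambda-1}|\gd a|\in L_\infty$, so the modified $a_1$ still lies in $L_\infty^\lambda(\Omega,T\Omega)$.
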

\begin{remark}
Based on \eqref{S6.2: prc part-2}, we can readily observe that the principle symbol of $\cA$ satisfies
$$ a(t)|\xi|^2 \sim \d^{2-\lambda} |\xi|^2, \quad \lambda\neq 2. $$
Therefore, \eqref{S6.2: degenerate BVP} can either be a degenerate boundary value problem or be a boundary blow-up problem.
This supplements the results in \cite{ForMetPall11, Shao14, Vesp89} with weak degeneration case, i.e., $\lambda\in (0,2)$, and boundary singularity case, i.e., $\lambda>2$.
\end{remark}

\subsection{\bf Generalized Heston operator}
Let $\Omega=\R\times \R_+$.
One can readily check that 
$$(\M,g;\rho):=(\Omega , g_2; y),\quad g_2=dx^2+dy^2,$$
is a {\em singular manifold} with uniformly mild wedge end.

Let $J:=[0,T]$. Consider the following initial value problem. 
\begin{align}
\label{S6.3: gen-Heston-1}
\left\{\begin{aligned}
u_t +\cA u&= f &&\text{on}&&\Omega_T\\
u(0)&=u_0  &&\text{on}&&\Omega_0. &&
\end{aligned}\right.
\end{align}
Here with $\alpha\leq 2$ and $z=(x,y)$ 
\begin{align*}
&\quad \cA(t,z)u(t,z)\\
:&= -\partial_i( y^\alpha a^{ij}\partial_j u(t,z)) + y^{\alpha-1} b^j(t,z)\partial_j u(t,z) +y^{\alpha-2} c(t,z)u(t,z),
\end{align*}
where $b^j(t,z):=b^j_0(t,z) +y b^j_1(t,z)$, and $c(t,z):=c_0(t,z) +y c_1(t,x)+ y^2 c_2(t,z)$. We impose the following assumptions on the coefficients.
\begin{itemize}
\item[(GH1)] 
\begin{align*}
(a^{ij})=\frac{1}{2}
\begin{pmatrix}
1 &\varrho\sigma\\
\varrho\sigma &\sigma^2
\end{pmatrix}, \quad
\sigma>0,\quad -1<\varrho<1,
\end{align*}
\item[(GH2)] $b_i^j, c_i \in C^s(J;L_\infty(\Omega)) $ for some $s\in (0,1)$.
\end{itemize}
This problem corresponds to the case $\lambda=2-\alpha$ in \eqref{S3: operator A}.

While $\alpha=1$, $b_i^j\equiv \text{const}$, $c_0\equiv 0$, $c_1, c_2\equiv \text{const}$,
$\cA$ is called the Heston operator. 
\eqref{S6.3: gen-Heston-1} generalizes the Heston model in the following sense. 
It does not only exhibit degeneracy along the boundary, but boundary singularities may also appear. When $\alpha>0$, the diffusion term is degenerate. Whereas $\alpha<0$ corresponds to the situation that boundary singularities show for the highest order term. 

The Heston operator has been  studied in \cite{Fee13,FeePop13,FeePop14} and the references therein.  In this subsection, we focus on the case $\alpha\neq 1$. The study of this kind of problem is new since the Schauder approach in the aforementioned articles relies on the particular choice the degeneracy factor $y$.

One can check by direct computations that after a change of spatial variables and rescaling of the temporal variable. Equation~\eqref{S6.3: gen-Heston-1} can be transformed into 
\begin{align*}
\left\{\begin{aligned}
u_t +\hat{\cA} u&= f &&\text{on}&&\Omega_T\\
u(0)&=u_0  &&\text{on}&&\Omega_0. &&
\end{aligned}\right.
\end{align*}
Here 
\begin{align*}
\hat{\cA}(t,z)u(t,z):=& -\partial_j( y^\alpha \partial_j u(t,z)) + y^{\alpha-1} (\hat{b}^j_0(t,z) +y \hat{b}^j_1(t,z))\partial_j u(t,z) \\
&+y^{\alpha-2} (\hat{c}_0(t,z) +y \hat{c}_1(t,x)+ y^2 \hat{c}_2(t,z))u(t,z),
\end{align*}
where $\hat{b}_i^j, \hat{c}_i \in C^s(J;L_\infty(\Omega)) $. By Corollary~\ref{S5: main corollary}, 
\begin{theorem}
Suppose that  $\Omega=\R\times \R_+$.
Let $s\in (0,1)$, $\alpha\in (-\infty,1)\cup(1,2]$, $\lambda^\prime\in\R$ and $1<p<\infty$. 
Assume that (GH1) and (GH2) are satisfied. Then given any 
$$(f,u_0) \in C^s(J; L_p^{\lambda^\prime}(\Omega))\times L_p^{\lambda^\prime}(\Omega),$$
the equation \eqref{S6.3: gen-Heston-1} has a unique solution
$$u\in C^{1+s}(J\setminus \{0\};L^{\lambda^\prime}_p(\Omega))\cap C^s(J\setminus \{0\};W^{\lambda^\prime+\alpha-2}_p(\Omega)).$$
\end{theorem}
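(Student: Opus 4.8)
The plan is to reduce the generalized Heston problem \eqref{S6.3: gen-Heston-1} to an abstract parabolic evolution equation on the scale of weighted Sobolev spaces $\mathring{W}^{s,\vartheta}_p(\Omega)$, and then to invoke the maximal-regularity/parabolic-existence machinery of \cite[Theorem~II.1.2.1]{Ama95}. First I would verify the claim made just before the statement, namely that $(\M,g;\rho):=(\Omega,g_2;y)$ with $\Omega=\R\times\R_+$ is a \emph{singular manifold with uniformly mild wedge end}: this is exactly of the form $P(R,B)\times\Gamma$ with $R(t)=t\in\mathscr{C_U}(J_0)$ (note $\dot R\equiv 1\sim\mathbf 1_{J_0}$ and $\ddot R\equiv 0$), $B$ a point, and $\Gamma=\R$ (a uniformly regular manifold without boundary), so Proposition~\ref{S5: Eg-mild cusp-wedge} combined with Lemma~\ref{S5: lem-product} applies. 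The singularity function is $\rho\sim y$ near $y=0$ and $\rho\sim\mathbf 1$ for $y$ bounded away from $0$; near infinity in the $x$-direction nothing degenerates, so the wedge-end structure is confirmed.

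Next I would perform the change of variables mentioned in the excerpt to normalize the principal part. Since $(a^{ij})$ in (GH1) is a constant, symmetric, positive-definite $2\times2$ matrix (its determinant is $\tfrac14\sigma^2(1-\varrho^2)>0$), there is a constant linear change of spatial coordinates $z\mapsto \hat z$, together with a constant rescaling of $t$, transforming $-\partial_i(y^\alpha a^{ij}\partial_j\,\cdot\,)$ into $-\partial_j(y^\alpha\partial_j\,\cdot\,)$; one must only track that the lower-order coefficients remain in $C^s(J;L_\infty(\Omega))$ after this affine substitution, which is immediate because affine maps and multiplication by smooth bounded functions preserve that class. This puts the equation in the form with operator $\hat\cA$ stated in the excerpt, with $\hat b^j_i,\hat c_i\in C^s(J;L_\infty(\Omega))$.

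Then I would recognize $\hat\cA$ as a $(\rho,\lambda)$-regular operator of the divergence form treated in Section~5, with $\lambda=2-\alpha$ (so that $\rho^{2-\lambda}=y^\alpha$, matching the leading coefficient). Concretely, writing $\cA u=-\div(\rho^{2-\lambda}\gd u)+\ev(\nabla u,a_1)+a_0u$ with $\vec a=\rho^{2-\lambda}g_\flat$, the hypothesis $b^j(t,z)=\hat b^j_0+y\hat b^j_1$ and $c(t,z)=\hat c_0+y\hat c_1+y^2\hat c_2$ together with $y^{\alpha-1}b^j=\rho^{\lambda-1}\cdot(\text{bounded})$ and $y^{\alpha-2}c=\rho^{\lambda-2}\cdot(\text{bounded})$ is exactly what is needed to secure $a_1\in C^s(J;L^\lambda_\infty(\Omega,T\Omega))$ and $a_0\in C^s(J;L^\lambda_\infty(\Omega))$, i.e. $(\rho,\lambda)$-regularity for each fixed $t$; the constraint $\alpha\in(-\infty,1)\cup(1,2]$ ensures $\lambda=2-\alpha\in[0,1)\cup(1,\infty)$, which is the admissible range in Corollary~\ref{S5: main corollary} and Theorem~\ref{S5: main theorem}. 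Also $\|\rho\|_\infty=\infty$ is no obstruction: on a wedge end near $y=0$ one has $\rho\le 1$ locally, and the theorems apply after the localization-gluing argument. An application of Corollary~\ref{S5: main corollary} (the manifold here is a uniformly mild wedge, no holes) then yields $\hat\cA(t)\in\cH(\mathring W^{2,\lambda'-\lambda}_p(\Omega),L^{\lambda'}_p(\Omega))$ for each $t\in J$ and every $1<p<\infty$, $\lambda'\in\R$. Since $\lambda=2-\alpha$, note $\lambda'-\lambda=\lambda'+\alpha-2$, which is precisely the weight appearing in the claimed regularity class.

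Finally, the time-dependence: the map $t\mapsto\hat\cA(t)$ is Hölder continuous of exponent $s$ into $\L(\mathring W^{2,\lambda'-\lambda}_p(\Omega),L^{\lambda'}_p(\Omega))$, because the coefficients depend Hölder-continuously on $t$ in the relevant $L_\infty$-weighted norms and the mapping from coefficients to the operator in $\L$ of these spaces is bounded linear, by the pointwise-multiplier Proposition~\ref{S2: pointwise multiplication} and Propositions~\ref{S2: nabla}, \ref{S2: div-tangent}, \ref{S2: change of wgt}. With $\hat\cA(\cdot)\in C^s(J;\cH(E_1,E_0))$ where $E_1=\mathring W^{2,\lambda'-\lambda}_p(\Omega)$, $E_0=L^{\lambda'}_p(\Omega)$, and $f\in C^s(J;E_0)$, $u_0\in E_0$, the abstract parabolic existence-uniqueness theorem \cite[Theorem~II.1.2.1]{Ama95} produces a unique solution $u\in C^{1+s}(J\setminus\{0\};E_0)\cap C^s(J\setminus\{0\};E_1)$ of $u_t+\hat\cA u=f$, $u(0)=u_0$. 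Undoing the affine change of variables (which is a topological-linear isomorphism of all the function spaces involved, commuting with the time variable up to the harmless rescaling) transfers this back to \eqref{S6.3: gen-Heston-1} and gives $u\in C^{1+s}(J\setminus\{0\};L^{\lambda'}_p(\Omega))\cap C^s(J\setminus\{0\};W^{\lambda'+\alpha-2}_p(\Omega))$, as asserted. I expect the only genuinely delicate point to be bookkeeping that the affine change of variables preserves all the weighted structure and the Hölder-in-$t$ regularity of the coefficients simultaneously — everything else is a direct citation of Corollary~\ref{S5: main corollary} and \cite[Theorem~II.1.2.1]{Ama95}; in particular no new manifold-analytic work is needed beyond identifying $\Omega$ as a uniformly mild wedge and $\lambda=2-\alpha$.
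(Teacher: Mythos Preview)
Your proposal is correct and follows essentially the same route as the paper: identify $(\Omega,g_2)$ as a singular manifold with a uniformly mild wedge end, reduce the principal part to $-\partial_j(y^\alpha\partial_j\,\cdot\,)$ by an affine change of variables and time rescaling, recognize the resulting operator as $(\rho,\lambda)$-regular with $\lambda=2-\alpha$, apply Corollary~\ref{S5: main corollary}, and then invoke \cite[Theorem~II.1.2.1]{Ama95}. You supply considerably more detail than the paper (which simply asserts the change of variables and cites Corollary~\ref{S5: main corollary}), but the logical skeleton is identical; the one point worth making explicit is that the linear change of coordinates must fix the $y$-variable so that the weight $y^\alpha$ and the domain $\R\times\R_+$ are preserved.
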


\section*{Acknowledgements}
The author would like to express his sincere gratitude to Prof. Herbert Amann and  Prof. Gieri Simonett for  valuable suggestions and helpful discussions.


\begin{thebibliography}{99}


\bibitem{Ama95} H.~Amann, Linear and Quasilinear Parabolic Problems: Volume I. 
Abstract linear theory. Monographs in Mathematics, 89. Birkh\"auser Boston, Inc., Boston, MA  (1995).
 
  
 
 \bibitem{Ama13} H. Amann, Function spaces on singular manifolds,  Math. Nachr. \textbf{286}, No. 5-6, 436-475 (2013).
 
 \bibitem{AmaAr} H. Amann, Anisotropic function spaces on singular manifolds. arXiv.1204.0606.
 
 \bibitem{Ama13b} H. Amann, Parabolic equations on uniformly regular Riemannian manifolds and degenerate initial boundary value problems. arXiv:1403.2418.
 
 \bibitem{Ama14} H. Amann, Uniformly regular and singular Riemannian manifolds. arXiv:1405.3821.
 
 \bibitem{ArenElst97} W. Arendt, A. F. M. ter Elst,
Gaussian estimates for second order elliptic operators with boundary conditions. 
J. Operator Theory \textbf{38}, no. 1, 87-130 (1997). 
 
 \bibitem{ArenThom05} W. Arendt, S. Thomaschewski, Local operators and forms. Positivity
\textbf{9}, 357-367 (2005).
 
 \bibitem{Chg79} J. Cheeger, On the spectral geometry of spaces with cone-like singularities. 
Proc. Nat. Acad. Sci. U.S.A. \textbf{76}, no. 5, 2103-2106 (1979).

\bibitem{Chg80} J. Cheeger, On the Hodge theory of Riemannian pseudomanifolds. Geometry of the Laplace operator (Proc. Sympos. Pure Math., Univ. Hawaii, Honolulu, Hawaii, 1979), pp. 91-146, 
Proc. Sympos. Pure Math., XXXVI, Amer. Math. Soc., Providence, R.I., 1980. 

\bibitem{Chg83} J. Cheeger, Spectral geometry of singular Riemannian spaces. 
J. Differential Geom. \textbf{18} (1983), no. 4, 575-657 (1984). 

\bibitem{ChowLuNi06} B. Chow, P. Lu, L. Ni, Hamilton's Ricci Flow. Graduate Studies in Mathematics, 77. American Mathematical Society, Providence, RI; Science Press, New York, 2006.
 
 \bibitem{Dav89} E.B. Davies, Heat Kernels and Spectral Theory.  Cambridge Tracts in Mathematics, 92. Cambridge University Press, Cambridge, 1989.
 
   \bibitem{DisShaoSim} M. Disconzi, Y. Shao, G. Simonett, Some remarks on uniformly regular Riemannian manifolds. Submitted. arXiv: 1410.8627v2.
 




\bibitem{Fee13} P.M.N. Feehan, Maximum principles for boundary-degenerate second order linear elliptic differential operators.   Comm. Partial Differential Equations \textbf{38}, no. 11, 1863-1935 (2013).

\bibitem{FeePop13} P.M.N. Feehan, C.A. Pop, A Schauder approach to degenerate-parabolic partial differential equations with unbounded coefficients.  J. Differential Equations \textbf{254}, no. 12, 4401-4445 (2013). 
 
\bibitem{FeePop14} Feehan, P.M.N. Feehan, C.A. Pop, Schauder a priori estimates and regularity of solutions to boundary-degenerate elliptic linear second-order partial differential equations.  J. Differential Equations \textbf{256}, no. 3, 895-956 (2014). 

\bibitem{ForLor07} S. Fornaro, L. Lorenzi, Generation results for elliptic operators with unbounded diffusion coefficients in $L^p$- and $C_b$-spaces. Discrete Contin. Dyn. Syst. \textbf{18}, no. 4, 747-772 (2007).

\bibitem{ForMetPall11}  S. Fornaro, G. Metafune, D. Pallara, Analytic semigroups generated in $L_p$ by elliptic operators with high order degeneracy at the boundary.  
Note Mat. \textbf{31}, no. 1, 103-116 (2011). 

\bibitem{Fuhr97} M. Fuhrman, Sums of generators of analytic semigroups and multivalued linear operators. Ann. Mat. Pura Appl. (4) \textbf{173}, 63-105 (1997). 

\bibitem{GiuGozMonVes08} M. Giuli, F. Gozzi, R. Monte, V. Vespri, Generation of analytic semigroups and domain characterization for degenerate elliptic operators with unbounded coefficients arising in financial mathematics. II. Functional analysis and evolution equations, 315-330, Birkh\"auser, Basel, 2008.

\bibitem{Gold85} J.A. Goldstein, Semigroups of Linear Operators and Applications. Oxford Mathematical Monographs. The Clarendon Press, Oxford University Press, New York, 1985.

\bibitem{GozMonVes02} F. Gozzi, R. Monte, V. Vespri, Generation of analytic semigroups and domain characterization for degenerate elliptic operators with unbounded coefficients arising in financial mathematics. I. Differential Integral Equations \textbf{15}, no. 9, 1085-1128 (2002).
 
 
 \bibitem{Kato80} T. Kato, Perturbation Theory for Linear Operators, Springer-Verlag, Berlin, 1980.
 
 \bibitem{Kon67} V. A. Kondratʹev, 
Boundary value problems for elliptic equations in domains with conical or angular points. (Russian) 
Trudy Moskov. Mat. Ob\v{s}\v{c}. \textbf{16} 209-292 (1967).
 
 \bibitem{LauSei99} R. Lauter, J. Seiler, 
Pseudodifferential analysis on manifolds with boundary-a comparison of b-calculus and cone algebra. Approaches to singular analysis (Berlin, 1999), 131-166, 
Oper. Theory Adv. Appl., \textbf{125}, Birkh\"auser, Basel, 2001. 

 \bibitem{LiuWit04} X. Liu, I. Witt, Pseudodifferential calculi on the half-line respecting prescribed asymptotic types. Integral Equations Operator Theory \textbf{49}, no. 4, 473-497 (2004). 

\bibitem{NazSalSchSte06} V. Nazaikinskii, A.Yu. Savin, B.-W. Schulze, B.Yu. Sternin, Elliptic Theory on Singular Manifolds.
Differential and Integral Equations and Their Applications, 7. Chapman Hall/CRC, Boca Raton, FL, 2006. 
 

\bibitem{Mel81} R.B. Melrose, Transformation of boundary problems. 
Acta Math. \textbf{147}, no. 3-4, 149-236 (1981). 

 \bibitem{Mel93} R.B. Melrose, The Atiyah-Patodi-Singer Index Theorem. Research Notes in Mathematics, 4. A K Peters, Ltd., Wellesley, MA, 1993. 

  \bibitem{MetPalPruSch05} G. Metafune, D. Pallara, J. Pr\"uss, R. Schnaubelt, $L^p$-theory for elliptic operators on $\R^d$ with singular coefficients. Z. Anal. Anwendungen \textbf{24}, no. 3, 497-521 (2005).
 
 \bibitem{MouSel12} S. Mourou, M. Selmi, Quasi-$L^p$-contractive analytic semigroups generated by elliptic operators with complex unbounded coefficients on arbitrary domains. Semigroup Forum \textbf{85}, no. 1, 5-36 (2012). 
 
 \bibitem{Ouh92}  E.M. Ouhabaz, $L^\infty$-contractivity of semigroups generated by sectorial forms. 
J. London Math. Soc. (2) \textbf{46}, no. 3, 529-542 (1992). 
 
 \bibitem{Ouh04} E.M. Ouhabaz, Gaussian upper bounds for heat kernels of second-order elliptic operators with complex coefficients on arbitrary domains. J. Operator Theory \textbf{51}, no. 2, 335-360 (2004).
 
 \bibitem{Pazy83} A. Pazy,  Semigroups of Linear Operators and Applications to Partial Differential Equations. 
Applied Mathematical Sciences, 44. Springer-Verlag, New York, 1983.
 
 
 \bibitem{Pru03} J. Pr\"uss, Maximal regularity for evolution equations in $L_p$-spaces. Conf. Semin. Mat. Univ. Bari No. \textbf{285} (2002), 1-39 (2003).

\bibitem{PruSim07} J. Pr\"uss, G. Simonett, 
$H_\infty$-calculus for the sum of non-commuting operators. Trans. Amer. Math. Soc. \textbf{359}, no. 8, 3549-3565 (2007).

 
 
 \bibitem{Sch94} B.-W. Schulze, 
Pseudo-differential Boundary Value Problems, Conical Singularities, and Asymptotics. 
Mathematical Topics, 4. Akademie Verlag, Berlin, 1994.
 
 \bibitem{Sch97} B.-W. Schulze, 
Boundary Value Problems and Edge Pseudo-differential Operators. Microlocal analysis and spectral theory (Lucca, 1996), 165-226, 
NATO Adv. Sci. Inst. Ser. C Math. Phys. Sci., 490, Kluwer Acad. Publ., Dordrecht, 1997. 

 \bibitem{SchSei02} B.-W. Schulze, J. Seiler, 
The edge algebra structure of boundary value problems. 
Ann. Global Anal. Geom. \text{22}, no. 3, 197-265 (2002). 
 
 
 \bibitem{Shao14} Y. Shao, Continuous maximal regularity on singular manifolds and its applications. Submitted. arXiv:1410.1082.
 
 
  \bibitem{Trib78} H. Triebel, Interpolation Theory, Function Spaces, Differential Operator. North-Holland Publishing Co., Amsterdam-New York, 1978.
  
  
\bibitem{Vesp89} V. Vespri, Analytic semigroups, degenerate elliptic operators and applications to nonlinear Cauchy problems. Ann. Mat. Pura Appl. (4) \textbf{155}, 353-388 (1989). 
  
   
\end{thebibliography}
\end{document}